\theoremstyle{plain} 
\newtheorem{theorem}{\indent\sc Theorem}[section]
\newtheorem{lemma}[theorem]{\indent\sc Lemma}
\newtheorem{corollary}[theorem]{\indent\sc Corollary}
\newtheorem{proposition}[theorem]{\indent\sc Proposition}
\theoremstyle{definition} 
\newtheorem{definition}[theorem]{\indent\sc Definition}
\newtheorem{remark}[theorem]{\indent\sc Remark}
\newtheorem{example}[theorem]{\indent\sc Example}
\newtheorem{question}[theorem]{\indent\sc Question}
\begin{document}

\pagestyle{plain}
\thispagestyle{plain}

\title[A gluing construction of K3 surfaces]
{A gluing construction of K3 surfaces}

\author[Takayuki Koike and Takato Uehara]{Takayuki Koike$^{1}$ and Takato Uehara$^{2}$}
\address{ 
$^{1}$ Department of Mathematics \\
Graduate School of Science \\
Osaka Metropolitan University \\
3-3-138 Sugimoto \\
Osaka 558-8585 \\
Japan 
}
\email{tkoike@omu.ac.jp}
\address{
$^{2}$ Department of Mathematics \\
Faculty of Science \\
Okayama University \\
1-1-1, Tsushimanaka \\
Okayama, 700-8530 \\
Japan
}
\email{takaue@okayama-u.ac.jp}
\renewcommand{\thefootnote}{\fnsymbol{footnote}}
\footnote[0]{ 
2010 \textit{Mathematics Subject Classification}.
Primary 14J28; Secondary 32G05.
}
\footnote[0]{ 
\textit{Key words and phrases}.
K3 surfaces, the blow-up of the projective plane at general nine points, Levi-flat hypersurfaces. 
}
\renewcommand{\thefootnote}{\arabic{footnote}}

\begin{abstract}
We develop a new method for constructing K3 surfaces.  
We construct such a K3 surface $X$ by patching two open complex surfaces obtained as the complements of tubular neighborhoods of elliptic curves embedded in blow-ups of the projective planes at general nine points. 
Our construction has $19$ complex dimensional degrees of freedom. 
For general parameters, the K3 surface $X$ is neither Kummer nor projective. 
By the argument based on the concrete computation of the period map, we also investigate which points in the period domain correspond to K3 surfaces obtained by such construction. 
\end{abstract}

\maketitle

\section{Introduction}\label{section:introduction}

The aim of this paper is to develop a new method for constructing K3 surfaces. 
With the method in hand, we state one of main results as follows: 

\begin{theorem}\label{thm:main}
There exists a holomorphic deformation family $\pi \colon\mathcal{X}\to B$ of K3 surfaces over a $19$-dimensional complex manifold $B$ such that the following property holds: \\
$(i)$ The corresponding map from $B$ to the period domain is locally injective. \\
$(ii)$ Each fiber $X$ admits a real $1$ parameter family of compact Levi-flat hypersurfaces $\{H_t\}_{t\in I}$ of $C^\omega$ class such that, for each $t$ of the interval $I$, the real hypersurface $H_t$ is $C^\omega$-diffeomorphic to a real $3$-dimensional torus $S^1\times S^1\times S^1$. \\
$(iii)$ Each leaf of the Levi-flat foliation of $H_t$ is biholomorphic to either $\mathbb{C}$ or $\mathbb{C}^*:=\mathbb{C}\setminus\{0\}$ and is dense in $H_t$. \\
Moreover, general fiber $X$ (i.e. each fiber $X$ of an open dense subset of $B$) is a K3 surface with the Picard number $0$, and hence is neither projective nor Kummer. 
\end{theorem}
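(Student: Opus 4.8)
The plan is to realize each fiber $X$ by the gluing procedure announced in the introduction and then to read off every stated property from the resulting explicit model near the gluing locus. I would start from two copies $S_1,S_2$ of a blow-up of $\mathbb{P}^2$ at nine general points; the unique anticanonical curve of $S_i$ is a smooth elliptic curve $C_i$ with $C_i^2=0$, and its normal bundle $N_i:=N_{C_i/S_i}$ is a degree-$0$ line bundle which is non-torsion for general points. Fixing an isomorphism $C_1\cong C_2=:C$ together with the matching condition $N_1\cong N_2^{-1}$, and invoking the Ueda-type linearization so that a neighborhood of $C_i$ is biholomorphic to a neighborhood of the zero section of $N_i$, I would glue $U_i:=S_i\setminus V_i$ (with $V_i$ a tubular neighborhood of $C_i$) along the annular region between two concentric circle bundles by the biholomorphism $\xi_2=c/\xi_1$ in the linearizing fiber coordinate; this is well defined precisely because $\xi_1\xi_2$ is then a section of $N_1\otimes N_2\cong\mathcal{O}_C$, i.e.\ a constant $c\in\mathbb{C}^*$. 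Carrying this out holomorphically in the parameters and counting the modulus of $C$, the two nine-point configurations with $C$ fixed, the normal-bundle matching, and the gluing constant yields $1+9+9-1+1=19$ dimensions, producing $\pi\colon\mathcal{X}\to B$ over a $19$-dimensional base $B$.

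Next I would verify that every fiber $X$ is a K3 surface. On each $S_i$ there is a meromorphic $2$-form $\eta_i$ with a simple pole exactly along $C_i$, unique up to scale since $h^0(-K_{S_i})=1$, whose Poincar\'e residue is a nowhere-vanishing holomorphic $1$-form on $C_i$. In the linearizing coordinate $\eta_i$ behaves like $\frac{d\xi_i}{\xi_i}\wedge(\text{residue})$, and since $\frac{d\xi_2}{\xi_2}=-\frac{d\xi_1}{\xi_1}$ under $\xi_2=c/\xi_1$, the forms $\eta_1$ and $-\eta_2$ patch, once the residues are normalized via $C_1\cong C_2$, into a nowhere-vanishing holomorphic $2$-form $\sigma_X$ on $X$; hence $K_X\cong\mathcal{O}_X$. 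To conclude that $X$ is K3 rather than a complex torus, I would show $\pi_1(X)=1$ by van Kampen applied to $X=U_1\cup U_2$ glued along $H_t\cong S^1\times S^1\times S^1$: because each exceptional curve $E_j\subset S_i$ meets $C_i$ transversally in one point, the meridian of $C_i$ is null-homotopic in $U_i$, forcing $\pi_1(U_i)=1$, so the amalgamated product over $\pi_1(H_t)=\mathbb{Z}^3$ collapses to the trivial group. Simple connectivity together with $K_X\cong\mathcal{O}_X$ forces $X$ to be a K3 surface.

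For the Levi-flat hypersurfaces I would work entirely inside the linearized neighborhood. Writing $C=\mathbb{C}/\Lambda$ with $\Lambda=\mathbb{Z}+\mathbb{Z}\tau$ and realizing $N$ as the flat bundle with unitary holonomy $\rho(1)=e^{2\pi i a}$, $\rho(\tau)=e^{2\pi i b}$ for $a,b\in\mathbb{R}$, the level sets $H_t:=\{|\xi|=t\}$ for $t$ in a small interval $I$ are circle bundles over $C$; flatness makes them topologically trivial, hence $C^\omega$-diffeomorphic to $S^1\times S^1\times S^1$. The flat connection foliates each $H_t$ by the images of the horizontal slices $\{\xi=\text{const}\}$, and the leaf through a point is $\mathbb{C}/\Lambda'$ with $\Lambda'=\{m+n\tau:ma+nb\in\mathbb{Z}\}$. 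Since $N$ is non-torsion, $\Lambda'$ has rank $0$ or $1$, giving leaves biholomorphic to $\mathbb{C}$ or $\mathbb{C}^*$ respectively, and the same irrationality makes each leaf equidistribute, hence dense in $H_t$.

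The main obstacle, and the final step, is the injectivity of the Kodaira-Spencer map together with the Picard number $0$ claim, both of which I would extract from an explicit computation of the period map. Choosing a basis of $H_2(X,\mathbb{Z})$ adapted to the decomposition $X=U_1\cup U_2$, in particular the transcendental cycles obtained as tubes over the $1$-cycles of the gluing torus, I would compute the periods of $\sigma_X$ as functions of the parameters and show that their differential is injective, which yields injectivity of the Kodaira-Spencer map and the local $19$-dimensionality of $B$. Finally, since $\mathrm{NS}(X)=\sigma_X^\perp\cap H^2(X,\mathbb{Z})$, a class $\lambda\ne0$ is algebraic only on the locus $\{b:\langle\lambda,\sigma_{X_b}\rangle=0\}$; the period computation shows that this is a proper analytic subset of $B$ for each fixed $\lambda$, so by Baire the general fiber lies outside the countable union of these loci and has Picard number $0$, whence $X$ is neither projective nor Kummer. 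The delicate points are to make the Ueda linearization depend holomorphically on the parameters, which requires a uniform Diophantine condition on the holonomy, and to control the period integrals precisely enough to see that each of these Noether-Lefschetz loci is proper.
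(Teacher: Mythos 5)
Your overall strategy coincides with the paper's: glue the complements of linearized tubular neighborhoods via $\xi_2=c/\xi_1$, patch the log-polar meromorphic $2$-forms into a nowhere-vanishing $\sigma$, identify the Levi-flats as the circle subbundles of the flat normal bundle, and get both the Kodaira--Spencer injectivity and $\rho(X)=0$ from explicit period integrals over cycles adapted to the decomposition, killing each Noether--Lefschetz locus by a Baire argument. Most of these steps are carried out essentially as you describe (the paper uses Mayer--Vietoris and the surface classification in place of your van Kampen computation, which is an acceptable variant).

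There is, however, a genuine gap in your parameter count $1+9+9-1+1=19$. You let the ninth point $p_1^+,\dots,p_9^+$ vary freely (only $p_9^-$ being determined by the matching $g^*N_-\cong N_+^{-1}$), so the normal bundle $N_+$ sweeps out an \emph{open} subset of ${\rm Pic}^0(C)$. But the Diophantine locus in ${\rm Pic}^0(C)$, while of full measure, is a countable union of nowhere dense closed sets and hence has empty interior (Remark \ref{rem:fullmeas}); moreover the constants in the Diophantine estimate degenerate as one approaches non-Diophantine bundles, so no uniform linearizing neighborhood exists over such a family and the relative Arnol'd theorem (Theorems \ref{thm:rel_Arnol'd}, \ref{thm:rel_Arnol'd_gen}) cannot be applied. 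You do flag ``a uniform Diophantine condition on the holonomy'' as a delicate point, but it is not merely delicate: it removes one of your $19$ directions. The paper's fix is to fix a Diophantine pair $(p,q)$ of \emph{real} numbers once and for all, set $N_0(\tau)=p-q\tau$ so that $d(\mathbb{I}_{C_\tau},N_0(\tau)^n)$ is independent of $\tau$, let only $p_1^\pm,\dots,p_8^\pm$ vary (the ninth points being determined), and recover the missing dimension from the translation parameter in the gluing isomorphism $g\colon C^+\to C^-$ (the $\Delta_z$-direction of \S\ref{section:deform_patching}), giving $1+16+1+1=19$. This constraint is also what confines the period image to the hyperplane $v_{(p,q)}^\perp$ in Theorem \ref{thm:over_B_pq}, a feature your parametrization would miss. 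A second, smaller point: your claim that $\eta_i$ equals a \emph{constant} times $\frac{d\xi_i}{\xi_i}\wedge dz$ on the gluing annulus is exactly what requires Lemma \ref{lem:W*} ($H^0(V,\mathcal{O}_V)=\mathbb{C}$, proved via the maximum principle on the dense leaves); without it the two forms need not patch.
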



Here a real hypersurface $H$ in a complex manifold $X$ is said to be Levi-flat if it admits a foliation of real codimension $1$ whose leaves are complex manifolds holomorphically immersed into $X$. 
We will construct a family $\pi \colon\mathcal{X}\to B$ of K3 surfaces each of whose fibers $X$ has an open complex submanifold $V\subset X$ with the following property: there exists an elliptic curve $C$, a non-torsion flat line bundle $N\to C$ (i.e. a topologically trivial holomorphic line bundle $N\in {\rm Pic}^0(C)$ such that $N^n:=N^{\otimes n}$ is not holomorphically trivial for any positive integer $n$), and two positive numbers $a<b$ such that $V$ is biholomorphic to $\{x\in N\mid a<|x|_h<b\}$, where $h$ is a fiber metric on $N$ with zero curvature. 
The Levi-flat hypersurfaces $\{H_t\}_{t\in I}$ in Theorem \ref{thm:main} are given by the hypersurfaces corresponding to $\{x\in N\mid |x|_h=t\}$ for each $t\in I:=(a, b)\subset \mathbb{R}$. 
Note that Kummer surfaces with such an open subset $V$ can be constructed in a simple manner. 
Actually, the Kummer surface constructed from an abelian surface $A$ has such $V$ as an open complex submanifold if $A$ includes $V$. 
Note also that each leaf of the Levi-flat hypersurface $\{x\in N\mid |x|_h=t\}$ is biholomorphic to either $\mathbb{C}$ or $\mathbb{C}^*$ for each $a<t<b$. 
By considering the universal covering of a leaf, we have the following corollary: 

\begin{corollary}\label{cor:main}
There exists a K3 surface $X$ which is neither projective nor Kummer, and which admits a holomorphic map $f\colon \mathbb{C}\to X$ such that the Euclidean closure of the image $f(\mathbb{C})$ is a compact real hypersurface of $X$. 
In particular, the Zariski closure of $f(\mathbb{C})$ coincides with $X$ (i.e. $f(\mathbb{C})$ cannot be included in any proper analytic subvariety), whereas the Euclidean closure of $f(\mathbb{C})$ is a proper subset of $X$. 
\end{corollary}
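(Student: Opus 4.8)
The plan is to read off the desired surface and holomorphic map directly from the structure furnished by Theorem \ref{thm:main}, so that essentially all of the hard work is already contained in that statement. First I would fix a general fiber $X$ of the family $\pi\colon\mathcal{X}\to B$; by the final sentence of Theorem \ref{thm:main} such an $X$ has Picard number $0$ and is therefore neither projective nor Kummer, settling the first qualitative requirement. I would then select one Levi-flat hypersurface $H_t$ for a fixed $t\in I$, together with a single leaf $L$ of its Levi-flat foliation. By Theorem \ref{thm:main}, the leaf $L$ is holomorphically immersed in $X$, is biholomorphic to either $\mathbb{C}$ or $\mathbb{C}^*$, and is dense in $H_t$.

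To produce the map $f$, I would distinguish the two conformal types of $L$. If $L\cong\mathbb{C}$, the immersion of the leaf already supplies a holomorphic map $f\colon\mathbb{C}\to X$ with $f(\mathbb{C})=L$. If instead $L\cong\mathbb{C}^*$, I would precompose the leaf immersion with the universal covering $\exp\colon\mathbb{C}\to\mathbb{C}^*$; since $\exp$ is surjective, the resulting holomorphic map $f\colon\mathbb{C}\to X$ again has image $f(\mathbb{C})=L$. This is precisely the point at which both conformal types in Theorem \ref{thm:main} become relevant, and it is why the corollary is phrased via the universal covering of a leaf. In either case the Euclidean closure of $f(\mathbb{C})$ equals the Euclidean closure of $L$, which by density is $H_t$ itself; since $H_t$ is $C^\omega$-diffeomorphic to $S^1\times S^1\times S^1$, it is a compact real hypersurface of $X$, giving the first assertion of the corollary.

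For the claims about the two closures I would argue by a real-dimension count. The Zariski closure of $f(\mathbb{C})$ is, by definition, the smallest complex analytic subset $Z\subseteq X$ containing $f(\mathbb{C})$; since analytic subsets are closed in the Euclidean topology, $Z$ must contain the Euclidean closure of $f(\mathbb{C})$, namely $H_t$. Now any proper complex analytic subset of the compact complex surface $X$ has complex dimension at most $1$, hence real dimension at most $2$, whereas $H_t$ has real dimension $3$. Thus no proper analytic subset can contain $H_t$, which forces $Z=X$; the Zariski closure is therefore all of $X$. At the same time the Euclidean closure is exactly $H_t$, a proper subset of $X$, completing the proof.

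I do not expect a genuine obstacle here, as the statement is a corollary whose substance lies in Theorem \ref{thm:main}; the only steps requiring care are the two just indicated. The first is verifying that $f$ is a bona fide holomorphic map from $\mathbb{C}$ with image exactly the leaf, for which I rely on the Levi-flat condition guaranteeing that leaves are holomorphically immersed complex manifolds, together with the surjectivity of the exponential covering in the $\mathbb{C}^*$ case. The second is the dimension comparison $\dim_{\mathbb{R}} H_t=3>2\geq\dim_{\mathbb{R}} Z$ for any proper analytic $Z$, which is what simultaneously yields Zariski density and the strict properness of the Euclidean closure, and which crucially uses that $H_t$ is genuinely three real-dimensional rather than a complex curve.
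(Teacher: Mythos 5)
Your proposal is correct and follows the same route as the paper, whose entire proof is the one-line remark that the corollary follows by considering the universal covering of a leaf of a Levi-flat hypersurface $H_t$; you have simply written out the details (the two conformal types of the leaf, density in $H_t$, and the dimension count showing no proper analytic subset can contain a real $3$-dimensional hypersurface) that the authors leave to the reader.
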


We will construct a K3 surface $X$ by patching two open complex surfaces obtained as the complements of tubular neighborhoods of elliptic curves embedded in blow-ups of the projective plane $\mathbb{P}^2$ at general nine points. 
The outline of the construction is as follows: 
Let $C_0^+$ and $C_0^-$ be two smooth elliptic curves in $\mathbb{P}^2$ biholomorphic to each other, and $Z^\pm:=\{p_1^\pm, p_2^\pm, \dots, p_9^\pm\}\subset C_0^{\pm}$ be general nine points.  
Moreover let $S^{\pm}$ be the blow-ups of $\mathbb{P}^2$ at the nine points $Z^\pm$, and $C^\pm\subset S^\pm$ be the strict transforms of $C_0^{\pm}$. 
Denote by $M^\pm$ the complements of tubular neighborhoods of $C^\pm$ in $S^\pm$. 
We construct $X$ by patching $M^+$ and $M^-$. 
In order to patch them holomorphically, namely, $M^{\pm}$ become holomorphically embedded open complex submanifolds of $X$, 
one needs to choose $Z^\pm$ in a suitable manner. 
To this end, we give the following definition: 

\begin{definition}[{\cite[\S 4.1]{U}}] \label{def:Dioph}
A flat line bundle $L \in {\rm Pic}^0(C)$ on an elliptic curve $C$ is said to satisfy the Diophantine condition if $-\log d(\mathbb{I}_C, L^{n})=O(\log n)$ as $n\to \infty$, where $d$ is an invariant distance on ${\rm Pic}^0(C)$ (in the sense of {\cite[\S 4.1]{U}}, which is equivalent to the Euclidean distance on ${\rm Pic}^0(C)$ as a torus, see \S \ref{section:distance_Pic0}) and $\mathbb{I}_C$ is the holomorphically trivial line bundle on $C$. 
The condition is independent of the choice of an invariant distance $d$. 
\end{definition}

\begin{remark}\label{rem:fullmeas}
It is clear from the definition that any flat line bundle is non-torsion if it is Diophantine. 
The set $\mathcal{D}$ of all elements of ${\rm Pic}^0(C)$ which satisfy the Diophantine condition is a subset of ${\rm Pic}^0(C)$ with full Lebesgue measure, since 
\[
{\rm Pic}^0(C)\setminus \mathcal{D} = \bigcap_{m, \ell=1}^\infty\bigcup_{n=1}^\infty\{L\in {\rm Pic}^0(C)\mid d(\mathbb{I}_C, L^n)< n^{-\ell}/m\}
\]
holds and the measure of the set $\bigcup_{n=1}^\infty\{L\in {\rm Pic}^0(C)\mid d(\mathbb{I}_C, L^n)< n^{-\ell}/m\}$ is less than or equal to 
\[
\sum_{n=1}^\infty n^2\cdot \frac{1}{n^2}\cdot \frac{\pi}{m^2\cdot n^{2\ell}}=\frac{\pi}{m^2}\zeta(2\ell), 
\]
which approaches to zero as $m, \ell\to \infty$. 
More precisely, the set can be expressed as the union of a countable number of nowhere dense Euclidean closed subsets of ${\rm Pic}^0(C)$, since 
\[
\mathcal{D} = \bigcup_{m, \ell=1}^\infty\bigcap_{n=1}^\infty\{L\in {\rm Pic}^0(C)\mid d(\mathbb{I}_C, L^n)\geq n^{-\ell}/m\}. 
\]
\end{remark}

Under the setting above, we have the following theorem:

\begin{theorem}\label{thm:constr_of_K3}
Under the above notations, assume that the normal bundles $N_{\pm}:=N_{C^{\pm}/S^{\pm}}$, which are clearly elements of ${\rm Pic}^0(C^\pm)$ by construction, are dual to each other via a biholomorphism $g\colon C^+\to C^-$, 
that is, $g^*N_-\cong N_+^{-1}$, and that $N_\pm$ satisfy the Diophantine condition. 
Then we can patch $M^+$ and $M^-$ holomorphically to yield a K3 surface $X$. 
\end{theorem}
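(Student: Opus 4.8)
The plan is to realize neighborhoods of $C^\pm$ as model neighborhoods inside the total spaces of the flat line bundles $N_\pm$, and then to glue the two resulting bordered surfaces along a common annular collar by the fiberwise inversion dictated by the duality $g^\ast N_-\cong N_+^{-1}$. \textbf{Linearization.} First I would appeal to Ueda's theory of neighborhoods of a curve with topologically trivial normal bundle. For the blow-up of $\mathbb{P}^2$ at nine points on a cubic one shows that the anticanonical curve $C^\pm$ has a neighborhood all of whose Ueda obstructions vanish; since $N_\pm$ satisfies the Diophantine condition of Definition \ref{def:Dioph}, the formal linearization converges and the neighborhood is genuinely biholomorphic to a neighborhood $\{x\in N_\pm\mid|x|_{h_\pm}<r_\pm'\}$ of the zero section, for a flat fiber metric $h_\pm$. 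In the resulting fiber coordinate $w_\pm$ (over a flat coordinate $z_\pm$ on $C^\pm$) the zero section is $\{w_\pm=0\}=C^\pm$, and the part of $M^\pm$ visible in this model is the collar $A^\pm=\{r_\pm<|w_\pm|<r_\pm'\}$.

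\textbf{The gluing.} The duality $g^\ast N_-\cong N_+^{-1}$ furnishes a fiberwise pairing between the punctured fibers of $N_+$ and $N_-$ over $g$, i.e.\ a biholomorphism of the form $w_-=c/w_+$ covering $g\colon C^+\to C^-$ and sending the zero section of $N_+$ to the section at infinity. Choosing the constant $c$ and the radii so that $c/r_+'=r_-$ and $c/r_+=r_-'$, this map carries $A^+$ biholomorphically onto $A^-$ while reversing the radial direction, so it is precisely the identification needed to glue the two bordered surfaces into a closed one. Patching $M^+$ and $M^-$ along $A^+\cong A^-$ thus produces a compact complex surface $X$ whose overlap region is the annulus $V\cong\{a<|x|_h<b\}$ described in the introduction, homotopy equivalent to the flat circle bundle $S^1\times S^1\times S^1$.

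\textbf{Triviality of the canonical bundle.} Because $C^\pm\in|-K_{S^\pm}|$, the constant section of $K_{S^\pm}\otimes\mathcal{O}(C^\pm)\cong\mathcal{O}_{S^\pm}$ gives a meromorphic $2$-form $\Omega^\pm$ on $S^\pm$ with divisor $-C^\pm$; it has a simple pole along $C^\pm$ and no zeros, so $\Omega^\pm|_{M^\pm}$ is holomorphic and nowhere vanishing. In the linear model $\Omega^\pm$ is a nonzero constant times $(dz_\pm\wedge dw_\pm)/w_\pm$. Since $dw_-/w_-=-\,dw_+/w_+$ under $w_-=c/w_+$, and $g$ pulls the invariant holomorphic $1$-form of $C^-$ back to a constant multiple of that of $C^+$, the form $\Omega^-$ pulls back on $V$ to a constant multiple of $\Omega^+$; after rescaling one of them the two agree on $V$ and patch to a global nowhere-vanishing holomorphic $2$-form $\Omega_X$ on $X$. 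Hence $K_X\cong\mathcal{O}_X$.

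\textbf{Identification as a K3 surface.} Triviality of $K_X$ forbids $(-1)$-curves, so $X$ is minimal of Kodaira dimension $0$. As $W^\pm$ retracts to $C^\pm\simeq S^1\times S^1$ and $A^\pm$ to $S^1\times S^1\times S^1$, inclusion--exclusion gives $\chi_{\mathrm{top}}(M^\pm)=\chi_{\mathrm{top}}(S^\pm)=12$, while $\chi_{\mathrm{top}}(V)=0$, whence $\chi_{\mathrm{top}}(X)=12+12-0=24$. Among minimal surfaces with trivial canonical bundle---namely K3 surfaces, complex tori and primary Kodaira surfaces---only K3 surfaces have $\chi_{\mathrm{top}}=24$, so $X$ is a K3 surface. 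The crux of the argument, and the step I expect to be the main obstacle, is the linearization: the vanishing of the Ueda obstructions together with the convergence of the linearizing coordinate, for which the Diophantine condition is exactly what tames the small divisors. Once the model neighborhoods are in place, the gluing, the form-matching and the classification are essentially formal.
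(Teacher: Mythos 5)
Your construction follows the paper's route almost exactly: Arnol'd-type linearization of the neighborhoods of $C^\pm$ (the paper simply invokes Theorem~\ref{thm:Arnol'd_original} as a black box, whereas you sketch its mechanism via Ueda obstructions and small divisors), the fiberwise inversion $w_-=c/w_+$ over $g$ justified by $g^*N_-\cong N_+^{-1}$, and the patching of the two anticanonical meromorphic $2$-forms $\eta^\pm$ into a global nowhere-vanishing $2$-form. The one place you genuinely diverge is the final identification: you compute $\chi_{\rm top}(X)=24$ by inclusion--exclusion and appeal to the Enriques--Kodaira classification of minimal surfaces with trivial canonical bundle, whereas the paper shows $H_1(X,\mathbb{Z})=0$ directly by two applications of Mayer--Vietoris (to $\{M^\pm,W^\pm\}$ in $S^\pm$ and then to $\{M^+,M^-\}$ in $X$). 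Both are valid; the paper's argument is more self-contained and also delivers simple connectivity, which is used later, while yours trades that for a citation of the classification.

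There is one step you assert without justification that the paper is careful about: that in the linearized coordinates $\Omega^\pm$ equals a \emph{constant} times $dz_\pm\wedge dw_\pm/w_\pm$. A priori the ratio is only a nowhere-vanishing holomorphic function $F^\pm$ on the deleted tubular neighborhood, and you need $F^\pm$ to be constant for the two forms to match on all of $V$ after a single rescaling. This is Lemma~\ref{lem:W*} in the paper ($H^0(V,\mathcal{O}_V)=\mathbb{C}$), proved via the maximum principle on the dense leaves of the Levi-flat foliation; equivalently, expanding $F^\pm=\sum_{n\in\mathbb{Z}}a_n(z)w^n$ with $a_n\in H^0(C^\pm,N_\pm^{-n})$ and using that $N_\pm$ is non-torsion (a consequence of the Diophantine hypothesis) kills all terms with $n\neq 0$. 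The claim is true, but it is not formal, and it is precisely where the non-torsion nature of $N_\pm$ enters beyond the linearization step; you should supply this argument.
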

%


Remark \ref{rem:fullmeas} says that for almost all nine point configurations $Z^{\pm} \subset C^{\pm}_0$, the corresponding normal bundles $N_{\pm}$ satisfy the Diophantine condition. 
In such sense, our gluing construction has some degree of freedom. 
Indeed, in the gluing construction of K3 surfaces, there are some parameters coming from, for example, the choice of the elliptic curves $C_0^\pm$, nine points configurations $Z^{\pm}$, the choice of the radii of the tabs of gluing, and some patching parameters, which cause $19$-dimensional deformation as stated in Theorem \ref{thm:main}. 
By concretely computing the period integral, we investigate the relation between these parameters which appear in the construction and the period map (see \S \ref{section:constr_cycles}, \ref{section:deformation_of_X}, and \ref{section:realizability}). 
For this purpose, we also construct a marking, namely, $22$ generators of the second homology group $H_2(X, \mathbb{Z})$ of our K3 surface $X$, which will be denoted by 
\[
A_{\alpha\beta}, A_{\beta\gamma}, A_{\gamma\alpha}, B_\alpha, B_\beta, B_\gamma, C_{12}^+, C_{23}^+, \dots, C_{78}^+, C_{678}^+, C_{12}^-, C_{23}^-, \dots, C_{78}^-, C_{678}^-.
\] 
The homology group $H_2(X, \mathbb{Z})$ equipped with the intersection form can  be expressed as 
\[
H_2(X, \mathbb{Z})=\Pi_{3,19} \cong \langle A_{\alpha\beta}, B_\gamma\rangle\oplus \langle A_{\beta\gamma}, B_\alpha\rangle\oplus \langle A_{\gamma\alpha}, B_\beta\rangle\oplus \langle C^+_\bullet\rangle\oplus \langle C_\bullet^-\rangle, 
\]
where 
$\langle A_{\alpha\beta}, B_\gamma\rangle\cong \langle A_{\beta\gamma}, B_\alpha\rangle\cong \langle A_{\gamma\alpha}, B_\beta\rangle \cong U$ are 
the even unimodular lattice of rank $2$ with $(A_{\bullet}. A_{\bullet})=0, (A_{\bullet}. B_{\bullet})=1, (B_{\bullet}. B_{\bullet})=-2$, and $\langle C_\bullet^\pm\rangle \cong E_8(-1)$,  where $E_8$ is the lattice corresponding to the Dynkin diagram given in Figure \ref{figure:dynkin_1}. 


\begin{center}
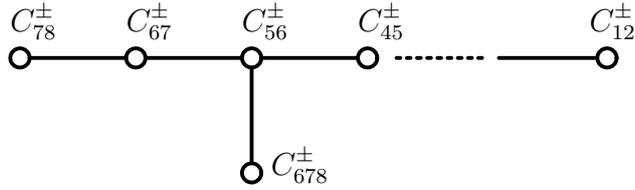
\begin{figure}[h]
\unitlength 0.1in
\begin{picture}( 31.4000,  9.2000)(  3.5000,-10.5000)
%
\special{pn 20}%
\special{pa 450 400}%
\special{pa 950 400}%
\special{fp}%
%
\special{pn 20}%
\special{pa 1050 400}%
\special{pa 1550 400}%
\special{fp}%
%
\special{pn 20}%
\special{pa 1650 400}%
\special{pa 2150 400}%
\special{fp}%
%
\special{pn 20}%
\special{pa 2350 400}%
\special{pa 2800 400}%
\special{dt 0.054}%
%
\special{pn 20}%
\special{pa 2880 400}%
\special{pa 3380 400}%
\special{fp}%
%
\special{pn 20}%
\special{pa 1600 450}%
\special{pa 1600 950}%
\special{fp}%
\put(17.0000,-10.5000){\makebox(0,0)[lb]{$C_{678}^\pm$}}%
\put(3.5000,-3.0000){\makebox(0,0)[lb]{$C_{78}^\pm$}}%
\put(9.5000,-3.0000){\makebox(0,0)[lb]{$C_{67}^\pm$}}%
\put(15.5000,-3.0000){\makebox(0,0)[lb]{$C_{56}^\pm$}}%
\put(21.5000,-3.0000){\makebox(0,0)[lb]{$C_{45}^\pm$}}%
\put(33.5000,-3.0000){\makebox(0,0)[lb]{$C_{12}^\pm$}}%
%
\special{pn 20}%
\special{ar 400 400 50 50  0.0000000 6.2831853}%
%
\special{pn 20}%
\special{ar 1000 400 50 50  0.0000000 6.2831853}%
%
\special{pn 20}%
\special{ar 1600 400 50 50  0.0000000 6.2831853}%
%
\special{pn 20}%
\special{ar 1600 1000 50 50  0.0000000 6.2831853}%
%
\special{pn 20}%
\special{ar 2200 400 50 50  0.0000000 6.2831853}%
%
\special{pn 20}%
\special{ar 3440 400 50 50  0.0000000 6.2831853}%
\end{picture}%
\caption{Dynkin diagram}
\label{figure:dynkin_1}
\end{figure}
\end{center}

A pair $(p, q)$ of real numbers is said to satisfy the Diophantine condition if there exist positive numbers $A$ and $\alpha$ such that $\textstyle\min_{(a, b)\in\mathbb{Z}^2}(|np-a|+|nq-b|)\geq A\cdot n^{-\alpha}$ holds for each positive integer $n$. 
By using the notations above, we have the following: 

\begin{theorem}\label{thm:over_B_pq}
For any pair $(p, q)$ of real numbers satisfying the Diophantine condition, put $v=v_{(p, q)}:=A_{\alpha\beta}+p\cdot A_{\beta\gamma}-q\cdot A_{\gamma\alpha}$. 
Then there exists an open subset $\Xi_{(p, q)}$ of $v^{\perp}:=\{ \xi \in \mathcal{D}_{\rm Period} \mid (\xi. v)=0 \}$ and a proper holomorphic submersion $\pi \colon \mathcal{X}\to \Xi_{(p, q)}$ such that 
each fiber is a K3 surface constructed in our gluing method whose period map yields the identity map $\Xi_{(p, q)}\to \Xi_{(p, q)}$, where $\mathcal{D}_{\rm Period}:=\{\xi\in\mathbb{P}(\Pi_{3, 19}\otimes \mathbb{C})\mid (\xi. \xi)=0, (\xi. \overline{\xi})>0 \}$ is the period domain. 
\end{theorem}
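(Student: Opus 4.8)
The plan is to read Theorem~\ref{thm:over_B_pq} as a \emph{surjectivity-of-periods} statement for the gluing construction of Theorem~\ref{thm:constr_of_K3}, and to prove it in three stages: (i) exhibit a finite-dimensional family of gluing data whose K3 surfaces all have period lying on the hyperplane section $v^{\perp}$; (ii) compute the period map explicitly on the generators $A_\bullet, B_\bullet, C^\pm_\bullet$; and (iii) invoke local Torelli to reparametrize the family by its own period, so that the period map becomes the identity.

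First I would fix the discrete data that forces the direction $v=v_{(p,q)}$. The coefficients $p$ and $q$ should be read off from the modulus $\tau$ of the elliptic curve $C$ together with the normal bundle $N_+\in\mathrm{Pic}^0(C)\cong\mathbb{C}/(\mathbb{Z}+\tau\mathbb{Z})$; the Diophantine condition on $(p,q)$ is exactly what makes $N_\pm$ satisfy the Diophantine condition of Definition~\ref{def:Dioph}, so that Theorem~\ref{thm:constr_of_K3} applies and the gluing converges. With $(p,q)$ thus fixed, the remaining gluing data --- the radii $a<b$ of the tubular neighborhoods, the patching parameter, and the residual freedom in the nine-point configurations $Z^\pm$ not already pinned down by $N_\pm$ --- form a parameter space $\mathcal{B}$ of complex dimension $19$ (matching the count in Theorem~\ref{thm:main}), and Theorem~\ref{thm:constr_of_K3} produces a holomorphic family $\mathcal{X}_{\mathcal B}\to\mathcal B$. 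Since the explicit cycles are locally constant over $\mathcal B$, I would trivialize $H_2(X_b,\mathbb{Z})\cong\Pi_{3,19}$ to obtain a marking, hence a period map $\mathcal{P}\colon\mathcal{B}\to\mathcal{D}_{\mathrm{Period}}$.

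The analytic core is the explicit evaluation of the periods $\int_\gamma\Omega_{X_b}$ of a nowhere-vanishing holomorphic $2$-form over each of the $22$ generators. The key point is that the three $A$-cycles $A_{\alpha\beta},A_{\beta\gamma},A_{\gamma\alpha}$ are built from $1$-cycles on $C$ swept along the $S^1$-direction of the gluing annulus, so their periods are governed by the elliptic modulus and the monodromy of $N_\pm$; the computation should yield precisely the linear relation $a_{\alpha\beta}+p\,a_{\beta\gamma}-q\,a_{\gamma\alpha}=0$ on the $A$-periods $a_\bullet:=\int_{A_\bullet}\Omega_{X_b}$, which by Poincaré duality is the assertion $(\mathcal{P}(b).v)=0$. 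As $v$ lies in the three mutually orthogonal $U$-summands and each $A_\bullet$ is isotropic, one checks $(v.v)=0$, consistent with $v^{\perp}\cap\mathcal{D}_{\mathrm{Period}}$ being a smooth hyperplane section of complex dimension $19$. Thus $\mathcal{P}(\mathcal{B})\subset v^{\perp}$.

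It then remains to upgrade $\mathcal{P}$ to a local biholomorphism onto an open subset of $v^{\perp}$ and to promote this to the identity. Having confirmed $\dim_{\mathbb{C}}\mathcal{B}=19=\dim_{\mathbb{C}}(v^{\perp}\cap\mathcal{D}_{\mathrm{Period}})$, injectivity of the Kodaira--Spencer map (local Torelli for K3 surfaces) makes $\mathcal{P}$ an immersion; combined with the dimension equality and the inclusion into $v^{\perp}$, its image is open in $v^{\perp}$. Shrinking $\mathcal{B}$ to a neighborhood on which $\mathcal{P}$ is a biholomorphism onto an open set, I would \emph{define} $\Xi_{(p,q)}:=\mathcal{P}(\mathcal{B})$ and reparametrize the family by pulling back along $\mathcal{P}^{-1}$; the resulting $\pi\colon\mathcal{X}\to\Xi_{(p,q)}$ is a proper holomorphic submersion of K3 surfaces whose period map is, tautologically, the identity. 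The main obstacle is the period computation together with the verification that $\mathcal{P}$ is a local isomorphism: one must show not only that the $A$-periods obey the single relation cutting out $v^{\perp}$, but also that the $B$- and $C^\pm$-periods vary independently enough for the Jacobian of $\mathcal{P}$ to have full rank $19$ --- that is, that the analytic gluing parameters map nondegenerately onto period coordinates transverse to $v$. Controlling this nondegeneracy, while keeping all period integrals convergent under the Diophantine hypothesis, is the crux of the argument.
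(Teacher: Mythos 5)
Your overall strategy --- build a $19$-parameter family of gluing data, show the $A$-periods satisfy the single linear relation cutting out $v^{\perp}$, and then use injectivity of the Kodaira--Spencer map plus the dimension count $19=\dim(v^{\perp}\cap\mathcal{D}_{\rm Period})$ to conclude the period map is an open embedding whose image you declare to be $\Xi_{(p,q)}$ --- is sound for the theorem as literally stated, and it reproduces the ``openness'' half of the paper's argument (Proposition \ref{prop:deform_w_g_points} together with Remark \ref{rmk:openness}). The paper, however, does not stop there: it derives Theorem \ref{thm:over_B_pq} from Corollary \ref{cor:realizability_main}, i.e.\ from the much stronger Theorem \ref{theorem:realizability}, which identifies $\Xi_{(p,q)}$ \emph{explicitly} as the locus cut out by conditions $(a)$, $(b)$, $(c)$ (the last being a volume inequality $>\Lambda_{(p,q)}$ governed by ${\rm vol}_{\eta^\pm}(S^\pm\setminus W^\pm_{\rm max})$). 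That requires a genuinely harder closedness argument --- extracting limits of gluing radii and gluing maps from a convergent sequence of periods, using the maximal tubular neighborhoods $W^\pm_{\rm max}$ and the volume formula to get compactness. Your route buys the weaker existence statement more cheaply; it cannot recover the description of $\Xi_{(p,q)}$ promised just after the theorem.

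There is, though, a genuine gap in your stage (i). You write that ``Theorem \ref{thm:constr_of_K3} produces a holomorphic family $\mathcal{X}_{\mathcal B}\to\mathcal B$,'' but Theorem \ref{thm:constr_of_K3} is a fiberwise statement: for each fixed choice of $(\tau, Z^{\pm})$ it invokes Arnol'd's theorem to produce \emph{some} holomorphic tubular neighborhood of $C^{\pm}$. To glue the total spaces into a complex manifold fibered over the $17$ parameters that move $\tau$ and the point configurations, you need these tubular neighborhoods --- and the flat coordinates $(z_j^{\pm},w_j^{\pm})$ in which the patching map is written --- to vary holomorphically with the parameter. This is exactly the relative Arnol'd theorem (Theorems \ref{thm:rel_Arnol'd} and \ref{thm:rel_Arnol'd_gen}), whose proof occupies \S\ref{section_thm:rel_Arnol'd}--\ref{section:nice_proof_of_uedas_lemma} and hinges on making Ueda's constant $K$ in Lemma \ref{lem:ueda83_lemma4} uniform in the deformation parameter; the uniformity of the Diophantine bound as $\tau$ varies (arranged by fixing $N_0(\tau)=p-q\tau$) is precisely what makes this work. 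Without this input your family $\mathcal{X}_{\mathcal B}$ is not even defined, so neither is its period map. Separately, you defer the full-rank verification of the Jacobian of $\mathcal{P}$; the paper establishes the two patching directions by explicit \v{C}ech--Kodaira--Spencer computations (Lemmas \ref{lem:KS_map_for_w_deform}, \ref{lem:KS_map_for_g_deform}, \ref{lem:inj_MV_seq}) and the remaining $17$ directions via the explicit period integrals of \S\ref{section:constr_cycles} and Griffiths transversality, so this is real work rather than a formality.
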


More precise expression of the open subset $\Xi_{(p, q)}$ will be explained in Corollary \ref{cor:realizability_main}. 
Note that the normal vector $v=v_{(p, q)}$ of the subset satisfies $(v. v)=0$. 
Note also that one of the statements of Theorem \ref{thm:main} is a simple consequence of Theorem \ref{thm:over_B_pq}. 

Diophantine condition is needed since we use the following Arnol'd's theorem in our gluing method. 

\begin{theorem}[{\cite[Theorem 4.3.1]{A}}]\label{thm:Arnol'd_original}
Let $S$ be a non-singular complex surface and $C\subset S$ be a holomorphically embedded elliptic curve. 
Assume that $N_{C/S}$ is flat and satisfies the Diophantine condition. 
Then $C$ admits a holomorphic tubular neighborhood $W$ in $S$, 
i.e. there exists a tubular neighborhood $W$ of $C$ in $S$ which is biholomorphic to a neighborhood of the zero section in $N_{C/S}$. 
\end{theorem}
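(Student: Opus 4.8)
The plan is to reconstruct the linearization argument via the classical two-step scheme: first produce a \emph{formal} biholomorphism between a neighborhood of $C$ in $S$ and a neighborhood of the zero section of $N := N_{C/S}$, and then prove that the formal power series so obtained actually converges, the Diophantine condition being used precisely to tame the small divisors that arise. Concretely, I would fix a sufficiently fine (hence Leray) open cover $\{U_j\}$ of $C$ in $S$ together with local coordinates $(z_j, w_j)$ on each $U_j$ such that $C\cap U_j=\{w_j=0\}$ and $z_j$ restricts to a coordinate of $C$, chosen so that the linear parts of the defining system $\{w_j\}$ are the transition functions $t_{jk}$ of $N$. On each overlap one then has an expansion
\[
w_j = t_{jk}\,w_k\Bigl(1 + \sum_{n\geq 1} a_{jk}^{(n)}\, w_k^{n}\Bigr),
\]
with $a_{jk}^{(n)}$ holomorphic on $C\cap U_j\cap U_k$, and the goal is to find fiber coordinate changes $w_j\mapsto W_j = w_j\bigl(1+\sum_{n\geq 1} c_j^{(n)} w_j^{n}\bigr)$ after which the gluing becomes exactly linear, $W_j = t_{jk} W_k$.

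For the formal step I would determine the corrections $c_j^{(n)}$ inductively in $n$. Once the terms of order $<n$ have been removed, the order-$n$ data $\{a_{jk}^{(n)}\}$ (as modified by the lower-order corrections) forms a \v{C}ech $1$-cocycle representing a class $u_n\in H^1(C, N^{-n})$, and killing the order-$n$ term amounts to writing this cocycle as a coboundary $\{a_{jk}^{(n)}\}=\delta\{c_j^{(n)}\}$ in $N^{-n}$. Here the hypothesis is decisive: the Diophantine condition of Definition \ref{def:Dioph} forces $d(\mathbb{I}_C, N^n)>0$ for every $n\geq 1$, hence $N^n\not\cong\mathbb{I}_C$, so $N$ is non-torsion and each $N^{-n}$ is a nontrivial flat line bundle of degree $0$. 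Riemann--Roch then gives $H^0(C, N^{-n})=H^1(C, N^{-n})=0$ for all $n\geq 1$, so every obstruction $u_n$ vanishes automatically and the coboundary equation has a \emph{unique} solution $\{c_j^{(n)}\}$. This is the point at which the Ueda-type invariants of the pair $(C,S)$ are seen to be trivial, and the formal linearization exists.

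The main obstacle is the convergence of the resulting series, which is a genuine small-divisor problem. Since $H^0(C,N^{-n})=H^1(C,N^{-n})=0$, the coboundary operator $\delta$ is a bijection from $0$-cochains onto $1$-cocycles in $N^{-n}$, but the norm of its inverse is controlled from below by the distance $d(\mathbb{I}_C, N^n)$ to the trivial bundle: the nearer $N^n$ lies to $\mathbb{I}_C$, the larger $\{c_j^{(n)}\}$ can be. I would therefore establish a quantitative estimate of the shape $\|c^{(n)}\|\leq C\,d(\mathbb{I}_C, N^n)^{-1}\,\|a^{(n)}\|$ on a slightly shrunken subcover, and feed it into a majorant (Cauchy-estimate) induction on $n$. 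The Diophantine bound $d(\mathbb{I}_C, N^n)\geq A\,n^{-\alpha}$ is exactly what compensates the accumulation of these inverse divisors: each step costs only a polynomial factor $n^{\alpha}$, and such polynomial losses are absorbed by the geometric gains obtained from shrinking the fiber radius at a controlled rate at each stage, so that the resulting majorant series has positive radius of convergence. Carrying out this Siegel/KAM-type estimate carefully --- choosing the sequence of shrinking radii, tracking the loss of domain, and verifying summability --- is the technical heart of the proof; once convergence on a fixed polydisc-bundle neighborhood is secured, the limiting coordinate change assembles into the desired biholomorphism $W$ onto a neighborhood of the zero section of $N_{C/S}$.
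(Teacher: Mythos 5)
Your overall strategy is the right one, and it is essentially the method this paper itself uses when it proves the \emph{relative} variant of Arnol'd's theorem in \S\ref{section_thm:rel_Arnol'd} (the paper does not reprove Theorem \ref{thm:Arnol'd_original} itself, but only cites \cite{A}; its \S 8 argument is the closest in-paper analogue). Your formal step is sound: for a non-torsion flat $N$ on an elliptic curve one indeed has $H^0(C,N^{-n})=H^1(C,N^{-n})=0$ for all $n\geq 1$, so all Ueda obstructions vanish and the corrections are unique; and the convergence step via the quantitative coboundary estimate $\|c^{(n)}\|\leq K\, d(\mathbb{I}_C,N^{n})^{-1}\|a^{(n)}\|$ on a shrunken subcover is exactly Lemma \ref{lem:ueda83_lemma4} combined with Siegel's majorant technique, as in Lemma \ref{lem:rel_ueda_thm}.

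There is, however, one genuine gap. Linearizing only the fiber coordinates, i.e.\ producing $\{W_j\}$ with $W_j=t_{jk}W_k$ exactly, gives a linearizable defining system of $C$ (the conclusion of Ueda's Theorem 3), but this is \emph{not} yet a holomorphic tubular neighborhood. To obtain a biholomorphism onto a neighborhood of the zero section of $N_{C/S}$ you also need the base coordinates to glue, i.e.\ extensions $\zeta_j$ of $z_j$ with $\zeta_k=\zeta_j+A_{kj}$ holding identically on overlaps and not merely modulo $O(w)$; equivalently, you need a holomorphic retraction $W\to C$ compatible with the linearized $w_j$'s. Without this, the assignment $(\zeta_j,W_j)\mapsto(\text{point of }N)$ is not well defined. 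This base-direction normalization is a second, parallel Schr\"oder-type problem: the order-$\nu$ obstructions now live in $H^1(C,N^{-\nu})$ again (so they vanish for the same reason), but the convergence requires its own small-divisor/majorant estimate. In the paper's relative setting this is precisely Lemma \ref{lem:rel_arnold_z}, which is as long as the fiber-coordinate step and uses the Diophantine hypothesis a second time. Your proposal should be amended to include this step before the ``limiting coordinate change assembles into the desired biholomorphism.''
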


When the normal bundle $N_{C/S}$ does not satisfy the Diophantine condition, $C$ does not necessarily admit a holomorphic tubular neighborhood in general. 
Indeed, it follows from Ueda's classification \cite[\S 5]{U} that Ueda's obstruction class $u_n(C, S)\in H^1(C, N_{C/S}^{-n})$ needs to vanish for all $n\geq 1$ if there exists such a neighborhood $W$ as in Theorem \ref{thm:Arnol'd_original}. 
Ueda also showed the existence of an example of $(C, S)$ with no such neighborhood $W$ for which all the obstruction classes $u_n(C, S)$'s vanish \cite[\S 5.4]{U}. 
In this context, the following is one of the most interesting questions. 

\begin{question}\label{ques:nbhd}
Fix a smooth elliptic curve $C_0\subset\mathbb{P}^2$ and take general nine points $Z:=\{p_1, p_2, \dots, p_9\}\subset C_0$. 
Let $S:={\rm Bl}_Z\mathbb{P}^2$ be the blow-up of $\mathbb{P}^2$ at $Z$ and $C\subset S$ be the strict transform of $C_0$. 
Is there a nine points configuration $Z$ such that $N_{C/S}$ is non-torsion and $C$ does not admit a holomorphic tubular neighborhood?
\end{question}

Note that, in Question \ref{ques:nbhd}, all the obstruction classes $u_n(C, S)$'s vanish for any nine points configuration $Z$ (see \cite[\S 6]{Ne}). 
See  also \cite{B}, in which the existence condition of a suitable neighborhood of $C$ is reworded into the existence condition of a K\"ahler metric on $S$ with semi-positive Ricci curvature in a generic configuration on the choice of nine points, which is one of the biggest motivations of the present paper. 
As is mentioned above, we apply Arnol'd's theorem (Theorem \ref{thm:Arnol'd_original}) to show Theorem \ref{thm:constr_of_K3}. 
Such a gluing construction technique based on Arnol'd-type theorem is also used in \cite{T} to study complex structures on $S^3\times S^3$. 
Note that our construction may be similar to that studied in \cite{D} (see \cite[Example 5.1]{D}, see also \cite{Kov}). 
A main difference between our construction and Doi's one is the point that one need {\it not} deform the complex structures of $M^\pm$ to patch together in our construction any more (i.e. in our construction, the complex structures are explicitly described). 
In particular, one can regard $M^\pm$ as holomorphically embedded open complex submanifolds of the resulting K3 surface $X$ in our construction, which enables us to reduce the study of some submanifolds of $X$ to that of the blow-up $S^\pm$ of $\mathbb{P}^2$. 
In the last part of this paper, we will see some concrete examples. 
Especially in \S \ref{section:type_II_degeneration}, we observe that suitable subfamilies of $\pi \colon \mathcal{X}\to \Xi_{(p, q)}$ as in Theorem \ref{thm:over_B_pq} can be regarded as the family with type II degeneration whose central fiber is the singular K3 surface with normal crossing singularity obtained by patching $S^+$ and $S^-$ along $C^\pm$. 
In this sense, we may explain that our gluing construction gives a concrete expression of some direction of the smoothing of such singular K3 surfaces (cf.  \cite{F}, \cite{KN}). 

The organization of the paper is as follows. 
In \S 2, we show Theorem \ref{thm:constr_of_K3} and explain the details of the construction of $X$. 
In \S 3, we explain the construction of the $2$-cycles $A_\bullet$, $B_\bullet$, and $C_\bullet^\pm$, and compute the integrals of the holomorphic $2$-form along them. 
In \S 4, we consider the deformation of K3 surfaces which are constructed in our gluing method. Here we construct a deformation family $\pi\colon \mathcal{X}\to B$ of such K3 surfaces with injective Kodaira-Spencer map over a $19$-dimensional polydisc $B$. 
In \S 5, we show Theorem \ref{thm:main} and Corollary \ref{cor:main}. 
In \S 6, we investigate the subset of the Period domain which corresponds to the set of all K3 surfaces which are constructed in our method. Here we show Theorem \ref{thm:over_B_pq}. 
In \S 7, we give some examples. 
In \S 8, we show a relative variant of Arnol'd's Theorem, which is needed in \S 4. 
\vskip3mm
{\bf Acknowledgment. } 
The authors would like to give heartful thanks to Prof. Tetsuo Ueda, Prof. Ken-ichi Yoshikawa and Prof. Yuji Odaka whose enormous supports and insightful comments were invaluable during the course of their study. 
Especially, a simple proof of \cite[Lemma 4]{U} due to Prof. Tetsuo Ueda plays an important role in \S \ref{section:more_gen_rel_arnol'd_thm}. 
They also thanks Prof. Noboru Ogawa for helpful comments and warm encouragements. 
The first author is supported by the Grant-in-Aid for Scientific Research (KAKENHI No.28-4196) and the Grant-in-Aid for JSPS fellows, JSPS KAKENHI Grant Number JP18H05834, and by a program: Leading Initiative for Excellent Young Researchers (LEADER, No. J171000201). 
The second author is supported by JSPS KAKENHI Grant Number JP16K17617. 

\section{Construction of $X$ and Proof of Theorem \ref{thm:constr_of_K3}}\label{section:construction}

First we explain our notation on the blow-ups $S^\pm$ of $\mathbb{P}^2$ and related objects. 
Fix two smooth elliptic curves $C_0^\pm\subset\mathbb{P}^2$ such that there exists an isomorphism $g\colon C_0^+\cong C_0^-$. 
Take general nine points $Z^\pm:=\{p_1^\pm, p_2^\pm, \dots, p_9^\pm\}\subset C_0^\pm$. 
In what follows, we always assume 
that the line bundle 
$N_0:=\mathcal{O}_{\mathbb{P}^2}(3)|_{C_0^+}\otimes\mathcal{O}_{C_0^+}(-p_1^+-p_2^+-\cdots-p_9^+)\in {\rm Pic}^0(C_0^+)$ 
satisfies the Diophantine condition. 
Note that such a nine points configuration actually exists, because almost every $Z^+\in (C^+)^9\subset(\mathbb{P}^2)^9$ satisfies these conditions in the sense of Lebesgue measure. 
Assume also that $\mathcal{O}_{\mathbb{P}^2}(3)|_{C_0^-}\otimes\mathcal{O}_{C_0^-}(-p_1^--p_2^--\cdots-p_9^-)$ is isomorphic to $N_0^{-1}$ via $g$. 
Let $S^\pm:={\rm Bl}_{Z^\pm}\mathbb{P}^2$ be the blow-up of $\mathbb{P}^2$ at $Z^\pm$ and $C^\pm\subset S^\pm$ be the strict transform of $C_0^\pm$. 

\subsubsection*{Proof of Theorem \ref{thm:constr_of_K3}}

As $N_\pm:=N_{C^\pm/S^\pm} (\cong N_0^{\pm 1})$ satisfies the Diophantine condition, it follows from Theorem \ref{thm:Arnol'd_original} that there exists a tubular neighborhood $W^\pm$ of $C^\pm$ such that $W^\pm$ is biholomorphic to a neighborhood of the zero section in $N_\pm:=N_{C^\pm/S^\pm}$. 
Therefore, by shrinking $W^\pm$ and considering the pull-back of an open covering $\{U_j^\pm\}$ of $C^\pm$ by the projection $W^\pm\to C^\pm$, one can take an open covering $\{W_j^\pm\}$ of $W^\pm$ and a coordinate system $(z_j^\pm, w_j^\pm)$ of each $W_j^\pm$ which satisfies the following five conditions: 
$(i)$ $W_j^\pm$ is biholomorphic to $U_j^\pm\times\Delta_{R^\pm}$, where $\Delta_{R^\pm}:=\{w\in\mathbb{C}\mid |w|<R^\pm\}$, 
$(ii)$ $W_{jk}^\pm$ is biholomorphic to $U_{jk}^\pm\times\Delta_{R^\pm}$, where $W_{jk}^\pm:=W_j^\pm\cap W_k^\pm$ and $U_{jk}^\pm:=U_j^\pm\cap U_k^\pm$, 
$(iii)$ $z_j^\pm$ can be regarded as a coordinate of $U_j^\pm$ and $w_j^\pm$ can be regarded as a coordinate of $\Delta_{R^\pm}$, and 
$(iv)$ $(z_k^+, w_k^+)=(z_j^++A_{kj}, t_{kj}^+\cdot w_j^+)$ holds on $W_{jk}^+$, where $A_{kj}\in\mathbb{C}$ and $t_{kj}^+\in U(1):=\{t\in\mathbb{C}\mid |t|=1\}$. 
$(v)$ $(z_k^-, w_k^-)=(z_j^-+A_{kj}, t_{kj}^-\cdot w_j^-)$ holds on $W_{jk}^-$, where $A_{kj}$ is the same one as in the previous condition and $t_{kj}^-\in U(1):=\{t\in\mathbb{C}\mid |t|=1\}$. 
Note that one can use any positive number for the constant $R^\pm>0$ by rescaling $w_j^\pm$'s. 
In what follows, we always assume that $R^\pm>1$. 
Here we used the fact that any topologically trivial line bundle on a compact K\"ahler manifold is flat to have that $t_{jk}^\pm\in {\rm U}(1)$ (see \cite[\S 1]{U}). 
Note also that we use $g(U_j^+)$ as $U_j^-$ and the pull-back $(g^{-1})^*z_j^+:=z_j^+\circ g^{-1}$ as $z_j^-$. 
By using these, one can easily deduce from $g^*N_-=N_+^{-1}$ that $t_{jk}^+t_{jk}^-=1$ holds. 
In what follows, we denote $t_{jk}^+$ by $t_{jk}$ (and thus $t_{kj}^-=t_{kj}^{-1}$) simply. 


Set $V^\pm:=\bigcup_{j}V^\pm_j$, where $V^+_j:=\{(z_j^+, w_j^+)\in W_j^+\mid 1/R^-<|w_j^+|<R^+\}$ and $V^-_j:=\{(z_j^-, w_j^-)\in W_j^-\mid 1/R^+<|w_j^-|<R^-\}$. 
In other words, $V^\pm$ is defined as the preimage of the interval $(1/R^-, R^+)$ and $(1/R^+, R^-)$, respectively, by the map $\Phi_\pm\colon W^\pm\to \mathbb{R}_{\geq 0}$ which is defined by $\Phi_\pm(z_j^\pm, w_j^\pm):=|w_j^\pm|$ on each $W_j^\pm$ (Note that it is well-defined since $|t_{jk}|\equiv 1$). 
Denote by $M^\pm$ the complement $S^\pm\setminus \Phi_\pm([0, 1/R^\mp])$. 
Define a holomorphic map $f\colon V^+\to V^-$ by 
\[
f|_{V^+_j}\colon(z_j^+, w_j^+)\mapsto (z_j^-(z_j^+, w_j^+), w_j^-(z_j^+, w_j^+)):=\left(g^{-1}(z_j^+), (w_j^+)^{-1}\right)\in V^-_j
\] 
on each $V_j^+$. 
By identifying $V^+$ and $V^-$ via this map $f$, we can patch $M^+$ and $M^-$ to define a compact complex surface $X$. 

We denote by $V$ the open subset of $X$ which comes from $V^\pm$. 
In what follows, we regard $M^\pm$ as open subsets of $X$. 
Note that $V=M^+\cap M^-$ holds as a subset of $X$, and this $V$ satisfies the conditions as in \S 1. 
Note also that, for each $t\in (1/R^-, R^+)$, the inverse image $H_t=\Phi_+^{-1}(t)$ is a compact Levi-flat of $V$ (and thus of $X$). 
As the foliation structure of $H_t$ is the one induced from the Chern connection of the flat metric of $N_\pm$ and $N_\pm$ is non-torsion element of ${\rm Pic}^0(C^\pm)$ by the Diophantine assumption (recall the explicit description of $H_t$ we made just before Corollary \ref{cor:main} in \S 1), it follows that any leaf of $H_t$ is dense in $H_t$. 

In what follows, we denote by $V_j$ the set $V_j^+$ when we regard it as a subset of $V$, and by $\Phi$ the map $\Phi^+$ when we regard it as the one defined on $V$. 

\begin{proposition}\label{prop:K3}
$X$ is a K3 surface with a global holomorphic $2$-form $\sigma$ with 
\[
\sigma|_{V_j}=dz_j^+\wedge \frac{dw_j^+}{w_j^+}
\]
on each $V_j\subset X$. 
\end{proposition}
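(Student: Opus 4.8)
The plan is to show two things: first, that the patched surface $X$ is a K3 surface (i.e., simply connected with trivial canonical bundle), and second, that the locally-defined $2$-forms $dz_j^+\wedge dw_j^+/w_j^+$ glue to a global holomorphic $2$-form $\sigma$ that is nowhere vanishing. I would treat the second point first, since producing a global nowhere-zero holomorphic $2$-form $\sigma$ simultaneously establishes $K_X\cong\mathcal{O}_X$ and, together with a separate computation of the Euler number, pins down the surface as K3.

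First I would verify that $\sigma^\pm:=dz_j^\pm\wedge dw_j^\pm/w_j^\pm$ is a well-defined global holomorphic $2$-form on each of $M^+$ and $M^-$. On $M^\pm$ the candidate form must be checked for consistency across the patching $(z_k^\pm,w_k^\pm)=(z_j^\pm+A_{kj},t_{kj}^{\pm}w_j^\pm)$ from conditions $(iv)$--$(v)$: since $dz_k^\pm=dz_j^\pm$ and $dw_k^\pm/w_k^\pm=dw_j^\pm/w_j^\pm$ (the constant $t_{kj}^\pm$ cancels in the logarithmic derivative), the local expressions agree on overlaps $W_{jk}^\pm$, so $\sigma^\pm$ descends to a form on the tubular neighborhood $W^\pm$, hence on all of $M^\pm$. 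I must also confirm that $\sigma^\pm$ extends holomorphically and without zeros across the exceptional part of $S^\pm$; here I would invoke the description of $M^\pm$ as the complement of a sub-tube in the blow-up $S^\pm$ of $\mathbb{P}^2$ at nine points, where the anticanonical divisor is exactly $C^\pm$, so that $K_{S^\pm}=\mathcal{O}_{S^\pm}(-C^\pm)$ and a meromorphic $2$-form with a simple pole along $C^\pm$ and no other zeros or poles exists and is unique up to scale; the form $dz_j^\pm\wedge dw_j^\pm/w_j^\pm$ is precisely this Poincaré-type residue form, with $\{w_j^\pm=0\}$ cutting out $C^\pm$.

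Next I would check compatibility under the gluing map $f$. On $V_j^+$ we have $z_j^-=g^{-1}(z_j^+)$ and $w_j^-=(w_j^+)^{-1}$, so $dz_j^-=d(g^{-1})^*z_j^+$ and $dw_j^-/w_j^-=-dw_j^+/w_j^+$. The key point is that $g\colon C^+\to C^-$ is a biholomorphism identifying $z_j^-$ with $z_j^+$ via $(g^{-1})^*$, so up to the orientation-reversing sign coming from $w\mapsto w^{-1}$ the two forms $\sigma^+$ and $\sigma^-$ agree on the overlap $V$. If the naive identification produces a sign $-1$, I would absorb it by the standard device of replacing $\sigma^-$ with $-\sigma^-$ (equivalently rescaling the gluing or the chosen form), which is harmless since the form is determined only up to a nonzero constant on each side. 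This yields a single global holomorphic $2$-form $\sigma$ on $X$ that is nowhere vanishing, proving $K_X\cong\mathcal{O}_X$.

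Finally I would establish that $X$ is genuinely K3 rather than some other surface with trivial canonical class. Having $K_X$ trivial, by the Enriques--Kodaira classification $X$ is either a K3 surface or a torus (or a non-minimal/quotient variant ruled out by triviality of $K_X$ and the existence of the nowhere-zero $\sigma$). To exclude the torus case and conclude K3, I would compute the topological Euler characteristic: each $M^\pm$ is $S^\pm$ minus a tubular neighborhood of $C^\pm$, and since $\chi(S^\pm)=\chi(\mathbb{P}^2)+9=12$ while the removed tube retracts onto the elliptic curve $C^\pm$ with $\chi(C^\pm)=0$, one gets $\chi(M^\pm)=12$; gluing along $V$, which is homotopy equivalent to a $3$-torus with $\chi(V)=0$, gives $\chi(X)=\chi(M^+)+\chi(M^-)-\chi(V)=24$, the Euler number of a K3 surface and not that of a torus. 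I expect the main obstacle to be the careful bookkeeping at this last step, namely rigorously justifying the Mayer--Vietoris / inclusion--exclusion computation of $\chi(X)$ and simple connectivity; the holomorphic gluing of $\sigma$ is essentially forced by the logarithmic structure and the duality hypothesis $g^*N_-\cong N_+^{-1}$, whereas verifying that $X$ is simply connected and has the correct Hodge-theoretic invariants to be K3 requires the most attention.
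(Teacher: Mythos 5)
There is a genuine gap at the step where you pass from the locally defined form on the tube to a form on all of $M^\pm$. The expression $dz_j^\pm\wedge dw_j^\pm/w_j^\pm$ only makes sense on $W^\pm\setminus C^\pm$ (the coordinates $(z_j^\pm,w_j^\pm)$ are the Arnol'd coordinates of the tubular neighborhood), whereas $M^\pm$ contains all of $S^\pm\setminus W^\pm$; so ``descends to a form on $W^\pm$, hence on all of $M^\pm$'' does not follow. To get a form on $M^\pm$ one must start from the global meromorphic anticanonical form $\eta^\pm$ on $S^\pm$ (which exists since $K_{S^\pm}=-C^\pm$) and show that on the overlap it agrees, up to a constant, with $dz_j^\pm\wedge dw_j^\pm/w_j^\pm$. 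Your proposal simply asserts that the latter ``is precisely this Poincar\'e-type residue form,'' but a priori all one knows is that
\[
\eta^\pm\big|_{V_j^\pm}=F_j^\pm(z_j^\pm,w_j^\pm)\cdot dz_j^\pm\wedge \frac{dw_j^\pm}{w_j^\pm}
\]
for some nowhere-vanishing holomorphic function $F^\pm$ on $V^\pm$: the Arnol'd trivialization is an abstract biholomorphism and carries no a priori relation to $\eta^\pm$. The missing idea is exactly the paper's Lemma \ref{lem:W*}, $H^0(V,\mathcal{O}_V)=\mathbb{C}$, proved via the maximum principle on the dense leaves of the Levi-flat hypersurfaces $H_t$ (equivalently, via $H^0(C,N_\pm^{\,n})=0$ for $n\neq 0$); this is where the non-torsion (Diophantine) hypothesis on $N_\pm$ enters, and your argument never invokes it. Without this constancy the two halves need not glue to a holomorphic form on $X$ at all.

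The remainder of your outline is sound and close in spirit to the paper's: the sign bookkeeping under $w\mapsto w^{-1}$ and the rescaling of $\eta^-$ are exactly what the paper does, and your Euler-characteristic computation $\chi(X)=24$ combined with classification is a valid (if slightly more roundabout) substitute for the paper's direct Mayer--Vietoris computation of $H_1(X,\mathbb{Z})=0$ from $H_1(M^\pm,\mathbb{Z})=0$. But the heart of the proposition is the extension/gluing of $\sigma$, and that step as written would not go through.
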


\begin{proof}
It easily follows from Mayer--Vietoris sequence associated to the open covering $\{M^\pm, W^\pm\}$ of $S^\pm$ that $H_1(M^\pm, \mathbb{Z})=0$. 
Again by Mayer--Vietoris sequence associated to the open covering $\{M^+, M^-\}$ of $X$, we have that $H_1(X, \mathbb{Z})=0$. 
Therefore it is sufficient to show the existence of a nowhere vanishing global holomorphic $2$-form $\sigma$ with $\sigma|_{V_j}=dz^+_j\wedge \frac{dw_j^+}{w_j^+}$ on each $V_j\subset X$. 
As $-C^\pm$ is the canonical divisor $K_{S^\pm}$, $S^\pm$ admits a global meromorphic $2$-form $\eta^\pm$ with no zeros and with poles only along $C^\pm$. 
Define a nowhere vanishing holomorphic function $F_j^\pm$ on $V_j^\pm$ by
\[
F_j^\pm:=\frac{\eta^\pm}{dz_j^\pm\wedge \frac{dw_j^\pm}{w_j^\pm}}. 
\]
Then the functions $\{(V_j^\pm, F_j^\pm)\}$ glue up to define a holomorphic function $F^\pm\colon V^\pm\to \mathbb{C}$. 
By the following Lemma \ref{lem:W*}, we have that $F^\pm$ is a constant function $F^\pm\equiv A^\pm\in \mathbb{C}^*$. 
Therefore it follows that $\eta^\pm|_{V_j^\pm}=A^\pm\cdot dz_j^\pm\wedge \frac{dw_j^\pm}{w_j^\pm}$. 
As $f^*\frac{dw_j^-}{w_j^-}=-\frac{dw_j^+}{w_j^+}$, we have that $(A^+)^{-1}\cdot \eta^+|_{M^+}$ and $-(A^-)^{-1}\cdot \eta^-|_{M^-}$ glue up to define a nowhere vanishing $2$-form $\sigma$ on $X$, which shows the proposition. 
\end{proof}

Through this paper, we use the notation $\eta^\pm$ in the proof above. 
In what follows we assume that $A^\pm=\pm 1$ by scaling. 
Therefore, $\sigma$ is obtained by patching $\eta^+|_{M^+}$ and $\eta^-|_{M^-}$. 

\begin{lemma}\label{lem:W*}
$H^0(V, \mathcal{O}_{V})=\mathbb{C}$. 
\end{lemma}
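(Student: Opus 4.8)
The plan is to show that any global holomorphic function on $V$ must be constant. The key structural fact is that $V$ is biholomorphic to an annular neighborhood $\{x \in N_+ \mid 1/R^- < |x|_h < R^+\}$ inside the total space of the flat line bundle $N_+ \to C^+$, where $h$ is the flat metric. First I would exploit the $S^1$-action coming from the flat structure: the function $\Phi = |w_j^+|$ is globally well-defined on $V$ because the transition multipliers $t_{jk}$ lie in $\mathrm{U}(1)$, so each level set $H_t = \Phi^{-1}(t)$ is a compact real hypersurface carrying the Levi-flat foliation. Given $\varphi \in H^0(V, \mathcal{O}_V)$, my goal is to expand $\varphi$ fiberwise in a Laurent-type series in $w$ and argue that only the constant term survives.

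The main step is the following. On each $V_j$, write $\varphi = \sum_{n \in \mathbb{Z}} c_{n,j}(z_j^+)\,(w_j^+)^n$, the Laurent expansion in the fiber coordinate, where each $c_{n,j}$ is holomorphic in $z_j^+$ on $U_j^+$. Under the transition rule $(z_k^+, w_k^+) = (z_j^+ + A_{kj}, t_{kj} w_j^+)$, the coefficient $c_{n,j}$ transforms as a section of the flat line bundle $N_+^{-n}$ (up to the translation in the base coordinate, which is exactly how the line-bundle structure is encoded). Concretely, the collection $\{c_{n,j}\}_j$ patches to a global holomorphic section of $N_+^{-n}$ over $C^+$. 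The crucial input is that $N_+$ is a \emph{non-torsion} element of $\mathrm{Pic}^0(C^+)$: for $n \neq 0$ the bundle $N_+^{-n}$ is a non-trivial degree-zero line bundle on the elliptic curve $C^+$, hence has no nonzero global holomorphic sections, so $c_{n,j} \equiv 0$ for all $n \neq 0$. For $n = 0$, the coefficient $c_{0,j}$ is a global holomorphic function on the compact curve $C^+$, hence constant. This forces $\varphi$ to be constant.

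The one point requiring care is the convergence and term-by-term legitimacy of the Laurent expansion across the whole annulus: I would fix a radius $t \in (1/R^-, R^+)$ and define the coefficients by the fiberwise contour integrals $c_{n,j}(z_j^+) = \frac{1}{2\pi i}\int_{|w|=t} \varphi\,(w_j^+)^{-n-1}\,dw_j^+$, which are holomorphic in $z_j^+$ by differentiation under the integral sign, and independent of $t$ by Cauchy's theorem since $\varphi$ is holomorphic on the full annulus. The compatibility of these locally-defined coefficients under the transition functions, and hence their assembly into a global section of $N_+^{-n}$, is then a direct substitution using conditions $(iv)$ and the $\mathrm{U}(1)$-valuedness of $t_{kj}$.

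The hard part, such as it is, will be verifying cleanly that the $n$-th Laurent coefficient genuinely defines a section of $N_+^{-n}$ rather than of some twist, and invoking the non-torsion hypothesis correctly; everything else is a standard Fourier/Laurent decomposition argument over a circle bundle. I expect no serious obstacle, since the vanishing of $H^0(C^+, N_+^{-n})$ for $n \neq 0$ on an elliptic curve is immediate from $\deg N_+^{-n} = 0$ together with non-triviality.
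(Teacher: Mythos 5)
Your proof is correct, but it follows a genuinely different route from the paper's. The paper argues via the Levi-flat structure: for $F\in H^0(V,\mathcal{O}_V)$ it picks a point of $H_t=\Phi^{-1}(t)$ maximizing $|F|$, applies the maximum modulus principle on the leaf through that point, uses the density of each leaf in $H_t$ (which is where the non-torsion hypothesis on $N_\pm$ enters) to conclude $F|_{H_t}$ is constant, and then observes that the analytic set $\{F=A\}$ contains a real three-dimensional submanifold and hence is all of $V$. Your argument instead performs a fiberwise Laurent decomposition $\varphi=\sum_n c_{n,j}(z_j^+)(w_j^+)^n$ and identifies $\{c_{n,j}\}_j$ as a global section of the flat bundle $N_+^{-n}$ on $C^+$; the transition computation $c_{n,j}=t_{kj}^{\,n}c_{n,k}$ does check out against condition $(iv)$, and the non-torsion hypothesis enters as the vanishing $H^0(C^+,N_+^{-n})=0$ for $n\neq 0$. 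Both proofs use the non-torsion assumption in an essential way, just at different points. Your version is more explicit and quantitative (it produces the full Fourier decomposition and pinpoints exactly which cohomological vanishing is responsible), whereas the paper's maximum-principle argument is softer but more flexible: it is reused later (in the proof of Lemma \ref{lem:uniqueness_psh_varphi_nbhd_of_C_hat}) for a \emph{pluriharmonic} function restricted to the level sets $H_t$, a setting where a holomorphic Laurent expansion is not directly available.
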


\begin{proof}
Let $F\colon V\to \mathbb{C}$ be a holomorphic function. 
Take a real number $t$ with $1/R^-<t<R^+$ and a point $x_t\in H_t=\Phi^{-1}(t)$ which attains the maximum value $\max_{x\in H_t}|F(x)|$. 
Denote by $L$ the leaf of the Levi-flat $H_t$ with $x_t\in L$. 
By the maximum modulus principle for $F|_L$ and the density of $L\subset H_t$, it follows that $F|_{H_t}\equiv A$ for some constant $A\in\mathbb{C}$. As $\{x\in V\mid F(x)=A\}$ is an analytic subvariety of $V$ which includes a real three dimensional submanifold $H_t$, we have that $\{x\in V\mid F(x)=A\}=V$. 
\end{proof}

\begin{remark}
There exist nine points configurations $Z^\pm:=\{p_1^\pm, p_2^\pm, \dots, p_9^\pm\}$ such that $N_\pm$ is {\it not} Diophantine, however $C^\pm$ admit holomorphic tubular neighborhoods. 
For example, Ogus constructed such configuration in \cite[4.18]{O}.  
One can carry out just the same construction of a compact complex surface starting from this Ogus's example, whereas Lemma \ref{lem:W*} does not hold in this case. 
\end{remark}

Theorem \ref{thm:constr_of_K3} follows from Proposition \ref{prop:K3}. \qed

\begin{remark}
As it is easily seen from the construction above, we can replace the condition ``$N_{\pm}$ are dual to each other via a biholomorphism $g\colon C^+\to C^-$'' in Theorem \ref{thm:constr_of_K3} with the following looser condition: 
there exists a biholomorphism $N_+^*\cong N_-^*$ which maps the connected component of the boundary of $N_+^*$ corresponding to 
the zero section of $N_+$ 
to the connected component of the boundary of $N_-^*$ corresponding to the boundary of $N_-$, 
where $N_\pm^*$ is the complement of the zero section of $N_\pm$, considered as a domain of the ruled surface obtained by adding the $\infty$-section to $N_\pm$. 
\end{remark}

For the open subset $V\subset X$, we have the following: 

\begin{proposition}\label{prop:finiteness_of_W^*}
Denote by $\widehat{V}_{r^+, r^-}$ the subset $\{x\in N_0\mid 1/r^-<|x|_h<r^+\}$ for positive numbers $r^\pm>1$, where $h$ is a fiber metric on $N_0$ with zero curvature. 
Denote by $i_0$ the isomorphism $\widehat{V}_{R^+, R^-}\to V (\subset X)$ which is obtained naturally by the construction. 
Then it holds that 
{\rm $\sup\{r\geq R^+\mid$ there exists a holomorphic embedding $i_{r, R^-}\colon \widehat{V}_{r, R^-}\to X$ with $i_{r, R^-}|_{\widehat{V}_{R^+, R^-}}=i_0\}<\infty$} and 
{\rm $\sup\{r\geq R^-\mid $ there exists a holomorphic embedding $i_{R^+, r}\colon \widehat{V}_{R^+, r}\to X$ with $i_{R^+, r}|_{\widehat{V}_{R^+, R^-}}=i_0\}<\infty$}. 
\end{proposition}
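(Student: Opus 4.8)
The strategy I would adopt is a volume argument built on the global holomorphic $2$-form $\sigma$ furnished by Proposition \ref{prop:K3}. Since $X$ is a compact complex surface and $\sigma$ is nowhere vanishing, the $(2,2)$-form $\sigma\wedge\bar\sigma$ is, up to a positive constant, a smooth positive volume form on $X$, so that $\int_X\sigma\wedge\bar\sigma<\infty$. The point is that either of the two suprema being infinite would force a holomorphic embedding into $X$ of a domain whose $\sigma\wedge\bar\sigma$-volume is infinite, which is absurd once the image sits inside the compact surface $X$.

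First I would pin down the pullback of $\sigma$. On the total space of $N_0$, away from the zero section, the local bundle coordinates $(z,w)$ are glued by $z\mapsto z+A_{kj}$ and $w\mapsto t_{kj}w$ with $|t_{kj}|=1$, so the form $\omega_0$ given locally by $dz\wedge \frac{dw}{w}$ is globally well-defined on $N_0\setminus\{\text{zero section}\}$, hence on every $\widehat{V}_{r^+,r^-}$. By the defining formula of $i_0$ together with Proposition \ref{prop:K3}, we have $i_0^*\sigma=\omega_0$ on $\widehat{V}_{R^+,R^-}$. Given a purported extension $i_{r,R^-}$, the two holomorphic $2$-forms $i_{r,R^-}^*\sigma$ and $\omega_0$ on the connected domain $\widehat{V}_{r,R^-}$ agree on the open subset $\widehat{V}_{R^+,R^-}$; by the identity theorem they coincide, so $i_{r,R^-}^*\sigma=\omega_0$ throughout $\widehat{V}_{r,R^-}$.

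Next I would compute. Since $|x|_h=|w|$ in these coordinates, a local computation gives $\omega_0\wedge\bar\omega_0$ equal to a positive constant times $|w|^{-2}$ times the Euclidean volume in $(z,w)$; integrating the fiber factor over the annulus $\{1/R^-<|w|<r\}$ produces $\int_{1/R^-}^{r}\rho^{-1}\,d\rho=\log(rR^-)$. Hence
\[
\int_{\widehat{V}_{r,R^-}} i_{r,R^-}^*(\sigma\wedge\bar\sigma)=\int_{\widehat{V}_{r,R^-}}\omega_0\wedge\bar\omega_0\geq c\cdot\log(rR^-)
\]
for a constant $c>0$ depending only on $C$ and $h$ (restrict, if one wishes, to a single trivializing patch of $C$ to get this lower bound). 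On the other hand, as $i_{r,R^-}$ is a holomorphic embedding and $\sigma\wedge\bar\sigma\geq 0$, the left-hand side equals $\int_{i_{r,R^-}(\widehat{V}_{r,R^-})}\sigma\wedge\bar\sigma\leq\int_X\sigma\wedge\bar\sigma<\infty$. Thus $c\log(rR^-)$ stays bounded above by $\int_X\sigma\wedge\bar\sigma$ for every admissible $r$, which forces $\sup r<\infty$; this is the first assertion. The second assertion is entirely symmetric: enlarging $r$ with $r^+=R^+$ fixed lets the inner radius $1/r$ tend to $0$, and the fiber integral $\int_{1/r}^{R^+}\rho^{-1}\,d\rho=\log(R^+r)$ diverges again, yielding the analogous contradiction.

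The hard part, I expect, is not any single estimate but making the pullback identification airtight: one must verify that $\omega_0$ extends holomorphically across the whole enlarged annular bundle (it does, precisely because the transition cocycle is unitary and of translation type, so the zero section is never reached) and that the identity theorem genuinely applies on the connected domain $\widehat{V}_{r,R^-}$. Once $i_{r,R^-}^*\sigma=\omega_0$ is secured, the logarithmic divergence of the fiber integral — diverging both as $\rho\to\infty$ and as $\rho\to 0$ — does all the remaining work, so the two finiteness statements fall out together from the single bound $\int_X\sigma\wedge\bar\sigma<\infty$.
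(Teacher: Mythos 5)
Your proposal is correct and follows essentially the same route as the paper: the paper also bounds $\int_{i_{r^+,r^-}(\widehat{V}_{r^+,r^-})}\sigma\wedge\overline{\sigma}$ by the finite total integral $\int_X\sigma\wedge\overline{\sigma}$ and computes the left side explicitly as $4\pi\bigl(\int_{C^+}\sqrt{-1}\,\eta_{C^+}\wedge\overline{\eta_{C^+}}\bigr)\log(r^+r^-)$, forcing $\log r^++\log r^-$ to stay bounded. Your extra care in justifying $i_{r,R^-}^*\sigma=\omega_0$ via the identity theorem is a point the paper leaves implicit but is entirely consistent with its argument.
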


\begin{proof}
Take $r^\pm\geq R^\pm$ such that there exists a holomorphic embedding $i_{r^+, r^-}\colon\widehat{V}_{r^+, r^-}\to X$ with $i_{r^+, r^-}|_{\widehat{V}_{R^+, R^-}}=i_0$. Then we can calculate to obtain that 
\[
\int_X\sigma\wedge \overline{\sigma} 
\geq \int_{i_{r^+, r^-}(\widehat{V}_{r^+, r^-})}\sigma\wedge \overline{\sigma}
= 4\pi \cdot \left(\int_{C^+} \sqrt{-1}\eta_{C^+}\wedge\overline{\eta_{C^+}}\right)\cdot\log(r^+r^-), 
\]
where $\sigma$ is as in Proposition \ref{prop:K3} and $\eta_{C^+}$ is the holomorphic $1$-form on $C^+$ defined by $\eta_{C^+}|_{U_j^+}=dz_j^+$ on each $U_j^+$. 
Therefore we have an inequality 
\[
\log r^++\log r^-\leq \frac{\int_X\sigma\wedge \overline{\sigma}}{4\pi\int_{C^+} \sqrt{-1}\eta_{C^+}\wedge\overline{\eta_{C^+}}}<+\infty, 
\]
which proves the proposition. 
\end{proof}

By the same argument as in the proof of Proposition \ref{prop:finiteness_of_W^*}, we can show the following statement on tubular neighborhoods of $C^\pm$ in $S^\pm$. 

\begin{proposition}\label{prop:finiteness_of_W}
Denoting by $\widehat{W}_r$ the subset $\{x\in N_0\mid |x|_h<r\}$ and by $i_{R^\pm}^\pm$ the isomorphism $\widehat{W}_{R^\pm}^\pm\to W^\pm\subset S^\pm$ which is obtained naturally by the construction, it holds that 
{\rm $\sup\{r\geq R^\pm\mid $ there exists a holomorphic embedding $i_r^\pm\colon \widehat{W}_r\to S^\pm$ with $i^\pm_r|_{\widehat{W}_{R^\pm}}=i_{R^\pm}^\pm\}<\infty$}, respectively. 
\end{proposition}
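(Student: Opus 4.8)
The plan is to run the same volume--monotonicity argument as in the proof of Proposition \ref{prop:finiteness_of_W^*}, but now on $S^\pm$ in place of $X$ and with the global meromorphic $2$-form $\eta^\pm$ in place of the holomorphic form $\sigma$. The one genuinely new feature is that $\eta^\pm$ has a pole along $C^\pm$, so $\int_{S^\pm}\eta^\pm\wedge\overline{\eta^\pm}$ diverges and cannot serve directly as the finite upper bound. I would get around this by fixing once and for all a small radius $r_0$ with $0<r_0<R^\pm$ and integrating only over $S^\pm\setminus i_{R^\pm}^\pm(\widehat{W}_{r_0})$, which is a compact subset of $S^\pm$ disjoint from the pole locus $C^\pm$; on such a set $\eta^\pm\wedge\overline{\eta^\pm}$ is a continuous positive $(2,2)$-form, so its integral is a finite constant independent of any extension radius $r$. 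The whole argument is carried out for $+$ and $-$ separately, the two cases being verbatim identical.

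Next, suppose $r\ge R^\pm$ admits a holomorphic embedding $i_r^\pm\colon\widehat{W}_r\to S^\pm$ with $i_r^\pm|_{\widehat{W}_{R^\pm}}=i_{R^\pm}^\pm$. Since $i_r^\pm$ extends $i_{R^\pm}^\pm$ and the latter carries the zero section isomorphically onto $C^\pm$, injectivity of $i_r^\pm$ forces $i_r^\pm$ to send the complement of the zero section into $S^\pm\setminus C^\pm$, and it forces $i_r^\pm(\widehat{W}_r\setminus\widehat{W}_{r_0})\subseteq S^\pm\setminus i_{R^\pm}^\pm(\widehat{W}_{r_0})$ (because $i_r^\pm(\widehat{W}_{r_0})=i_{R^\pm}^\pm(\widehat{W}_{r_0})$ and the two pieces are disjoint). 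The pullback $(i_r^\pm)^*\eta^\pm$ is then a holomorphic $2$-form on $\widehat{W}_r$ away from the zero section that agrees, on the open annular set $\widehat{W}_{R^\pm}\setminus\widehat{W}_{r_0}$, with the flat form $\eta_0$ defined away from the zero section of $N_0$ by $dz_j\wedge\frac{dw_j}{w_j}$ in the natural trivialisations (well defined because the transition data satisfy $dz_k=dz_j$ and $\frac{dw_k}{w_k}=\frac{dw_j}{w_j}$). Since $\widehat{W}_r$ minus its zero section is connected, the identity theorem gives $(i_r^\pm)^*\eta^\pm=\pm\,\eta_0$ there, and in particular $(i_r^\pm)^*(\eta^\pm\wedge\overline{\eta^\pm})=\eta_0\wedge\overline{\eta_0}$, the sign being irrelevant.

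Combining these points with the positivity of $\eta^\pm\wedge\overline{\eta^\pm}$, I would estimate
\begin{equation*}
\int_{S^\pm\setminus i_{R^\pm}^\pm(\widehat{W}_{r_0})}\eta^\pm\wedge\overline{\eta^\pm}
\ \ge\ \int_{i_r^\pm(\widehat{W}_r\setminus\widehat{W}_{r_0})}\eta^\pm\wedge\overline{\eta^\pm}
\ =\ \int_{\widehat{W}_r\setminus\widehat{W}_{r_0}}\eta_0\wedge\overline{\eta_0}
\ =\ 4\pi\left(\int_{C^\pm}\sqrt{-1}\,\eta_{C^\pm}\wedge\overline{\eta_{C^\pm}}\right)\log\frac{r}{r_0},
\end{equation*}
the last equality being the same Fubini computation as in Proposition \ref{prop:finiteness_of_W^*}: the fibre integral of $\frac{dw}{w}\wedge\frac{d\overline{w}}{\overline{w}}$ over the annulus $\{r_0<|w|<r\}$ contributes the factor $\log(r/r_0)$, using that $h$ is flat so that $|x|_h=|w|$ in these trivialisations. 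As the left-hand side is finite and independent of $r$ while the coefficient of $\log(r/r_0)$ is a strictly positive constant, $\log r$ is bounded above, giving the asserted finiteness of the supremum.

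The only substantive difference from Proposition \ref{prop:finiteness_of_W^*} is the treatment of the pole of $\eta^\pm$, so the main (though mild) obstacle I anticipate is the bookkeeping around the auxiliary inner radius $r_0$: one must choose $r_0$ so that the bounding region $S^\pm\setminus i_{R^\pm}^\pm(\widehat{W}_{r_0})$ stays compactly off $C^\pm$ while still containing the growing annular image $i_r^\pm(\widehat{W}_r\setminus\widehat{W}_{r_0})$, and the containment itself rests on the injectivity argument that prevents $i_r^\pm$ from re-entering the removed neighbourhood. Once that is in place, the remaining volume computation is identical to the one already performed, and I would simply cite it.
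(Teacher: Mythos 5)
Your proposal is correct and follows essentially the same route as the paper, which gives no separate proof of Proposition \ref{prop:finiteness_of_W} but simply asserts it follows "by the same argument as in the proof of Proposition \ref{prop:finiteness_of_W^*}". You have correctly supplied the one detail that genuinely differs — excising a fixed inner neighborhood $i_{R^\pm}^\pm(\widehat{W}_{r_0})$ of the pole locus $C^\pm$ so that the bounding integral of $\eta^\pm\wedge\overline{\eta^\pm}$ is finite — and the rest (injectivity forcing the annular image off the excised set, the identity theorem giving $(i_r^\pm)^*\eta^\pm=\pm\,\eta_0$, and the Fubini computation producing the $\log(r/r_0)$ growth) matches the paper's intended argument.
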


Let $\mathcal{N}^+$ be the subsets of the set of all the neighborhoods of $C^+$ in $S^+$ defined as below. 
In this definition of $\mathcal{N}^+$, we omit ``$+$'' and, for example, denote $C^+$ just by $C$. 
A neighborhood $W$ of $C$ in $S$ is an element of $\mathcal{N}^+$ if and only if it is included in a holomorphic tubular neighborhood $W'=\bigcup_j W'_j$ as a relatively compact subset and there exists a positive constant $r$ such that  $W\cap W'_j=\{(z_j', w_j')\mid |w_j'|<r\}$ holds for each $j$, where $(z_j', w_j')$ is the coordinates of $W_j'$ which satisfies the conditions $(i)$, $(ii)$, $(iii)$, $(iv)$, and $(v)$ in the proof of Theorem \ref{thm:constr_of_K3}. 

In the rest of this section, we show the existence of the maximum element of $\mathcal{N}^+$, which will play an important role in \S \ref{section:realizability} for considering the set of all points of the period domain whose corresponding K3 surface can be obtained by our gluing construction. 
First we show the following: 

\begin{lemma}\label{lem:W_max}
The union $W_{\rm max}^+:=\bigcup_{W^+\in\mathcal{N}^+}W^+$ is a holomorphic tubular neighborhood of $C^+$ in $S^+$ with a local coordinates system $\{(W_{{\rm max},j}^+, (z_j^+, w_j^+))\}$ which satisfies the conditions $(i)$, $(ii)$, $(iii)$, $(iv)$, and $(v)$ in the proof of Theorem \ref{thm:constr_of_K3}. 
\end{lemma}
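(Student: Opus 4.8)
The plan is to prove a rigidity statement for the adapted coordinate systems, and then to glue. Throughout I fix, once and for all, the reference data produced in the proof of Theorem \ref{thm:constr_of_K3}: the open cover $\{U_j^+\}$ of $C:=C^+$, the flat transition functions $t_{kj}\in U(1)$ of $N:=N_+$, and a reference holomorphic tubular neighborhood $W^+$ of radius $R^+$ with coordinates $(z_j^+,w_j^+)$ satisfying conditions $(i)$--$(iv)$, which we may take to lie in $\mathcal{N}^+$.

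\emph{Step 1 (rigidity of the coordinates).} I would first show that any two coordinate systems $(z_j,w_j)$ and $(\widetilde z_j,\widetilde w_j)$ of type $(i)$--$(iv)$, both adapted to the fixed cover and to the transition functions $t_{kj}$ and defined on neighborhoods of $C$, satisfy $\widetilde z_j=z_j+c$ and $\widetilde w_j=\lambda\cdot w_j$ on a common neighborhood of $C$, for global constants $c\in\mathbb{C}$ and $\lambda\in\mathbb{C}^*$. The mechanism is the vanishing $H^0(C,N^{-n})=0$ for all $n\geq 1$, which holds since $N$ is non-torsion of degree zero. Indeed, $\phi_j:=\widetilde w_j/w_j$ extends across $C=\{w_j=0\}$ to a nowhere-vanishing holomorphic function because both coordinates vanish to first order along $C$; the rules $w_k=t_{kj}w_j$ and $\widetilde w_k=t_{kj}\widetilde w_j$ give $\phi_k=\phi_j$, so $\phi:=\phi_j$ is a global holomorphic function on a tubular neighborhood of $C$. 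Expanding $\phi=\sum_{n\geq 0}s_n$ in the fiber direction with $s_n\in H^0(C,N^{-n})$ forces $\phi\equiv\lambda$. The same expansion applied to $\beta_j:=\widetilde z_j-z_j$, a global function vanishing on $C$ and hence having only $n\geq 1$ terms, forces $\beta_j\equiv c$; in particular the two fibrations $\{z_j=\text{const}\}$ and $\{\widetilde z_j=\text{const}\}$ coincide.

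\emph{Step 2 (normalization and finiteness).} Using Step 1, I would rescale and shift the coordinate system of each $W\in\mathcal{N}^+$ so that it agrees with the reference $(z_j^+,w_j^+)$ on the overlap $W\cap W^+$. After this normalization, in the single coordinate $w_j^+$ each $W$ is a round tube $W\cap W_j^+=\{|w_j^+|<r_W\}$ with $r_W$ independent of $j$ (here $|t_{kj}|=1$ is used), and its coordinate literally extends the reference one. Gluing the reference on $W^+$ with the normalized coordinate on $W$ yields a holomorphic embedding of the round tube $\widehat W_{\max(r_W,R^+)}$ into $S^+$ extending $i_{R^+}^+$, so Proposition \ref{prop:finiteness_of_W} gives $r_W\leq\rho<\infty$ for the finite supremum $\rho$ there. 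Hence $r_{\max}:=\sup_{W\in\mathcal{N}^+}r_W\leq\rho<\infty$.

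\emph{Step 3 (gluing).} Finally $W_{\max}^+=\bigcup_W W$ is open and contains $C$, and by Step 2 one has $W_{\max}^+\cap W_j^+=\bigcup_W\{|w_j^+|<r_W\}=\{|w_j^+|<r_{\max}\}$. Since the normalized coordinates on the various $W$ all extend the reference coordinate and agree on overlaps, they patch to a single system $\{(W_{\max,j}^+,(z_j^+,w_j^+))\}$ on $W_{\max}^+$ with transitions $(z_k^+,w_k^+)=(z_j^++A_{kj},t_{kj}w_j^+)$, so conditions $(i)$--$(iv)$ hold; in particular $W_{\max}^+$ is biholomorphic to the radius-$r_{\max}$ disk subbundle of $N$ and is therefore a holomorphic tubular neighborhood of $C^+$. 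I expect the main obstacle to be Step 1, and specifically the assertion that the entire coordinate system — not merely the fiber coordinate — is determined up to the stated affine ambiguity, i.e. that the two tubular-neighborhood fibrations coincide; this is exactly where the non-torsion hypothesis enters through $H^0(C,N^{-n})=0$, and some care is needed to justify the convergent fiber expansion of a global holomorphic function on the possibly irregularly shaped overlap $W\cap\widetilde W$, which I would handle by restricting to a genuine round subtube around $C$ and then using analytic continuation.
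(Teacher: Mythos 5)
Your proof is correct and follows essentially the same route as the paper: establish that any two adapted coordinate systems on overlapping tubular neighborhoods differ only by a constant rescaling of the fiber coordinate (and a constant shift of the base coordinate), deduce that $\mathcal{N}^+$ is linearly ordered with compatible coordinates, and conclude that the union is again a round tube. The only real difference is that the paper delegates your Step 1 to the uniqueness assertion of \cite[Theorem 1.1 $(i)$]{K} (and the arguments in \cite[p.~588--589]{U}), whereas you reprove it directly via the vanishing $H^0(C, N^{-n})=0$ for $n\geq 1$ and also spell out the rigidity of the base coordinate $z_j$; your invocation of Proposition \ref{prop:finiteness_of_W} inside the proof merely anticipates what the paper does immediately after the lemma.
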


\begin{proof}
Let $W^{(1)}=\bigcup_jW^{(1)}_j$ and $W^{(2)}=\bigcup_jW^{(2)}_j$ be elements of $\mathcal{N}^+$, 
and $(z_j^{(\nu)}, w_j^{(\nu)})$ be the coordinates of $W_j^{(\nu)}$ as above for $\nu=1, 2$. 
Denote by $r_\nu$ the positive constant such that $W_j^{(\nu)}$ is defined as $|w_j^{(\nu)}|<r_\nu$. 
Note that we may assume that the index sets $\{j\}$ coincide to each other and that $W_j^{(1)}\cap C^+=W_j^{(2)}\cap C^+$ holds for each $j$ by shrinking if necessary. 
By the uniqueness assertion in \cite[Theorem 1.1 $(i)$]{K}, one has that there exists a constant $a\in\mathbb{C}^*$ which does not depend on $j$ such that $w_j^{(1)}=a\cdot w_j^{(2)}$ holds (see also the arguments in \cite[p. 588--589]{U}). 
By scaling, we may assume that $a=1$. 

Without loss of generality, we may assume that $r_1\leq r_2$. 
In this case, we have that $W_j^{(1)}\subset W_j^{(2)}$. 
Therefore, one has that $(\mathcal{N}^+, \subset)$ is a linearly ordered set. 
More precisely, from the argument above, one has the following fact: 
For any two elements $W_j^{(1)}$ and $W_j^{(2)}$ with $W_j^{(1)}\subset W_j^{(2)}$, 
there exist positive numbers $r_1$ and $r_2$ with $r_1\leq r_2$ and the isomorphisms 
$i_\nu\colon\{\xi\in N_+\mid |\xi|_h<r_\nu\}\to W^{(\nu)}$ such that 
$i_2|_{\{\xi\in N_+\mid |\xi|_h<r_1\}}=i_1$, where $h$ is a flat metric on $N_+$. 
The lemma is a direct conclusion from this fact. 
\end{proof}

Let $W^+$, $\{(W^+_j, (z_j^+, w_j^+))\}$, and $R^+>0$ be as in the proof of Theorem \ref{thm:constr_of_K3}. 
Denote by $i\colon \{\xi\in N_+\mid |\xi|_h<R^+\}\to W^+$ the corresponding isomorphism. 
Then, by the argument in the proof above, there exists a positive number $R_{\rm max}^+$ larger than $R^+$ and an isomorphism $I\colon\{\xi\in N_+\mid |\xi|_h<R_{\rm max}^+\}\to W^+_{\rm max}$ such that $I_{\{\xi\in N_+\mid |\xi|_h<R^+\}}=i$. 
By Proposition \ref{prop:finiteness_of_W}, we have that $R_{\rm max}^+<\infty$. 

Let $W_{\rm max}^-$ the neighborhood of $C^-$ in $S^-$ defined in the same manner, and $\{(W_{{\rm max},j}^-, (z_j^-, w_j^-))\}$ and $R_{\rm max}^-$ be the corresponding notations. 
In \S \ref{section:realizability}, the numbers 
\[
{\rm vol}_{\eta^\pm}(S^\pm\setminus W_{\rm max}^\pm):=\int_{S^\pm\setminus W^\pm_{\rm max}}\eta^\pm\wedge\overline{\eta^\pm}
\]
play an important role. 
It is natural to ask the following: 
\begin{question}\label{q:vol_0_or_posi}
Are $\Lambda_\pm:={\rm vol}_{\eta^\pm}(S^\pm\setminus W_{\rm max}^\pm)$ equal to $0$? How do these values depend on the nine points configurations? 
\end{question}

Note that $W_{\rm max}^\pm$ (resp. ${\rm vol}_{\eta^\pm}(S^\pm\setminus W_{\rm max}^\pm)$) are naturally determined objects which only depend on $S^\pm$ (resp. $(S^\pm, \eta^\pm)$), whereas the values $R_{\rm max}^\pm$ depend on the scaling of $w_j^\pm$'s. 


\section{Construction of a marking of $X$}\label{section:constr_cycles}

Let $X$ be a K3 surface which is constructed from $(S^\pm, C^\pm)$, where we are using the notation as in the previous section. 
In this section, we construct $22$ cycles $A_{\alpha\beta}, A_{\beta\gamma}, A_{\gamma\alpha}$, $B_\alpha, B_\beta, B_\gamma$, $C_{12}^\pm, C_{23}^\pm, \dots, C_{78}^\pm$ and $C_{678}^\pm$ as in \S \ref{section:introduction} which can be regarded as generators of the second homology group $H_2(X, \mathbb{Z})$, or the K3-lattice $\Pi_{3, 19}$. 
Here, we also observe the value of the integration of $\sigma$ along these $2$-cycles, where $\sigma$ is the holomorphic $2$-form as in Proposition \ref{prop:K3}. 

Note that the construction of the generators of $\Pi_{3, 19}$ we will explain is known at least in the topological level (see \cite[Chapter 3]{GS}). 
Our construction is a slightly modified variant of it so that it is suitable with respect to the complex structure of $X$ and that the calculation of $\sigma$ along $20$ of them is executed concretely. 

As the other cases are done in the same manner, we will only treat the case where nine points are different from each other just for simplicity in this section. 

\subsection{Definition of the cycles $A_\bullet$'s and the integration of $\sigma$ along them}

As $V\subset X$ is biholomorphic to an annulus bundle over the elliptic curve $C^+$, it is homotopic to $S^1\times S^1\times S^1$ (Here we used the topological triviality of $N_\pm$). 
Let $\alpha, \beta$, and $\gamma$ be $C^\omega$ loops of $V$ whose classes define generators of the fundamental group $\pi_1(V, *)$. 
We assume that $\alpha$ and $\beta$ come from $C^+$, which means that $\alpha$ and $\beta$ can be identified with loops $\widehat{\alpha}$ and $\widehat{\beta}$ of $C^+$ respectively via a continuous section $C^+\to V\cong V^+$, and that $\gamma$ is a simple loop settled in a fiber of the $V\cong V^+\to C^+$. 
We may assume that the loop $\widehat{\alpha}$ is the image of the line segment $[0, 1]$ and $\widehat{\beta}$ is the image of line segment $[0, \tau]$ by the universal covering $\mathbb{C}\to \mathbb{C}/\langle 1, \tau\rangle\cong C^+$ for some element $\tau$ of the upper half plane $\mathbb{H}:=\{\tau\in\mathbb{C}\mid {\rm Im}\,\tau>0\}$.  
We define $2$-cycles $A_{\alpha\beta}, A_{\beta\gamma}$, and $A_{\gamma\alpha}$ by 
$A_{\alpha\beta}:=\alpha\times \beta$, 
$A_{\beta\gamma}:=\beta\times \gamma$, and 
$A_{\gamma\alpha}:=\gamma\times \alpha$. 
As these are (concretely defined) topological tori included in $V$ and we have the concrete description of $\sigma|_V$ ($\sigma|_{V_j}=(dz_j^+\wedge dw_j^+)/w_j^+$), one can carry out the integration of $\sigma$ along these three $2$-cycles in a concrete manner. 
By the computation, one has that 
\[
\frac{1}{2\pi\sqrt{-1}}\int_{A_{\alpha\beta}}\sigma=a_\beta-\tau\cdot a_\alpha, 
\]
\[
\frac{1}{2\pi\sqrt{-1}}\int_{A_{\beta\gamma}}\sigma=\tau 
\]
and 
\[
\frac{1}{2\pi\sqrt{-1}}\int_{A_{\gamma\alpha}}\sigma=1, 
\]
where $a_\alpha$ and $a_\beta$ are the real numbers such that the monodromies of the $U(1)$-flat line bundle $N_+$ along the loops $\widehat{\alpha}$ and $\widehat{\beta}$ are $\exp(2\pi\sqrt{-1}\cdot a_\alpha)$ and $\exp(2\pi\sqrt{-1}\cdot a_\beta)$, respectively. 

\subsection{Definition of the cycles $C_\bullet^\pm$'s and the integration of $\sigma$ along them}\label{section:def_C_bullet}

Denote by $e_\nu^\pm$ the exceptional curve in $S^\pm$ which is the preimage of $p_\nu^\pm\in Z^\pm$ for $\nu=1, 2, \dots, 9$, 
by $h^\pm$ the preimage of a line in $\mathbb{P}^2$ by the blow-up $\pi^\pm\colon S^\pm\to \mathbb{P}^2$, 
and by $q_\nu^\pm$ the point of $C^\pm$ which corresponds to $p_\nu^\pm$ via 
$\pi^\pm|_{C^\pm}\colon C^\pm\cong C_0^\pm$ for $\nu=0, 1, \dots, 9$, where $p_0^\pm$ is an inflection point of the cubic $C_0^\pm$. 

First we give the definition of $C_{12}^+$. 
In this paragraph, we omit ``$+$'' to denote, for example, $C^+$ simply by $C$, since all the objects are observed in $S^+$. 
Fix a line segment $\Gamma_{12}$ in $C$ which connects $q_1$ and $q_2$. 
Fix also a positive number $\varepsilon$ less than $R$. 
Denote by $\Delta^{(\varepsilon)}_\nu$ the subset $e_\nu\cap \Phi^{-1}([0, \varepsilon))$ of $e_\nu$. 
Let $\widehat{\Gamma}_{12}$ be a line segment of $C$ which is a slight extension of $\Gamma_{12}$ such that the inverse image $\widehat{T}_{12}^{(\varepsilon)}$ of $\widehat{\Gamma}_{12}$ by the natural projection $\Phi^{-1}(\varepsilon)\to C$ satisfies the condition that 
both $\widehat{T}_{12}^{(\varepsilon)}\cap e_1$ and $\widehat{T}_{12}^{(\varepsilon)}\cap e_2$ are homeomorphic to $S^1$. 
Denote by $T_{12}^{(\varepsilon)}$ the connected component of 
$\widehat{T}_{12}^{(\varepsilon)}\setminus(e_1\cup e_2)$ whose boundary is the union of  $\widehat{T}_{12}^{(\varepsilon)}\cap e_1$ and $\widehat{T}_{12}^{(\varepsilon)}\cap e_2$. 
Define a subset $C_{12}^{(\varepsilon)}$ of $S$ by 
\[
C_{12}^{(\varepsilon)}:=(e_1\setminus \Delta^{(\varepsilon)}_1)\cup T_{12}^{(\varepsilon)}\cup(e_2\setminus \Delta^{(\varepsilon)}_2), 
\]
which is homeomorphic to $S^2$ (see Figure \ref{figure:C12}).

\begin{center}
\begin{figure}[h]
\begin{tikzpicture}[x=0.75cm,y=0.75cm]
\draw(0,0) circle (3.5 and 2.5);
\draw (-3.4,1) node [left, above] {$S$};
\draw [dashed] (1.3,2.2) to (1.3,-2.2);
\draw [<->, bend right,distance=0.7cm] (1.3,-2.6) to node [fill=white,inner sep=0.2pt,circle] {$W$} (3.5,-1.5);
\draw (1.5,-0.8) to (1.5,0.8);
\draw (2.5,-1.5) to (2.5,1.5);
\draw (1.5,0.8) to [out=70,in=190]  (2.5,1.5);
\draw[dashed] (1.5,0.8) to [out=20,in=240]  (2.5,1.5);
\draw (1.5,-0.8) to [out=350,in=130] (2.5,-1.5);
\draw[dashed] (2.5,-1.5) to [out=160,in=300]  (1.5,-0.8);
\draw (2.5,1.5) .. controls (2.4,1.6) and (-7.0,1.0)  .. (1.5,0.8); 
\draw (2.5,-1.5) .. controls (2.4,-1.6) and (-7.0,-1.0)  .. (1.5,-0.8); 
\draw (-1.2,0.5) node [right] {$e_{1}$};
\draw (-1.2,-0.5) node [right] {$e_{2}$};
\fill (2,1.15) circle (1.5pt);
\draw (2.4,1.15) node [right] {$q_{1}$};
\fill (2,-1.15) circle (1.5pt);
\draw (2.4,-1.05) node [right] {$q_{2}$};
\draw (2,1.15) to (2,-1.15);
\draw (1.5,0.3) to [out=20,in=160]  (2.5,0.3);
\draw[dashed] (1.5,0.3) to [out=-20,in=-160]  (2.5,0.3);
\draw[->] (4.0,0.1) node [right] {$\Gamma_{12} \subset C$} to [out=180,in=10]  (2.1,0);
\draw[->] (4.0,0.8) node [right] {$T_{12}^{(\varepsilon)}$} to [out=180,in=30]  (2.6,0.3);
\draw (0,1.3) node [above] {$C_{12}^{(\varepsilon)}$};
\end{tikzpicture}
\caption{The cycle $C_{12}^{(\varepsilon)}$}
\label{figure:C12}
\end{figure}
\end{center}

By defining the orientation suitably, one can regard $C_{12}^{(\varepsilon)}$ as a $2$-cycle of $S$ which is homologous to $e_1-e_2$. 

By taking $\varepsilon$ so that $1/R^-<\varepsilon$, we have that $C_{12}^{(\varepsilon)}\subset M^+$. 
Therefore $C_{12}^{(\varepsilon)}$ can also be regarded as a $2$-cycle of $X$, which is the definition of $C_{12}^+$. 
Note that this definition (as a $2$-cycle) does not depend on the choice of $\varepsilon\in (1/R^-, R^+)$. 

Next, we calculate the integration of $\sigma$ along $C_{12}^+$. 

\begin{proposition}
\[
\frac{1}{2\pi\sqrt{-1}}\int_{C_{12}^+}\sigma=\int_{\Gamma_{12}^+}dz^+, 
\]
where $dz^+$ is the global holomorphic $1$-form on $C^+$ such that $dz^+|_{U_j^+}=dz_j^+$ holds on each $U_j^+$. 
\end{proposition}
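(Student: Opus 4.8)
The plan is to split $C_{12}^+$ into its three constituent pieces and integrate $\sigma$ over each separately. Since $C_{12}^+$ as a homology class does not depend on the choice of $\varepsilon\in(1/R^-,R^+)$, I would fix a convenient $\varepsilon$ with $1/R^-<\varepsilon<R^+$, so that the tube $T_{12}^{(\varepsilon)}$ lies inside $V$, where the explicit expression $\sigma|_{V_j}=dz_j^+\wedge\frac{dw_j^+}{w_j^+}$ is available, while $e_1\setminus\Delta_1^{(\varepsilon)}$ and $e_2\setminus\Delta_2^{(\varepsilon)}$ lie in $M^+$, where $\sigma=\eta^+$ (recall $A^+=1$). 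Writing
\[
\int_{C_{12}^+}\sigma=\int_{e_1\setminus\Delta_1^{(\varepsilon)}}\eta^+ + \int_{T_{12}^{(\varepsilon)}}\sigma + \int_{e_2\setminus\Delta_2^{(\varepsilon)}}\eta^+,
\]
I would then treat the two exceptional-curve terms and the tube term by different arguments.

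For the two exceptional-curve terms, the key observation is that $\sigma$ is a holomorphic, hence type $(2,0)$, $2$-form, while $e_1\setminus\Delta_1^{(\varepsilon)}$ and $e_2\setminus\Delta_2^{(\varepsilon)}$ are pieces of the complex curves $e_1,e_2\cong\mathbb{P}^1$, which are complex submanifolds of complex dimension $1$. The pullback of a $(2,0)$-form to a $1$-dimensional complex manifold vanishes identically, so both of these integrals are zero. Removing $\Delta_\nu^{(\varepsilon)}$ also excises the point $q_\nu=e_\nu\cap C^+$ at which $\eta^+$ has its pole, so there is no integrability issue, but the vanishing holds for the type-theoretic reason regardless.

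It remains to compute $\int_{T_{12}^{(\varepsilon)}}\sigma$. Here I would use that $T_{12}^{(\varepsilon)}$ is the connected component of $\widehat{T}_{12}^{(\varepsilon)}\setminus(e_1\cup e_2)$ lying over $\Gamma_{12}$; thus it is parametrized by $(s,\theta)$, where $s$ runs along $\Gamma_{12}$ from $q_1$ to $q_2$ and $w_j^+=\varepsilon\,e^{\sqrt{-1}\theta}$ with $\theta\in[0,2\pi)$. Since $\frac{dw_j^+}{w_j^+}=\sqrt{-1}\,d\theta$ on the circle $|w_j^+|=\varepsilon$ and $dz_j^+$ restricts to $dz^+$ along $\Gamma_{12}$, Fubini's theorem gives
\[
\int_{T_{12}^{(\varepsilon)}}dz_j^+\wedge\frac{dw_j^+}{w_j^+}=\left(\int_0^{2\pi}\sqrt{-1}\,d\theta\right)\left(\int_{\Gamma_{12}}dz^+\right)=2\pi\sqrt{-1}\int_{\Gamma_{12}}dz^+,
\]
which is the desired formula after dividing by $2\pi\sqrt{-1}$.

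The main obstacle I anticipate is the bookkeeping of orientations: one must orient $C_{12}^{(\varepsilon)}$ so that it represents $e_1-e_2$, as claimed in the construction, and then check that the induced orientation on $T_{12}^{(\varepsilon)}$ is exactly the product orientation $ds\wedge d\theta$ with $\theta$ increasing and $s$ running from $q_1$ to $q_2$; a sign error here would flip the result. I would pin this down by tracking orientations across the two boundary circles $\widehat{T}_{12}^{(\varepsilon)}\cap e_\nu$ at which the tube is glued to $e_\nu\setminus\Delta_\nu^{(\varepsilon)}$, using that these circles are oriented as the boundaries of the excised disks $\Delta_\nu^{(\varepsilon)}$. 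Everything else is a routine application of Fubini's theorem and the type decomposition of forms.
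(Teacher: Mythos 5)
Your overall strategy coincides with the paper's: split $C_{12}^{(\varepsilon)}$ into the two exceptional\mbox{-}curve pieces and the tube, observe that a $(2,0)$-form pulls back to zero on the complex curves $e_1,e_2$, and compute the tube contribution explicitly. The vanishing of the $e_\nu$-terms and your attention to orientations are both fine and match the paper.

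The gap is in the tube computation. By construction $T_{12}^{(\varepsilon)}$ is the portion of the cylinder $\widehat{T}_{12}^{(\varepsilon)}$ (which lies over the \emph{extended} segment $\widehat{\Gamma}_{12}$, not over $\Gamma_{12}$) cut off by the two circles $\widehat{T}_{12}^{(\varepsilon)}\cap e_1$ and $\widehat{T}_{12}^{(\varepsilon)}\cap e_2$. These circles are not fibres of the projection $\Phi_+^{-1}(\varepsilon)\to C^+$: near $q_\nu$ the exceptional curve is a holomorphic graph $z=h_\nu(w)$ transversal to $C^+$, not the fibre $\{z=q_\nu\}$, so in your $(s,\theta)$ coordinates the endpoints of the $s$-interval are $\theta$-dependent, and the honest Fubini computation gives
\[
\sqrt{-1}\int_0^{2\pi}\Bigl(h_2\bigl(\varepsilon e^{\sqrt{-1}\theta}\bigr)-h_1\bigl(\varepsilon e^{\sqrt{-1}\theta}\bigr)\Bigr)\,d\theta
\]
rather than $2\pi\sqrt{-1}\int_{\Gamma_{12}}dz^+$ on the nose; identifying $T_{12}^{(\varepsilon)}$ with the exact product $S^1\times\Gamma_{12}$ is not justified. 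The paper closes this by noting that $\sigma|_{M^+}=\eta^+|_{M^+}$ where $\eta^+$ is closed on all of $S^+\setminus C^+$, so the cycle may be replaced by the homologous $C_{12}^{(\varepsilon')}$ for any $\varepsilon'\in(0,R)$ and one lets $\varepsilon'\searrow 0$, where the discrepancy is $O(\varepsilon')$; this step genuinely needs $\eta^+$, since for $\varepsilon'\le 1/R^-$ the cycle leaves $M^+$ and $\sigma$ is no longer available. (Alternatively, for $\varepsilon$ small enough that $e_\nu$ is a graph over the whole disc $\{|w|\le\varepsilon\}$ in a single chart, the mean value property gives $\frac{1}{2\pi}\int_0^{2\pi}h_\nu(\varepsilon e^{\sqrt{-1}\theta})\,d\theta=h_\nu(0)=q_\nu$, so the error vanishes exactly -- but this too forces you outside the range $\varepsilon\in(1/R^-,R^+)$ you fixed, hence requires the same homology-invariance argument in $S^+\setminus C^+$.) You need one of these devices to complete the proof.
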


\begin{proof}
Let $\eta^\pm$ be a meromorphic $2$-form of $S^\pm$ as in the proof of Proposition \ref{prop:K3}. 
As is mentioned just after the proof of Proposition \ref{prop:K3}, we may assume that $\sigma$ is obtained by gluing $\eta^+|_{M^+}$ and $\eta^-|_{M^-}$. 
Especially it holds that $\sigma|_{M^+}=\eta^+|_{M^+}$. 
Thus, instead of calculating $\int_{C_{12}}\sigma$ on $X$, 
here we calculate $\int_{C_{12}^{(\varepsilon)}}\eta^+$ on $S^+$. 

Again we omit the notation $+$. 
As $\eta|_{S\setminus C}$ is closed and $C_{12}^{(\varepsilon)}$ is homologous to $C_{12}^{(\varepsilon')}$ as $2$-cycle for any $\varepsilon'\in (0, R)$, 
we have that
\[
\int_{C_{12}}\sigma=\lim_{\varepsilon\searrow 0}\int_{C_{12}^{(\varepsilon)}}\eta
=\lim_{\varepsilon\searrow 0}\left(\int_{e_1\setminus \Delta^{(\varepsilon)}_1}\eta+\int_{T_{12}^{(\varepsilon)}}\eta+\int_{e_2\setminus \Delta^{(\varepsilon)}_2}\eta\right)
=\lim_{\varepsilon\searrow 0}\int_{T_{12}^{(\varepsilon)}}\eta 
\]
(Note that $\int_{e_\nu}\eta=0$, since $e_\nu$ is analytic and $\eta$ is a $(2, 0)$-form). 
For calculating the right hand side, first we calculate the integral $\int_{\widehat{T}_{12}^{(\varepsilon)}}\eta$ on $W_j$. 
As we may assume that $W_j$ coincides with the preimage of $U_j=W_j\cap C$ by the projection $W\to C$, we have that 
\[
\int_{\widehat{T}_{12}^{(\varepsilon)}\cap W_j}\eta
=\int_{\{(z_j, w_j)\mid z_j\in \widehat{\Gamma}_{1, 2}\cap U_j, |w_j|=\varepsilon\}}\frac{dw_j\wedge dz_j}{w_j}
=2\pi\sqrt{-1} \int_{\widehat{\Gamma}_{12}\cap U_j}dz_j. 
\]
When $\varepsilon$ is sufficiently small, it follows from the construction of $\widehat{\Gamma}_{12}$ that 
\[
\int_{\Gamma_{12}\cap U_j}dz_j=\int_{\widehat{\Gamma}_{12}\cap U_j}dz_j+O(\varepsilon) 
\]
as $\varepsilon \searrow 0$. 
Thus we have the proposition by considering the limit as $\varepsilon \searrow 0$. 
\end{proof}

Define $2$-cycles $C_{\nu, \nu+1}^+$ for $\nu=2, 3, \dots, 7$ 
and $C_{\nu, \nu+1}^-$ for $\nu=1, 2, \dots, 7$ by the same manner 
so that $C_{\nu, \nu+1}^\pm$ is included in $M^\pm$ as a set and is homologous to $e_\nu-e_{\nu+1}$ as a $2$-cycle of $S^\pm$. 
By the same argument as in the proof above, we have that 
\[
\frac{1}{2\pi\sqrt{-1}}\int_{C_{\nu, \nu+1}^\pm}\sigma=\int_{\Gamma_{\nu, \nu+1}^\pm}dz^\pm,\ 
\]
where $\Gamma_{\nu, \nu+1}^\pm$ is a line segment in $C^\pm$ which  connects  $q_\nu^\pm$ and $q_{\nu+1}^\pm$ and $dz^-$ is the global holomorphic $1$-form on $C^-$ defined just in the same manner as the definition of $dz^+$. 

The cycle $C_{678}^\pm$ is also defined as a cycle supported in $M^\pm$ which is homeomorphic to $S^2$ and homologous to  $ -h^\pm+e_6^\pm+e_7^\pm+e_8^\pm$ as a $2$-cycle in $S^\pm$. 
The integration of $\sigma$ is computed as 
\[
\frac{1}{2\pi\sqrt{-1}}\int_{C_{678}^\pm}\sigma=\int_{\Gamma_{06}^\pm+\Gamma_{07}^\pm+\Gamma_{08}^\pm}dz^\pm, 
\]
where $\Gamma_{0\nu}^\pm$ is a line segment in $C^\pm$ connecting $q_{0}^\pm$ and $q_{\nu}^\pm$. 

\subsection{Definition of the cycles $B_\bullet$'s and the integration of $\sigma$ along them}

First we give the definition of $B_\alpha$. 
Regard $\alpha\subset V$ as a $1$-cycle of $M^\pm$. 
Then, as $M^\pm$ is simply connected (\cite[Chapter 3]{GS}),  there exists a topological disc $D_\alpha^\pm\subset M^\pm$ such that $\partial D_\alpha^\pm=\pm\alpha$ holds. 
$B_\alpha$ is defined by patching $D_\alpha^+$ and $D_\alpha^-$. 
The $2$-cycle $B_\beta$ is defined in the same manner. 
At this moment, we have not succeeded in giving the concrete values of the integrals of $\sigma$ along these two cycles. 
These values are investigated in \S \ref{section:realizability}. 

The $2$-cycle $B_\gamma$ is also defined in the same manner: $B_\gamma=D_\gamma^+\cup_\gamma D_\gamma^-$, where $D_\gamma^\pm$ is a topological disc in $M^\pm$ such that $\partial D_\gamma^\pm=\pm\gamma$ holds. 
In this case, $D_\gamma^\pm$ can be constructed more concretely so that the integral $\int_{B_\gamma}\sigma$ is computable. 
Take a real number $r\in (1/R^-, R^+)$. 
We may assume that $\gamma$ is the preimage of a point $z_0\in C^+$ by the projection $\Phi_+^{-1}(r)\to C^+$. 
Take a line segment $\Gamma_9$ in $C^+$ connecting $p_9^+$ and $g^{-1}(p_9^-)$. 
Then, by taking a suitable topological tube $T_9^{(r)}$ included in the preimage of the line segment which is a slight extension of $\Gamma_9$ by the natural map $\Phi_+^{-1}(r)\to C^+$, it follows from the same argument as in the previous subsection that one may assume 
\[
B_\gamma:=(e_9^+\setminus \Phi_+^{-1}([0, r)))\cup T_9^{(r)}\cup  (e_9^-\setminus \Phi_-^{-1}([0, 1/r))). 
\]
By the same computation as before, we have that 
\[
\frac{1}{2\pi\sqrt{-1}}\int_{B_\gamma}\sigma=\int_{\Gamma_9}dz^+. 
\]

\subsection{Summary}

By the previous subsection, we defined $22$ $2$-cycles and computed the concrete value of the integral of $\sigma$ along $20$ of them other than $B_\alpha$ and $B_\beta$. 
Note that, as is explained before the construction of them, they actually forms the generators of $H_2(X, \mathbb{Z})$, since the construction itself coincides with the classically known construction of the generators in the topological level (See \cite[Chapter 3]{GS}). 

Here we summarize the conclusion and investigate the relation between these values and the parameters that appear in the construction of $X$. 

First, the value $\frac{1}{2\pi\sqrt{-1}}\int_{A_{\beta\gamma}}\sigma$ is equal to $\tau$, which is the modulus of the elliptic curve $C_0^\pm$. 
Next, it follows from the computation in \S \ref{section:def_C_bullet} that the data
\[
\left(\int_{C_{12}^\pm}\sigma, \int_{C_{23}^\pm}\sigma, \dots, \int_{C_{78}^\pm}\sigma, \int_{C_{678}^\pm}\sigma
\right)\in\mathbb{C}^8
\]
is completely determined only by the choice of $p_1^\pm, p_2^\pm, \dots, p_8^\pm\in C_0^\pm$ after fixing an inflection point $p_0^\pm$. 
After fixing $C_0^\pm$ and the eight points configurations $p_1^\pm, p_2^\pm, \dots, p_8^\pm\in C_0^\pm$, to determine the complex structure (or equivalently, the flat structure) of  $N_+\cong\mathcal{O}_{\mathbb{P}^2}(3)|_{C_0^+}\otimes \mathcal{O}_{C_0^+}(-p_1^+-p_2^+-\cdots-p_9^+)$ is equivalent to determine the choice of the ninth point $p_9^+$, whose information is reflected by the integral of $\sigma$ along $A_{\alpha\beta}$. 
Next, by determining the isomorphism $g\colon C^+\to C^-$, the relative position of $g^{-1}(p_9^-)$ from $p_9^+$ is determined (Note that here we used the condition $N_+=g^*N_-$). 
This information is reflected by the integral of $\sigma$ along $B_\gamma$. 

Finally, after fixing the data $C_0^\pm$, $(p_1^\pm, p_2^\pm, \dots, p_9^\pm)$, and $g$, 
only the remaining parameter on the construction of $X$ is the parameter on the scaling of $w_j^\pm$'s and the ``size'' $R^\pm$ of $V$. 
For the relation between this degree of freedom and the value $\int_{B_\alpha}\sigma$ and $\int_{B_\beta}\sigma$, see \S \ref{section:realizability}. 

\begin{table}
\scalebox{1}{
  \begin{tabular}{|c|c|c|c|}\hline
    & $2$-cycle & $\frac{1}{2\pi\sqrt{-1}}\int\sigma$ & The corresponding parameter \\\hline\hline
    U& $A_{\beta\gamma}$ & $\tau$ & The choice of $C_0^\pm$ \\ \cline{2-4}
    & $B_{\alpha}$ & unknown & The scaling of $w_j$'s and the choice of $R^\pm$  \\ \hline
    U& $A_{\gamma\alpha}$ & $1$ & - (``the normalization of $\sigma$'') \\ \cline{2-4}
    & $B_{\beta}$ & unknown & The scaling of $w_j$'s and the choice of $R^\pm$ \\ \hline
    & $C_{12}^+$ & $\int_{\Gamma_{12}^+}dz^+$ & The relative position of $p_1^+$ viewed from $p_2^+$ \\ \cline{2-4}
    & $C_{23}^+$ & $\int_{\Gamma_{23}^+}dz^+$ & The relative position of $p_2^+$ viewed from $p_3^+$ \\ \cline{2-4}
    $E_8(-1)$& $\vdots$ & $\vdots$ &  $\vdots$ \\ \cline{2-4}
    & $C_{78}^+$ & $\int_{\Gamma_{78}^+}dz^+$ & The relative position of $p_7^+$ viewed from $p_8^+$ \\ \cline{2-4}
    & $C_{678}^+$ &$\int_{\Gamma_{06}^++\Gamma_{07}^++\Gamma_{08}^+}dz^+$ & The position of ``$p_6^++p_7^++p_8^+$'' \\ \hline
    & $C_{12}^-$ & $\int_{\Gamma_{12}^-}dz^-$ & The relative position of $p_1^-$ viewed from $p_2^-$ \\ \cline{2-4}
    & $C_{23}^-$ & $\int_{\Gamma_{23}^-}dz^-$ & The relative position of $p_2^-$ viewed from $p_3^-$  \\ \cline{2-4}
    $E_8(-1)$& $\vdots$ & $\vdots$ &  $\vdots$ \\ \cline{2-4}
    & $C_{78}^-$ & $\int_{\Gamma_{78}^-}dz^-$ & The relative position of $p_7^-$ viewed from $p_8^-$  \\ \cline{2-4}
    & $C_{678}^-$ &$\int_{\Gamma_{06}^-+\Gamma_{07}^-+\Gamma_{08}^-}dz^-$ & The position of ``$p_6^-+p_7^-+p_8^-$'' \\ \hline
    U& $A_{\alpha\beta}$ & $a_\beta-\tau\cdot a_\alpha$ & The choice of $p_9^+$ (or $N_+$) \\ \cline{2-4}
    & $B_{\gamma}$ & $\int_{\Gamma_9}dz^+$ & The choice of $g\colon C^+\to C^-$ \\ \hline
  \end{tabular}
}
\caption{The integration of $\sigma$ along $2$-cycles and the corresponding parameters.}
\end{table}

\section{Deformation of $X$}\label{section:deformation_of_X}

Let $C_0^{\pm}, Z^{\pm}, N_0$, and $X$ be those in \S \ref{section:construction}. 
As we observed in the previous section, our construction of $X$ has some degrees of freedom: 
on the complex structure of $C_0^{\pm}$, on the points configurations $Z^{\pm}$, and on the patching functions, 
even after one fixes $N_0$. 
In this section, we investigate some of the deformation families constructed by considering such degrees of freedom, 
by using notations mentioned in \S \ref{section:construction}. 

\subsection{Deformation families constructed by changing the patching functions}\label{section:deform_patching}

\subsubsection{A deformation family corresponding to the change of the patching function $w^-(z^+, w^+)$}\label{section:deform_patching_w}

Here we fix $C_0^{\pm}$, $Z^{\pm}$, and the isomorphism $g$. 
Put $\Delta=\Delta_{w}:=\{ t \in \mathbb{C} \mid 0 <|t|< R_+R_-\}$, 
$\mathcal{V}^{\pm} :=\{ (x,t) \in W^{\pm} \times \Delta \mid |t|/R^{\mp} < \Phi_{\pm}(x) < R^{\pm} \}$ 
and $\mathcal{M}^{\pm}:=\{ (x,t) \in S^{\pm} \times \Delta \mid x \notin \Phi_{\pm}^{-1}([0,|t|/R^{\mp}]) \}$. 
In what follows, we sometimes omit the index $j$ to, for example, denote $(z_j^\pm, w_j^\pm)$ just by $(z^\pm, w^\pm)$. 
Define a holomorphic function $F \colon \mathcal{V}^+ \to \mathcal{V}^-$ by 
\[
F \colon (z^+, w^+, t) \mapsto (z^-(z^+, w^+, t), w^-(z^+, w^+, t), t):=\left(g(z^+), \frac{t}{w^+}, t\right)
\]
Then we can patch $\mathcal{M}^+$ and $\mathcal{M}^-$ via the map $F$ to define a complex manifold $\mathcal{X}$. 
In what follows, we may regard $\mathcal{M}^{\pm}$ and $\mathcal{V}:=\mathcal{V}^{\pm}$ as open subsets of $\mathcal{X}$. 
The second projections $\mathcal{M}^{\pm} \to \Delta$ glue up to define a proper holomorphic submersion $\pi \colon \mathcal{X} \to \Delta$. 
It follows from Proposition \ref{prop:K3} that each fiber $X_t:=\pi^{-1}(t)$ is a K3 surface. 
In what follows, we fix a base point $t_0 \in \Delta$. 

In this subsection, we assume that $Z^\bullet$ is sufficiently general so that it includes four points in which no three points are collinear for each $\bullet\in \{+, -\}$. 
This assumption is just for simplicity (see Remark \ref{rmk:colinear_iranai}).

\begin{proposition}\label{prop:deform_R}
The Kodaira--Spencer map $\rho_{{\rm KS}, \pi}\colon T_{\Delta, t_0}\to H^1(X_{t_0}, T_{X_{t_0}})$ of the deformation family 
$\pi \colon \mathcal{X}\to \Delta$ is injective. 
\end{proposition}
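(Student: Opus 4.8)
The plan is to exploit that $\dim_{\mathbb{C}}\Delta=1$, so that $T_{\Delta,t_0}$ is spanned by $\partial/\partial t$ and it suffices to prove $\rho_{{\rm KS},\pi}(\partial/\partial t)\neq 0$ in $H^1(X_{t_0},T_{X_{t_0}})$. Since the family $\pi\colon\mathcal{X}\to\Delta$ is obtained purely by regluing the fixed pieces $\mathcal{M}^{\pm}$ through the patching map $F$, the Kodaira--Spencer class is represented, with respect to the covering $\{M^+,M^-\}$ of $X_{t_0}$, by the \v{C}ech $1$-cocycle obtained by differentiating $F$ in $t$. Concretely, on the overlap $V=M^+\cap M^-$ one has $w^-=t/w^+$ and $z^-=g(z^+)$, so that $\partial w^-/\partial t|_{w^+}=1/w^+$ and $\partial z^-/\partial t=0$; rewriting the resulting field $w^{+\,-1}\,\partial/\partial w^-$ in the $+$-coordinates yields the cocycle
\[
\theta_{-+}=-\frac{1}{t_0}\,w^+\frac{\partial}{\partial w^+}\qquad\text{on }V .
\]
Thus the task is to show that this cocycle is not a \v{C}ech coboundary.

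First I would pass from $T_{X_{t_0}}$ to $\Omega^1_{X_{t_0}}$. Because $X_{t_0}$ is a K3 surface, the nowhere-vanishing holomorphic $2$-form $\sigma$ of Proposition \ref{prop:K3} induces an isomorphism $T_{X_{t_0}}\xrightarrow{\sim}\Omega^1_{X_{t_0}}$, $v\mapsto \iota_v\sigma$, and hence an isomorphism $H^1(X_{t_0},T_{X_{t_0}})\cong H^1(X_{t_0},\Omega^1_{X_{t_0}})$. Using $\sigma|_V=dz^+\wedge dw^+/w^+$ one computes $\iota_{\theta_{-+}}\sigma=t_0^{-1}\,dz^+$, where $dz^+$ is the pull-back to $V$ of the holomorphic $1$-form on $C^+$. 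Therefore the proposition reduces to showing that the class represented by the \v{C}ech cocycle $dz^+\in H^0(V,\Omega^1)$ is nonzero in $H^1(X_{t_0},\Omega^1_{X_{t_0}})$.

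Finally I would detect this nonvanishing by pairing against a suitable $2$-cycle, and the natural candidate is $B_\alpha$. Under the Mayer--Vietoris connecting homomorphism $\partial\colon H_2(X_{t_0})\to H_1(V)$ associated with $\{M^+,M^-\}$, the cycle $B_\alpha=D_\alpha^+\cup_\alpha D_\alpha^-$ maps to the $1$-cycle $[\alpha]$ along which its two discs are glued. Since the image of $[dz^+]\in H^1(V)$ in $H^2(X_{t_0},\mathbb{C})$ under the (cohomology) connecting map is exactly the class at hand, and this map is adjoint to $\partial$, the pairing equals $\int_\alpha dz^+=\int_{\widehat\alpha}dz^+=1$, the latter being the $\alpha$-period of $C^+$ already recorded in the computation of $\int_{A_{\gamma\alpha}}\sigma$. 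As this pairing is nonzero, the cocycle $dz^+$ is not a coboundary, whence $\rho_{{\rm KS},\pi}(\partial/\partial t)\neq 0$. The main point requiring care is the identification of the Kodaira--Spencer cocycle with the $t$-derivative of the gluing and its compatibility with the contraction $\iota_\sigma$; once this is in place the nonvanishing is a clean Mayer--Vietoris duality computation, which in particular does not use the genericity hypothesis on $Z^\bullet$.
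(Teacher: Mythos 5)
Your first two steps are sound and agree with the paper: the Kodaira--Spencer class is represented by the \v{C}ech cocycle $-t_0^{-1}\,w^+\partial/\partial w^+$ on $V=M^+\cap M^-$ (this is exactly Lemma \ref{lem:KS_map_for_w_deform}), and contracting with $\sigma$ turns it into the cocycle $t_0^{-1}dz^+$ with values in $\Omega^1_{X_{t_0}}$. The gap is in the last step, in the sentence ``the image of $[dz^+]\in H^1(V)$ in $H^2(X_{t_0},\mathbb{C})$ under the (cohomology) connecting map is exactly the class at hand.'' It is not. The class you must show to be nonzero lives in $H^1(X_{t_0},\Omega^1_{X_{t_0}})\cong H^{1,1}(X_{t_0})$ and is represented in Dolbeault cohomology by $\bar\partial\rho^-\wedge dz^+$ (for a partition of unity $\rho^++\rho^-=1$ subordinate to $\{M^+,M^-\}$), whereas the topological Mayer--Vietoris image of $[dz^+]$ is represented by $d\rho^-\wedge dz^+=\partial\rho^-\wedge dz^+ + \bar\partial\rho^-\wedge dz^+$. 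These two classes agree only modulo $H^{2,0}(X_{t_0})=\mathbb{C}\,[\sigma]$, and the discrepancy is genuinely nonzero here: pairing with $\bar\sigma$ gives
\[
\int_{X_{t_0}} d\rho^-\wedge dz^+\wedge\overline{\sigma}
=\int_{\partial V}\rho^-\,dz^+\wedge\overline{\sigma}
=\pm\, 2\pi\sqrt{-1}\int_{C^+}dz^+\wedge d\overline{z^+}\neq 0 ,
\]
so the Mayer--Vietoris image equals $c\,[\sigma]+\xi$ with $c\neq 0$ and $\xi$ the Dolbeault class you actually care about. Your computation $\langle\delta[dz^+],B_\alpha\rangle=\int_\alpha dz^+=1$ therefore only shows $c\,[\sigma]+\xi\neq 0$, which is automatic and says nothing about $\xi$; a priori $\xi$ could vanish with the whole pairing carried by $c\int_{B_\alpha}\sigma$ (a quantity the paper explicitly cannot evaluate).

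The argument can be repaired, and the repaired version is essentially what the paper does in Remark \ref{rmk:colinear_iranai}: besides the nonvanishing against $B_\alpha$, one must also pair against a cycle contained in $V$ (hence with trivial Mayer--Vietoris boundary), e.g.\ $A_{\gamma\alpha}$. Then $\langle c[\sigma]+\xi, A_{\gamma\alpha}\rangle=0$ while $\int_{A_{\gamma\alpha}}\sigma=2\pi\sqrt{-1}\neq 0$, so $c[\sigma]+\xi$ is not proportional to $[\sigma]$; since it is nonzero and has no $(0,2)$-part, $\xi\neq 0$ follows. (Equivalently, Remark \ref{rmk:colinear_iranai} computes $\frac{\partial}{\partial t}\int_{B_\alpha}\sigma_t=-1/(2t)\neq 0$ and invokes Griffiths transversality, which handles the $H^{2,0}$ ambiguity correctly.) The paper's main proof avoids Hodge theory altogether: by Lemma \ref{lem:inj_MV_seq} the \v{C}ech coboundary $H^0(V_{t_0},T_{V_{t_0}})\to H^1(X_{t_0},T_{X_{t_0}})$ is injective because $H^0(M_{t_0}^\pm,T_{M_{t_0}^\pm})=0$, which is where the collinearity hypothesis on $Z^\bullet$ enters via Lemma \ref{lem:P^2_bup_cohomoogy}; your route, once corrected, does indeed dispense with that hypothesis, as the paper notes.
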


We define holomorphic vector fields $\theta_1, \theta_2$ on $V_{t_0}:=X_{t_0} \cap \mathcal{V}$ by 
\[
\theta_1:= w^+ \frac{\partial}{\partial w^+}, \quad \theta_2:=\frac{\partial}{\partial z^+}, 
\]
and put $M_{t_0}^{\pm}:=X_{t_0}\cap \mathcal{M}^{\pm}$. 
Proposition \ref{prop:deform_R} follows from the following two lemmata. 

\begin{lemma}\label{lem:KS_map_for_w_deform}
As a \v{C}ech cohomology class, we have 
\[
\rho_{{\rm KS}, \pi} \left( \left.\frac{\partial}{\partial t}\right|_{t_0} \right)=[\{(M_{t_0}^+ \cap M_{t_0}^-, - t_0^{-1} \cdot \theta_1)\}] \in \check{H}^1(\{M_{t_0}^+, M_{t_0}^-\}, T_{X_{t_0}}). 
\]
\end{lemma}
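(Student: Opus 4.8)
```latex
\textbf{Plan of proof.}
The goal is to compute the Kodaira--Spencer class of the family $\pi\colon\mathcal{X}\to\Delta$ at the base point $t_0$ in terms of the \v{C}ech cocycle attached to the two-set open cover $\{M_{t_0}^+,M_{t_0}^-\}$ of the fiber $X_{t_0}$. The plan is to use the standard description of the Kodaira--Spencer map for a family given by gluing two pieces along a transition map depending on the deformation parameter. Since $\mathcal{X}$ is, by construction, obtained by patching $\mathcal{M}^+$ and $\mathcal{M}^-$ via the single holomorphic map $F$, the family is already presented in exactly the form for which the Kodaira--Spencer cocycle is computed by differentiating the gluing map with respect to $t$ and restricting to the central fiber. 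So the whole computation lives on the overlap $M_{t_0}^+\cap M_{t_0}^- = V_{t_0}$, and the answer will be a single \v{C}ech $1$-cochain supported on that overlap.

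First I would set up local coordinates consistent with the construction: on $\mathcal{M}^+$ use $(z^+,w^+)$ (with $t$ the base coordinate), on $\mathcal{M}^-$ use $(z^-,w^-)$, and recall the gluing $F\colon(z^+,w^+,t)\mapsto(z^-,w^-,t)=\bigl(g(z^+),\,t/w^+,\,t\bigr)$. The Kodaira--Spencer image of $\partial/\partial t|_{t_0}$ is represented by the vector field on the overlap obtained by differentiating the coordinate change of the \emph{fibers} in $t$, holding the target fiber coordinates fixed, and then projecting the resulting ambient vector field to the vertical (fiber) tangent direction. Concretely, I would hold $(z^-,w^-)$ fixed on $M_{t_0}^-$, regard $(z^+,w^+)$ as functions of $t$ through the inverse gluing, and compute $\partial/\partial t$ of these fiber coordinates at $t=t_0$. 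From $w^-=t/w^+$ with $w^-,z^-$ frozen one gets, upon differentiating, that the induced variation of the $+$-coordinate is $\partial w^+/\partial t = w^+/t$ at fixed $w^-$; the $z^+$-coordinate does not move since $z^-=g(z^+)$ carries no $t$. Translating this into a vertical vector field on $X_{t_0}$ expressed in the $+$-coordinates gives $-t_0^{-1}\,w^+\,\partial/\partial w^+ = -t_0^{-1}\,\theta_1$, where the sign comes from the usual convention in passing from the derivative of the transition function to the Kodaira--Spencer cocycle (change of variables on the overlap versus change of the fiber). This produces exactly the claimed cocycle $\{(M_{t_0}^+\cap M_{t_0}^-,\,-t_0^{-1}\theta_1)\}$.

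The step I expect to be the main obstacle is the bookkeeping of signs and of which coordinate set is held fixed: the Kodaira--Spencer class depends on representing the variation as a cocycle for the cover $\{M_{t_0}^+,M_{t_0}^-\}$, and one must be careful that $\theta_1=w^+\partial/\partial w^+$ is a genuinely holomorphic vertical field on the overlap $V_{t_0}$ (it is, since $w^+$ is nonvanishing there) and that it is expressed in a single consistent chart so that the \v{C}ech difference is well defined. I would verify that $\theta_1$ is invariant, up to the expected factor, under the overlap transition functions $(z_k^+,w_k^+)=(z_j^++A_{kj},t_{kj}w_j^+)$ from condition $(iv)$, so that the cochain is genuinely defined on all of $M_{t_0}^+\cap M_{t_0}^-$ rather than chart-by-chart; since $w_j^+\partial/\partial w_j^+$ is manifestly unchanged under scaling $w_j^+\mapsto t_{kj}w_j^+$ and translation of $z_j^+$, this invariance is immediate. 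Once these checks are in place, the identification of the Kodaira--Spencer image with the stated class follows directly from the definition of the connecting map in the exact sequence computing deformations of a glued family.
```
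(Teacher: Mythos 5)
Your proposal is correct and follows essentially the same route as the paper: the Kodaira--Spencer class is read off by differentiating the gluing relation $w^-=t/w^+$ in $t$ on the overlap and rewriting the resulting Euler field via $w^-\partial/\partial w^- = -\,w^+\partial/\partial w^+$, which is exactly the one-line computation the authors give. Your sign bookkeeping (computing $\partial w^+/\partial t=w^+/t$ with the $-$ coordinates frozen and then flipping sign by convention) is the mirror image of the paper's $\partial w^-/\partial t\cdot\partial/\partial w^-=t^{-1}w^-\partial/\partial w^-=-t^{-1}\theta_1$, and your added check that $\theta_1$ is well defined under the transitions $(z_k^+,w_k^+)=(z_j^++A_{kj},t_{kj}w_j^+)$ is a harmless (and implicitly assumed) verification.
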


\begin{proof}
The lemma directly follows from the computation 
\[
\frac{\partial w^-(z^+, w^+, t)}{\partial t}\frac{\partial}{\partial w^-}
=\frac{\partial}{\partial t} \left(\frac{t}{w^+} \right) \cdot \frac{\partial}{\partial w^-}
=t^{-1} w^-\frac{\partial}{\partial w^-}=-t^{-1}\cdot w^+ \frac{\partial}{\partial w^+}=-t^{-1} \cdot \theta_1. 
\]
\end{proof}

\begin{lemma}\label{lem:inj_MV_seq}
The coboundary map $H^0(V_{t_0}, T_{V_{t_0}})\to H^1(X_{t_0}, T_{X_{t_0}})$ which appears in the Mayer--Vietoris sequence corresponding to the covering $\{M_{t_0}^+, M_{t_0}^-\}$ of $X_{t_0}$ is injective. 
\end{lemma}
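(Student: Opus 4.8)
The plan is to deduce the injectivity of the coboundary map from the vanishing of the two ``side'' cohomologies $H^0(M_{t_0}^+, T_{X_{t_0}})$ and $H^0(M_{t_0}^-, T_{X_{t_0}})$. Recall that the Mayer--Vietoris sequence attached to the covering $\{M_{t_0}^+, M_{t_0}^-\}$ reads
\[
H^0(M_{t_0}^+, T_{X_{t_0}})\oplus H^0(M_{t_0}^-, T_{X_{t_0}})\xrightarrow{\ r\ } H^0(V_{t_0}, T_{V_{t_0}})\xrightarrow{\ \delta\ }H^1(X_{t_0}, T_{X_{t_0}}),
\]
where $r$ is the difference of the two restriction maps and $\delta$ is the coboundary we must show injective. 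By exactness $\ker\delta=\operatorname{im}r$, so once I show that the source of $r$ is trivial, it follows that $\operatorname{im}r=0=\ker\delta$ and $\delta$ is injective. Thus the whole lemma reduces to proving $H^0(M_{t_0}^\pm, T_{X_{t_0}})=0$.

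To treat $M_{t_0}^+$ (the case of $M_{t_0}^-$ being identical), I would first note that on $V_{t_0}$ the tangent sheaf is globally framed by $\theta_1=w^+\,\partial/\partial w^+$ and $\theta_2=\partial/\partial z^+$: the gluing rules $(z_k^+, w_k^+)=(z_j^++A_{kj}, t_{kj}w_j^+)$ leave both fields invariant, and they are everywhere independent since $w^+\neq 0$ on $V_{t_0}$. Hence any $\theta\in H^0(M_{t_0}^+, T_{X_{t_0}})$ can be written on $V_{t_0}$ as $\theta=g\,\theta_1+f\,\theta_2$ with $f,g\in H^0(V_{t_0}, \mathcal{O}_{V_{t_0}})$, and Lemma \ref{lem:W*} (whose proof applies verbatim to $V_{t_0}$) forces $f$ and $g$ to be constants, say $c_2$ and $c_1$. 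The key point is that the combination $c_1\theta_1+c_2\theta_2=c_1 w^+\,\partial/\partial w^+ + c_2\,\partial/\partial z^+$ extends holomorphically across the zero section, i.e. defines a section over the whole holomorphic tubular neighborhood $W^+$ of $C^+$ supplied by Theorem \ref{thm:Arnol'd_original}, because $w^+\,\partial/\partial w^+$ and $\partial/\partial z^+$ are holomorphic there. Since $M_{t_0}^+\cup W^+=S^+$ and $M_{t_0}^+\cap W^+=V_{t_0}$, the field $\theta$ on $M_{t_0}^+$ and the field $c_1\theta_1+c_2\theta_2$ on $W^+$ agree on the overlap and glue to a global holomorphic vector field $\widetilde{\theta}\in H^0(S^+, T_{S^+})$.

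It then remains to invoke $H^0(S^+, T_{S^+})=0$. This is exactly where the standing assumption that $Z^+$ contains four points no three of which are collinear enters: a holomorphic vector field on $S^+=\mathrm{Bl}_{Z^+}\mathbb{P}^2$ is the lift of a projective vector field on $\mathbb{P}^2$ vanishing at the nine centers, and a vector field on $\mathbb{P}^2$ already vanishing at four points in general position is identically zero. Consequently $\widetilde{\theta}=0$, so $\theta\equiv 0$ on $M_{t_0}^+$ and $c_1=c_2=0$. The same argument yields $H^0(M_{t_0}^-, T_{X_{t_0}})=0$, and the injectivity of $\delta$ follows from the reduction above.

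The step I expect to be the genuine obstacle is the extension-and-gluing of the second paragraph: one must be sure that the restriction to $V_{t_0}$ of an \emph{arbitrary} section of $T_{X_{t_0}}$ over $M_{t_0}^+$ is a \emph{constant}-coefficient combination of $\theta_1,\theta_2$, so that it matches a field defined across $C^+$. This is precisely what $H^0(V_{t_0}, \mathcal{O}_{V_{t_0}})=\mathbb{C}$ delivers, and it in turn rests on the Diophantine hypothesis (through the existence of the honest holomorphic tubular neighborhood $W^+$ and the density of the Levi-flat leaves used in Lemma \ref{lem:W*}). The other essential input is the vanishing $H^0(S^\pm, T_{S^\pm})=0$; should one wish to dispense with the four-points condition as indicated in Remark \ref{rmk:colinear_iranai}, a separate verification of this vanishing under the weaker genericity of $Z^\pm$ would be needed.
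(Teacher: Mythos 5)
Your proof is correct and follows essentially the same route as the paper's: reduce via exactness to the vanishing of $H^0(M_{t_0}^\pm, T_{X_{t_0}})$, use the global frame $\theta_1, \theta_2$ of $T_{V_{t_0}}$ together with Lemma \ref{lem:W*} to see that the restriction to $V_{t_0}$ has constant coefficients, extend across $C^+$ inside the Arnol'd tubular neighborhood, glue to a global vector field on $S^+$, and kill it by Lemma \ref{lem:P^2_bup_cohomoogy}. Your closing remarks correctly identify the two essential inputs, and the paper indeed addresses the removal of the four-points hypothesis separately in Remark \ref{rmk:colinear_iranai}.
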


See \cite[II 5.10]{I} for example of the Mayer--Vietoris sequence for an open covering. 

\begin{proof}
From the Mayer--Vietoris sequence, it is sufficient to show that 
$H^0(M_{t_0}^{\pm}, T_{M_{t_0}^{\pm}})=0$. 
Take an element $\xi \in H^0(M_{t_0}^+, T_{M_{t_0}^+})$. 
As $\xi|_{V_{t_0}}\in H^0(V_{t_0}, T_{V_{t_0}})$ and 
$T_{V_{t_0}}=\theta_1 \cdot \mathbb{I}_{V_{t_0}}\oplus \theta_2\cdot\mathbb{I}_{V_{t_0}}$, 
it follows from Lemma \ref{lem:W*} that there exists an element $(a_1, a_2)\in\mathbb{C}^2$ such that $\xi|_{V_{t_0}}=a_1\cdot \theta_1+a_2\cdot \theta_2$. 
As it is clear that both $\theta_1$ and $\theta_2$ can be extended to holomorphic vector fields on $W_{t_0}^+:=W^+ \times \{t_0\}$, 
it follows that there exists $\zeta\in H^0(W_{t_0}^+, T_{W_{t_0}^+})$ such that $\zeta|_{V_{t_0}}=\xi|_{V_{t_0}}$. 
By gluing $\zeta$ and $\xi$, we obtain a global vector field $\widetilde{\xi} \in H^0(S^+, T_{S^+})$. 
It follows from Lemma \ref{lem:P^2_bup_cohomoogy} that $\widetilde{\xi}=0$, and thus $\xi=0$, 
which shows that $H^0(M_{t_0}^+, T_{M_{t_0}^+})=0$. 
Similarly we have $H^0(M_{t_0}^-, T_{M_{t_0}^-})=0$. 
\end{proof}

\begin{proof}[Proof of Proposition \ref{prop:deform_R}]
As $\check{H}^1(\{M_{t_0}^+, M_{t_0}^-\}, T_{X_{t_0}})\to
\mathop{\varinjlim}\limits_{\mathcal{U}}\check{H}^1(\mathcal{U}, T_{X_{t_0}})=H^1(X_{t_0}, T_{X_{t_0}})$ is injective, 
Proposition \ref{prop:deform_R} follows from Lemma \ref{lem:KS_map_for_w_deform} and Lemma \ref{lem:inj_MV_seq}. 
\end{proof}

\begin{remark}\label{rmk:colinear_iranai}
In the proof of Proposition \ref{prop:deform_R} (especially in the proof of Lemma \ref{lem:inj_MV_seq}), 
we used the assumption that $Z^\pm$ is sufficiently general so that it includes four points in which no three points are collinear. 
However one can remove this assumption by the following alternative proof of Proposition \ref{prop:deform_R}. 
For each $t\in \Delta_w$, denote by $I_t\colon \mathbb{R}/\mathbb{Z}\times [|t|/R_-, R_+]\to V_t^+$ the map defined by 
\[
I_t([r], \ell) := [(r, \ell\cdot \exp({2\pi\sqrt{-1}a_\alpha}r))], 
\]
where we are identifying $V_t^+$ with the quotient $\mathbb{C}\times \Delta/\sim$ by the relation $\sim$ generated by $(z, w)\sim (z+1, \exp({2\pi\sqrt{-1}a_\alpha})\cdot w)\sim (z+\tau, \exp({2\pi\sqrt{-1}a_\beta})\cdot w)$. 
As both the components of $X_t\setminus V_t$ are simply connected, there exist continuously embedded discs $D_\alpha^+$ and $D_\alpha^-$ in $X_t\setminus V_t$ such that $\partial D_\alpha^+ = I_t(\mathbb{R}/\mathbb{Z}\times \{R_+\})$ and $\partial D_\alpha^- = I_t(\mathbb{R}/\mathbb{Z}\times \{|t|/R_-\})$. 
Note that one may regard each $D_\alpha^\pm$ as a subset of $S^\pm$ which does not depend on the parameter $t$. 
Then one has that 
\begin{eqnarray}
\frac{\partial}{\partial t}\int_{B_\alpha}\sigma &=& \frac{\partial}{\partial t}\left(\int_{D_\alpha^+}\sigma +\int_{I_t(\mathbb{R}/\mathbb{Z}\times [|t|/R_-, R_+])}\sigma+\int_{D_\alpha^-}\sigma \right)\nonumber \\
 &=& \frac{\partial}{\partial t} \int_{I_t(\mathbb{R}/\mathbb{Z}\times [|t|/R_-, R_+])}\sigma =\frac{\partial}{\partial t} \int_{\mathbb{R}/\mathbb{Z}\times [|t|/R_-, R_+]}I_t^*\sigma\nonumber \\
 &=& \frac{\partial}{\partial t} \left(\int_0^1dr\cdot \int_{|t|/R_-}^{R_+}\frac{d\ell}{\ell} \right) 
=\frac{\partial}{\partial t} (-\log |t|)
= \frac{-1}{2t}.  \nonumber
\end{eqnarray}
Therefore, by considering the Griffiths transversality on the relation between the Kodaira-Spencer map and the derivative of the period map, one can prove Proposition \ref{prop:deform_w_g_points}. 
Here we remark that the value $\frac{\partial}{\partial t}\int_{B_\beta}\sigma$ can also be concretely calculated in the same manner. 
\end{remark}

\subsubsection{A deformation family corresponding to the change of the patching function $z^-(z^+, w^+)$}

Let $\Delta=\Delta_z:=\{t\in\mathbb{C}\mid |t|<1\}$, and
$\widetilde{g}\colon \mathbb{C}\to \mathbb{C}$ be the isomorphism such that 
$p_{C^-} \circ \widetilde{g}=g \circ p_{C^+}$ holds, where $p_{C^{\pm}} \colon \mathbb{C} \to C^{\pm}$ are the universal covers. 
Without loss of generality, we may assume that $\frac{\partial}{\partial z}\widetilde{g}(z) \equiv 1$. 
Denote by $\widetilde{g}_t$ the automorphism of $\mathbb{C}$ defined by $\widetilde{g}_t(z):=\widetilde{g}(z)+t$ and by $g_t\colon C^+ \to C^-$ the isomorphism induced by $\widetilde{g}_t$ for each $t\in \Delta$. 
Let $\mathcal{M}^{\pm}:=M^{\pm} \times \Delta$, $\mathcal{W}^{\pm}:=W^{\pm} \times \Delta$ and $\mathcal{V}^{\pm}:=V^{\pm} \times \Delta$. 
Let $\mathcal{V}^{\pm}_j$ be the subsets of $\mathcal{V}^{\pm}$ defined by 
$\mathcal{V}^+_j:=U_j^+ \times \Phi_{+}^{-1}((1/R^-,R^+)) \times \Delta$ and 
$\mathcal{V}^-_j:=\{(z_j^-, w_j^-, t)\mid z_j^- \in g_t(U_j^+), w_j^- \in \Phi_{-}^{-1}((1/R^+,R^-)), t\in\Delta\}$. 
Define a holomorphic function $F\colon \mathcal{V}^+ \to \mathcal{V}^-$ by 
\[
F|_{\mathcal{V}^+_j}\colon(z_j^+, w_j^+, t)\mapsto (z_j^-(z_j^+, w_j^+, t), w_j^-(z_j^+, w_j^+, t), t):=\left(g_t(z_j^+), (w_j^+)^{-1}, t\right)\in \mathcal{V}^-_j
\]
on each $\mathcal{V}^+_j$. 
Then we can patch $\mathcal{M}^+$ and $\mathcal{M}^-$ by identifying $\mathcal{V}^{\pm}$, denoted by $\mathcal{V}$, 
via the map $F$ to define a complex manifold $\mathcal{X}$. 
We regard $\mathcal{M}^{\pm}$ and $\mathcal{V}$ as open subsets of $\mathcal{X}$. 
The second projections $\mathcal{M}^{\pm} \to \Delta$ glue up to define a proper holomorphic submersion $\pi\colon \mathcal{X}\to \Delta$. 
By Proposition \ref{prop:K3}, we have that each fiber $X_t:=\pi^{-1}(t)$ is a K3 surface. 

\begin{proposition}\label{prop:deform_g}
The Kodaira--Spencer map $\rho_{{\rm KS}, \pi}\colon T_{\Delta, 0}\to H^1(X_{0}, T_{X_{0}})$ of the deformation family $\pi\colon \mathcal{X}\to \Delta$ is injective. 
\end{proposition}

As in the previous subsection, we use notations in the proof of Proposition \ref{prop:deform_R}. 

\begin{lemma}\label{lem:KS_map_for_g_deform}
As a \v{C}ech cohomology class, we have 
\[
\rho_{{\rm KS}, \pi} \left( \left.\frac{\partial}{\partial t}\right|_{t=0} \right)=[\{(M_{0}^+ \cap M_{0}^-, \theta_2)\}] \in \check{H}^1(\{M_{0}^+, M_{0}^-\}, T_{X_{0}}). 
\]
\end{lemma}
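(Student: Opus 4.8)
The plan is to treat this as the exact $z$-deformation analogue of Lemma~\ref{lem:KS_map_for_w_deform}, relying on the standard description of the Kodaira--Spencer map for a family assembled by gluing two product families $\mathcal{M}^\pm = M^\pm\times\Delta$ along a $t$-dependent biholomorphism $F$ of the overlaps. For such a family, the class $\rho_{{\rm KS},\pi}(\partial/\partial t|_{t=0})$ is represented by the \v{C}ech $1$-cocycle on $M_0^+\cap M_0^-$ given by the difference of the two canonical lifts of $\partial/\partial t$; concretely, this is the vertical vector field obtained by differentiating the patching function with respect to $t$ and converting it into a vector field via the chain rule, exactly as carried out in the proof of Lemma~\ref{lem:KS_map_for_w_deform}.

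First I would record that here only the $z$-component of $F$ depends on $t$: since $w_j^-(z_j^+,w_j^+,t)=(w_j^+)^{-1}$, we have $\partial w_j^-/\partial t\equiv 0$, so there is no $\theta_1$-contribution. Next I would differentiate the $z$-component. As $\widetilde{g}_t(z)=\widetilde{g}(z)+t$ induces $g_t$ and the charts are chosen so that $z_j^-=g_t(z_j^+)$, one gets $\partial z_j^-/\partial t\equiv 1$, whence
\[
\frac{\partial z_j^-(z_j^+,w_j^+,t)}{\partial t}\frac{\partial}{\partial z_j^-}=1\cdot\frac{\partial}{\partial z_j^-}.
\]
The final step is to re-express $\partial/\partial z_j^-$ in the $+$-coordinates: invoking the normalization $\partial\widetilde{g}/\partial z\equiv 1$, one has $\partial z_j^+/\partial z_j^-=1/g_t'(z_j^+)=1$, so $\partial/\partial z_j^-=\partial/\partial z_j^+=\theta_2$. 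This identifies the cocycle value with $\theta_2$ and yields the asserted class in $\check{H}^1(\{M_0^+,M_0^-\},T_{X_0})$.

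There is no serious obstacle beyond bookkeeping; the computation is essentially forced once the gluing data are written down. The only points requiring care are the sign and normalization conventions: one must differentiate the $-$-side coordinate as a function of the $+$-side coordinate and $t$ (so that the cocycle lands in $T_{X_0}$ expressed in the $\theta_1,\theta_2$-frame of $V_0$), and the normalization $\partial\widetilde{g}/\partial z\equiv 1$ is exactly what forces the coefficient to be $1$ rather than $1/g_t'$, delivering $\theta_2$ with no extra factor. As in Lemma~\ref{lem:KS_map_for_w_deform}, the independence of $w_j^-$ from $t$ guarantees the $\theta_1$-term vanishes, so the cocycle is $\theta_2$ exactly.
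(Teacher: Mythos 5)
Your proposal is correct and matches the paper's proof: both differentiate the patching function to get $\partial z^-/\partial t\equiv 1$ and then use the normalization $\partial\widetilde{g}/\partial z\equiv 1$ to convert $\partial/\partial z^-$ into $\partial/\partial z^+=\theta_2$. The extra remark that $\partial w^-/\partial t\equiv 0$ (so there is no $\theta_1$-term) is implicit in the paper but worth having made explicit.
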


\begin{proof}
The lemma directly follows from the computation 
\[
\frac{\partial z^-(z^+, w^+, t)}{\partial t}\frac{\partial}{\partial z^-}
=\left(\frac{\partial}{\partial t}(g(z^+)+t)\right)\cdot \frac{\partial}{\partial z^-}
=\frac{\partial}{\partial z^-}=\frac{\partial}{\partial z^-}g_t^{-1}(z^-)\cdot \frac{\partial}{\partial z^+}=\frac{\partial}{\partial z^+}=\theta_2. 
\]
\end{proof}

\begin{proof}[Proof of Proposition \ref{prop:deform_g}]
As in the proof of Proposition \ref{prop:deform_R}, Proposition \ref{prop:deform_g} follows from Lemma \ref{lem:inj_MV_seq} and Lemma \ref{lem:KS_map_for_g_deform}. 
\end{proof}

\subsection{A deformation family constructed by changing the nine points configurations}\label{section:deform_points}

Let $C_0^{\pm}, N_0, Z^{\pm}=\{p_1^{\pm}, \dots, p_9^{\pm} \}, S^{\pm}, C^{\pm}, M^{\pm}, W^{\pm}, R^{\pm}, g$, and $V$ be those in \S \ref{section:construction}. 
In this subsection, we construct a deformation family corresponding to the change of the nine points configurations $Z^{\pm}$, leaving $C_0^{\pm}, N_0$ and the patching functions fixed. 
For simplicity, we also fix $Z^-$ and consider only the change of $Z^+$ here, and put $C_0:=C_0^{\pm}$ and $Z=\{p_1,\dots,p_9 \}:=Z^+$. 

Fix a sufficiently small open neighborhood $U_\nu$ of $p_\nu$ in $C_0$ for each $\nu=1, 2, \dots, 8$ and 
denote by $T$ the product $U_1\times U_2\times\cdots \times U_8$. 
In what follows, we regard $t_0:=(p_1, p_2, \dots, p_8)$ as a base point of $T$. 
For each $t=(q_1, q_2, \dots, q_8)\in T$, we define $q(t)\in C_0$ by $\mathcal{O}_{\mathbb{P}^2}(3)|_{C_0}\otimes\mathcal{O}_{C_0}(-q_1-q_2-\cdots-q_8-q(t))\cong N_0$. 
Let $\pi\colon \mathcal{S}\to T$ be a proper holomorphic submersion from a $10$-dimensional complex manifold $\mathcal{S}$ to $T$ such that each fiber $S_t:=\pi^{-1}(t)$ is isomorphic to the blow-up of $\mathbb{P}^2$ at nine points $q_1, q_2, \dots, q_8$, and $q(t)$ for each $t=(q_1, q_2, \dots, q_8)\in T$. 
Such $\mathcal{S}$ can be constructed as the blow-up of $\mathbb{P}^2\times T$ along some submanifolds. 
Let $\mathcal{C}\subset \mathcal{S}$ be the strict transform of $C_0\times T$. 
Note that $\pi|_{\mathcal{C}}={\rm Pr}_2\colon C_0\times T\to T$ holds via the natural isomorphism between $\mathcal{C}$ and $C_0\times T$. 
Denote by $C_t$ the intersection $S_t\cap \mathcal{C}$ for each $t\in T$. 
Then it follows from the construction that $N_{\mathcal{C}/\mathcal{S}}|_{C_t}=N_{C_t/S_t}\cong N_0$ for each $t$. 
Therefore, by regarding  $N_{\mathcal{C}/\mathcal{S}}$ as a holomorphic line bundle on $C_0\times T$, it follows from Proprosition \ref{prop:coarse_moduli} that there exists a holomorphic map $i\colon T\to {\rm Pic}^0(C_0)$ such that $({\rm id}_{C_0}\times i)^*\mathcal{L}\cong N_{\mathcal{C}/\mathcal{S}}$ holds, where $\mathcal{L}$ is the universal line bundle on $C_0\times {\rm Pic}^0(C_0)$. 
As it clearly holds that $i(t)\equiv N_0\in {\rm Pic}^0(C_0)$, we obtain that $N_{\mathcal{C}/\mathcal{S}}\cong {\rm Pr}_1^*N_0$. 
Then, we can apply the following relative variant of Arnol'd's theorem to our $(\mathcal{C}, \mathcal{S})$. 

\begin{theorem}\label{thm:rel_Arnol'd}
Let $\pi\colon \mathcal{S}\to T$ be a deformation family of complex surfaces over a ball in $\mathbb{C}^n$, 
and $\mathcal{C}\subset \mathcal{S}$ be a submanifold which is biholomorphic to $C_0\times T$, where $C_0$ is an elliptic curve, and satisfies $\pi|_{\mathcal{C}}={\rm Pr}_2$ via this biholomorphism. 
Assume that $N_{\mathcal{C}/\mathcal{S}}\cong {\rm Pr}_1^*N_0$ holds for some line bundle $N_0$ on $C_0$ which satisfies the Diophantine condition. 
Then, by shrinking $T$ if necessary, there exists a tubular neighborhood $\mathcal{W}$ of $\mathcal{C}$ in $\mathcal{S}$ 
which is isomorphic to a neighborhood of the zero section in $N_{\mathcal{C}/\mathcal{S}}$. 
\end{theorem}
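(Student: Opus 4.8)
The plan is to run the successive-approximation scheme underlying Arnol'd's theorem (Theorem \ref{thm:Arnol'd_original}) with the base point $t\in T$ carried along as a holomorphic parameter; note that applying the absolute theorem in each fiber $(C_t,S_t)$ produces fiberwise tubular neighborhoods but gives no control on their joint holomorphic dependence on $t$, so the construction genuinely has to be parametrized. First I would fix a locally finite covering $\{U_j\}$ of $C_0$ trivializing $N_0$, with the ($t$-independent) unitary transition functions $t_{kj}\in U(1)$ of $N_0$, and choose relative fiber coordinates: on a suitable neighborhood of $\mathcal{C}$ one can take holomorphic coordinates $(z_j,w_j,t)$ on $\mathcal{S}$ with $\pi(z_j,w_j,t)=t$, with $\mathcal{C}=\{w_j=0\}$, and with $z_j$ restricting to a coordinate on $U_j\subset C_0$. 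Since $N_{\mathcal{C}/\mathcal{S}}\cong {\rm Pr}_1^*N_0$ is independent of $t$, these coordinates can be arranged so that the transition relations read $w_k=t_{kj}\,w_j+O(w_j^2)$, the linear part coinciding fiberwise with that of the flat bundle $N_0$ for every $t$.

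Second, I would remove the higher-order terms order by order. Assuming the transitions are linear modulo $w_j^{n+1}$, the degree-$(n+1)$ discrepancy defines a $1$-cocycle valued in $N_0^{-n}$ on $\mathcal{C}=C_0\times T$, and correcting the coordinates by a change $w_j\mapsto w_j+c_j(z_j,t)\,w_j^{n+1}$ is possible precisely when this class vanishes in $H^1(C_0\times T,{\rm Pr}_1^*N_0^{-n})$. By the Künneth formula together with the Stein-ness of the ball $T$ (so $H^q(T,\mathcal{O}_T)=0$ for $q\geq1$) and the vanishing $H^0(C_0,N_0^{-n})=H^1(C_0,N_0^{-n})=0$ for the nontrivial flat bundle $N_0^{-n}$ with $n\geq1$, this relative cohomology group vanishes, and the corrections $c_j(z_j,t)$ can moreover be chosen holomorphic in $t$. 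Solving the associated coboundary equation amounts to dividing by quantities of the form $t_\bullet^{\,n}-1$ built from the $n$-th powers of the monodromy of $N_0$; this is exactly the step at which the Diophantine condition on $N_0$ is needed in order to keep these small divisors bounded below.

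Third --- and this is where the real work lies --- I would prove convergence of the resulting formal coordinate change to a genuine biholomorphism on a neighborhood of the zero section, \emph{uniformly} for $t$ in a slightly shrunk subball $T'\Subset T$. Here I would follow Arnol'd's majorant and Cauchy-estimate argument \cite{A}: the Diophantine condition supplies polynomial lower bounds $|t_\bullet^{\,n}-1|\gtrsim n^{-\alpha}$ for the small divisors, which control the growth of the degree-$(n+1)$ corrections and force geometric convergence on a fixed polydisc in $w$. The decisive point is that these divisors depend only on $N_0$ and \emph{not} on $t$; consequently Arnol'd's estimates hold with constants uniform in $t\in T'$, the radius of convergence is uniform, and the limiting coordinates depend holomorphically on $t$. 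Assembling the charts then yields a tubular neighborhood $\mathcal{W}$ of $\mathcal{C}$ in $\mathcal{S}$ biholomorphic to a neighborhood of the zero section of $N_{\mathcal{C}/\mathcal{S}}$ over all of $T'$.

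I expect the main obstacle to be precisely this uniform convergence. Although the small divisors are $t$-independent, one must verify that the holomorphic dependence on $t$ is preserved at every stage of the iteration and that all the constants entering Arnol'd's estimate can be taken uniform over the compact subball $T'$, so that no loss in the convergence radius accumulates as $t$ varies. Once this is established, shrinking $T$ to $T'$ gives the statement.
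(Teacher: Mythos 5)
Your overall scheme --- carry the parameter $t$ through the Ueda/Arnol'd power-series normalization, observe that the obstruction groups $H^1(C_0\times T,{\rm Pr}_1^*N_0^{-n})$ vanish, note that the small divisors $d(\mathbb{I}_{C_0},N_0^n)$ are $t$-independent, and close the argument with a majorant series --- is exactly the paper's strategy. But as written your construction only normalizes the \emph{fiber} coordinate, and that is not enough to conclude. After your step two you have transition relations of the form $(z_k,w_k)=(z_j+A_{kj}+O(w_j),\,t_{kj}w_j)$: the $w$-direction is linear, but the $z$-transition still contains $w$-dependent terms, so the resulting neighborhood is not yet identified with a neighborhood of the zero section of $N_{\mathcal{C}/\mathcal{S}}$. (What you have proved at that stage is the relative analogue of Ueda's theorem --- existence of defining functions with $t_{jk}w_k=w_j$, hence a pluriharmonic defining function and the Levi-flat foliation --- not Arnol'd's linearization.) The paper accordingly splits the proof into two lemmas: Lemma \ref{lem:rel_ueda_thm}, which is your step, and Lemma \ref{lem:rel_arnold_z}, which constructs extensions $\zeta_j$ of the base coordinates satisfying $\zeta_k=\zeta_j+A_{kj}$ exactly, by solving a second functional equation $\zeta_j=u_j+\sum_{\nu\ge1}F_j^{(\nu)}(\zeta_j,t)w_j^\nu$ order by order. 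This second normalization is not a formality: the degree-$\nu$ discrepancies again form cocycles valued in ${\rm Pr}_1^*N_0^{-\nu}$, solving the coboundary equations again divides by the same small divisors, and the convergence proof (the majorant $\widehat{B}(X)$ and the bookkeeping of the composition terms $h_{kj}^{(\nu)}$) is the most delicate part of the paper's argument. You need to add this step for the proof to yield the stated biholomorphism.

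A secondary remark: the uniformity in $t$ that you flag as the main obstacle is actually harmless in this theorem, precisely because $\mathcal{C}\cong C_0\times T$ as a product and the covering $\{U_j\}$, $\{U_j^*\}$ of $C_0$ is fixed once and for all; the constant $K=K(C_0,\{U_j\},\{U_j^*\})$ in Ueda's Lemma \ref{lem:ueda83_lemma4} is then literally independent of $t$, and all coefficient functions are holomorphic in $t$ by the uniqueness coming from $H^0(C_0\times T,{\rm Pr}_1^*N_0^{-n})=0$. The uniformity of $K$ only becomes a genuine issue in the generalized version (Theorem \ref{thm:rel_Arnol'd_gen}), where the elliptic fibers vary and the paper needs the effective form of Ueda's lemma proved in \S\ref{section:nice_proof_of_uedas_lemma}.
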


See \S \ref{section_thm:rel_Arnol'd} for the proof of Theorem \ref{thm:rel_Arnol'd}. 
Take $\mathcal{W}\subset \mathcal{S}$ as in Theorem \ref{thm:rel_Arnol'd}. 
By shrinking $\mathcal{W}$ and considering the pull-back of an open covering $\{U_j\times T\}$ of $\mathcal{C}$ by the projection $\mathcal{W}\to \mathcal{C}$, we can take an open covering $\{\mathcal{W}_j\}$ of $\mathcal{W}$ and a coordinate system $(z_j, w_j, t)$ of each $\mathcal{W}_j$ which satisfies the following four conditions: 
$(I)$ $\mathcal{W}_j$ is biholomorphic to $U_j\times\Delta_R\times T$ ($R>1$), 
$(II)$ $\mathcal{W}_{jk}$ is biholomorphic to $U_{jk}\times\Delta_R\times T$, 
$(III)$ 
$\pi(z_j, w_j, t)=t$ holds on each $\mathcal{W}_j$, and 
$(IV)$ $(z_k, w_k, t)=(z_j+A_{kj}, t_{kj}\cdot w_j, t)$ holds on $\mathcal{W}_{jk}$, where $A_{kj}$ and $t_{kj}$ are as in the condition $(iv)$ in \S \ref{section:construction}. 
Denote by $\mathcal{V}$ the union of $\mathcal{V}_j$'s, where $\mathcal{V}_j:=\{(z_j, w_j, t)\in \mathcal{W}_j\mid 1/R^-<|w_j|<R^+\}$. 
Let $\mathcal{M}$ be the complement $\mathcal{S}\setminus \Phi^{-1}([0,1/R^-])$, where $\Phi : \mathcal{W} \to \mathbb{R}_{\ge 0}$ is given by 
$\Phi(z_j, w_j, t)=|w_j|$ on $\mathcal{W}_{j}$. 
Define a holomorphic function $F\colon \mathcal{V} \to \mathcal{S}^-:=S^-\times T$ by $F(z_j, w_j, t):=(g(z_j), w_j^{-1}, t)\in \mathcal{W}_j^-:=W_j^-\times T$ on each $\mathcal{V}_j$. 
By regarding $\mathcal{V}$ as a subset of $\mathcal{M}^-:=M^-\times T$ via $F$, we patch $\mathcal{M}$ and $\mathcal{M}^-$ to construct a manifold $\mathcal{X}$ just as in the previous section. 
The maps $\pi$ and $\pi^-:={\rm Pr}_2\colon \mathcal{S}^- \to T$ glue up to define a proper holomorphic submersion $P\colon \mathcal{X}\to T$. 
By the same argument as in the proof of Proposition \ref{prop:K3}, we have that each fiber $X_t:=P^{-1}(t)$ is a K3 surface. 
As before, by considering the Griffiths transversality and the calculation of the period integral in the previous section, one has that Kodaira-Spencer map $\rho_{{\rm KS}, P}$ is injective. 

\subsection{A deformation family fixing the Diophantine condition and the patching functions}\label{section:deform_fix_dioph_patch}
In this subsection, we consider a broader deformation family than that constructed in the previous subsection, 
that is, a family corresponding to the change of $C_0^{\pm}$, $N_0$ and $Z^{\pm}$ by fixing the Diophantine condition and the patching functions. 
As in the previous subsection, we also fix $Z^-$ and put $C_0:=C_0^{\pm}$ and $Z=\{p_1,\dots,p_9 \}:=Z^+$. 

Let $(p,q)$ be a pair of real numbers that satisfies the Diophantine condition, and fix $\tau_0 \in \mathbb{H}$ with $C_0 \cong \mathbb{C}/\langle 1, \tau_0 \rangle$, 
and a sufficiently small open neighborhood $U=U_{\tau}$ of $\tau_0$ in $\mathbb{H}$. 
For each $\tau \in U$, let $C_0(\tau)$ be a smooth elliptic curve given by $C_0(\tau) \cong \mathbb{C}/\langle 1, \tau \rangle$, and 
let $N_0(\tau) \in {\rm Pic}^0(C_0)$ be a line bundle given by $N_0(\tau) =p-q\tau$ via the identification ${\rm Pic}^0(C_0(\tau))\cong \mathbb{C}/\langle 1, \tau \rangle$. 
For a fixed sufficiently small open neighborhood $U_\nu$ of $p_\nu$ in $\mathbb{C}/\langle 1, \tau \rangle$ for each $\nu=1, 2, \dots, 8$,
we use a $9$-dimensional complex manifold $T:= U \times U_1\times U_2\times \cdots\times U_8$ as a parameter space in this subsection (More rigorously, each $U_\nu$ should be regarded not as a subset of $\mathbb{C}/\langle 1, \tau \rangle$ but as that of the universal cover $\mathbb{C}$ since $\tau$ also varies in this subsection). 
In what follows, we regard $t_0:=(\tau_0,p_1, p_2, \dots, p_8)$ as a base point of $T$. 
For each $t=(\tau, q_1, q_2, \dots, q_8)\in T$, we define $q(t)\in \mathbb{C}/\langle 1, \tau \rangle$ by 
$\mathcal{O}_{\mathbb{P}^2}(3)|_{C_0(\tau)}\otimes\mathcal{O}_{C_0(\tau)}(-q_1-q_2-\cdots-q_8-q(t))\cong N_0(\tau)$. 
Let $\pi\colon \mathcal{S}\to T$ be a proper holomorphic submersion from an $11$-dimensional complex manifold $\mathcal{S}$ to $T$ such that each fiber $S_t:=\pi^{-1}(t)$ is isomorphic to the blow-up of $\mathbb{P}^2$ at nine points $q_1, q_2, \dots, q_8$, and $q(t)$ for each $t=(\tau, q_1, q_2, \dots, q_8)\in T$. 
Let $\mathcal{C}\subset \mathcal{S}$ be the strict transform of $\{(x,(\tau,q_1,\dots,q_8)) \in \mathbb{P}^2 \times T \mid x \in C_0(\tau)\}$. 

An important issue here is the existence of a holomorphic tubular neighborhood $\mathcal{W}$ of $\mathcal{C}$ is $\mathcal{S}$, 
which is guaranteed by Theorem \ref{thm:rel_Arnol'd_gen}, a generalized version of Theorem \ref{thm:rel_Arnol'd}. 
See subsection \ref{section:more_gen_rel_arnol'd_thm} for more details. 
With such a holomorphic tubular neighborhood $\mathcal{W}$ of $\mathcal{C}$ is $\mathcal{S}$ in  hand, 
one can construct a deformation family $P\colon\mathcal{X}\to T$ such that each fiber $X_t:=P^{-1}(t)$ is a K3 surface, 
in a similar manner mentioned in the previous subsection. 

\subsection{A deformation family : summary}\label{section:construction_19_dim_family_summary}

In this subsection, we use a $19$-dimensional complex manifold $T:=\Delta_z \times \Delta_w \times U_{\tau} \times U_1^+\times \cdots\times U_8^+\times U_1^- \times \cdots \times U_8^-$ as a parameter space, 
where $\Delta_z$, $\Delta_w$ and $U_{\tau}$ are those in \S \ref{section:deform_patching} and \S \ref{section:deform_fix_dioph_patch}, and $U_\nu^{\pm}$ is a neighborhood of $p_\nu^{\pm}$ in $\mathbb{C}/\langle 1, \tau \rangle$. 
Let $(p,q)$ be a pair of real numbers that satisfies the Diophantine condition. 
By combining the constructions of the deformation families as in \S \ref{section:deform_patching}, \S \ref{section:deform_points} and \S \ref{section:deform_fix_dioph_patch}, one can naturally construct a deformation family $\pi\colon\mathcal{X}\to T$ and the subsets $\mathcal{M}^{\pm} \subset \mathcal{X}$ such that the following conditions hold: 
for each $t=(t_1, t_2, \tau, q_1^+, \dots, q_8^+, q_1^-,  \cdots, q_8^-)\in T$, 
$M_t^{\pm}:=\mathcal{M}^{\pm} \cap \pi^{-1}(t)$ is a subset of the blow-up of $\mathbb{P}^2$ at $\{q_1^{\pm}, \dots, q_8^{\pm}, q_9^{\pm} \}$, 
and $X_t:=\pi^{-1}(t)$ is a K3 surface obtained by patching $M_t^+$ and $M_t^-$ by identifying the point $(z_j, w_j)\in M_t^+$ with the point 
$(z_j^-(z_j^+, w_j^+), w_j^-(z_j^+, w_j^+)):=\left( g_{t_1}(z_j^+), t_2(w_j^+)^{-1}\right) \in M_t^-$, 
where $q_9^{\pm} \in C_0^{\pm}$ is the point such that $\mathcal{O}_{\mathbb{P}^2}(3)|_{C_0(\tau)}\otimes\mathcal{O}_{C_0(\tau)}(-q_1^{\pm}-\cdots-q_8^{\pm}-q_9^{\pm}) \cong (p-q \tau)^{\pm 1}$ with the identification ${\rm Pic}^0(C_0(\tau))\cong \mathbb{C}/\langle 1, \tau \rangle$, and $(z_j^{\pm}, w_j^{\pm})$ are the coordinates near the boundary of $M_t^{\pm}$ with $|t_2|/R^{\mp} <|w_j^{\pm}|<R^{\pm}$ and $(z_k^{\pm}, w_k^{\pm})=(z_j^{\pm}+A_{kj}, t_{kj}^{\pm 1} \cdot w_j^{\pm})$. 
Then, by shrinking the parameter space $T$ if necessary, we have the following proposition. 

\begin{proposition}\label{prop:deform_w_g_points}
The Kodaira--Spencer map $\rho_{{\rm KS}, \pi}\colon T_{T, t}\to H^1(X_{t}, T_{X_{t}})$ of the deformation family $\pi\colon \mathcal{X}\to T$ is injective for all $t\in T$. 
\end{proposition}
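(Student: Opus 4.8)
The plan is to deduce the injectivity of $\rho_{{\rm KS},\pi}$ from an explicit, essentially triangular, full-rank computation of the derivative of the period map, using the period integrals already obtained in \S\ref{section:constr_cycles}. The holomorphic $2$-form $\sigma_t$ on $X_t$ is produced fiberwise by gluing $\eta^+|_{M_t^+}$ with $-\eta^-|_{M_t^-}$ exactly as in Proposition \ref{prop:K3}, and this gluing depends holomorphically on $t$; hence $t\mapsto[\sigma_t]$ is a holomorphic period map $\mathcal{P}\colon T\to\mathcal{D}_{\rm Period}$ and each $\int_\gamma\sigma_t$ is holomorphic in $t$. By the infinitesimal period relation (Griffiths transversality) for K3 surfaces, $d\mathcal{P}_t$ factors as contraction with $[\sigma_t]$ composed with $\rho_{{\rm KS},\pi}$, so $\rho_{{\rm KS},\pi}(\xi)=0$ forces $d\mathcal{P}_t(\xi)=0$. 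It therefore suffices to prove that $d\mathcal{P}_t$ is injective.

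First I would remove the projective ambiguity. Since $\frac{1}{2\pi\sqrt{-1}}\int_{A_{\gamma\alpha}}\sigma_t\equiv 1$ by the normalization $A^\pm=\pm1$, the $A_{\gamma\alpha}$-coordinate of the period is constant, and feeding this into the formula for the derivative of a map into projective space shows that $d\mathcal{P}_t(\xi)=0$ is equivalent to $\xi\cdot\partial_t\left(\int_\gamma\sigma_t\right)=0$ for every cycle $\gamma$ of the marking. Thus I must show that the $19\times 22$ matrix of period derivatives, in the parameters $t_1\in\Delta_z$, $t_2\in\Delta_w$, $\tau\in U_\tau$, $q_1^+,\dots,q_8^+$ and $q_1^-,\dots,q_8^-$, has rank $19$.

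The heart of the argument is an elimination in a fixed order, exploiting which parameter actually moves which integral. Write $\xi=\xi_{t_1}\partial_{t_1}+\xi_{t_2}\partial_{t_2}+\xi_\tau\partial_\tau+\sum_\nu\xi_\nu^+\partial_{q_\nu^+}+\sum_\nu\xi_\nu^-\partial_{q_\nu^-}$ and suppose $\xi$ annihilates all period derivatives. Since $\frac{1}{2\pi\sqrt{-1}}\int_{A_{\beta\gamma}}\sigma_t=\tau$ depends on $\tau$ alone, we get $\xi_\tau=0$. Describing the moving points in universal-cover coordinates (as in \S\ref{section:deform_fix_dioph_patch}), the eight $C_\bullet^+$-periods then depend only on $q_1^+,\dots,q_8^+$, and the $8\times8$ block $\partial(C_\bullet^+\text{-periods})/\partial(q^+)$ is the difference matrix of the chain $q_1^+,\dots,q_8^+$ together with the row coming from $C_{678}^+$; a short linear-algebra check shows it is nonsingular, so $\xi_\nu^+=0$ for all $\nu$, and symmetrically $\xi_\nu^-=0$. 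With these coefficients eliminated, $\frac{1}{2\pi\sqrt{-1}}\int_{B_\gamma}\sigma_t=\int_{\Gamma_9}dz^+$ has $\partial_{t_1}$-derivative $-1\neq0$ (the shift $g_t(z)=g(z)+t$ moves the endpoint $g_t^{-1}(p_9^-)$ with unit speed) while every surviving contribution vanishes, giving $\xi_{t_1}=0$; finally $\partial_{t_2}\int_{B_\alpha}\sigma_t=-1/(2t_2)\neq0$ from Remark \ref{rmk:colinear_iranai} forces $\xi_{t_2}=0$. Hence $\xi=0$, so $d\mathcal{P}_t$ is injective at the base point; injectivity persists after shrinking $T$ because full rank is an open condition, and by the factorization above $\rho_{{\rm KS},\pi}$ is injective for every $t\in T$.

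The main obstacle I anticipate is not the linear algebra but the bookkeeping of dependencies needed to make the matrix triangular: one must check carefully that the $C_\bullet^\pm$-periods are insensitive to the patching parameters $t_1,t_2$ and to the opposite-side points, that $A_{\beta\gamma}$ sees only $\tau$, and that the constrained ninth points $q_9^\pm$ (fixed by the Diophantine bundle $N_0(\tau)$) enter only through $B_\gamma$ and $C_{678}^\pm$ and are disposed of in the correct order. The choice of universal-cover coordinates for the moving points is precisely what strips the spurious $\tau$-dependence from the $C_\bullet^\pm$-periods and makes the elimination order $\tau\to q^\pm\to t_1\to t_2$ valid.
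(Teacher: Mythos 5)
Your proposal is correct, and it proves the full statement by a single uniform mechanism: compute the derivative of the (normalized) period map explicitly, show the resulting $19\times 22$ Jacobian of period integrals has trivial kernel via a triangular elimination in the order $\tau\to q^\pm\to t_1\to t_2$, and then pull injectivity back through the local Torelli identification $H^1(X_t,T_{X_t})\cong\mathrm{Hom}(H^{2,0},H^{1,1})$. The paper's own proof is a hybrid: for the two patching parameters it computes the Kodaira--Spencer classes directly as \v{C}ech $1$-cocycles $-t_0^{-1}\theta_1$ and $\theta_2$ for the cover $\{M^+,M^-\}$ (Lemmas \ref{lem:KS_map_for_w_deform} and \ref{lem:KS_map_for_g_deform}) and shows the Mayer--Vietoris coboundary $H^0(V,T_V)\to H^1(X,T_X)$ is injective using $H^0(M^\pm,T_{M^\pm})=0$ (Lemma \ref{lem:inj_MV_seq}, which needs Lemma \ref{lem:W*} and $H^0(S^\pm,T_{S^\pm})=0$); for the remaining seventeen parameters it invokes, as you do, the period computations of \S\ref{section:constr_cycles} together with Griffiths transversality, and Remark \ref{rmk:colinear_iranai} records your period-derivative argument $\partial_t\int_{B_\alpha}\sigma=-1/(2t)$ as an alternative for $t_2$ precisely to avoid the genericity hypothesis on $Z^\pm$ needed in Lemma \ref{lem:inj_MV_seq}. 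What the \v{C}ech route buys is an intrinsic description of the deformation classes independent of any period bookkeeping; what your route buys is (i) no collinearity assumption on the nine points, and (ii) an explicit verification that the nineteen tangent directions are \emph{jointly} independent in $H^1(X_t,T_{X_t})$ -- a point the paper leaves implicit, since checking each one-parameter subfamily separately would not by itself give injectivity of the full map. The only places where your argument needs the care you already flag are the off-diagonal vanishings that make the matrix triangular (in particular that $\int_{B_\gamma}\sigma=\int_{\Gamma_9}dz^+$ is insensitive to $t_2$, which holds because the tube computation depends only on the path in $C^+$ and not on the radius or the gluing constant), and these all check out against the formulas of \S\ref{section:constr_cycles}.
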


The injectivity of the Kodaira--Spencer map corresponding to $t_1$ and $t_2$ has already been shown in \S \ref{section:deform_patching}, 
while that corresponding to the rest parameters will be shown in \S \ref{section:realizability}. 
It is known that the moduli space of K3 surfaces has $20$ complex dimension. 
However, it is difficult to construct a deformation family $\pi\colon \mathcal{X}\to T$ with $\mathrm{dim}_{\mathbb{C}} T=20$, 
since the assumption that $(p,q)$ satisfies the Diophantine condition is essential in our construction. 

\begin{question}
What is the maximum dimension of a complex manifold $T$ over which there exists a deformation family $\pi\colon \mathcal{X}\to T$ whose fibers are K3 surfaces obtained by the construction as in \S \ref{section:construction} such that the Kodaira--Spencer map $\rho_{{\rm KS}, \pi}\colon T_{T, t}\to H^1(X_{t}, T_{X_{t}})$ is injective for all $t\in T$? 
\end{question}


\section{Proof of Theorem \ref{thm:main} and Corollary \ref{cor:main}}

\subsection{Proof of Theorem \ref{thm:main}}

First we notice that the K3 surfaces constructed in \S \ref{section:construction} admit Levi-flat hypersurfaces $\{H_t\}$ satisfying the condition in Theorem \ref{thm:main}. 
So in order to prove Theorem \ref{thm:main}, 
it is sufficient to show that a general K3 surface $X$ we constructed has $\rho(X)=0$, 
since $\rho(X) \ge 1$ if $X$ is projective, and $\rho(X) \ge 16$ if $X$ is Kummer. 
Here $\rho(X):=\mathrm{rank} \{ x \in H_2(X,\mathbb{Z}) \, | \, \int_{x} \sigma=0 \}$ is the Picard number of $X$. 
In the calculation of the integrals given in \S \ref{section:constr_cycles}, one can choose $(a_{\alpha},a_{\beta})$ and $\tau$ generally so that 
$\int_x \sigma \neq 0$ for any $x \in \mathbb{Z} A_{\alpha \beta} \oplus \mathbb{Z} A_{\beta \gamma} \oplus \mathbb{Z} A_{\gamma \alpha}$, 
and then choose $(p_1^{\pm},\dots,p_8^{\pm})$ generally so that  
$\int_x \sigma \neq 0$ for any 
$x \in (\mathbb{Z} A_{\alpha \beta} \oplus \mathbb{Z} A_{\beta \gamma} \oplus \mathbb{Z} A_{\gamma \alpha}) \oplus 
\bigoplus_{\nu=\pm} (\mathbb{Z} C_{12}^\nu \oplus \cdots \mathbb{Z} C_{78}^\nu \oplus \mathbb{Z} C_{678}^\nu)$. 
Finally, from the calculation of $\int_{B_{\gamma}} \sigma$ and Proposition \ref{prop:deform_w_g_points}, 
one can choose the patching parameters generally so that 
$\int_x \sigma \neq 0$ for any $2$-cycle $x \in H_2(X,\mathbb{Z})$ (here we used the Griffiths transversality on the relation between the Kodaira-Spencer map and the derivative of the period map, see \cite[p. 107, Proposition 2.4]{H}), which establishes Theorem \ref{thm:main}. 

\subsection{Proof of Corollary \ref{cor:main}}
One can easily deduce the corollary by considering the universal covering of a leaf of a Levi-flat hypersurface $H_t\subset X$ given in \S \ref{section:construction}. 
\qed


\section{Realizability in the period domain}\label{section:realizability}

In this section,  we discuss the realizability of our K3 surfaces in the period domain 
\[
\mathcal{D}_{\rm Period}:=\{ \xi \in \mathbb{P}(H_2(X,\mathbb{C})) \mid (\xi.\xi)=0, (\xi.\overline{\xi})>0 \}.
\] 
Let $X$ be a K3 surface constructed in \S \ref{section:construction},
and let $\sigma$ be the holomorphic $2$-form on X that is normalized as in \S \ref{section:constr_cycles}. 
Through the Poincar\'e duality, we identify $\widehat{\sigma}:=\sigma/(2\pi\sqrt{-1})$ with an element in $H_2(X,\mathbb{C})$, expressed as 
\[
\widehat{\sigma}=
a_{\alpha\beta}A_{\alpha\beta}+a_{\beta\gamma}A_{\beta\gamma}+a_{\gamma\alpha}A_{\gamma\alpha}
+b_\alpha B_\alpha + b_{\beta}B_\beta + b_\gamma B_{\gamma}
+\sum_{\bullet \in \{\pm\}}\Bigl( \sum_{j=1}^7 c_{j, j+1}^{\bullet} C_{j, j+1}^{\bullet} + c_{678}^{\bullet}C_{678}^{\bullet} \Bigl)
\]
with $a_\bullet, b_\bullet, c_\bullet^\pm \in \mathbb{C}$. 
From the arguments in \S \ref{section:constr_cycles}, we have the following proposition. 

\begin{proposition} \label{propositon:realizability}
With the notations in \S \ref{section:construction} and \S \ref{section:constr_cycles}, 
put $\mu:=\mu(N_{C/S}):=a_\beta-\tau\cdot a_\alpha$, 
$\Lambda_\pm=\Lambda_\pm(S^\pm, C^\pm):={\rm vol}_{\eta^\pm}(S^\pm\setminus W_{\rm max}^\pm)$ and  
$\Lambda:=\Lambda_++\Lambda_-$. 
Then we have the following: 
\begin{enumerate}
\item[$(i)$] $b_\alpha = \tau$, $b_{\beta} = 1$,  $b_\gamma=\mu$. \\
\item[$(ii)$] $\hat{\sigma}$ is orthogonal to $v:=A_{\alpha\beta}+a_\alpha\cdot A_{\beta\gamma}-a_\beta\cdot A_{\gamma\alpha}$ in $H_2(X, \mathbb{C})$. \\
\item[$(iii)$] $c_\bullet^{\pm}$ are determined uniquely by the positions of $(p_1^{\pm}, p_2^{\pm}, \dots, p_8^{\pm})$ in $(C_0^{\pm})^8$. \\
\item[$(iv)$] $a_{\alpha\beta} = 2\mu+\int_{\Gamma_9}dz$. \\
\item[$(v)$] By putting $x:=a_{\beta\gamma}, y:=a_{\gamma\alpha}$, we have  
\[
2\tau x+2y
+2\mu^2+2\mu\cdot\int_{\Gamma_9}dz-2\tau^2 -2+ \Theta(c^+,c^-)
=0
\]
and 
\[
2{\rm Re}(\overline{\tau} x)+2{\rm Re}(y)-2(|\tau|^2+1)
+4|\mu|^2+2{\rm Re}\left(\overline{\mu}\cdot\int_{\Gamma_9}dz\right)+\overline{\Theta}(c^+,c^-)
>\Lambda,
\]
where $\Theta(c^+,c^-)$ and $\overline{\Theta}(c^+,c^-)$ are constants depending only on the values $c^{\pm}:=(c_{12}^{\pm},\dots,c_{78}^{\pm}, c_{678}^{\pm})$. 
\end{enumerate}
\end{proposition}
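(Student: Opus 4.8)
The plan is to read off every coefficient of $\widehat\sigma$ from the intersection pairing, using the Poincar\'e duality identity $\int_C\widehat\sigma=(\widehat\sigma.C)$ together with the block decomposition $H_2(X,\mathbb{Z})=U\oplus U\oplus U\oplus E_8(-1)\oplus E_8(-1)$ recorded in \S\ref{section:introduction}, in which $A_{\alpha\beta}$ is dual to $B_\gamma$, $A_{\beta\gamma}$ to $B_\alpha$, $A_{\gamma\alpha}$ to $B_\beta$, and each hyperbolic plane has $(A_\bullet.A_\bullet)=0$, $(A_\bullet.B_\bullet)=1$, $(B_\bullet.B_\bullet)=-2$. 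First I would pair $\widehat\sigma$ with the three $A$-cycles: since $A_{\alpha\beta}$ meets only $B_\gamma$, the pairing $(\widehat\sigma.A_{\alpha\beta})$ collapses to $b_\gamma$, which must equal the period $\frac{1}{2\pi\sqrt{-1}}\int_{A_{\alpha\beta}}\sigma=a_\beta-\tau a_\alpha=\mu$; the same device gives $b_\alpha=\tau$ and $b_\beta=1$, proving $(i)$. Pairing with $B_\gamma$ next yields $(\widehat\sigma.B_\gamma)=a_{\alpha\beta}-2\mu$, which when equated with $\int_{B_\gamma}\widehat\sigma=\int_{\Gamma_9}dz$ gives $(iv)$.

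For $(ii)$ I would simply expand $(\widehat\sigma.v)=b_\gamma+a_\alpha\,b_\alpha-a_\beta\,b_\beta=\mu+\tau a_\alpha-a_\beta$, which vanishes by the very definition of $\mu$; note the block structure also makes $(v.v)=0$ automatic. Statement $(iii)$ is then a formal consequence of the non-degeneracy of $E_8(-1)$: the periods $\int_{C^\pm_\bullet}\widehat\sigma=\int_{\Gamma^\pm_\bullet}dz^\pm$ are fixed by the positions $(p_1^\pm,\dots,p_8^\pm)$ through the computation of \S\ref{section:def_C_bullet}, and since the Gram matrix of the $C^\pm_\bullet$ is invertible these periods determine $c^\pm_\bullet$ uniquely.

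The two relations in $(v)$ come from the two defining conditions of the period domain. The equality is the isotropy $(\widehat\sigma.\widehat\sigma)=0$: expanding blockwise gives $2a_{\alpha\beta}\mu-2\mu^2+2\tau x-2\tau^2+2y-2+\Theta(c^+,c^-)=0$ with $x=a_{\beta\gamma}$, $y=a_{\gamma\alpha}$ and $\Theta$ the $E_8(-1)$-quadratic form in $c^\pm$, and substituting $a_{\alpha\beta}=2\mu+\int_{\Gamma_9}dz$ from $(iv)$ produces exactly the stated equation. The inequality is where geometry enters. Expanding $(\widehat\sigma.\overline{\widehat\sigma})$ in the same way and inserting $(i)$ and $(iv)$ writes the algebraic left-hand side of $(v)$ in terms of $x,y,\mu,\tau,\int_{\Gamma_9}dz$ and $\overline\Theta$. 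To bound it below by $\Lambda$ I would pass to the volume form via $(\widehat\sigma.\overline{\widehat\sigma})=\frac{1}{4\pi^2}\int_X\sigma\wedge\overline\sigma$: decomposing $X$ as $M^+\cup M^-$ glued along $V$, recalling $\sigma|_{M^\pm}=\pm\eta^\pm$ from Proposition \ref{prop:K3}, and slicing each $M^\pm$ into the part lying in the maximal tubular neighborhood $W_{\rm max}^\pm$ (Lemma \ref{lem:W_max}) and its complement $S^\pm\setminus W_{\rm max}^\pm$, which contributes exactly $\Lambda_\pm$. The annular pieces integrate by the same polar-coordinate computation as in Proposition \ref{prop:finiteness_of_W^*}, and because the $R_{\rm max}^\pm$ are finite but strictly larger than $1$ this annular contribution is strictly positive; hence $\int_X\sigma\wedge\overline\sigma>\Lambda$, which is the claimed strict inequality.

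The main obstacle is precisely this last step. Parts $(i)$--$(iv)$ and the equality in $(v)$ are essentially bookkeeping with the fixed block intersection form and the periods already computed in \S\ref{section:constr_cycles}, but the inequality requires reconciling the Hodge-theoretic quantity $(\widehat\sigma.\overline{\widehat\sigma})$ with the geometric volume and with the invariantly defined numbers $\Lambda_\pm={\rm vol}_{\eta^\pm}(S^\pm\setminus W_{\rm max}^\pm)$. The delicate points are that the slicing must use the \emph{maximal} neighborhoods, so that the complementary volume is genuinely $\Lambda_\pm$ and independent of the gluing radii $R^\pm$ (by Lemma \ref{lem:W_max} and Proposition \ref{prop:finiteness_of_W^*}), and that the normalization constants relating $\sigma$, $\widehat\sigma$, $\eta^\pm$ and the polar-coordinate integrals must be tracked carefully, so that the geometric inequality $\int_X\sigma\wedge\overline\sigma>\Lambda$ matches the algebraic expansion of $(\widehat\sigma.\overline{\widehat\sigma})$ term by term.
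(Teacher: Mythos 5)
Your proposal is correct and follows exactly the route the paper intends: the paper prints no proof of this proposition (it is asserted as a consequence of the period computations of \S 3 together with the volume computation in the proofs of Propositions \ref{prop:finiteness_of_W^*} and Theorem \ref{theorem:realizability}), and your derivation of $(i)$--$(iv)$, $(ii)$, and the isotropy relation from the block intersection form, plus the decomposition $\int_X\sigma\wedge\overline{\sigma}=\Lambda_++\Lambda_-+(\text{positive annular contribution})$ for the strict inequality, is precisely that argument made explicit. One point your method would surface: expanding $(\widehat{\sigma}.\overline{\widehat{\sigma}})$ with $a_{\alpha\beta}=2\mu+\int_{\Gamma_9}dz$ and $b_\gamma=\mu$ gives $2|\mu|^2+2{\rm Re}\bigl(\overline{\mu}\cdot\int_{\Gamma_9}dz\bigr)$, consistent with the coefficient $2\mu^2$ in the holomorphic equality of $(v)$, whereas the printed inequality has $4|\mu|^2$ --- this appears to be a typo in the statement rather than a gap in your reasoning, but you should track it when matching the algebraic and geometric sides.
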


Conversely, we have the following theorem. 

\begin{theorem} \label{theorem:realizability}
Let $(p, q)$ be a pair of real numbers that satisfies the Diophantine condition. 
For $v=v_{(p, q)}:=A_{\alpha\beta}+p\cdot A_{\beta\gamma}-q\cdot A_{\gamma\alpha}$, we put 
\[
v^{\perp}:=\{ \xi \in \mathcal{D}_{\rm Period} \mid (\xi. v)=0 \}. 
\]
Then there exists a mapping 
\[
\Lambda_{(p, q)}\colon \mathbb{H}_\tau\times \mathbb{C}^8_{c^+}
\times \mathbb{C}^8_{c^-}\times \mathbb{C}_g
\to \mathbb{R}_{\geq 0}
\]
such that the following holds: 
if $\xi \in v^{\perp}$ with an expression
\[
\xi =a_{\alpha\beta}A_{\alpha\beta}+a_{\beta\gamma}A_{\beta\gamma}+a_{\gamma\alpha}A_{\gamma\alpha}
+b_\alpha B_\alpha + b_{\beta}B_\beta + b_\gamma B_{\gamma}
+\sum_{\bullet \in \{\pm\}}\Bigl( \sum_{j=1}^7 c_{j, j+1}^{\bullet} C_{j, j+1}^{\bullet} + c_{678}^{\bullet}C_{678}^{\bullet} \Bigl)
\]
satisfies the following conditions, then $\xi$ is realized as the image of the period map of a K3 surface constructed in \S \ref{section:construction}. 
\begin{enumerate}
\item[$(a)$] $b_\beta\not=0$. In what follows, we normalize $b_{\beta}$ so that $b_{\beta}=1$. \\
\item[$(b)$] $b_\alpha \in \mathbb{H}$. \\
\item[$(c)$] For $x:=a_{\beta\gamma}, y:=a_{\gamma\alpha}, \tau:=b_\alpha, \mu:=b_\gamma$, 
\begin{eqnarray}
&&2{\rm Re}(\overline{\tau} x)+2{\rm Re}(y)-2(|\tau|^2+1)
+4|\mu|^2+2{\rm Re}\left(\overline{\mu}\cdot\int_{\Gamma_9}dz\right)+\overline{\Theta}(c^+,c^-) \nonumber \\
&>&\Lambda_{(p, q)}(b_\alpha, c^+, c^-, a_{\alpha\beta}), \nonumber 
\end{eqnarray}
where $c^{\pm}$ and $\overline{\Theta}(c^+,c^-)$ are given in Proposition \ref{propositon:realizability}. 
\end{enumerate}
\end{theorem}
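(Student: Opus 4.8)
The plan is to reverse the gluing construction of \S\ref{section:construction}: from the coordinates of a given $\xi\in v_{(p,q)}^{\perp}$ satisfying $(a)$--$(c)$ I will read off every parameter entering the construction, build the corresponding K3 surface, and then verify by the period computations of \S\ref{section:constr_cycles} (packaged in Proposition \ref{propositon:realizability}) that its period equals $\xi$. A preliminary observation organizes the bookkeeping: pairing $\xi$ against $v_{(p,q)}=A_{\alpha\beta}+p\,A_{\beta\gamma}-q\,A_{\gamma\alpha}$ and using the $U\oplus U\oplus U\oplus E_8(-1)\oplus E_8(-1)$ structure gives $(\xi. v_{(p,q)})=b_\gamma+p\,b_\alpha-q\,b_\beta$, so after the normalization $b_\beta=1$ of $(a)$, the orthogonality $\xi\in v^{\perp}$ forces $\mu:=b_\gamma=q-p\tau$ with $\tau:=b_\alpha$. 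Thus the value $\mu$ and the relation $\mu=a_\beta-\tau a_\alpha$ of Proposition \ref{propositon:realizability}$(i)$ are automatic once we set the monodromy exponents to $a_\alpha:=p,\ a_\beta:=q$.

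First I would fix the rigid data. Condition $(b)$ gives $\tau=b_\alpha\in\mathbb{H}$, which is the modulus of the elliptic curves $C_0^{\pm}\cong\mathbb{C}/\langle 1,\tau\rangle$. Taking the $U(1)$-monodromy of the normal bundle to have exponents $(a_\alpha,a_\beta)=(p,q)$ produces a flat bundle $N_0\in{\rm Pic}^0(C_0^+)$; because the real pair $(p,q)$ satisfies the Diophantine condition, $N_0$ satisfies the Diophantine condition of Definition \ref{def:Dioph}, which is exactly the hypothesis needed to invoke Arnol'd's theorem (Theorem \ref{thm:Arnol'd_original}) in the construction. Inverting the correspondence of Proposition \ref{propositon:realizability}$(iii)$ recovers the eight-point configurations $(p_1^{\pm},\dots,p_8^{\pm})$ from $c^{\pm}$, hence $S^{\pm}$, $C^{\pm}$, and $\eta^{\pm}$; reading $(iv)$ backwards fixes $\int_{\Gamma_9}dz:=a_{\alpha\beta}-2\mu$, which determines the gluing isomorphism $g$ (the relative position of the ninth points). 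At this stage every ingredient of the gluing is pinned down except the radii $R^{\pm}$ and the scaling of the $w_j^{\pm}$.

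The remaining freedom is precisely the complex patching parameter of \S\ref{section:deform_patching_w}, namely the $t$ in $w^-=t/w^+$, together with the admissible radii. I would define $\Lambda_{(p,q)}(b_\alpha,c^+,c^-,a_{\alpha\beta}):=\Lambda_++\Lambda_-={\rm vol}_{\eta^+}(S^+\setminus W_{\rm max}^+)+{\rm vol}_{\eta^-}(S^-\setminus W_{\rm max}^-)$ using the maximal holomorphic tubular neighborhoods of Lemma \ref{lem:W_max}, whose finiteness is guaranteed by Propositions \ref{prop:finiteness_of_W^*} and \ref{prop:finiteness_of_W}; by construction this depends only on the reconstructed data $(\tau,c^+,c^-,a_{\alpha\beta})$ and not on $t$. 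The only period coordinates still to be matched are $x:=a_{\beta\gamma}$ and $y:=a_{\gamma\alpha}$, which through Poincar\'e duality are controlled by $\int_{B_\alpha}\sigma$ and $\int_{B_\beta}\sigma$; the dependence on $t$ is the logarithmic one computed in Remark \ref{rmk:colinear_iranai}. Since the constructed surface is automatically a K3, the holomorphicity relation $(\xi. \xi)=0$ holds, and it reproduces exactly the equality of Proposition \ref{propositon:realizability}$(v)$, which is $\mathbb{C}$-linear in $(x,y)$; hence $y$ is determined by $x$ and it suffices to realize the single complex quantity $x$.

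The crux, and the step I expect to be the main obstacle, is to show that $(c)$ is not merely necessary (as Proposition \ref{propositon:realizability}$(v)$ shows) but sufficient. The key is a volume decomposition: the total volume $\int_X\sigma\wedge\overline{\sigma}$, recorded by the quantity on the left-hand side of $(c)$, splits as the two fixed deficit volumes $\Lambda_{\pm}$ plus the volume of the annular region $V$, the latter being a strictly positive, strictly monotone function of $-\log|t|$ by the computation in Proposition \ref{prop:finiteness_of_W^*}. As $|t|\to R_{\rm max}^+R_{\rm max}^-$ the annular volume tends to $0$, so the realizable total volumes are exactly those strictly exceeding $\Lambda_++\Lambda_-$; equivalently, as $t$ ranges over its admissible punctured disc, $x$ sweeps out precisely the region cut out by the inequality $(c)$. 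Granting $(c)$, I solve $x(t)=x_\xi$ for an admissible $t$ with $|t|<R_{\rm max}^+R_{\rm max}^-$, whence $y(t)=y_\xi$ follows from the shared relation $(\xi. \xi)=0$, and all other coordinates $(\tau,\mu,c^{\pm},a_{\alpha\beta})$ agree by construction via Proposition \ref{propositon:realizability}$(i)$--$(iv)$; thus the constructed K3 surface has period exactly $\xi$. The delicate points are establishing the sharp upper bound $|t|<R_{\rm max}^+R_{\rm max}^-$ through the maximal neighborhoods and the surjectivity of the map $t\mapsto x$ onto the open region defined by $(c)$, rather than the essentially mechanical reconstruction of the rigid data.
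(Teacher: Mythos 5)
Your reconstruction of the rigid data from $\xi$ (the modulus $\tau$ from $b_\alpha$, the eight-point configurations from $c^\pm$, the ninth points from the Diophantine normal bundle $p-q\tau$, the gluing isomorphism $g$ from $a_{\alpha\beta}$) and your definition $\Lambda_{(p,q)}:=\Lambda_++\Lambda_-$ agree exactly with the paper. The gap is in the sufficiency of condition $(c)$, which you correctly identify as the crux but do not prove. After the rigid data are fixed, $(c)$ cuts out a region of \emph{two} real dimensions in the remaining coordinate $x=a_{\beta\gamma}$ (a half-plane $\{{\rm Im}\,x>N_4\}$, since $y$ is determined by $(\xi.\xi)=0$ and the left-hand side of $(c)$ is then affine in ${\rm Im}\,x$, cf.\ \S\ref{section:type_II_degeneration}). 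Your volume argument only shows that the image of the realizable set under the single real-valued function $\xi\mapsto(\xi.\overline{\xi})$ is all of $(\Lambda,\infty)$; the step ``equivalently, as $t$ ranges over its admissible punctured disc, $x$ sweeps out precisely the region cut out by $(c)$'' is a non sequitur --- a priori the realizable set could be a proper subset of the half-plane meeting every horizontal line. To control the second real dimension (${\rm Re}\,x$, governed by $\arg t$) you would have to integrate the infinitesimal formula of Remark \ref{rmk:colinear_iranai} to the global statement that $x$ is an affine function of $\log t$ on the universal cover of the punctured $t$-disc, keeping track of the monodromy of the marking ($B_\alpha\mapsto B_\alpha+A_{\gamma\alpha}$, etc.); this is essentially the content of \S\ref{section:type_II_degeneration} and is not supplied here. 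You also explicitly defer the sharp admissible range $|t|<R_{\rm max}^+R_{\rm max}^-$, so the two ``delicate points'' you name are exactly the content of the theorem.

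The paper avoids the explicit $t$-parametrization altogether. It sets $\Delta_{C_0^\pm,Z^\pm,g}$ to be the set of realizable $(x,y)$ and $D_{C_0^\pm,Z^\pm,g}$ the set cut out by $(c)$, notes $\Delta\subset D$ (Proposition \ref{propositon:realizability}), that $D$ is connected, and that $\Delta$ is open in $D$ (via the $19$-dimensional family and Torelli, Remark \ref{rmk:openness}); the substance of the proof is that $\Delta$ is \emph{closed} in $D$. This is done by a compactness argument: for $\xi_\nu\to\xi_\infty\in D$ one normalizes the gluing data of $X_\nu$ inside $W_{\rm max}^\pm$, uses the volume lower bound coming from $(c)$ to trap the radii $r_\nu^\pm,\ell_\nu^\pm$ and the ratio $\lambda_\nu$ in a compact region away from degeneration, extracts convergent subsequences of the radii and of the gluing maps $f_\nu$ (tracked through a point $\eta_\nu=f_\nu(\eta)$), and glues in the limit to realize $\xi_\infty$. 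If you wish to keep your direct route you must prove the global logarithmic dependence of the period on the patching parameter and the exactness of the admissible range of $|t|$; otherwise the open--closed argument is the way to close the gap.
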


We notice that the function $\Lambda_{(p, q)}$ is independent of the values $(x, y)=(a_{\beta\gamma}, a_{\gamma\alpha})$, 
where $(a_{\beta\gamma}, a_{\gamma\alpha})$ correspond to the integrals $\Bigl( \int_{B_\alpha}\sigma, \int_{B_\beta}\sigma \Bigr)$ 
that we still have not clarified. 

\begin{proof}
For a given $\xi \in v^{\perp}$, Proposition \ref{propositon:realizability} determines some of parameters of our K3 surfaces as follows:  
\begin{itemize}
\item The coefficients $(b_{\alpha},b_{\beta})$ determine $\tau$ and hence $C_0^{\pm}$. Moreover $b_{\gamma}$ is also determined from the condition 
$\xi \in v^{\perp}$. \\
\item The coefficients $c^{\pm}$ determine $(p_1^{\pm}, p_2^{\pm}, \dots, p_8^{\pm})$. Moreover $p_9^{\pm}$ and thus $Z^{\pm}$ are also determined 
from the condition $N_{C^{\pm}/S^{\pm}} \cong N_0^{\pm 1}$ with $N_0 \cong p - q \tau$. 
\item The coefficient $a_{\alpha \beta}$ determines $\int_{\Gamma_9}dz$ and hence $g$. 
\end{itemize}
Therefore the coefficients of $\xi$ other than $(x,y)=(a_{\beta\gamma}, a_{\gamma\alpha})$ uniquely determine the parameters other than the scaling of $w_j$'s and the choice of $R^{\pm}$. 
In particular, these coefficients determine $\Lambda_\pm={\rm vol}_{\eta^\pm}(S^\pm\setminus W_{\rm max}^\pm)$, 
which is independent of $(x,y)$. 
Hence by putting $\Lambda_{(p, q)}(b_\alpha, c^+, c^-, a_{\alpha\beta}-2b_\gamma):=\Lambda_++\Lambda_-$, 
we will show that if $(x,y)$ satisfies condition (c), then $\xi \in \mathcal{D}_{\rm Period}$ is realized as the period of a K3 surface we constructed. 
More precisely, we will show that the equality 
\[
\Delta_{C_0^{\pm},Z^{\pm},g}=D_{C_0^{\pm},Z^{\pm},g}
\]
holds, where $\Delta_{C_0^{\pm},Z^{\pm},g}$ the set of $(x,y)$ such that $\xi \in \mathcal{D}_{\rm Period}$ is realized 
as the period of a K3 surface we constructed, 
and $D_{C_0^{\pm},Z^{\pm},g}$ is the set of $(x,y)$ satisfying condition (c). 
Proposition \ref{propositon:realizability} says that the inclusion $\Delta_{C_0^{\pm},Z^{\pm},g} \subset D_{C_0^{\pm},Z^{\pm},g}$ holds. 
Since $D_{C_0^{\pm},Z^{\pm},g}$ is connected and $\Delta_{C_0^{\pm},Z^{\pm},g}$ is an open subset of $D_{C_0^{\pm},Z^{\pm},g}$ 
from the argument in \S \ref{section:construction_19_dim_family_summary} (see also Remark \ref{rmk:openness} below), it is enough to show that $\Delta_{C_0^{\pm},Z^{\pm},g}$ is a closed subset of $D_{C_0^{\pm},Z^{\pm},g}$, 
To this end, for any sequence $\{\xi_\nu=(x_\nu, y_\nu)\}\subset \Delta_{C_0^{\pm},Z^{\pm},g}$ with 
$\xi_\nu\to \xi_\infty=(x_\infty, y_\infty)\in D_{C_0^{\pm},Z^{\pm},g}$ as $\nu\to \infty$, 
we will show that $\xi_\infty\in \Delta_{C_0^{\pm},Z^{\pm},g}$. 

Let $W_{\rm max}^{\pm} \subset S^{\pm}$ and $R_{\rm max}^{\pm} >0$ be given is \S \ref{section:construction}. 
We can construct a K3 surface $X_{\nu}$ corresponding to $\xi_{\nu}$ by gluing 
$M_\nu^{\pm}:=S^{\pm} \setminus \{w \in W_{{\rm max}}^{\pm}\mid \Phi_{\pm}(w) \leq r_\nu^{\pm} \}$
via an identification 
\[
f_{\nu} : \{w \in W_{\rm max }^{+} \mid r_\nu^{+} <\Phi_{+}(w)< \ell_\nu^{+} \}
 \to \{ w \in W_{\rm max }^{-} \mid r_\nu^{-} <\Phi_{-}(w)< \ell_\nu^{-} \}
 \]
for some $0 < r_\nu^{\pm}<\ell_\nu^{\pm} < R_{\rm max}^{\pm}$. 
In particular, the Levi-flat hypersurfaces 
\[
H_{\ell_\nu^{\pm}}^{\pm}:=\{w\in W_{\rm max}^{\pm} \mid \Phi_{\pm}(w)= \ell_\nu^{\pm} \}
\]
are identified with the boundaries $\partial M_\nu^{\mp}$. 
Therefore the ratios $\ell_\nu^{\pm}/r_\nu^{\pm}$ coincide, which is denoted by $\lambda_\nu$. 
It should be noted that we can choose $\ell_{\nu}^{\pm}$ sufficiently close to $R_{\rm max}^{\pm}$ for each $\nu$, 
since for any $1<b^{\pm} <R_{\rm max}^{\pm} /\ell_\nu^{\pm}$ one may extend a domain of the identification $f$ as 
\[
f_{\nu} : \{ w \in W_{\rm max }^{+} \mid r_\nu^{+}/b^- <\Phi_{+}(w)< b^+ \cdot \ell_\nu^{+} \}
 \to \{ w \in W_{\rm max }^{-} \mid r_\nu^{-}/b^+ <\Phi_{-}(w)< b^- \cdot \ell_\nu^{-} \},
 \]
which gives rise to the same K3 surface $X_{\nu}$. 

As in the proof of Proposition \ref{prop:finiteness_of_W^*}, the volume of $X_\nu$ is given by
\[
\mathrm{vol}(X_{\nu})= \Lambda+4\pi \cdot \left(\int_C \sqrt{-1}\eta_C\wedge\overline{\eta_C}\right)
\cdot\left(\log\frac{R_{\rm max}^+}{\ell_\nu^+}+\log\frac{R_{\rm max}^-}{\ell_\nu^-}+\log \lambda_\nu\right). 
\] 
Since $\xi_\nu\to \xi_\infty$, a sequence of the volumes $\{ \mathrm{vol}(X_\nu) \}$ is convergent to a constant $> \Lambda$. 
Hence one may assume that there is a constant $P>1$ such that the estimates
\[
\frac{1}{P}<\log\frac{R_{\rm max}^+}{\ell_\nu^+}+\log\frac{R_{\rm max}^-}{\ell_\nu^-}+\log \lambda_\nu < P
\]
hold for each $\nu$. 
Moreover as $\ell_{\nu}^{\pm}$ is chosen sufficiently close to $R_{\rm max}^{\pm}$, 
we have the estimates 
\[
0 < \log\frac{R_{\rm max}^\pm}{\ell_\nu^\pm} < \frac{1}{4P}, \qquad 
\frac{1}{2P}< \log \lambda_\nu < P. 
\]
for each $\nu$. 
By passing to a subsequence if necessary, we assume that $\{ \lambda_\nu \}$ converges to a constant $\lambda_\infty\in [e^{1/(2P)}, e^P]$, 
$\{ \ell_\nu^{\pm} \}$ converges to a constant $\ell_\infty^{\pm} \in [e^{-1/(4P)}\cdot R_{\rm max}^{\pm}, R_{\rm max}^{\pm}]$, 
and thus $\{ r_\nu^{\pm} \}$ converges to $r_{\infty}^{\pm} :=\ell_{\infty}^{\pm}/\lambda_{\infty}<\ell_{\infty}^{\pm}$.  

Now put $a^{\pm}:=\sqrt{r_{\infty}^{\pm} \ell_{\infty}^{\pm}} \in (r_{\infty}^{\pm}, \ell_{\infty}^{\pm})$, 
and fix $\varepsilon>0$ so that $r_{\nu}^- < a^- - \varepsilon < a^-+\varepsilon < \ell_{\nu}^-$. 
Then, there is a positive integer $\nu_0>0$ such that $r_\nu^+<a^+<\ell_\nu^+$ for any $\nu \ge \nu_0$, 
which guarantees that $a_{\nu} \in (r_{\nu}^-,\ell_{\nu}^-)$ can be defined by $f_{\nu}(H_{a^+}^+)=H_{a_{\nu}}^-$, 
and $a_{\nu}$ satisfies $|a_{\nu} -a^-| \le \varepsilon$ for any $\nu \ge \nu_0$, 
where we notice that $\lim_{\nu \to \infty} a_{\nu}=a^-$ from our construction. 
Fix $\eta \in H_{a^+}^+$, and put $\eta_{\nu}:=f_{\nu}(\eta) \in H_{a_{\nu}}^-$ for any $\nu \ge \nu_0$. 
Since $\eta_{\nu}$ is contained in the compact set $\bigcup_{|a-a_{\nu}| \le \varepsilon} H_{a}^-$, 
by passing to a subsequence if necessary, $\{\eta_{\nu} \}$ converges to a point $\eta_{\infty}$ in $V_{\infty}^-$, where 
$V_{\infty}^{\pm}:=\{ w \in W_{\rm max}^{\pm} \mid r_\infty^{\pm}< \Phi_{\pm}(w) < \ell_\infty^{\pm}  \}$. 

Hence we can define a K3 surface $X_\infty$ by gluing 
$M_{\infty}^{\pm}:=S^{\pm} \setminus \{w \in W_{{\rm max}}^{\pm}\mid \Phi_{\pm}(w) \leq r_\infty^{\pm} \}$
via an identification $f_{\infty} : V_{\infty}^{+} \to V_{\infty}^{-}$, 
where $f_{\infty} : V_{\infty}^{+} \to V_{\infty}^{-}$ is a unique isomorphism characterized by 
the conditions $f_{\infty}(\eta)=\eta_{\infty}$ and $g \circ p^{+} = p^{-} \circ f_{\infty}$ with the natural projections 
$p^{\pm} \colon W_{\rm max}^{\pm} \to C^{\pm}$. 
Note that the automorphism group of the annulus $\{z \in \mathbb{C} \mid r< |z| <\ell \}$ 
is generated by $U(1)$-rotations and the map $z \mapsto r \ell/z$. 
The period of K3 surface $X_{\infty}$ corresponds to $\xi_\infty \in \Delta_{C_0^{\pm},Z^{\pm},g}$, 
which establishes the theorem. 
\end{proof}

\begin{remark}\label{rmk:openness}
Let $\pi\colon \mathcal{X}\to T$ be the family as in \S \ref{section:construction_19_dim_family_summary}. 
By Torelli Theorem, there exists a holomorphic map $i\colon T\to \mathcal{D}_{\rm Period}$ such that $i^*\mathcal{U}:=\mathcal{U}\times_{\mathcal{D}_{\rm Period}}T\to T$ coincides with $\pi\colon \mathcal{X}\to T$, where $\mathcal{U}\to \mathcal{D}_{\rm Period}$ is the universal family. 
By Proposition \ref{prop:deform_w_g_points}, $i$ is an embedding. 
Therefore, by considering an open set defined by $i(T)\cap D_{C_0^{\pm},Z^{\pm},g}$, one has that $\Delta_{C_0^{\pm},Z^{\pm},g}$ is open in $D_{C_0^{\pm},Z^{\pm},g}$. 
\end{remark}

The following corollary follows from Theorem \ref{theorem:realizability} and the relative variant of Arnol'd's theorem. 

\begin{corollary}\label{cor:realizability_main}
Let $\Xi_{(p, q)}$ be the set of $\xi \in v_{(p, q)}^\perp$ satisfying the conditions $(a), (b), and\ (c)$ in Theorem \ref{theorem:realizability}. 
Then there exists a proper holomorphic submersion $\pi \colon \mathcal{X}\to \Xi_{(p, q)}$ such that 
each fiber is a K3 surface constructed in \S \ref{section:construction} whose period map yields the identity map $\Xi_{(p, q)}\to \Xi_{(p, q)}$. 
\end{corollary}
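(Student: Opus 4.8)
The plan is to carry out the gluing construction of \S\ref{section:construction} relative to the base $\Xi_{(p,q)}$, using the relative Arnol'd theorem to make every ingredient depend holomorphically on $\xi$. Theorem \ref{theorem:realizability} already produces, for each individual $\xi\in\Xi_{(p,q)}$, a K3 surface $X_\xi$ of the required form whose period is $\xi$; what remains is to organize the $X_\xi$ into a single proper holomorphic submersion and to check that its marked period map is the inclusion $\Xi_{(p,q)}\hookrightarrow\mathcal{D}_{\rm Period}$, i.e.\ the identity onto its image.

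First I would read off the construction data as holomorphic functions of $\xi$. By Proposition \ref{propositon:realizability} and the proof of Theorem \ref{theorem:realizability}, the coefficients of $\xi$ determine $\tau=b_\alpha$ (hence $C_0^\pm=\mathbb{C}/\langle 1,\tau\rangle$ and $N_0=p-q\tau$, which is Diophantine since $(p,q)$ is), the positions $(p_1^\pm,\dots,p_8^\pm)$ from $c^\pm$, the ninth points $p_9^\pm$ and thus $Z^\pm,S^\pm,C^\pm$ from $N_{C^\pm/S^\pm}\cong N_0^{\pm1}$, and the gluing isomorphism $g$ from $\int_{\Gamma_9}dz=a_{\alpha\beta}-2b_\gamma$; all of these vary holomorphically with $\xi$. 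Over $\Xi_{(p,q)}$ this yields holomorphic families $\mathcal{S}^\pm\to\Xi_{(p,q)}$ of blow-ups together with the relative curves $\mathcal{C}^\pm$, exactly as in \S\ref{section:deform_fix_dioph_patch}. Applying the generalized relative Arnol'd theorem (Theorem \ref{thm:rel_Arnol'd_gen}) on a covering of $\Xi_{(p,q)}$ by balls furnishes holomorphic tubular neighborhoods of $\mathcal{C}^\pm$; because the fiberwise maximal neighborhood $W_{\rm max}^\pm$ of Lemma \ref{lem:W_max} is canonically determined, the uniqueness of its adapted coordinates lets these local neighborhoods be glued into a global relative maximal tubular neighborhood $\mathcal{W}_{\rm max}^\pm\subset\mathcal{S}^\pm$.

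Next I would install the remaining, size-type parameters. The coefficients $(x,y)=(a_{\beta\gamma},a_{\gamma\alpha})$, which are the free directions along $\Xi_{(p,q)}$, determine the radii $r^\pm<\ell^\pm<R_{\rm max}^\pm$ and the gluing map $f_\xi$ precisely as in the realizability construction; here condition $(c)$ of Theorem \ref{theorem:realizability} is exactly what guarantees that admissible radii exist, via the identity $\mathrm{vol}(X_\xi)=\Lambda+4\pi\bigl(\int_{C}\sqrt{-1}\,\eta_C\wedge\overline{\eta_C}\bigr)\cdot(\cdots)$, whose inversion is holomorphic in $\xi$ (the nonvanishing derivative $\frac{\partial}{\partial t}\int_{B_\alpha}\sigma=-\tfrac{1}{2t}$ of Remark \ref{rmk:colinear_iranai} makes this precise). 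Removing the relative tube $\Phi_\pm^{-1}([0,r^\pm])$ from $\mathcal{S}^\pm$ and gluing the resulting $\mathcal{M}^\pm$ along the relative annulus $\mathcal{V}$ by the holomorphically varying $f_\xi$ produces a proper holomorphic submersion $\pi\colon\mathcal{X}\to\Xi_{(p,q)}$; by Proposition \ref{prop:K3} each fiber is a K3 surface of the form constructed in \S\ref{section:construction}. Transporting the $2$-cycles $A_\bullet,B_\bullet,C_\bullet^\pm$ of \S\ref{section:constr_cycles} across the construction gives a flat marking of the family, and the period computations of \S\ref{section:constr_cycles} together with Proposition \ref{propositon:realizability} show that the period of $(\mathcal{X}_\xi,\text{marking})$ equals $\xi$; hence the period map is the identity on $\Xi_{(p,q)}$.

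The hard part will be the globalization in the second step: the relative Arnol'd theorem is stated only after shrinking the base, so patching the local tubular neighborhoods into one defined over all of $\Xi_{(p,q)}$ requires the canonicity of $\mathcal{W}_{\rm max}^\pm$ and the uniqueness behind Lemma \ref{lem:W_max} to force the adapted coordinates to agree on overlaps, up to the $U(1)$-scalings that $f_\xi$ already absorbs. One must also check that $r^\pm,\ell^\pm$ stay strictly inside $(0,R_{\rm max}^\pm)$, which is where condition $(c)$ enters, and that the marking is single-valued over the connected base $\Xi_{(p,q)}$. As a cross-check, one may instead pull back the universal family of marked K3 surfaces along $\Xi_{(p,q)}\hookrightarrow\mathcal{D}_{\rm Period}$ and use Theorem \ref{theorem:realizability} together with the strong Torelli theorem to identify the fibers, as already exploited in Remark \ref{rmk:openness}.
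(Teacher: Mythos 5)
Your proposal is correct and follows essentially the same route as the paper, which derives the corollary by combining Theorem \ref{theorem:realizability} (pointwise realizability of each $\xi\in\Xi_{(p,q)}$) with the relative Arnol'd theorem to make the gluing data vary holomorphically over the base. Your elaboration of the globalization of the relative tubular neighborhoods via the canonicity of $W_{\rm max}^\pm$ and of the marked period computation fills in details the paper leaves implicit, but the underlying argument is the same.
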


Theorem \ref{thm:over_B_pq} follows from the above corollary. 

\section{Examples}

\subsection{An example of a K3 surface with an involution which switches $M^+$ and $M^-$}\label{section:involution_eg}

\subsubsection{Preliminary for the construction}

Let $N\to C$ be a flat line bundle over an elliptic curve $C$. 
Assume that $N$ is a non-torsion element of ${\rm Pic}^0(C)$. 
Take a flat metric $h$ of $N$. 
Denote by $\widehat{W}^{(r)}$ the subset $\{\xi\in N\mid |\xi|_h<r\}$ for each $r>0$, 
and by $\widehat{W}$ the set $\widehat{W}^{(1)}$, and by 
$\widehat{C}$ the zero-section. 

Let $\{U_j\}$ be an open covering of $C$, $z_j$ be a coordinate of $U_j$ with 
$z_j=z_k+A_{jk}$ for some $A_{jk}\in\mathbb{C}$ on each $U_{jk}$. 
Denote by $\pi\colon \widehat{W}\to C$ the restriction of the natural projection $N\to C$, 
and by $\widehat{W}_j$ the pull-back $\pi^{-1}(U_j)$. 
Let $w_j$ be a fiber coordinate of $\widehat{W}_j$ with $w_j=t_{jk}w_k$ on each  $\widehat{W}_{jk}$ for some $t_{jk}\in U(1)$. 
Denoting $\pi^*z_j:=z_j\circ \pi$ also by $z_j$, we regard $(z_j, w_j)$ as coordinates of $\widehat{W}_j$. Note that we may assume 
$|(z_j, w_j)|_h=|w_j|$ by scaling. 
For each $B\in \mathbb{C}^*$, we denote by ${\rm Rot}_B\colon \widehat{W}\to \widehat{W}$ the automorphism defined by ${\rm Rot}_B(z_j, w_j):=(z_j, B\cdot w_j)$ on each $\widehat{W}_j$.

\begin{proposition}\label{prop:aut_W_C}
Let $F\colon \widehat{W}\to N$ be an open holomorphic embedding such that 
$F(\widehat{C})=\widehat{C}$ and $F|_{\widehat{C}}={\rm id}_{\widehat{C}}$ hold. 
Then, there exists an element $B\in \mathbb{C}^*$ such that $F={\rm Rot}_B$ holds. 
\end{proposition}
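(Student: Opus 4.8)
The plan is to linearize $F$ by expanding it as a power series along the fibers of $N$ and then to annihilate all higher-order terms using the non-torsion hypothesis. Since both $F$ and any candidate ${\rm Rot}_B$ are holomorphic on the connected domain $\widehat{W}$, by the identity theorem it suffices to prove the equality on a neighborhood of the zero section $\widehat{C}$; there the image $F(\xi)$ lies over a base point close to $\pi(\xi)$, so I may write $F$ in the chart $\widehat{W}_j$ as $(z_j,w_j)\mapsto(Z_j,W_j)$ and expand $Z_j(z_j,w_j)=z_j+\sum_{n\ge 1}a_n^{(j)}(z_j)w_j^n$ and $W_j(z_j,w_j)=\sum_{n\ge 1}b_n^{(j)}(z_j)w_j^n$. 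The constant-in-$w_j$ terms are already pinned down by the normalization: the hypotheses $F(\widehat{C})=\widehat{C}$ and $F|_{\widehat{C}}={\rm id}_{\widehat{C}}$ give $F(z_j,0)=(z_j,0)$, whence the $w_j^0$-term of $Z_j$ is $z_j$ and that of $W_j$ is $0$. The coefficients $a_n^{(j)},b_n^{(j)}$ are holomorphic on $U_j$ by the Cauchy integral formula in $w_j$.

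First I would record how these coefficients transform across overlaps. Since $F$ is a globally well-defined map into $N$, the two coordinate expressions of $F(p)$ on $\widehat{W}_{jk}$ are related by the transition rules $z_j=z_k+A_{jk}$ and $w_j=t_{jk}w_k$ of $N$, applied both to the source point $p$ and to its image. Substituting $z_k=z_j-A_{jk}$, $w_k=t_{jk}^{-1}w_j$ into the expansions and matching powers of $w_j$, I expect to obtain $a_n^{(j)}=t_{jk}^{-n}\,a_n^{(k)}$ and $b_n^{(j)}=t_{jk}^{\,1-n}\,b_n^{(k)}$ on $U_{jk}$. In other words, $\{a_n^{(j)}\}_j$ is a holomorphic section of $N^{-n}$ and $\{b_n^{(j)}\}_j$ is a holomorphic section of $N^{1-n}$.

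Now the non-torsion hypothesis does the work. Since $N\in{\rm Pic}^0(C)$ is non-torsion, the flat line bundle $N^{m}$ is a non-trivial degree-$0$ line bundle for every $m\neq 0$, so $H^0(C,N^{m})=0$ (a degree-$0$ bundle admitting a nonzero section is trivial). Consequently $a_n\equiv 0$ for all $n\ge 1$ since $-n\neq 0$, and $b_n\equiv 0$ for all $n\ge 2$ since $1-n\neq 0$, while $b_1$ is a section of $N^{0}=\mathcal{O}_C$, hence a single constant $B\in\mathbb{C}$ independent of $j$. This gives $F(z_j,w_j)=(z_j,B\,w_j)={\rm Rot}_B(z_j,w_j)$ near $\widehat{C}$, and hence on all of $\widehat{W}$ by the identity theorem. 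Finally, $F$ being an open holomorphic embedding forces its Jacobian to be nowhere zero, so $B\neq 0$, i.e. $B\in\mathbb{C}^*$.

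The main obstacle will be bookkeeping rather than conceptual: I must justify that, near the zero section, the image of a point lying over $U_{jk}$ again lies over $U_{jk}$, so that the transition relations are legitimately applied to $F(p)$, and that the Taylor coefficients genuinely patch into global sections of $N^{-n}$ and $N^{1-n}$ with exactly the transition laws claimed (getting the exponents $-n$ and $1-n$ right is the one place an error could creep in). Once the transformation rules $a_n^{(j)}=t_{jk}^{-n}a_n^{(k)}$ and $b_n^{(j)}=t_{jk}^{1-n}b_n^{(k)}$ are pinned down correctly, the vanishing $H^0(C,N^{m})=0$ for $m\neq 0$ makes the remainder immediate.
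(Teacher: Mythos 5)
Your proof is correct, and it takes a genuinely different route from the one in the paper. You expand $F$ fiberwise as a power series in $w_j$, observe from the transition rules $z_j=z_k+A_{jk}$, $w_j=t_{jk}w_k$ that the $n$-th Taylor coefficients of the base and fiber components transform as sections of $N^{-n}$ and $N^{1-n}$ respectively, and then kill everything except the linear fiber term using $H^0(C,N^m)=0$ for $m\neq 0$, which is exactly where the non-torsion hypothesis enters; the identity theorem then propagates the conclusion from a neighborhood of $\widehat{C}$ to all of $\widehat{W}$. The paper instead argues analytically: it applies a plurisubharmonic-function lemma (that $F^*\log|w_j|=\log|w_j|+c$, proved via the density of the leaves of the Levi-flat hypersurfaces $\{|w_j|=t\}$ and the maximum principle, together with $dd^c\varphi=[\widehat{C}]$) to get $W_j=B_j\cdot w_j$ with $|B_j|$ constant, and then a vector-field computation combined with Lemma~\ref{lem:W*} to show $\partial Z_j/\partial w_j\equiv 0$, before checking $B_j=B_k$ on overlaps by hand. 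Both arguments use the non-torsion hypothesis in an essential way, but yours localizes it in the single cohomological vanishing $H^0(C,N^m)=0$ ($m\neq 0$) and is more elementary and self-contained (it is close in spirit to the expansion technique of Ueda theory that the paper uses elsewhere), whereas the paper's proof reuses the Levi-flat/psh machinery already set up for the gluing construction. The bookkeeping issues you flag --- that near the zero section the image of a point over $U_{jk}$ again lies over a slightly shrunken $U_{jk}$, and that the coefficient functions are holomorphic in $z_j$ by the Cauchy integral formula --- are genuine but routine, and the exponents $-n$ and $1-n$ you compute are the right ones.
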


\begin{proof}
Take an open covering $\{U_j^*\}$ of $C$ such that the index set $\{j\}$ coincides with that of $\{U_j\}$ and that $U_j^*\Subset U_j$ holds on each $j$ (i.e. $U_j^*$ is a relatively compact subset of $U_j$). 
Take a sufficiently small positive number $\varepsilon>0$ such that $F^{-1}\left(\widehat{W}^{(\varepsilon)}_j\right)\subset \widehat{W}_j$ holds, where $\widehat{W}^{(\varepsilon)}_j:=\pi^{-1}(U_j^*)\cap \widehat{W}^{(\varepsilon)}$. 
Define functions $p_j(z_j, w_j)$ and $q_j(z_j, w_j)$ on $\widehat{W}^{(\varepsilon)}_j$ by 
$p_j:=\left(F|_{F^{-1}\left(\widehat{W}^{(\varepsilon)}_j\right)}\right)^*z_j$ and $q_j:=\left(F|_{F^{-1}\left(\widehat{W}^{(\varepsilon)}_j\right)}\right)^*w_j$: 
i.e. $F|_{\widehat{W}^{(\varepsilon)}_j}(z_j, w_j)=(p_j(z_j, w_j), q_j(z_j, w_j))$. 
Consider the psh (plurisubharmonic) function $\varphi$ on 
$\widehat{W}$ defined by 
$\varphi|_{\widehat{W}_j}(z_j, w_j)=\log |w_j|$. 
Then it follows from Lemma \ref{lem:uniqueness_psh_varphi_nbhd_of_C_hat} below that there exists a constant $c\in \mathbb{C}$ such that 
$\log |q_j(z_j, w_j)|=c+\log |w_j|$ holds, which means that the holomorphic function 
$B_j$ on each $F^{-1}\left(\widehat{W}^{(\varepsilon)}_j\right)$ defined by
$B_j(z_j, w_j):=q_j(z_j, w_j)/w_j$ is a constant function with $|B_j|\equiv c$. 
Thus we have that $q_j(z_j, w_j)=B_j\cdot w_j$. 

Denote by $\theta_1$ and $\theta_2$ the global holomorphic vector field on
$\widehat{W}$ defined by 
\[
\theta_1:=\frac{\partial}{\partial z},\ \ 
\theta_2:=w\frac{\partial}{\partial w}, 
\]
and consider a global holomorphic function on a neighborhood of $\widehat{C}$ defined by 
$F_*\theta_2(dz_j)=\theta_2(dp_j)=w_j\frac{\partial p_j}{\partial w_j}$ (here we used the fact that $dz_j$'s glue up to define a global holomorphic $1$-form on $\widehat{W}$). 
As this function is a globally defined holomorphic function on $\widehat{W}^{(\delta)}$ for sufficiently small positive number $\delta$ which identically vanishes on $\widehat{C}$, it follows from Lemma \ref{lem:W*} that this function is the zero-map. Especially we have that 
\[
\frac{\partial p_j}{\partial w_j}\equiv 0
\]
holds on each $j$. 
As $p_j(z_j, 0)=z_j$ by the assumption, one has that 
$p_j(z_j, w_j)=z_j$ holds on each $j$. 

Take an element $x\in \widehat{W}_{jk}^{(\varepsilon)}$ and let
$x=(z_j, w_j)$ and $x=(z_k, w_k)$ be the corresponding coordinates on $\widehat{W}_j^{(\varepsilon)}$ and $\widehat{W}_k^{(\varepsilon)}$, respectively. 
Then now we have that 
$F(x)=(z_j, B_jw_j)=(z_k, B_kw_k)$ holds. 
Therefore we have that $w_j=t_{jk}w_k$ and 
$B_jw_j=t_{jk}B_kw_k$. 
Thus we obtain 
$B_j=B_k$, which proves the proposition. 
\end{proof}

\begin{lemma}\label{lem:uniqueness_psh_varphi_nbhd_of_C_hat}
Let $F$ and $\varphi$ be those in the proof of Proposition \ref{prop:aut_W_C}. 
Then there exists a constant $c\in \mathbb{R}$ such that 
$F^*\varphi=\varphi+c$ holds on a neighborhood of $\widehat{C}$, where $F^*\varphi=\varphi\circ F$. 
\end{lemma}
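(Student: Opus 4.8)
The plan is to show that the difference $\psi := F^*\varphi - \varphi$ is a single globally defined pluriharmonic function on a neighborhood of $\widehat{C}$ which is bounded near $\widehat{C}$ and constant along each Levi leaf, and then to deduce that $\psi$ must be constant. First I would write $F$ in the coordinates $(z_j, w_j)$ as $F(z_j, w_j) = (p_j, q_j)$, exactly as in the proof of Proposition \ref{prop:aut_W_C}. The hypotheses $F(\widehat{C}) = \widehat{C}$ and $F|_{\widehat{C}} = \mathrm{id}$ give $q_j(z_j, 0) = 0$ and $p_j(z_j, 0) = z_j$, while the fact that $F$ is a biholomorphism forces $\partial q_j/\partial w_j(z_j, 0) \neq 0$. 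Hence, after shrinking the neighborhood, one can factor $q_j = w_j \cdot u_j$ with $u_j$ holomorphic and nowhere vanishing. Since $|(z_j, w_j)|_h = |w_j|$ by the scaling convention, this yields $F^*\varphi = \log|q_j| = \log|w_j| + \log|u_j| = \varphi + \log|u_j|$, so that $\psi = \log|u_j|$. Because $u_j$ is holomorphic and nonvanishing, $\psi$ is pluriharmonic on a full neighborhood of $\widehat{C}$; it is globally well defined (both $F^*\varphi$ and $\varphi$ are), and being continuous on a compact neighborhood of $\widehat{C}$ it is bounded there.

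Next I would show that $\psi$ is constant on each Levi-flat hypersurface $H_r = \{|w|_h = r\}$ by the same mechanism used in Lemma \ref{lem:W*}. Since $H_r$ is compact, $\psi|_{H_r}$ attains its maximum at some point $x_r$; the leaf $L$ through $x_r$ is a complex curve holomorphically immersed in the neighborhood, so $\psi|_L$ is harmonic, and it attains an interior maximum at $x_r$. The maximum principle then forces $\psi|_L$ to be constant, and the density of $L$ in $H_r$ — which holds because $N$ is a non-torsion element of $\mathrm{Pic}^0(C)$ — together with continuity gives $\psi \equiv \psi(x_r)$ on all of $H_r$. Thus $\psi$ depends only on $r$, say $\psi = \chi(\log r)$ for some function $\chi$.

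Finally, applying pluriharmonicity to $\psi = \chi(\log|w|)$ reduces, upon computing the single surviving mixed derivative $\partial_w \partial_{\bar w}\psi$, to the condition $\chi'' = 0$, so $\chi$ is affine, $\chi(s) = as + b$; the boundedness of $\psi$ as $r \to 0$ then forces $a = 0$, whence $\psi \equiv b$ is a real constant and $F^*\varphi = \varphi + b$ on a neighborhood of $\widehat{C}$. The main obstacle is the second step: harmonicity on a single leaf is too weak to yield constancy, because the leaves are noncompact (biholomorphic to $\mathbb{C}$ or $\mathbb{C}^*$), so one cannot argue leaf by leaf in isolation. The resolution is to exploit the compactness of the whole hypersurface $H_r$ together with the density of each leaf — precisely the interplay between the maximum principle and density already exploited in Lemma \ref{lem:W*} — which propagates the constancy from one leaf to the entire $H_r$, and from there the elementary analysis of $\chi$ finishes the argument.
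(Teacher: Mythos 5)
Your proof is correct, and its core mechanism is the same as the paper's: restrict the pluriharmonic difference to the compact Levi-flat hypersurfaces $H_r$, use the maximum principle on the leaf through a maximizing point together with the density of leaves (which is where the non-torsion hypothesis on $N$ enters), conclude that the function is radial, and solve the resulting ODE to get an affine function of $\log r$. The one genuinely different ingredient is how you pin down the coefficient of $\log r$. The paper works directly with $F^*\varphi$, which is singular along $\widehat{C}$, obtains $F^*\varphi=\alpha+\beta\log|w_j|$, and identifies $\beta=1$ by computing $dd^c(F^*\varphi)=[\widehat{C}]$ as a current (a Lelong-number/Poincar\'e--Lelong type argument). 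You instead factor $q_j=w_j\cdot u_j$ with $u_j$ nowhere vanishing --- justified by $F|_{\widehat{C}}=\mathrm{id}_{\widehat{C}}$ and the invertibility of $dF$, which force the induced map on the normal bundle, i.e.\ $\partial q_j/\partial w_j|_{\widehat{C}}$, to be nonzero --- so that $\psi=F^*\varphi-\varphi=\log|u_j|$ is a globally defined \emph{bounded} pluriharmonic function on a full neighborhood of $\widehat{C}$, and boundedness as $r\to 0$ kills the $\log$ term. Your route is more elementary in that it avoids currents entirely; the paper's route avoids the coordinate factorization and generalizes more readily to settings where one only controls $dd^c\varphi$ as a current rather than a local defining function. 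Both are complete; the only point worth stating explicitly in your write-up is that $|u_j|=|u_k|$ on overlaps (immediate from $\psi=F^*\varphi-\varphi$ being globally defined off $\widehat{C}$ and continuity across it), so that $\psi$ is indeed a single pluriharmonic function to which the $H_r$-argument applies.
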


\begin{proof}
As $F^*\varphi$ is pluriharmonic on $\widehat{W}^{(\varepsilon)}\setminus\widehat{C}$, it follows from the same argument as in the proof of Lemma \ref{lem:W*} 
that $F^*\varphi|_{H_t}$ is a constant map for each $t\in (0, \varepsilon)$, where $H_t:=\{\xi\in N\mid |\xi|_h=t\}$, 
or equivalently, the function $F^*\varphi(z_j, w_j)$ depends only on $|w_j|$ 
(Note that here we used the assumption that $N$ is a non-torsion element of ${\rm Pic}^0(C)$). 
Thus there exists a function $\psi\colon (0, \varepsilon)\to \mathbb{R}$ with $\lim_{t\to 0}\psi(t)=-\infty$ such that $F^*\varphi(z_j, w_j)=\psi(|w_j|)$ holds on each $\widehat{W}^{(\varepsilon)}_j$. 
Note that $\psi$ is a $C^\infty$'ly smooth function by the regularity theorem. 
By the pluriharmonicity, one has the differential equation on $\psi$, by solving which we have that $\psi(t)=\alpha+\beta\cdot \log t$ for some constants $\alpha, \beta\in\mathbb{R}$. 
As $dd^c\varphi$ is the current $[\widehat{C}]$ defined by the integration along $\widehat{C}$ and $F$ is a biholomorphism which does not move $\widehat{C}$, one has that $dd^c(F^*\varphi)$ also coincides with $[\widehat{C}]$, which means that $\beta=1$. 
Thus the lemma follows by letting $c:=\alpha$. 
\end{proof}

Let $S$ be the blow-up of $\mathbb{P}^2$ at the nine points on a smooth cubic curve $C_0\subset \mathbb{P}^2$ such that $N:=N_{C/S}$ satisfies the Diophantine condition, where $C$ is the strict transform of $C_0$. 
Take a holomorphic tubular neighborhood $W$ of $C$ such that there is an isomorphism $H\colon\widehat{W}^{(R)}\to W$ from $\widehat{W}^{(R)}=\{\xi\in N\mid |\xi|_h<R\}$ for some $R>0$. 
Let $\tau\in \mathbb{H}$ be the modulus of $C$. 
In what follows, we sometimes identify $C$ with the quotient $\mathbb{C}/\langle 1, \tau \rangle$ of the complex plain $\mathbb{C}$ with the coordinate $z$. 
Denote by the involution $\iota\colon C\to C$ induced by the automorphism $z\mapsto -z$ of $\mathbb{C}$. 

As $N$ is a flat line bundle, one may regard $N$ as the quotient $\mathbb{C}^2/\sim$ of $\mathbb{C}^2$ with coordinates $(z, w)$, where ``$\sim$'' is the relation 
generated by $(z, w)\sim (z+1, t_1w) \sim (z+\tau, t_\tau w)$ for suitable elements $t_1, t_\tau\in {\rm U}(1)$. 
Note that the projection $N\to C$ is the one induced by $(z, w)\mapsto z$. 
Note also that the pull-back $\iota^*N$ can be regarded as $\mathbb{C}/\sim_\iota$, 
where ``$\sim_\iota$'' is the relation generated by 
$(z, w)\sim_\iota (-z-1, t_1w) \sim_\iota (-z-\tau, t_\tau w)$. 
As is observed by this easily, it holds that $\iota^*N\cong N^{-1}$. 
Denote by $I\colon N\to \iota^*N$ the map induced from the automorphism of $\mathbb{C}^2$ defined by $(z, w)\mapsto (-z, w)$. 

\begin{proposition}\label{prop:aut_S_C}
Let $F\colon S\to S$ be an automorphism such that $F(C)=C$ and $F|_{C}=\iota$ hold. 
Then there exists an element $B\in {\rm U}(1)$ such that 
$F|_W={\rm Rot}_B\circ I|_W$. 
Especially, it holds that $F(W)=W$ and that $F|_W$ is an automorphism of $W$. 
\end{proposition}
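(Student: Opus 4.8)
The plan is to reduce the whole statement to Proposition \ref{prop:aut_W_C} by stripping off the elliptic involution $\iota$ with the help of the model map $I$, and then to use the global structure of $S$ to pin down the remaining constant. First I would localize near $C$: since $F$ is continuous and $F(C)=C$, the image of a sufficiently thin tubular neighborhood $\widehat{W}^{(\varepsilon)}$ is contained in $W$, so transporting through the isomorphism $H$ produces an open holomorphic embedding $\widetilde{F}\colon \widehat{W}^{(\varepsilon)}\to N$ which preserves the zero section $\widehat{C}$ and restricts to $\iota$ on it. Because both $\widetilde{F}$ and $I$ cover $\iota$ on the base $C$, the composite $G:=\widetilde{F}\circ I^{-1}$ covers $\iota^2=\mathrm{id}_C$ and fixes $\widehat{C}$ pointwise; after the identification of the relevant neighborhoods in $\iota^*N$ and $N$ that is implicit in the formula $\mathrm{Rot}_B\circ I$, this $G$ becomes an open holomorphic embedding of a neighborhood of $\widehat{C}$ in $N$ into $N$ of exactly the type treated in Proposition \ref{prop:aut_W_C}.

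Next I would apply Proposition \ref{prop:aut_W_C} to $G$. That proposition yields a constant $B\in\mathbb{C}^*$ with $G=\mathrm{Rot}_B$, and unwinding the definition of $G$ gives $\widetilde{F}=\mathrm{Rot}_B\circ I$ on a neighborhood of $\widehat{C}$. Since both sides are holomorphic and agree there, the identity theorem propagates the equality, so $F|_W=\mathrm{Rot}_B\circ I|_W$ as claimed, with at this stage only $B\in\mathbb{C}^*$.

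It then remains to upgrade $B\in\mathbb{C}^*$ to $B\in U(1)$, and here I would invoke the fact that $F$ is a global automorphism of the compact surface $S$. Since $I$ is induced by $(z,w)\mapsto(-z,w)$ it preserves the modulus $|w|=|\cdot|_h$, so $\mathrm{Rot}_B\circ I$ scales the fiber radius by $|B|$. As $F$ is a biholomorphism fixing $C$ setwise, $F(W_{\rm max}^+)$ is again a holomorphic tubular neighborhood of $C$ belonging to the class $\mathcal{N}^+$; by the maximality and uniqueness in Lemma \ref{lem:W_max} together with the finiteness $R_{\rm max}^+<\infty$ from Proposition \ref{prop:finiteness_of_W}, one concludes $F(W_{\rm max}^+)=W_{\rm max}^+$. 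Applying the identity theorem once more gives $F|_{W_{\rm max}^+}=\mathrm{Rot}_B\circ I$ on all of $W_{\rm max}^+$, and since this map must send the ball bundle $\{\,|\cdot|_h<R_{\rm max}^+\,\}$ onto itself, the radius-scaling forces $|B|=1$. With $B\in U(1)$ the map $\mathrm{Rot}_B\circ I$ preserves every level set $\{\,|w|=r\,\}$, whence $F(W)=W$ and $F|_W$ is an automorphism of $W$.

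The main obstacle I anticipate is the untwisting step: turning the composition with $I$ into a genuine self-map of a neighborhood of $\widehat{C}$ in $N$ to which Proposition \ref{prop:aut_W_C} literally applies. This is delicate because $I$ lands in $\iota^*N\cong N^{-1}$ rather than in $N$, so one must carefully keep track of the identification between the two tubular-neighborhood models of $C$ coming from $N$ and from $\iota^*N$, and check that this identification is compatible with the flat metric $h$ so that the radius-scaling argument yielding $|B|=1$ is legitimate. Verifying this compatibility, rather than the formal appeal to Proposition \ref{prop:aut_W_C}, is the technical heart of the argument.
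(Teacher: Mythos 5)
Your proposal is correct and follows essentially the same route as the paper: reduce to Proposition \ref{prop:aut_W_C} by composing with $I$ to cancel the involution on the base, obtain $B\in\mathbb{C}^*$, and then use the maximality of $W_{\rm max}^+$ (Lemma \ref{lem:W_max}) together with the finiteness $R_{\rm max}^+<\infty$ (Proposition \ref{prop:finiteness_of_W}) to force $|B|=1$. The paper phrases the last step slightly differently — it constructs the extension $K:=J\circ{\rm Rot}_{B^{-1}}$ of $H$ to $\widehat{W}^{(b\cdot R_{\rm max})}$ and concludes $b\cdot R_{\rm max}\leq R_{\rm max}$ after normalizing $b\geq 1$ — but this rests on exactly the same facts as your argument that $F(W_{\rm max}^+)=W_{\rm max}^+$ rules out any radius scaling.
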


In Proposition \ref{prop:aut_S_C}, we are regarding ${\rm Rot}_B$ and $I$ as automorphisms of $W$ by using the isomorphism $H\colon\widehat{W}^{(R)}\to W$. 
The assertion in the proposition can be reworded as $H^{-1}\circ F\circ H={\rm Rot}_B\circ I$ on  $\widehat{W}^{(R)}$. 

\begin{proof}
Let $R_{\rm max}$ and $W_{\rm max}$ be those as in \S \ref{section:construction}. 
As is mentioned in the proof of Lemma \ref{lem:W_max}, 
$H$ can be extended to the isomorphism between $\widehat{W}^{(R_{\rm max})}$ and $W_{\rm max}$, which is also denoted by $H$. 
Define an open embedding $J\colon \widehat{W}^{(R_{\rm max})}\to S$ by $J:=F|_{W_{\rm max}}\circ H\circ I$. 
As $J|_{\widehat{C}}=C$, one has that $J({\widehat{W}^{(\varepsilon)}})\subset W_{\rm max}$ holds for a sufficiently small positive number $\varepsilon$. 
Denote by $\widehat{J}\colon \widehat{W}^{(\varepsilon)}\to \widehat{W}^{(R_{\rm max})}$ the map defined by 
$\widehat{J}:=H^{-1}\circ J$. 
As $\widehat{J}|_{\widehat{C}}={\rm id}_{\widehat{C}}$, it follows from Proposition \ref{prop:aut_W_C} that there exists a constant $B\in \mathbb{C}^*$ such that $\widehat{J}={\rm Rot}_B$ holds: i.e. $J=H\circ {\rm Rot}_B$ holds on $W^{(\varepsilon)}$. 
By replacing $F$ with $F^{-1}$ if necessary, we may assume $b:=|B|\geq 1$. 
By identity theorem, this equation also holds for any $\varepsilon$ with $\varepsilon \leq R_{\rm max}/b$. 
Therefore, it is sufficient to show that $|B|=1$. 

\[
\xymatrix{
\widehat{W}^{(R_{\rm max})} \ar@{}[dr]|\circlearrowleft \ar[r]^-J\ar[d]^I & S   &  \widehat{W}^{(\varepsilon)} \ar[r]^{J|_{\widehat{W}^{(\varepsilon)}}}\ar[rd]^{\widehat{J}}\ar@{=}[d] & W_{\rm max}\ar[d]^{H^{-1}}\\
\widehat{W}^{(R_{\rm max})} \ar[r]^-H & S\ar[u]^F  &  \widehat{W}^{(\varepsilon)} \ar[r]^-{{\rm Rot}_B} & \widehat{W}^{(R_{\rm max})}
}
\]

Consider the open embedding $K\colon \widehat{W}^{(b\cdot R_{\rm max})}\to S$ defined by $K:=J\circ {\rm Rot}_{B^{-1}}$. 
\[
\xymatrix{
\widehat{W}^{(b\cdot R_{\rm max})} \ar@{}[drr]|\circlearrowleft \ar[r]^{{\rm Rot}_{B^{-1}}}\ar@{}[d]|{\bigcup}&  \widehat{W}^{(R_{\rm max})}\ar[r]^-J  &  J(\widehat{W}^{(R_{\rm max})}) \\
\widehat{W}^{(\varepsilon)} \ar[rr]^-{\widehat{J}\circ {\rm Rot}_{B^{-1}}={\rm id}} & &\widehat{W}^{(\varepsilon)} \ar[u]_H
}
\]
As is observed by the commutativity of the diagram above, $K$ is an extension of $H$. Therefore, it follows by the property of $W_{\rm max}$ that $b\cdot R_{\rm max}\leq R_{\rm max}$, which means that $b\leq 1$. 
Thus one has that $b=1$, which proves the proposition. 
\end{proof}

\subsubsection{Construction of a K3 surface with an involution which switches $M^+$ and $M^-$}

Let $C_0$ be a smooth elliptic curve $\{[X; Y; Z]\mid Y^2Z=X(X-Z)(X-\lambda Z)\}\in\mathbb{P}^2$ for a fixed constant $\lambda\not=0, 1$. 
Denote by $C_0^\pm$ the copies of $C_0$. 
Fix any eight points $p_1^+, p_2^+, \dots, p_8^+\in C_0^+$. 
Take a point $p_9^+$ from $C_0^+$ such that 
the line bundle $\mathcal{O}_{\mathbb{P}^2}(3)|_{C_0^+}\otimes\mathcal{O}_{C_0^+}(-p_1^+-p_2^+\cdots -p_9^+)$ satisfies the Diophantine condition. 

Set $\iota_{\mathbb{P}^2}([X; Y; Z]):=[X; -Y; Z]$. 
Define $p_\nu^-$'s by $p_\nu^-:=\iota_{\mathbb{P}^2}(p_\nu^+)$ for $\nu=1, 2, \dots, 9$. 
Then, by letting $g\colon C_0^+\to C_0^-$ be the identity map obtained by the trivial identification $C_0^+=C_0=C_0^-$, it holds that 
\[
\mathcal{O}_{\mathbb{P}^2}(3)|_{C_0^-}\otimes\mathcal{O}_{C_0^-}(-p_1^--p_2^-\cdots -p_9^-)
=\iota_{\mathbb{P}^2}^*\left(\mathcal{O}_{\mathbb{P}^2}(3)|_{C_0^+}\otimes\mathcal{O}_{C_0^+}(-p_1^+-p_2^+\cdots -p_9^+)\right)
\]
and
\[
\mathcal{O}_{\mathbb{P}^2}(3)|_{C_0^-}\otimes\mathcal{O}_{C_0^-}(-p_1^--p_2^-\cdots -p_9^-)
=g^*\left(\mathcal{O}_{\mathbb{P}^2}(3)|_{C_0^+}\otimes\mathcal{O}_{C_0^+}(-p_1^+-p_2^+\cdots -p_9^+)\right)^{-1}. 
\]
In what follows, we use the notation as in \S \ref{section:construction}. 
Let $(S^\pm, C^\pm)$ be the models obtained by these nine points configurations. 
Denote by $\widetilde{F}\colon S^+\to S^-$ the isomorphism naturally induced by 
$\iota_{\mathbb{P}^2}$. 
Fix a real number $R>1$. Set $R^\pm:=R$. 
By scaling of the flat metric $h$ on $N_+$, we may assume that there exists a holomorphic tubular neighborhood $W^+$ of $C^+$ which is the image of an open embedding $\{\xi\in N_+\mid |\xi|_h<R\}\to S$. 
Set $W^-:=\widetilde{F}(W^+)$. 
Denote by $M^\pm$ the subset $S^\pm\setminus \Phi_\pm^{-1}([0, 1/R^\mp])$ of $S^\pm$.  Then it follows from Proposition \ref{prop:aut_S_C} that $\widetilde{F}(M^+)=M^-$. 
As we did in \S \ref{section:construction}, define a K3 surface by gluing $M^+$ and $M^-$ by identifying $V^\pm:=\Phi_\pm^{-1}((1/R^\mp, R^\pm))$ via the map $f$ defined by $f(z_j^+, w_j^+):=(g(z_j^+), 1/w_j^+)$. 
Note that this $f$ coincides with the restriction of the automorphism $\widetilde{f} \colon N_+\to N_-$ induced from the automorphism $(z, w)\mapsto (z, 1/w)$ of $\mathbb{C}^2$, where we are regarding $N_+$ as the quotient $\mathbb{C}^2/\sim$ (See the previous section for the relation ``$\sim$''). 
As $R^+=R^-$, it is easily observed that $\widetilde{F}(V^+)=V^-$ holds. 
Again by Proposition \ref{prop:aut_S_C}, it follows that the diagram 
\[
\xymatrix{
V^+ \ar[d]_f\ar[r]_{\widetilde{F}} & V^-\\
V^- \ar[r]_{\widetilde{F}^{-1}} & \ar[u]_f V^+. 
}
\]
is commutative. 
Thus we have that the automorphism $F$ on $X$ defined by 
\[
F(x):= \begin{cases}
    \widetilde{F}(x) & (x\in M^+) \\
    \widetilde{F}^{-1}(x) & (x\in M^-)
  \end{cases}
\]
is well-defined. 
This map $F$ is an involution which switches $M^+$ and $M^-$. 

\subsection{Some cases where $X$ is a Kummer surface}

In this section, we investigate the nine points constructions $Z^\pm$ such that the resulting K3 surface $X$ is the Kummer surface $K(Y)$ corresponding to a complex torus $Y$.

For simplicity, we here consider the case where  $Y$ can be written as $Y=$\linebreak$\mathbb{C}^2/\left\langle (1, 0), (a\sqrt{-1}, 0), (b_1\sqrt{-1}, b_2), (c_1\sqrt{-1}, c_2)\right\rangle$: 
i.e. $Y$ is the quotient of $\mathbb{C}^2$ with coordinates $(x_1, x_2)$ by the relation $\sim$ generated by 
\[
(x_1, x_2)
\sim (x_1+1, x_2)
\sim (x_1+a\sqrt{-1}, x_2)
\sim (x_1+b_1\sqrt{-1}, x_2+b_2)
\sim (x_1+c_1\sqrt{-1}, x_2+c_2), 
\]
where $a, b_1$, and $c_1$ are real numbers, and $b_2$ and $c_2$ are complex numbers which are $\mathbb{R}$-linearly independent. 
By considering a new coordinate $\xi_1:=\exp(2\pi\sqrt{-1}x_1)$, $Y$ can be also written as 
$Y=\mathbb{C}^*_{\xi_1}\times\mathbb{C}_{x_2}/\sim'$, where $\sim'$ is the relation generated by 
\[
(\xi_1, x_2)
\sim' (A\cdot \xi_1, x_2)
\sim' (B_1\cdot \xi_1, x_2+b_2)
\sim' (C_1\cdot \xi_1, x_2+c_2), 
\]
where $A:=e^{-2\pi a}$, 
$B_1:=e^{-2\pi b_1}$, and 
$C_1:=e^{-2\pi c_1}$. 
By construction, $Y$ admits an elliptic fibration $P\colon Y\to E$ onto the elliptic curve 
$E:=\mathbb{C}_{x_2}/\langle b_2, c_2\rangle$, which is the one induced by the second projection $(\xi_1, x_2)\mapsto x_2$. 
Complex tori $Y$ and $E$ admit involutions $i$ and $\overline{i}$ induced by $(\xi_1, x_2)\mapsto (\xi_1^{-1}, -x_2)$ and $x_2\mapsto -x_2$, respectively: 
\[
\xymatrix{
Y \ar[d]_{P} \ar[r]^i \ar@{}[dr]|\circlearrowleft & Y \ar[d]^{P} \\
E \ar[r]^{\overline{i}} & E \\
}\]
Consider Levi-flat hypersurfaces 
\[
H'_t:=\{[(x_1, x_2)]\in Y\mid {\rm Re}\,x_1\in t+\mathbb{Z}\}
=\{[(\xi_1, x_2)]\in Y\mid \xi_1\in \mathbb{R}_{>0}\cdot e^{2\pi\sqrt{-1}t}\}
\]
of $Y$ for each  $0\leq t< 1$. 

Denote by 
$\widetilde{Y}$ the blow-up of $Y$ at $16$ fixed points of $i$. 
The Kummer surface $K(Y)$ is the quotient of $\widetilde{Y}$ by the involution 
$\widetilde{i}$ induced by $i$. 
Denote by $\widetilde{H}_t$ the preimage of $H'_t$ by the blow-up map $\widetilde{Y}\to Y$, 
and by $H_t$ the image of $\widetilde{H}_t$ by the quotient map $\widetilde{Y}\to K(Y)$. 
It is easily observed that $H_t$ is a Levi-flat hypersurface of $K(Y)$ for each 
$t\not\in \mathbb{Z}\cup (1/2+\mathbb{Z})$. 
Note that $H_t=H_s$ if $s+t\in\mathbb{Z}$. 

Consider the subset $V^{(\varepsilon)}$ of $K(Y)$ defined by 
\[
V^{(\varepsilon)}:=\bigcup_{|t-1/4|<\varepsilon}H_t
\]
for each $\varepsilon$ with $0<\varepsilon<1/4$. 
For this set, we have the following: 

\begin{proposition}\label{prop:V_for_Kummer}
For each $\varepsilon$ with $0<\varepsilon<1/4$, $V^{(\varepsilon)}$ can be embedded into a flat line bundle $N$ on $E$ defined by 
$N:=\mathbb{C}_{w}\times\mathbb{C}_{x_2}/\sim_N$, where 
the relation $\sim_N$ is the one generated by 
$(w, x_2)
\sim (\exp(2\pi\sqrt{-1}b_1/a)\cdot w, x_2+b_2)
\sim (\exp(2\pi\sqrt{-1}c_1/a)\cdot w, x_2+c_2)$. 
\end{proposition}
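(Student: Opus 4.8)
The plan is to realize $V^{(\varepsilon)}$ concretely as an annulus subbundle of $N$ by first undoing the quotient by the involution and then writing the embedding in the coordinates $(x_1,x_2)$ (with $\xi_1=\exp(2\pi\sqrt{-1}x_1)$). First I would describe the preimage $\widehat V^{(\varepsilon)}$ of $V^{(\varepsilon)}$ in $Y$. Since $H_t$ is the image of $H'_t$, and of $H'_{-t}$ because $i\colon (\xi_1,x_2)\mapsto(\xi_1^{-1},-x_2)$ sends $H'_t$ to $H'_{-t}$, the preimage is the union of the two angular sectors
\[
S_\pm:=\{[(\xi_1,x_2)]\in Y\mid \arg\xi_1\in \pm\tfrac{\pi}{2}+(-2\pi\varepsilon,2\pi\varepsilon)\},
\]
that is, $\mathrm{Re}\,x_1\in\{\tfrac14,\tfrac34\}+(-\varepsilon,\varepsilon)$ modulo $\mathbb{Z}$. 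The hypothesis $0<\varepsilon<1/4$ ensures that $S_+$ and $S_-$ are disjoint and that both avoid the fixed locus of $i$ (whose points all have $\arg\xi_1\in\{0,\pi\}$). Hence the blow-up $\widetilde Y\to Y$ is biholomorphic over $\widehat V^{(\varepsilon)}$, the involution acts there freely, and since $i$ interchanges $S_+$ and $S_-$ biholomorphically I obtain $V^{(\varepsilon)}\cong \widehat V^{(\varepsilon)}/i\cong S_+$.

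Next I would present $S_+$ as a quotient of a strip. Lifting to the universal cover $\mathbb{C}^2_{(x_1,x_2)}$, the preimage of $S_+$ is the disjoint union of the integer translates $\Sigma+m$ ($m\in\mathbb{Z}$) of the single strip $\Sigma:=\{\mathrm{Re}\,x_1\in(1/4-\varepsilon,1/4+\varepsilon)\}$. Because $\Sigma$ has width $2\varepsilon<1$, writing $\Lambda=\Lambda_0\oplus\mathbb{Z}\cdot(1,0)$ with $\Lambda_0:=\langle(a\sqrt{-1},0),(b_1\sqrt{-1},b_2),(c_1\sqrt{-1},c_2)\rangle$ the sublattice whose $x_1$-component has vanishing real part, one sees that $\Lambda_0$ preserves each strip while $(1,0)$ permutes them; consequently $S_+\cong \Sigma/\Lambda_0$, and the translation $x_1\mapsto x_1+1$ has dropped out.

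With this model in hand I would define
\[
\Psi\colon \Sigma\to \mathbb{C}_w\times\mathbb{C}_{x_2},\qquad \Psi(x_1,x_2):=\bigl(e^{2\pi x_1/a},\,x_2\bigr)=\bigl(\xi_1^{-\sqrt{-1}/a},\,x_2\bigr),
\]
and verify that it is $\Lambda_0$-equivariant for the relation $\sim_N$. Indeed, $(a\sqrt{-1},0)$ multiplies $w$ by $e^{2\pi\sqrt{-1}}=1$, so the fibrewise $A$-monodromy of $Y$ becomes trivial exactly as required for $w$ to be a genuine fibre coordinate, while $(b_1\sqrt{-1},b_2)$ and $(c_1\sqrt{-1},c_2)$ send $w$ to $e^{2\pi\sqrt{-1}b_1/a}w$ and $e^{2\pi\sqrt{-1}c_1/a}w$ and shift $x_2$ by $b_2$ and $c_2$ respectively, which are precisely the defining relations of $N$. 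Thus $\Psi$ descends to a holomorphic map $\overline\Psi\colon S_+\to N$. Its lift has Jacobian $\tfrac{2\pi}{a}e^{2\pi x_1/a}\neq 0$, so $\overline\Psi$ is an immersion; and if $\overline\Psi(x_1,x_2)=\overline\Psi(x_1',x_2')$, then solving the resulting exponential identities forces $(x_1',x_2')-(x_1,x_2)\in\Lambda_0$, so $\overline\Psi$ is injective. An injective holomorphic immersion of equidimensional complex manifolds is an open embedding, and its image is the annulus subbundle $\{e^{2\pi(1/4-\varepsilon)/a}<|w|<e^{2\pi(1/4+\varepsilon)/a}\}$ of $N$, which is exactly an open set $V$ of the type considered in \S\ref{section:introduction}.

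The substantive point, rather than the routine verifications above, is the choice of fibre coordinate $w=\xi_1^{-\sqrt{-1}/a}$: this exponent is the unique one that simultaneously kills the real $A$-monodromy along the elliptic fibre and converts the $B_1$- and $C_1$-monodromies into the prescribed $U(1)$-transition functions $e^{2\pi\sqrt{-1}b_1/a}$ and $e^{2\pi\sqrt{-1}c_1/a}$ of $N$. The multivaluedness of $\xi_1^{-\sqrt{-1}/a}$ coming from $x_1\mapsto x_1+1$ is precisely what is removed by passing to the single sector $S_+$, so I expect the main obstacle to be the clean bookkeeping of that reduction: checking that restricting to one sector is legitimate (which relies on the freeness of $i$ on $\widehat V^{(\varepsilon)}$, hence on $\varepsilon<1/4$) and that doing so makes $\log\xi_1$, and therefore $w$, single-valued.
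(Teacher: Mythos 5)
Your proposal is correct and follows essentially the same route as the paper: the paper's proof simply asserts that $V^{(\varepsilon)}$ is biholomorphic to $\{\,x_1\in\mathbb{C}\mid |{\rm Re}\,x_1-1/4|<\varepsilon\,\}\times\mathbb{C}_{x_2}$ modulo the sublattice $\langle (a\sqrt{-1},0),(b_1\sqrt{-1},b_2),(c_1\sqrt{-1},c_2)\rangle$ and that $\mathbb{C}^2$ modulo this sublattice is $N\setminus\widehat{E}$, which is exactly your reduction to $\Sigma/\Lambda_0$ followed by the exponential $w=e^{2\pi x_1/a}$. You have merely made explicit the verifications (freeness of the involution on the sectors for $\varepsilon<1/4$, equivariance of $\Psi$) that the paper leaves as ``easily observed.''
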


\begin{proof}
As is easily observed, $V^{(\varepsilon)}$ is biholomorphic to 
\[
\{x_1\in \mathbb{C}\mid 1/4-\varepsilon \leq {\rm Re}\,x_1\leq 1/4+\varepsilon\}\times \mathbb{C}/\left\langle (a\sqrt{-1}, 0), (b_1\sqrt{-1}, b_2), (c_1\sqrt{-1}, c_2)\right\rangle.
\]
Thus, it is embedded into the set 
\[
\mathbb{C}^2/\left\langle (a\sqrt{-1}, 0), (b_1\sqrt{-1}, b_2), (c_1\sqrt{-1}, c_2)\right\rangle, 
\]
which is biholomorphic to $N\setminus \widehat{E}$, where $\widehat{E}$ is the zero-section 
of $N\to E$. 
\end{proof}

By Proposition \ref{prop:V_for_Kummer}, one may regard $V^{(\varepsilon)}$ as a counterpart of $V$ in \S \ref{section:construction}. 
In what follows, we fix $\varepsilon$ and denote $V^{(\varepsilon)}$ simply by $V$. 
Let $\Omega^+$ and $\Omega^-$ be two connected components of the complement $K(Y)\setminus V$. 
Set $M^\pm:=K(Y)\setminus \Omega^\mp$. 
By Proposition \ref{prop:V_for_Kummer}, one can embed $V$ into the compactification $\overline{N}:=\mathbb{P}(\mathbb{I}_E\oplus N)$ of $N$. 
Denote by $W^\pm$ two connected components of $\overline{N}\setminus \overline{V}$. 
By switching $W^+$ and $W^-$ if necessary, one can naturally patch $W^\pm$ and $M^\pm$ to construct compact complex surfaces $S^\pm$ by using $V$ as the tab for gluing. 
According to the classification theory, $S^\pm$ is either the blow-up of $\mathbb{P}^2$ at nine points or the blow-up of a Hirzebruch surface $\Sigma_n:=\mathbb{P}(\mathcal{O}_{\mathbb{P}^1}\oplus \mathcal{O}_{\mathbb{P}^1}(-n))$ at eight points. 

\begin{question}
When is $S^\pm$ isomorphic to the blow-up of $\mathbb{P}^2$ at nine points? 
\end{question}

Assume that both $S^+$ and $S^-$ are isomorphic to the blow-up of $\mathbb{P}^2$ at nine points. In this case, one can regard $K(X)$ as the one constructed in the manner as in \S \ref{section:construction} if $(b_1/a, c_1/a)$ is a Diophantine pair. 
Note that ${\rm vol}_{\eta^\pm}(S^\pm\setminus W_{\rm max}^\pm)=0$ holds in this case (cf. Question \ref{q:vol_0_or_posi}).

\subsection{Type II degeneration of K3 surfaces constructed by gluing method}\label{section:type_II_degeneration}

Fix elliptic curves $C_0^\pm$, nine points configurations $Z^\pm$ and an isomorphism $g$. 
Thus the complex structure of $S^\pm$ and the parameters $\tau$, $\mu=a_\beta-\tau\cdot a_\alpha$, and $\int_{\Gamma_9}dz^+$ are fixed. 
Let us denote by $(X_x, \sigma_x)$ the K3 surface corresponding to the class of the period domain represented by 
\[
\left(2\mu+\int_{\Gamma_9}dz^+\right)\cdot A_{\alpha\beta}+\mu\cdot B_\gamma
+x\cdot A_{\beta\gamma}
+\tau\cdot B_\alpha
+y\cdot A_{\gamma\alpha}
+B_{\beta}
+\sum c_\bullet^+ C_\bullet^+
+\sum c_\bullet^- C_\bullet^-, 
\]
where $y$ is the constant defined by the linear equation $y=-\tau\cdot x+N_1$ which comes from $(\sigma, \sigma)=0$. 
Note that $N_1$ is a constant which depends only on the choice of $C_0^\pm$, $Z^\pm$, and $g$, therefore it is a fixed constant in this section. 
As was observed in \S \ref{section:realizability}, 
the volume of $(X_x, \sigma_x)$ is calculated as below: 
\begin{eqnarray}
(\sigma_x, \overline{\sigma_x})
&=& 2({\rm Re}(\overline{\tau}\cdot x)+{\rm Re}(y)+N_2)\nonumber \\
&=& 2({\rm Re}(\overline{\tau}\cdot x)-{\rm Re}(\tau\cdot x)+{\rm Re}(N_1)+N_2)\nonumber \\
&=& 2((au+bv)-(au-bv)+{\rm Re}(N_1)+N_2)\nonumber \\
&=& 4bv + 2({\rm Re}(N_1)+N_2)\nonumber \\
&=& 4{\rm Im}\,\tau\cdot {\rm Im}\,x + N_3, \nonumber 
\end{eqnarray}
where $\tau=a+b\sqrt{-1}$ and $x=u+v\sqrt{-1}$ are the decomposition into the real part and the imaginary part, $N_2=N_2(C_0^\pm, Z^\pm, g)$ is a constant, and 
$N_3:=2({\rm Re}(N_1)+N_2)$ is a real constant. 
As ${\rm Im}\,\tau>0$, it follows from Theorem \ref{theorem:realizability} that there exists a constant $N_4=N_4(C_0^\pm, Z^\pm, g)$ such that $(X_x, \sigma_x)$ can be constructed by the gluing method as in the manner in \S \ref{section:construction} for any $x\in\mathbb{C}$ with ${\rm Im}\,x>N_4$. 

Take $x$ such that ${\rm Im}\,x>N_4$. 
Let $M^\pm_x\subset S^\pm$, $V^\pm_x\subset M^\pm_x$, and 
\[
f_x\colon V_x^+\ni (z_j^+, w_j^+)\mapsto (g(z_j^+), 1/w_j^+)\in V_x^-
\]
be the data by which $X_x$ is constructed as in \S \ref{section:construction}, where $(z_j^\pm, w_j^\pm)$'s are local coordinates of $W_{\rm max}^\pm$ as in Lemma \ref{lem:W_max}. 
Denote by $(X_x^{(a)}, \sigma_x^{(a)})$ the K3 surface constructed by patching $(M^\pm, \eta^\pm|_{M^\pm})$'s by identifying $V^+$ and $V^-$ via the biholomorphism 
\[
f_x^{(a)}\colon V_x^+\ni (z_j^+, w_j^+)\mapsto (g(z_j^+), e^{\sqrt{-1}a}/w_j^+)\in V_x^-
\]
for each $a\in\mathbb{R}$ (Note that $(X_x^{(0)}, \sigma_x^{(0)})=(X_x, \sigma_x)$). 

\begin{proposition}\label{prop:x_and_a}
Let $x\in\mathbb{C}$ be a constant with ${\rm Im}\,x>N_4$. 
Then it holds that $(X_x^{(a)}, \sigma_x^{(a)})=(X_{x-a}, \sigma_{x-a})$ for each $a\in\mathbb{R}$. 
\end{proposition}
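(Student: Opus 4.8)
The plan is to identify the pair $(X_x^{(a)},\sigma_x^{(a)})$ with $(X_{x-a},\sigma_{x-a})$ by showing that the two have the same period point relative to the marking $A_\bullet, B_\bullet, C_\bullet^\pm$, and then invoking the Torelli theorem exactly as in Remark \ref{rmk:openness}. First I would check that $\sigma_x^{(a)}$ is a well-defined, correctly normalized holomorphic $2$-form: since $e^{\sqrt{-1}a}$ is a nonzero constant it cancels in $\frac{dw}{w}$, so $(f_x^{(a)})^*\frac{dw^-}{w^-}=-\frac{dw^+}{w^+}$ just as in Proposition \ref{prop:K3}, and $\eta^+|_{M^+}$, $\eta^-|_{M^-}$ glue with the normalization $A^\pm=\pm1$ unchanged. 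In particular $\frac{1}{2\pi\sqrt{-1}}\int_{A_{\gamma\alpha}}\sigma_x^{(a)}=1$, so the marking of $X_x^{(a)}$ is normalized consistently and its period point may be compared coefficient-by-coefficient with that of $X_x$.

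Next I would isolate which period integrals can change when $f_x$ is replaced by $f_x^{(a)}={\rm Rot}_{e^{\sqrt{-1}a}}\circ f_x$. On $V$ the coordinates and the form $\sigma|_{V_j}=dz_j^+\wedge\frac{dw_j^+}{w_j^+}$ are literally unchanged, so the integrals over $A_{\alpha\beta},A_{\beta\gamma},A_{\gamma\alpha}$ are untouched, and those over $C_\bullet^\pm$ (supported in $M^\pm$ and computed from the point configurations) are untouched as well. For each $B_\bullet$ I would represent its class on $X_x^{(a)}$ by the fixed discs $D_\bullet^\pm\subset M^\pm$ joined across $V$ by the rotation-sweep annulus traced out by ${\rm Rot}_{e^{\sqrt{-1}s}}$, $s\in[0,a]$; the period then differs from the one on $X_x$ by the integral of $\sigma$ over that annulus. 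The decisive dichotomy is that $\gamma$ lies inside a single fibre, so along its sweep $dz=0$ and the contribution vanishes, whence $\int_{B_\gamma}\sigma$, and hence $a_{\alpha\beta}$, is unchanged; the base loops $\alpha,\beta$ instead give nonzero sweeps.

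The heart is the evaluation for $B_\alpha$. Parametrizing the annulus by $(r,s)\in[0,1]\times[0,a]$ through $z=r$, $w=\ell_0\,e^{2\pi\sqrt{-1}a_\alpha r}e^{\pm\sqrt{-1}s}$ and using $\int_\alpha dz^+=1$, a direct evaluation of $\int dz^+\wedge\frac{dw^+}{w^+}$ over the annulus is a constant multiple of $a$; reading off the coefficient of $A_{\beta\gamma}$ through the pairing $(\widehat{\sigma}.B_\alpha)=a_{\beta\gamma}-2b_\alpha$ (with $b_\alpha=\tau$ fixed) then shows $a_{\beta\gamma}$ is shifted from $x$ to $x-a$. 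Because $\sigma_x^{(a)}$ is a genuine holomorphic $2$-form on a K3 surface produced by the construction, the relations $(\widehat{\sigma},\widehat{\sigma})=0$ and $\widehat{\sigma}\in v^{\perp}$ of Proposition \ref{propositon:realizability} persist automatically; hence the remaining coordinate $a_{\gamma\alpha}$ is forced to the value $y=-\tau(x-a)+N_1$ appropriate to $X_{x-a}$, and one need not compute it independently. The analogous sweep for $B_\beta$ (where $\int_\beta dz^+=\tau$) serves only as a consistency check against $\Delta a_{\gamma\alpha}=-\tau\,\Delta a_{\beta\gamma}$.

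I expect the main obstacle to be precisely the bookkeeping of orientations, signs and normalizing constants in this last step: one must orient the rotation-sweep annulus so that it genuinely realizes the fixed class $[B_\alpha]$ of the marking, track the factor relating $\sigma$ and $\widehat{\sigma}=\sigma/(2\pi\sqrt{-1})$ together with the chosen parametrization of the $U(1)$-rotation, and confirm that the outcome is exactly the shift $x\mapsto x-a$ rather than a spurious scalar multiple — and, correspondingly, that the independently computed $B_\beta$-sweep respects $\Delta a_{\gamma\alpha}=-\tau\,\Delta a_{\beta\gamma}$ rather than the opposite sign. Once the period point of $X_x^{(a)}$ is identified with that of $X_{x-a}$, the Torelli theorem supplies a marking-preserving isomorphism carrying $\sigma_x^{(a)}$ to $\sigma_{x-a}$, which is the assertion.
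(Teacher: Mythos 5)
Your proposal is correct and follows essentially the same route as the paper: the paper likewise observes that only the $B_\bullet$-periods can move, represents $B_\alpha^{(a)}$ as the fixed discs $D_\alpha^\pm$ joined by an explicit tube $T_a$ whose twist is interpolated by a cut-off function $\rho(r)$ (a cosmetic variant of your rotation-sweep annulus), and computes $\int_{T_a}dz\wedge \frac{dw}{w}=\log(R^+R^-)-2\pi\sqrt{-1}\,a$, hence $\frac{d}{da}\int_{B_\alpha^{(a)}}\frac{\sigma_x^{(a)}}{2\pi\sqrt{-1}}\equiv -1$, which is precisely the constant you deferred to bookkeeping, the rest of the identification going through the period point exactly as you describe. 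The one point worth settling is the normalization you yourself flagged: the paper's tube twists by $e^{2\pi\sqrt{-1}\,a\rho(r)}$ while the gluing map $f_x^{(a)}$ twists by $e^{\sqrt{-1}a}$, so a sweep matched literally to $f_x^{(a)}$ yields $-a/(2\pi)$ rather than $-a$ per your parametrization; this factor of $2\pi$ is an inconsistency in the paper's own conventions rather than a gap in your argument, but it must be reconciled before the shift can be asserted to be exactly $x\mapsto x-a$.
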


\begin{proof}
Denote by $B_\alpha^{(a)}$ the $2$-cycle of $X_x^{(a)}$ which corresponds to $B_\alpha$. 
In what follows, we regard $B_\alpha^{(a)}$ as the sum of $T_a$, $D_\alpha^+$, and $D_\alpha^-$, where 
\[
T_a:=\{(z, w)\in \mathbb{C}\times \Delta_R\mid z\in\mathbb{R}, w=r\cdot e^{2\pi\sqrt{-1}\cdot (a\cdot \rho(r)+a_\alpha\cdot z)}\ \text{for some}\ r\in(1/R^-, R^+)\}/\sim
\]
and $D^\pm_\alpha$ is a topological disc with $\partial T_a=\partial D^+_\alpha\cup \partial D^-_\alpha$. 
Here we let the relation $\sim$ be the one generated by 
\[
(z, w)\sim (z+1, e^{2\pi\sqrt{-1}\cdot a_\alpha}\cdot w)\sim (z+\tau, e^{2\pi\sqrt{-1}\cdot a_\beta}\cdot w)
\]
and are identifying $V$ with $\{(z, w)\in \mathbb{C}\times \Delta_R\mid 1/R^-<|w|<R^+\}/\sim$. 
The function $\rho\colon \mathbb{R}\to [0, 1]$ is a cut-off function which is weakly monotonically decreasing such that ${\rm supp}\,\rho\subset (-\infty, R^+)$, and 
that $\rho$ is equivalently equal to $1$ on a neighborhood of $(-\infty, 1/R^-]$. 

For showing the proposition, it is sufficient to show that
\begin{equation}\label{eq:diff_a}
\frac{d}{da}\int_{B_\alpha^{(a)}}\frac{\sigma_x^{(a)}}{2\pi\sqrt{-1}}\equiv -1. 
\end{equation}
As 
\[
-\sigma_x^{(a)}=\frac{dz\wedge d(e^{\sqrt{-1}a}w)}{e^{\sqrt{-1}a}w}
=\frac{dz\wedge dw}{w}
\]
holds on $V^-$ and 
\[
\int_{B_\alpha^{(a)}}\frac{\sigma_x^{(a)}}{2\pi\sqrt{-1}}-\int_{B_\alpha}\frac{\sigma_x}{2\pi\sqrt{-1}}
=\int_{T_a}\frac{dz\wedge dw}{2\pi\sqrt{-1}w}-\int_{T_0}\frac{dz\wedge dw}{2\pi\sqrt{-1}w}
\]
holds, the equation (\ref{eq:diff_a}) follows from the following calculation: 
\begin{eqnarray}
\int_{T_a}\frac{dz\wedge dw}{w}
&=& \int_{\theta=0}^1\int_{r=1/R^-}^{R^+} \frac{\frac{d}{dr}(r\cdot e^{2\pi\sqrt{-1}\cdot (a\cdot \rho(r)+a_\alpha\cdot z)})\cdot drd\theta}{r\cdot e^{2\pi\sqrt{-1}\cdot (a\cdot \rho(r)+a_\alpha\cdot z)}} \nonumber \\
&=& \int_{\theta=0}^1\int_{r=1/R^-}^{R^+} \frac{(e^{2\pi\sqrt{-1}\cdot (a\cdot \rho(r)+a_\alpha\cdot z)}+2\pi\sqrt{-1}a\rho'(r)\cdot r\cdot e^{2\pi\sqrt{-1}\cdot (a\cdot \rho(r)+a_\alpha\cdot z)})\cdot drd\theta}{r\cdot e^{2\pi\sqrt{-1}\cdot (a\cdot \rho(r)+a_\alpha\cdot z)}} \nonumber \\
&=& \int_{\theta=0}^1d\theta\cdot \int_{r=1/R^-}^{R^+} \left(\frac{1}{r}+2\pi\sqrt{-1}a\rho'(r)\right)\, dr \nonumber \\
&=& \log(R^+R^-)-2\pi\sqrt{-1}\cdot a. \nonumber 
\end{eqnarray}
\end{proof}

As is observed easily, $(X_x^{(2\pi)}, \sigma_x^{(2\pi)})$ coincides with $(X_x, \sigma_x)$ if one omits the information on the marking. 
Therefore it follows from Proposition \ref{prop:x_and_a} that there exists a deformation of K3 surfaces (i.e. smooth holomorphic surjective submersion whose fibers are K3 surfaces) 
\[
\pi^*\colon \mathcal{X}^*\to B^* 
\]
on $B^*$ defined by 
\[
B^*:=\{x\in\mathbb{C}\mid {\rm Im}\,x>N_4\}/\sim_{B^*}, 
\]
where the relation $\sim_{B^*}$ is the one generated by $x\sim_{B^*} x+2\pi$ 
such that the fiber $(\pi^*)^{-1}(b)$ is isomorphic to $X_x$ for a preimage $x$ of $b$ by the quotient map $\{x\in\mathbb{C}\mid {\rm Im}\,x>N_4\}\to B^*$. 

In what follows, we regard as $B^*=\{b\in\mathbb{C}\mid 0<|b|<e^{-N_4}\}$ by using the coordinate $b:=e^{\sqrt{-1}x}$. 
Set $B:=\{b\in\mathbb{C}\mid |b|<e^{-N_4}\}$. 

\begin{proposition}\label{prop:type_II_degeneration}
There exists a proper holomorphic submersion 
\[
\pi\colon \mathcal{X}\to B
\]
from a smooth complex manifold $\mathcal{X}$ such that $\pi|_{\pi^{-1}(B^*)}$ coincides with $\pi^*$ and the central fiber $X_0:=\pi^{-1}(0)$ is a compact complex variety with normal crossing singularity whose irreducible components are $S^+$ and $S^-$ and whose singular part is the one obtained by identifying $C^+$ and $C^-$ by $g$. 
\end{proposition}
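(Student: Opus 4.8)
The plan is to construct $\pi\colon\mathcal{X}\to B$ as the standard semistable smoothing of the normal crossing surface $S^+\cup_C S^-$, whose existence is precisely what the duality $g^*N_-\cong N_+^{-1}$ underlying Theorem \ref{thm:constr_of_K3} permits. The structural input is that, by conditions $(iv)$ and $(v)$ in \S\ref{section:construction}, the fibre coordinates satisfy $w_k^+=t_{kj}\,w_j^+$ and $w_k^-=t_{kj}^{-1}\,w_j^-$ on overlaps, so the product $w_j^+w_j^-$ is independent of $j$; equivalently $N_+\otimes N_-$ is canonically trivialized along $C$. Using this together with Proposition \ref{prop:x_and_a} and the identification $b=e^{\sqrt{-1}x}$, I would first identify $\pi^*\colon\mathcal{X}^*\to B^*$, up to the reparametrization $b\leftrightarrow t$ and restriction, with the family of \S\ref{section:deform_patching_w}, whose neck gluing already has the standard form $(z_j^+,w_j^+)\mapsto(g(z_j^+),b/w_j^+)$, i.e. $w_j^+w_j^-=b$, after a holomorphic rescaling of the $w_j^\pm$ absorbing the cutting radii and the rotation of Proposition \ref{prop:x_and_a} into $b$.

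With $\pi^*$ in this normal form, I would assemble $\mathcal{X}$ from three charts over $B$. The neck chart is
\[
\mathcal{W}:=\{(z_j,w_j^+,w_j^-,b)\mid w_j^+w_j^-=b\},
\]
glued over the $U_{jk}$ by $z_k=z_j+A_{kj}$, $w_k^\pm=t_{kj}^{\pm1}w_j^\pm$; this is a cocycle preserving $w^+w^-=b$, and since $d(w_j^+w_j^--b)$ never vanishes, $\mathcal{W}$ is a smooth $3$-fold whose projection to $b$ is a submersion onto $B$. Its central fibre is $\{w_j^+w_j^-=0\}$, the union of the $C^+$-tube $\{w_j^-=0\}$ and the $C^-$-tube $\{w_j^+=0\}$, meeting transversally along $C=\{w_j^+=w_j^-=0\}$. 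The other two charts are products $\mathcal{M}^\pm_0\times B$ with $\mathcal{M}^\pm_0:=S^\pm\setminus\overline{W^\pm_\epsilon}$ the complement of a small closed tube. Over $\{w_j^+\neq0\}$ the map $(z_j,w_j^+,w_j^-,b)\mapsto((z_j,w_j^+),b)$ identifies $\mathcal{W}$ with the product of a punctured tube and $B$, gluing it to $\mathcal{M}^+_0\times B$ along the annular overlap; symmetrically over $\{w_j^-\neq0\}$ it glues to $\mathcal{M}^-_0\times B$. The result is a smooth complex $3$-fold $\mathcal{X}$ with a proper holomorphic submersion $\pi\colon\mathcal{X}\to B$.

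It then remains to verify three points. First, for $b\neq0$ the fibre of $\mathcal{W}$ is $\{w_j^+w_j^-=b\}\cong\mathbb{C}^*$, which is exactly the neck glued by $w_j^-=b/w_j^+$; hence $\pi^{-1}(b)\cong M^+\cup M^-=X_x$ and, by the first step, $\pi|_{\pi^{-1}(B^*)}$ coincides with $\pi^*$. Second, at $b=0$ the component $\{w_j^-=0\}$ of $\mathcal{W}$ together with the central fibre $S^+\setminus\overline{W^+_\epsilon}$ of $\mathcal{M}^+_0\times B$ reconstitutes the full $S^+$ (the curve $C^+$ sitting inside the neck chart), and likewise the $-$ side gives $S^-$; the two meet along $C$ with $C^+,C^-$ identified by $g$, with normal crossing singularity, so $X_0=\pi^{-1}(0)=S^+\cup_C S^-$ as claimed. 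Third, properness holds because each fibre is glued out of the compact $S^\pm$ along a relatively compact neck, while $\pi$ is a submersion by construction of each chart.

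The step I expect to be the main obstacle is the bookkeeping in the first paragraph: showing that the intrinsically defined $\pi^*$, built from the maximal tubes $W_{\max}^\pm$ of Lemma \ref{lem:W_max} with fibrewise-varying cutting radii and the rotational parameter of Proposition \ref{prop:x_and_a}, is genuinely isomorphic over $B^*$ to the standard-form family with gluing $w_j^+w_j^-=b$, with the rescaling depending holomorphically on $b$ and respecting the cocycle $\{t_{jk}\}$ near $C$. This global consistency is exactly where $g^*N_-\cong N_+^{-1}$ enters, through the trivialization of $N_+\otimes N_-$; granting it, the smoothness of $\mathcal{X}$ along $C\times\{0\}$ and all the fibre identifications reduce to the local computation on $\{w_j^+w_j^-=b\}$ carried out above.
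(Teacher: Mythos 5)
Your proposal is correct and follows essentially the same route as the paper: the paper's proof also builds $\mathcal{X}$ from three charts, with the neck given by $\mathcal{V}=\{(z,w^+,w^-)\mid |w^\pm|<R,\ |w^+w^-|<1\}/\sim$ fibred over $B'$ by $(z,w^+,w^-)\mapsto w^+w^-e^{-N_4'}$ (the quotient presentation of your hypersurface $\{w_j^+w_j^-=b\}$, the relation $\sim$ encoding exactly the trivialization of $N_+\otimes N_-$ you point out), glued to the product families $M^\pm\times B'$ by $(z^+,w^+,b)\mapsto(z^+,w^+,be^{N_4'}/w^+)$ and its mirror, with the identification over $B^*$ resting on Proposition \ref{prop:x_and_a} and $b=e^{\sqrt{-1}x}$ just as in your first paragraph.
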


\begin{proof}
We may prove the proposition by replacing $N_4$ with a bit larger constant $N_4'$ than $N_4$. 
Take $x\in \mathbb{C}$ with ${\rm Im}\,x=N_4'$. 
Let $(C_0^\pm, Z^\pm, g, R^\pm)$ be the parameters by which the K3 surface $(X_x, \sigma_x)$ is constructed by the gluing methods as in \S \ref{section:construction}. 
As it is observed easily, we may assume that $R^+=R^-$. 
Denote them simply by $R$. 
In what follows, we use the notation as in \S \ref{section:construction} for these parameters: $X_x=M^+\cup M^-$, for example. 

Set 
$\mathcal{V}:=\{(z, w^+, w^-)\in\mathbb{C}^3\mid |w^+|<R, |w^-|<R, |w^+\cdot w^-|<1\}/\sim$, 
where $\sim$ is the relation generated by 
\[
(z, w^+, w^-)
\sim (z+1, e^{2\pi\sqrt{-1}a_\alpha}\cdot w^+, e^{-2\pi\sqrt{-1}a_\alpha}\cdot w^-)
\sim (z+\tau, e^{2\pi\sqrt{-1}a_\beta}\cdot w^+, e^{-2\pi\sqrt{-1}a_\beta}\cdot w^-). 
\]
Define the morphism $\pi_{\mathcal{V}}\colon \mathcal{V}\to B'$ by 
$\pi_{\mathcal{V}}(z, w^+, w^-):=w^+\cdot w^-\cdot e^{-N_4'}$, where 
$B'$ is the disc with radius $e^{-N_4'}$. 
By gluing this map and the second projections 
\[
\pi_{\mathcal{M}^+}\colon \mathcal{M}^+:=M^+\times B'\to B'
\]
and 
\[
\pi_{\mathcal{M}^-}\colon \mathcal{M}^-:=M^-\times B'\to B'
\]
in the following manner, the morphism $\pi\colon \mathcal{X}\to B'$ as the assertion of the proposition is constructed: 
On a point $b\in B'$, glue the fiber $M_b^+:=M^+\times\{b\}$ of $\mathcal{M}^+\to B'$ and 
the fiber $V_b$ of $\mathcal{V}\to B'$ by using the map 
\[
V^+\times\{b\}\ni (z^+, w^+, b)\mapsto
\left(z^+, w^+, \frac{b\cdot e^{N_4'}}{w^+}\right)\in \mathcal{V}, 
\]
and glue the fiber $M_b^-:=M^-\times\{b\}$ of $\mathcal{M}^-\to B'$ and 
the fiber $V_b$ of $\mathcal{V}\to B'$ by using the map 
\[
V^-\times\{b\}\ni (z^-, w^-, b)\mapsto
\left(g^{-1}(z^-), \frac{b\cdot e^{N_4'}}{w^-}, w^-\right)\in \mathcal{V}. 
\]
\end{proof}

Let $\pi\colon \mathcal{X}\to B$ be the one as in Proposition \ref{prop:type_II_degeneration}. 
Then it is easily observed that the monodromy automorphism $m_\ell\colon \Pi_{3, 19}\to \Pi_{3, 19}$ of the K3 lattice along a simple loop $\ell\subset B^*$ is defined by 
$B_\alpha\mapsto B_\alpha+A_{\gamma\alpha}$ and 
$B_\beta\mapsto B_\beta+A_{\beta\gamma}$ ($A_\bullet$'s, $B_\gamma$, and $C_\bullet^\pm$'s  are fixed).  Therefore it follows that $\pi$ is a type II degeneration of K3 surfaces (see \cite[Chapter 6, \S 5.3]{H}). 

\subsection{Gluing construction of one concrete example of K3 surfaces which are neither Kummer nor elliptic}

In this subsection, we construct a K3 surface that is neither Kummer nor elliptic. 
Let $C_0^{\pm} \cong \mathbb{C}/\langle 1, \sqrt{-1} \rangle \subset \mathbb{P}^2$ be an elliptic curve, 
and define $Z^{\pm}:=\{p_1^{\pm}, p_2^{\pm}, \dots, p_9^{\pm}\}$ by 
\[
p_j^{\pm}:=0\ {\rm mod}\, \langle 1, \sqrt{-1} \rangle \quad (j=1,\dots,8), \qquad p_9^{\pm}=\mp \mu \ {\rm mod}\, \langle 1, \sqrt{-1} \rangle,  
\]
and $g : C_0^+ \to C_0^-$ by $g(z)=z+2 \mu$, which satisfies $g(p_9^+)=p_9^-$, 
where $\mu:=-2^{\frac{1}{3}}$. 
It should be noted that the pair $(p, q):=(0, \mu)$ satisfies the Diophantine condition from Liouville's Theorem. 
As in \S \ref{section:construction} and \S \ref{section:constr_cycles}, 
if $\mathrm{Im}(x) > 2-\mu^2+\Lambda /2$, we can construct a marked K3 surface $(X, \sigma, (A_\bullet, B_\bullet, C_\bullet^{\pm}))$ with period 
\[
\sigma/(2\pi \sqrt{-1})=2\cdot \mu \cdot A_{\alpha\beta}+\mu \cdot B_\gamma
+x\cdot A_{\beta\gamma}
+\sqrt{-1}\cdot B_\alpha
+y\cdot A_{\gamma\alpha}
+B_{\beta} \quad 
(y=-\mu^2-x \sqrt{-1}). 
\]
From the configurations $Z^{\pm}$, the K3 surface $X$ admits some $(-2)$-curves. 
Namely, denote by $D_0^{\pm}$ the $(-2)$-curve on $X$ derived from the strict transform of a line in $\mathbb{P}^2$ under the blow-up at $Z^{\pm}$, 
and for $j=1,\dots, 7$, denote by $D_j^{\pm}$ the $(-2)$-curve on $X$ derived from the irreducible component of the exceptional divisor of the blow-up at $Z^{\pm}$ 
with the intersection given by 
\[
(D_i^{\pm}. D_j^{\pm})=\begin{cases}
-2 & (\text{if}\ i=j)\\
1 & (\text{if $i$ is jointed to $j$ in  Figure \ref{figure:dynkin}} ) \\
0 & (\text{otherwise}). 
\end{cases}
\]
\begin{center}
\begin{figure}[t]
\unitlength 0.1in
\begin{picture}( 31.4000,  9.2000)(  3.5000,-10.5000)
%
\special{pn 20}%
\special{pa 450 400}%
\special{pa 950 400}%
\special{fp}%
%
\special{pn 20}%
\special{pa 1050 400}%
\special{pa 1550 400}%
\special{fp}%
%
\special{pn 20}%
\special{pa 1650 400}%
\special{pa 2150 400}%
\special{fp}%
%
\special{pn 20}%
\special{pa 2350 400}%
\special{pa 2800 400}%
\special{dt 0.054}%
%
\special{pn 20}%
\special{pa 2880 400}%
\special{pa 3380 400}%
\special{fp}%
%
\special{pn 20}%
\special{pa 1600 450}%
\special{pa 1600 950}%
\special{fp}%
\put(17.0000,-10.5000){\makebox(0,0)[lb]{$0$}}%
\put(3.5000,-3.0000){\makebox(0,0)[lb]{$1$}}%
\put(9.5000,-3.0000){\makebox(0,0)[lb]{$2$}}%
\put(15.5000,-3.0000){\makebox(0,0)[lb]{$3$}}%
\put(21.5000,-3.0000){\makebox(0,0)[lb]{$4$}}%
\put(33.5000,-3.0000){\makebox(0,0)[lb]{$7$}}%
%
\special{pn 20}%
\special{ar 400 400 50 50  0.0000000 6.2831853}%
%
\special{pn 20}%
\special{ar 1000 400 50 50  0.0000000 6.2831853}%
%
\special{pn 20}%
\special{ar 1600 400 50 50  0.0000000 6.2831853}%
%
\special{pn 20}%
\special{ar 1600 1000 50 50  0.0000000 6.2831853}%
%
\special{pn 20}%
\special{ar 2200 400 50 50  0.0000000 6.2831853}%
%
\special{pn 20}%
\special{ar 3440 400 50 50  0.0000000 6.2831853}%
\end{picture}%
\caption{Dynkin diagram}
\label{figure:dynkin}
\end{figure}
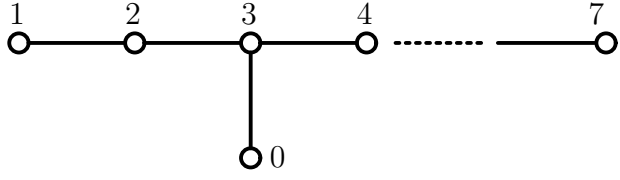
\end{center}
Moreover, by noting that $g(p_9^+)=p_9^-$, we have the Picard group
\[
{\rm Pic}(X)=Q_0 \oplus Q_+ \oplus Q_-, \qquad 
Q_0:=\mathbb{Z}\cdot \{B_\gamma\}, \quad 
Q_{\pm}:= \oplus_{j=0}^7 \mathbb{Z} \cdot \{D_j^{\pm}\}
\]
for a general  $x$ with $\mathrm{Im}(x) > 2-\mu^2+\Lambda /2$. 
In particular, the Picard number of $X$ is given by $\rho(X)=17$. 

\begin{proposition} \label{proposition:effdiv}
For a general $x$ with $\mathrm{Im}(x) > 2-\mu^2+\Lambda /2$, 
let $X$ be the corresponding K3 surface. 
Then, any effective divisor $D$ on $X$ is expressed as 
\[
D=a_\gamma D_\gamma+ \sum_{j=0}^7 a_j^+ D_j^+ + \sum_{j=0}^7 a_j^- D_j^-, 
\]
for some $a_\gamma, a_j^+, a_j^- \in \mathbb{Z}_{\ge 0}$, 
where $D_\gamma$ is a unique $(-2)$-curve with $\{ D_\gamma \}=\{B_{\gamma}\}$. 
In particular, $X$ is neither Kummer nor elliptic. 
The effective cone of $X$ is generated by the $(-2)$-curves $D_\gamma$ and $D_j^{\pm}$.  
\end{proposition}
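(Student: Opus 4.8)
The plan is to reduce the entire statement to the structure of the negative definite lattice $\mathrm{Pic}(X)=Q_0\oplus Q_+\oplus Q_-\cong \langle -2\rangle\oplus E_8(-1)\oplus E_8(-1)$ computed just above. First I would observe that, since this lattice is negative definite, every irreducible curve $C\subset X$ is a smooth rational $(-2)$-curve: adjunction on the K3 surface gives $C^2=2p_a(C)-2\ge -2$, while $[C]\ne 0$ in the negative definite $\mathrm{Pic}(X)$ forces $C^2<0$, so $C^2=-2$ and $p_a(C)=0$. Thus $[C]$ is a root of $\mathrm{Pic}(X)$, and the problem reduces to deciding which roots are classes of irreducible curves.

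Next I would analyze the root set $\Delta=\{\delta\in\mathrm{Pic}(X)\mid \delta^2=-2\}$. Because the decomposition is orthogonal and each summand is negative definite, a root lies entirely in one summand; hence $\Delta$ is the disjoint union of $\{\pm B_\gamma\}$ and the two copies of the $E_8$ root system sitting in $Q_\pm$. By Riemann--Roch, $\chi(\mathcal O_X(\delta))=2+\tfrac12\delta^2=1$ for every root, so $h^0(\delta)+h^0(-\delta)\ge 1$, while $\delta$ and $-\delta$ cannot both be effective (that would force $\delta=0$); so exactly one of $\pm\delta$ is effective. Since the curves $D_\gamma$ and $D_0^\pm,\dots,D_7^\pm$ are effective and $\{D_j^\pm\}_{j=0}^7$ is a base of the $E_8$ system $Q_\pm$, every positive root relative to this base is a non-negative combination of effective classes, hence effective; comparing cardinalities (the effective roots are exactly one half of $\Delta$) then shows that the effective roots in $Q_\pm$ are precisely the positive $E_8$-roots, whose indecomposable elements are the base $\{D_j^\pm\}$. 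I expect this matching of the geometric notion of effectivity with the combinatorial set of positive roots to be the main obstacle: it is what ultimately rules out any unlisted irreducible $(-2)$-curve.

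The final step is to identify irreducible curves with indecomposable (simple) effective roots. An irreducible $(-2)$-curve $C\cong\mathbb P^1$ satisfies $h^0(X,\mathcal O_X(C))=1$, as one reads off from $0\to\mathcal O_X\to\mathcal O_X(C)\to\mathcal O_C(C)\to 0$ with $\mathcal O_C(C)\cong\mathcal O_{\mathbb P^1}(-2)$; thus $C$ is the unique effective divisor in its class. If $[C]=\alpha+\beta$ with $\alpha,\beta$ nonzero effective roots, then representing $\alpha$ and $\beta$ by effective divisors would exhibit $C$ as a sum of two nonzero effective divisors, contradicting irreducibility. Hence $[C]$ is a simple root, so $C$ is one of the seventeen curves $D_\gamma, D_0^\pm,\dots,D_7^\pm$. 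Writing an arbitrary effective divisor as the sum of its irreducible components then yields the displayed expression with non-negative integer coefficients and shows that the effective cone is generated by these seventeen $(-2)$-curves.

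For the two remaining assertions I would argue directly from negative definiteness and the curve classification. $X$ is not elliptic, since an elliptic fibration would produce a nonzero effective fiber class $F$ with $F^2=0$, impossible in a negative definite lattice. $X$ is not Kummer, since a Kummer surface $\mathrm{Km}(A)$ contains sixteen pairwise disjoint $(-2)$-curves (the exceptional curves over the sixteen fixed points of $A\to A/\{\pm 1\}$), whereas on $X$ any family of pairwise disjoint irreducible $(-2)$-curves consists of the simple roots found above with vanishing mutual intersection numbers, i.e. $D_\gamma$ together with independent sets in the two $E_8$ Dynkin diagrams; as $D_\gamma$ is orthogonal to both $E_8$ blocks and the independence number of the $E_8$ diagram is $4$, such a family has at most $1+4+4=9<16$ members, so $X$ cannot be Kummer.
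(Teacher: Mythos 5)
Your overall strategy coincides with the paper's: adjunction plus negative definiteness of $\mathrm{Pic}(X)$ forces every irreducible curve to be a $(-2)$-curve whose class lies in exactly one orthogonal summand, and the problem becomes a root-system computation in $Q_0\oplus Q_+\oplus Q_-$. For the two $E_8(-1)$ blocks your argument (positive roots are effective because the simple roots $\{D_j^\pm\}$ are; count $120=120$ to conclude the effective roots are exactly the positive ones; indecomposable effective roots are simple) is a clean variant of the paper's, which instead observes that any root orthogonal-or-positive against all the $D_j^\pm$ must be the negative of the highest root and hence anti-effective. Both are correct, and your Riemann--Roch/Serre-duality dichotomy for roots, the $h^0(\mathcal{O}_X(C))=1$ argument, and the non-elliptic and non-Kummer conclusions (via $F^2=0$ and the independence number $1+4+4=9<16$) are all sound.

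The genuine gap is in the $Q_0$ block: you assert that ``the curves $D_\gamma$ and $D_0^\pm,\dots,D_7^\pm$ are effective,'' but while the $D_j^\pm$ are honest curves coming from the exceptional divisors and the strict transform of a line, $B_\gamma$ is only a \emph{topological} $2$-cycle (two exceptional-curve pieces joined by a non-holomorphic tube over the segment $\Gamma_9$), so there is no a priori effective divisor in the class $\{B_\gamma\}$. Your Riemann--Roch argument shows only that exactly one of $\pm\{B_\gamma\}$ is effective, and nothing in your proof determines the sign; since the proposition explicitly asserts $\{D_\gamma\}=+\{B_\gamma\}$ (and the subsequent description of the K\"ahler cone depends on this sign), this cannot be waved away. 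On a non-projective K3 with negative definite Picard lattice there is no ample class to test positivity against, and this is exactly where the paper needs a genuinely geometric input: it constructs a semi-positive closed $(1,1)$-current $T=\frac{\sqrt{-1}}{2\pi}\partial\overline{\partial}\phi$ supported on the Levi-flat hypersurface $H_1$, with $\{T\}=\{C^+\}$, checks $(\{T\}.\{B_\gamma\})=1$ from the construction of $B_\gamma$, and shows that the $(-2)$-curve in question cannot lie inside $H_1$ (a rational curve cannot dominate the elliptic curve $C^+$, nor fit in a fiber disc), so that $(\{T\}.\{D_\gamma\})\ge 0$ forces the effective class to be $+\{B_\gamma\}$. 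You would need to supply this (or an equivalent) argument to close the proof.
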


\begin{remark}
The set of divisors on $X$ is generated by a so called exceptional family (see \cite{Bo}). 
In particular, Boucksom shows that the number of the members of any exceptional family of a complex surface is less than its Picard number. 
The K3 surface $X$ constructed in this subsection gives an example such that the number of the exceptional family, 
all of which are given explicitly, is equal to the Picard number. 

Moreover, since the exceptional family is given explicitly, the K\"ahler class of $X$ is also given explicitly by 
\[
\mathcal{K}_{X}=\{ \xi \in \mathcal{C}_X \mid  (\xi.D)>0 \, (D \in \{D_{\gamma},D_0^+,\dots,D_7^+,D_0^-,\dots,D_7^-\})\},
\] 
where $\mathcal{C}_X \subset H^{1,1}(X,\mathbb{R})$ is the positive cone of $X$ (see \cite[Chapter 8, \S 5]{H}). 
\end{remark}

In order to prove Proposition \ref{proposition:effdiv}, we prepare the following lemmata. 

\begin{lemma} \label{lemma:-2-curve01}
Let $D$ be an irreducible curve on $X$. 
Then $D$ is a $(-2)$-curve on $X$. 
Moreover its cohomology class $\{D\}$ belongs to one of  
$Q_0$, $Q_+$, or $Q_-$. 
\end{lemma}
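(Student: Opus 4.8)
The plan is to show that any irreducible curve $D$ on $X$ is a $(-2)$-curve whose class lies entirely in one of the three sublattices $Q_0, Q_+, Q_-$. First I would observe that $D$ must be rational: since the Picard group is generated by the classes $\{B_\gamma\}$ and $\{D_j^\pm\}$, all of which are orthogonal to the period $\widehat\sigma$ (they are algebraic), the adjunction formula gives $(D.D)=2p_a(D)-2\ge -2$. The key point is that $\int_D\sigma=0$ forces $D$ to avoid the open set $V$ in a controlled way, so that $D$ is supported entirely in $M^+$ or entirely in $M^-$ (or is one of the cycles coming from $V$). Concretely, since $M^\pm$ are open submanifolds biholomorphic to subsets of the blow-ups $S^\pm$, an irreducible curve in $X$ that meets $M^+$ but is not contained in the closure of $M^+$ would have to cross the Levi-flat hypersurfaces $H_t$; but by the maximum principle argument of Lemma \ref{lem:W*} (density of leaves), a compact analytic curve cannot be confined to the annulus region $V$, and restricting to $M^\pm$ it descends to a curve in $S^\pm$.

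The main structural step is therefore to push $D$ into $S^+$ or $S^-$. I would argue that $D\cap V$ consists only of the parts of $D$ lying over the annulus bundle, and that by the density of leaves and the maximum-modulus principle (exactly as in the proof of Lemma \ref{lem:W*}), any irreducible $D$ that is not contained in a single $M^\pm$ would intersect each $H_t$ in a way incompatible with being an honest compact curve—the holomorphic $2$-form $\sigma$ integrates to a nonzero multiple of $\int_{\Gamma} dz$ over such cycles, contradicting $\int_D\sigma=0$. Thus $D\subset M^+$ or $D\subset M^-$. Once $D$ is supported in $M^+$, say, it defines an irreducible curve in $S^+={\rm Bl}_{Z^+}\mathbb{P}^2$ disjoint from the tubular neighborhood of $C^+$, hence disjoint from $C^+$ itself. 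Since $S^+$ is a rational elliptic surface with anticanonical curve $C^+$, any irreducible curve disjoint from $C^+=-K_{S^+}$ has $(D.C^+)=0$ and, by adjunction on $S^+$ together with $p_a(D)=0$, must be a $(-2)$-curve; its class lands in the orthogonal complement $\langle C^+\rangle^\perp$, which is precisely spanned by the $E_8(-1)$ classes realized by the $D_j^+$, i.e. $\{D\}\in Q_+$.

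The hard part will be verifying that an irreducible $D$ cannot straddle the gluing region in a nontrivial way, and ruling out the degenerate possibility that $D$ wraps around the annulus bundle $V$ to give a class with a nonzero $B_\bullet$ or $A_\bullet$ component. Here I would use the concrete period computations from \S\ref{section:constr_cycles}: for general $x$, the values $\int_{A_\bullet}\sigma,\int_{B_\gamma}\sigma$ are $\mathbb{Q}$-linearly independent over the lattice together with $\tau,\mu$, so no nonzero integral class with a nontrivial $U$-block component can be orthogonal to $\widehat\sigma$; this is exactly the genericity built into the statement. Combined with the fact that $\{B_\gamma\}$ is represented by the single $(-2)$-curve $D_\gamma$ (the cycle constructed from $e_9^+\cup e_9^-$, which genuinely passes through $V$ and accounts for the $Q_0$ case), this shows the class $\{D\}$ has no $A_\bullet$ component and lies in exactly one of $Q_0,Q_+,Q_-$. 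The delicate bookkeeping is to confirm that the only curve class crossing $V$ is $\{B_\gamma\}$; I expect this to follow from the explicit description of $B_\gamma$ and the irreducibility of $D$, but making the ``no straddling'' claim rigorous via the maximum principle is the crux of the argument.
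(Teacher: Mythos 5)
Your proposal takes a far harder geometric route than the statement requires, and its central step is left unproven. The dichotomy ``$D\subset M^+$ or $D\subset M^-$'' that carries most of your argument is exactly the point you defer (``making the no-straddling claim rigorous \dots is the crux''), and it is doubtful it can be established in this form: the maximum-principle argument of Lemma \ref{lem:W*} controls holomorphic \emph{functions} on $V$, not the position of a compact curve relative to the gluing region, and the class $\{B_\gamma\}$ --- represented by a genuine irreducible curve $D_\gamma$ by Lemma \ref{lemma:-2-curve03}, with $(\{T\}.\{B_\gamma\})=1$ for the current $T$ supported on $H_1\subset V$ --- shows that curves really do interact nontrivially with $V$. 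Moreover, even granting the descent of $D$ to an irreducible curve in $S^+$ disjoint from $C^+$, adjunction on $S^+$ only yields $(D.D)=2p_a(D)-2\ge -2$; to force $(D.D)=-2$ you must still exclude curves of positive arithmetic genus (for instance a second anticanonical curve disjoint from $C^+$), and that exclusion requires the negative definiteness of the relevant lattice --- at which point the geometric localization buys you nothing.

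The argument the paper uses is purely lattice-theoretic and needs only the fact, recorded just before the lemma, that for general $x$ one has ${\rm Pic}(X)=Q_0\oplus Q_+\oplus Q_-$, an orthogonal direct sum of even negative definite lattices. Adjunction gives $p_a(D)=\frac{1}{2}(D.D)+1$, so $(D.D)\ge -2$; since the form on ${\rm Pic}(X)$ is even and negative definite, $(D.D)=-2$ and $p_a(D)=0$, i.e.\ $D$ is a $(-2)$-curve. Writing $\{D\}=d_0+d_++d_-$ with $d_\bullet\in Q_\bullet$, orthogonality gives $(d_0.d_0)+(d_+.d_+)+(d_-.d_-)=-2$ with each summand even and nonpositive, so exactly one summand is $-2$ and the other two classes vanish; hence $\{D\}$ lies in a single component. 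This two-line lattice step is what your unproven localization claim is standing in for, and you should replace your argument by it.
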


\begin{proof}
Since $D$ is irreducible, the adjunction formula says that 
\[
p_a(D)=\frac{1}{2} \{  (K_X.D)+(D.D) \} + 1= \frac{1}{2} (D.D)+1, 
\]
where $K_X$ is the canonical class on $X$ and $p_a(D)$ is the arithmetic genus of $D$. 
As the intersection form on $\mathrm{Pic}(X)$ is even negative definite, we have 
$(D.D)=-2$ and $p_a(D)=0$, which means that $D$ is a $(-2)$-curve on $X$. 
Moreover since $(D.D)=-2$ is the maximal even negative integer and 
${\rm Pic}(X)=Q_0 \oplus Q_+ \oplus Q_-$ is an orthogonal decomposition with respect to 
the intersection form, $\{D\}$ must be contained in one of the components. 
\end{proof}

\begin{lemma} \label{lemma:-2-curve02}
Let $D$ be a $(-2)$-curve on $X$ whose class $\{D\}$ is contained in $Q_{\pm}$. 
Then we have $D=D_j^{\pm}$ for some $j=0,1,\dots,7$. 
\end{lemma}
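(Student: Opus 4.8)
The plan is to pin down the cohomology class $\{D\}\in Q_\pm$ first, and then promote the cohomological equality to an equality of effective divisors using the fact that $Q_\pm$ is an $E_8(-1)$-lattice whose classes are realized by concrete configurations of $(-2)$-curves in $S^\pm$. Without loss of generality I would treat the $Q_+$ case, the $Q_-$ case being identical after switching $+$ and $-$. First I would recall that $D_0^+,\dots,D_7^+$ form the $A_8$-type chain of $(-2)$-curves pictured in Figure \ref{figure:dynkin}, so their classes generate $Q_+\cong E_8(-1)$, and that the $(-2)$-curve $D$ satisfies $(D.D)=-2$ by Lemma \ref{lemma:-2-curve01}. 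Since $D$ is irreducible, for each $j$ the intersection number $(D.D_j^+)$ is nonnegative unless $D=D_j^+$; this is the key positivity input, and it is exactly what distinguishes an irreducible curve from an arbitrary class of self-intersection $-2$.

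The main step is then a lattice-theoretic argument internal to $E_8(-1)$. Writing $\{D\}=\sum_{j=0}^7 a_j^+\{D_j^+\}$, I would suppose $D\neq D_j^+$ for every $j$, so that $(D.D_j^+)\geq 0$ for all $j$, i.e. the vector $\{D\}$ pairs nonnegatively against every simple root $\{D_j^+\}$. In the root lattice $E_8$ this forces $\{D\}$ to lie in the dual cone of the simple roots; combined with $(\{D\}.\{D\})=-2$ (a root norm) and the fact that every root of $E_8$ is $W(E_8)$-conjugate to a simple root, one deduces that $\{D\}$ must in fact be (the negative of) a positive root expressible with all $a_j^+$ of one sign. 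The contradiction I expect to extract is that a root pairing nonnegatively with all simple roots, yet having norm $-2$, cannot be a nontrivial nonnegative combination of the $\{D_j^+\}$ distinct from a single simple root: the effective cone of $(-2)$-curves on $S^+$ over $C^+$ is generated precisely by $D_0^+,\dots,D_7^+$, so any effective $(-2)$-class is a single one of these.

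The part I expect to be the main obstacle is justifying that an \emph{effective} class of square $-2$ in $Q_+$ is actually one of the $D_j^+$ and not merely some positive root of $E_8(-1)$. Abstractly $E_8$ has $240$ roots, so the norm and pairing conditions alone do not isolate the eight simple ones. The resolution must use geometry rather than pure lattice theory: I would invoke that $D$ lives in $M^+\subset S^+$ (as in \S\ref{section:construction}, since $\{D\}\in Q_+$ is supported away from the gluing region and $C^\pm$), reduce the question to the blow-up $S^+$ of $\mathbb{P}^2$ at the nine points $Z^+$, and use that the only irreducible $(-2)$-curves on such a rational surface disjoint from the anticanonical curve $C^+$ are exactly the components listed in Figure \ref{figure:dynkin}. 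This reduction to $S^+$ is precisely the advantage of the gluing construction emphasized in \S\ref{section:introduction}, and it is what converts the purely lattice-theoretic ambiguity among the $240$ roots into the geometric uniqueness $D=D_j^+$.
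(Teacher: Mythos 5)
Your opening move is the same as the paper's: since $D$ is irreducible, $(D.D_j^{\pm})\ge 0$ for every $j$ unless $D=D_j^{\pm}$, so if $D$ were none of the $D_j^{\pm}$ its class would pair nonnegatively with all eight generators of $Q_{\pm}\cong E_8(-1)$. But you then stall exactly where the paper's proof closes: you assert that the norm and pairing conditions ``do not isolate the eight simple ones'' among the $240$ roots, and you try to compensate with extra geometry. This is the gap. In fact the two conditions $(\alpha.\alpha)=-2$ and $(\alpha.\{D_j^{\pm}\})\ge 0$ for all $j$ isolate a \emph{unique} class, namely the negative of the highest root,
\[
\alpha=-3\{D_0^{\pm}\}-2\{D_1^{\pm}\}-4\{D_2^{\pm}\}-6\{D_3^{\pm}\}-5\{D_4^{\pm}\}-4\{D_5^{\pm}\}-3\{D_6^{\pm}\}-2\{D_7^{\pm}\},
\]
because each Weyl orbit of roots meets the closed antidominant chamber exactly once, and the classes $\pm\{D_j^{\pm}\}$ themselves fail the pairing condition (at $j$ itself, respectively at a neighbour of $j$). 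This single remaining candidate is then excluded by item (1) of Remark \ref{remark:effective}: $-\alpha$ is a nonnegative integral combination of the effective classes $\{D_j^{\pm}\}$, hence effective, so $\alpha$ cannot be the class of an irreducible curve. That is the entire proof; no geometric input beyond the irreducibility and effectivity of the $D_j^{\pm}$ is needed.

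The geometric substitute you propose --- that the only irreducible $(-2)$-curves on $S^{\pm}$ disjoint from $C^{\pm}$ are the components in Figure \ref{figure:dynkin} --- is not available as a quotable fact: it is essentially the lemma itself read on $S^{\pm}$, and it cannot be imported from general theory because the nine points in this example are very far from general position (eight of them are infinitely near over $\wp(0)$), so standard descriptions of $(-2)$-curves on a nine-point blow-up do not apply. Two smaller slips: the $D_j^{\pm}$ form an $E_8$ configuration (Figure \ref{figure:dynkin} has a trivalent node), not an $A_8$ chain; and the exceptional class $\alpha$ above is a non\emph{positive} combination of the $\{D_j^{\pm}\}$, not a nonnegative one --- the contradiction comes from its negative being effective, not from $\alpha$ itself lying in the effective cone.
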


\begin{proof}
As the intersection form on $Q_{\pm}$ is negative definite, 
there are at most finitely many class $\alpha \in Q_{\pm}$ with $(\alpha.\alpha)=-2$. 
In addition, if $\alpha \neq \pm \{D_j^{\pm} \}$ for any $j$, 
then $\alpha$ must satisfy $(\alpha.\{D_j^{\pm}\}) \ge 0$ for any $j$. 
A little calculation shows that such a class $\alpha$ is expressed uniquely as 
\[
\alpha=-3\{ D_0^{\pm}\} -2 \{D_1^{\pm}\} -4 \{D_2^{\pm}\} -6\{ D_3^{\pm}\} -5 \{D_4^{\pm}\} -4 \{D_5^{\pm}\} -3\{D_6^{\pm}\} -2\{D_7^{\pm}\}. 
\]
For a $(-2)$-curve $D$, one has $\{D\} \neq \alpha$ as $-\alpha$ is an effective class (see Remark \ref{remark:effective}). 
Hence we have $\{D\} = \pm \{D_j^{\pm} \}$ for some $j$. 
Since $\{D_j^{\pm} \}$ is effective and $D_j^{\pm}$ is irreducible, $D$ satisfies $\{D\} = \{D_j^{\pm} \}$ and thus $D=D_j^{\pm}$
(see Remark \ref{remark:effective}). 
\end{proof}

\begin{lemma} \label{lemma:-2-curve03}
Let $D$ be a $(-2)$-curve on $X$ whose class $\{D\}$ is contained in $Q_{0}$. 
Then we have $D=D_{\gamma}$,  where $D_\gamma$ is a unique $(-2)$-curve with $\{ D_\gamma \}=\{B_{\gamma}\}$. 
\end{lemma}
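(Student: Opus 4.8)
The plan is to combine the orthogonal lattice decomposition ${\rm Pic}(X)=Q_0\oplus Q_+\oplus Q_-$ with an explicit geometric realization of the class $\{B_\gamma\}$ and with elementary positivity of effective divisors. Since $Q_0=\mathbb{Z}\cdot\{B_\gamma\}$ and $(B_\gamma.B_\gamma)=-2$ (recall $\langle A_{\alpha\beta},B_\gamma\rangle\cong U$ with $(B_\bullet.B_\bullet)=-2$), I would first write $\{D\}=n\{B_\gamma\}$ for some $n\in\mathbb{Z}$. As in Lemma \ref{lemma:-2-curve01} the adjunction computation forces $(D.D)=-2$, so $-2=n^2(B_\gamma.B_\gamma)=-2n^2$, whence $n=\pm1$ and $\{D\}=\pm\{B_\gamma\}$.

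Next I would produce the curve $D_\gamma$ explicitly. Because the present example is built with $g(p_9^+)=p_9^-$, the line segment $\Gamma_9$ of \S\ref{section:constr_cycles} is trivial, so the tube $T_9^{(r)}$ degenerates and the two exceptional curves $e_9^+\subset S^+$ and $e_9^-\subset S^-$ are glued across $V$ by the patching map $f$ into a single holomorphically embedded curve $D_\gamma\subset X$. By the very construction of $B_\gamma$ its homology class is exactly $\{B_\gamma\}$, and since gluing two discs along the overlap annulus yields a smooth $\mathbb{P}^1$, the curve $D_\gamma$ is a smooth rational curve of self-intersection $(\{B_\gamma\}.\{B_\gamma\})=-2$, i.e. an irreducible $(-2)$-curve. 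Hence $\{D\}=\pm\{D_\gamma\}$ with $D_\gamma$ effective.

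The sign is fixed by positivity, exactly as in the proof of Lemma \ref{lemma:-2-curve02} (see Remark \ref{remark:effective}). Both $D$ and $D_\gamma$ are irreducible curves, hence effective, so both $\{D\}$ and $\{D_\gamma\}$ pair strictly positively with any Kähler class $\omega$ of the K3 surface $X$. If $\{D\}=-\{D_\gamma\}$ we would obtain $0=(\omega.\{D\})+(\omega.\{D_\gamma\})>0$, a contradiction; therefore $\{D\}=\{D_\gamma\}$. Finally, if $D\neq D_\gamma$ were distinct irreducible curves their intersection number would satisfy $(D.D_\gamma)\geq 0$, whereas $(D.D_\gamma)=(\{D_\gamma\}.\{D_\gamma\})=-2<0$; this contradiction gives $D=D_\gamma$ and, at the same time, the uniqueness of the $(-2)$-curve in the class $\{B_\gamma\}$.

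The main obstacle I expect is the geometric input of the second paragraph: the explicit identification of an \emph{irreducible} effective curve $D_\gamma$ realizing $\{B_\gamma\}$ with $(D_\gamma.D_\gamma)=-2$. Everything else (the numerical computation $n=\pm1$, the Kähler-positivity argument ruling out the minus sign, and the intersection argument for $D=D_\gamma$) is routine once this geometric fact and the orthogonal decomposition ${\rm Pic}(X)=Q_0\oplus Q_+\oplus Q_-$ are in hand. In particular one must check that the degeneracy of $\Gamma_9$ coming from $g(p_9^+)=p_9^-$ really produces a globally holomorphic $\mathbb{P}^1$, and not merely a topological $2$-cycle, so that the identity $\{B_\gamma\}=\{D_\gamma\}$ holds at the level of an analytic curve.
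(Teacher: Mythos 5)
Your overall skeleton — reduce the lemma to producing an irreducible $(-2)$-curve $D_\gamma$ with $\{D_\gamma\}=\{B_\gamma\}$, then conclude by the standard effectivity/intersection arguments of Remark \ref{remark:effective} — is the same as the paper's. The genuine gap is precisely the step you yourself flag as the "main obstacle": the claim that, because $g(p_9^+)=p_9^-$, the exceptional curves $e_9^+\subset S^+$ and $e_9^-\subset S^-$ glue under the patching map $f$ into a single holomorphic rational curve. For this you would need $f(e_9^+\cap V^+)=e_9^-\cap V^-$, i.e. that in the Arnol'd linearizing coordinates $(z_j^\pm,w_j^\pm)$ each $e_9^\pm$ coincides, on the entire gluing annulus $1/R^\mp<|w^\pm|<R^\pm$, with the fiber of the tubular-neighborhood projection $W^\pm\to C^\pm$ over $q_9^\pm$. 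Arnol'd's theorem gives no such control: $e_9^\pm$ is merely transverse to $C^\pm$, so in these coordinates it is a graph $z^\pm=q_9^\pm+O(w^\pm)$ with no reason for the higher-order terms to vanish, let alone for the two annular pieces to correspond under $(z^+,w^+)\mapsto(g(z^+),1/w^+)$. So the degeneration of $\Gamma_9$ only collapses the \emph{topological} tube $T_9^{(r)}$ and yields the $2$-cycle $B_\gamma$; it does not produce an analytic curve, and the existence of $D_\gamma$ is left unproved.

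The paper fills exactly this hole by a non-constructive argument: Riemann--Roch gives $\chi(\mathcal{O}_X(\{B_\gamma\}))=1$, whence (via Serre duality) $h^0>0$ for one of the classes $\pm\{B_\gamma\}$, producing an effective divisor whose relevant irreducible component is a $(-2)$-curve $D_\gamma$ with $\{D_\gamma\}=\pm\{B_\gamma\}$ by Lemma \ref{lemma:-2-curve01}. The sign is then fixed not by a K\"ahler class but by pairing with the explicit semi-positive closed $(1,1)$-current $T=\frac{\sqrt{-1}}{2\pi}\partial\overline{\partial}\phi$ supported on the Levi-flat hypersurface $H_1$, for which $(\{T\}.\{B_\gamma\})=1$ by the construction of $B_\gamma$ while $(\{T\}.\{D_\gamma\})\ge 0$ (after checking $D_\gamma\not\subset H_1$ by a genus argument). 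This extra machinery is needed precisely because, absent your explicit curve, one only knows $\{D_\gamma\}=\pm\{B_\gamma\}$ with undetermined sign, and a K\"ahler class pairs positively with the effective $D_\gamma$ no matter which sign holds, so it cannot decide between them. Your closing steps ($n=\pm1$, the positivity argument, and $D=D_\gamma$ from $(D.D_\gamma)<0$) are fine, but the proof stands or falls on the unestablished holomorphic gluing of $e_9^+$ and $e_9^-$; either prove that rigidity statement about the Arnol'd coordinates or replace it by the Riemann--Roch/current argument.
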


\begin{proof}
If there exists a $(-2)$-curve $D_{\gamma}$ with $\{D_\gamma\}=\{B_{\gamma} \}$ (the uniqueness follows from Remark \ref{remark:effective}), then 
it immediately follows that $D=D_{\gamma}$ (see also Remark \ref{remark:effective}). 
Hence we will prove the existence of a $(-2)$-curve $D_{\gamma}$ with $\{D_\gamma\}=\{B_{\gamma} \}$. 
The Hirzebruch-Riemann-Roch theorem says that 
\[
\chi(\mathcal{O}_X(\{B_{\gamma}\}))=\frac{1}{2}(\{B_{\gamma}\}. \{B_{\gamma}\} - K_X)+\frac{1}{12}\{ (K_X.K_X)+c_2(X) \}.
\]
As $K_X$ is trivial and $c_2(X)=24$, we have $\chi(\mathcal{O}_X(\{B_{\gamma}\}))=1$, which means that 
\[
h^0(X, \mathcal{O}_X(\{B_{\gamma}\}))+h^0(X, \mathcal{O}_X(-\{B_{\gamma}\}))=h^1(X, \mathcal{O}_X(\{B_{\gamma}\}))+1. 
\]
Here we note that 
$h^2(X, \mathcal{O}_X(\{B_{\gamma}\}))=h^0(X, K_X \otimes \mathcal{O}_X(-\{B_{\gamma}\}))=h^0(X, \mathcal{O}_X(-\{B_{\gamma}\}))$ 
by the Serre duality theorem. 
Hence either $h^0(X, \mathcal{O}_X(\{B_{\gamma}\}))$ or $h^0(X, \mathcal{O}_X(-\{B_{\gamma}\}))$ is positive, 
and 
there exists an effective divisor $D_{\gamma}$ such that either $\{ D_{\gamma} \}= \{ B_{\gamma}\}$ or $\{ D_{\gamma} \}= -\{ B_{\gamma}\}$ holds. 
Let $D_0$ be an irreducible component of $D_{\gamma}$ such that $(D_0.B_{\gamma}) \neq 0$. 
Then Lemma \ref{lemma:-2-curve01} says that $D_0$ is a $(-2)$-curve and either $\{ D_0 \}= \{ B_{\gamma}\}$ or $\{ D_0 \}= -\{ B_{\gamma}\}$ holds, 
which means that $D_{\gamma}=D_0$ is a $(-2)$-curve with either $\{ D_{\gamma} \}= \{ B_{\gamma}\}$ or $\{ D_{\gamma} \}= -\{ B_{\gamma}\}$. 

Now we will show that $\{ D_{\gamma} \}= \{ B_{\gamma}\}$ holds. To this end, we define a $(1,1)$-current $T$ on $S^+\setminus C^+$ by 
\[
T:=\frac{\sqrt{-1}}{2\pi}\partial\overline{\partial}\phi, 
\]
where
\[
\phi : S^+ \setminus C^+ \to \mathbb{R}_{\ge 0}, \quad 
\phi(p):=\begin{cases}
-\log \Phi_+(p) & (p\in W^+\setminus C^+, \Phi_+(p)<1)\\ 
0 & (\text{otherwise}). 
\end{cases}
\]
It is easily seen that $\phi$ is plurisubharmonic on $S^+\setminus C^+$, and is pluriharmonic outside the Levi-flat hypersurface $H_1:=\{p\in V^+ \mid \Phi_+(p)=1\}$. 
So $T$ is a semi-positive closed $(1,1)$-current on $S^+\setminus C^+$ with support $H_1$, and hence 
$T$ is naturally regarded as that on $X$. 
We notice that $\{T\}=\{C^+\} \in H^{1, 1}(S^+, \mathbb{R})$, and 
from the construction of $B_\gamma$, we have 
\[
(\{T\}.\{ B_{\gamma} \})=\int_{B_\gamma}T=1. 
\]
On the other hand, the integral
\[
(\{T\}. \{ D_{\gamma} \})=\int_{D_{\gamma}} T =\int_{D_{\gamma}} \frac{\sqrt{-1}}{2\pi}\partial\overline{\partial}(\phi|_{D_{\gamma} \cap V}) 
\]
is well-defined as the $(-2)$-curve $D_{\gamma}$ is not contained in the support $H_1$ of $T$. 
Indeed, if $D_{\gamma}$ is contained in $H_1$, then the restriction of the natural projection $p : W^+ \to C^+$ induces 
a holomorphic map $D_{\gamma} \to C^+$, which is surjective since if it were not, its image $p(D_{\gamma})$ would be a point $c$ and 
the rational curve $D_{\gamma}$ would be contained in a disk $p^{-1}(c)$. 
However, the existence of the surjective holomorphic map $D_{\gamma} \to C^+$ contradicts that the genus of $D_{\gamma}$ is distinct from that of $C^+$. 

Since the integral is non-negative, we have $(\{T\}. \{ D_{\gamma} \}) \ge 0$ and thus $(\{T\}. \{ D_{\gamma} \})=(\{T\}. \{ B_{\gamma} \})=1$, 
which shows that $\{ D_{\gamma} \}=\{ B_{\gamma} \}$. 
\end{proof}

\begin{proof}[Proof of Proposition \ref{proposition:effdiv}]
Proposition \ref{proposition:effdiv} follows from Lemmas \ref{lemma:-2-curve01}--\ref{lemma:-2-curve03}. 
Note that $X$ is non-elliptic as there is no elliptic curve on $X$, and 
$X$ is non-Kummer as there is no $16$ disjoint $(-2)$-curves among $\{D_\gamma, D_0^+,\dots, D_7^+,D_0^-,\dots, D_7^-\}$.
\end{proof}

\begin{remark} \label{remark:effective}
In our arguments, we use the following well-known facts. 
\begin{enumerate} 
\item If $D$ is an effective divisor on a compact complex surface $X$, then the cohomology class $-\{D\}$ is never effective. 
Indeed, if $-\{D\}$ is effective with an expression $-\{D\}=\{D_0\}$ for some effective divisor $D_0$, 
then $D+D_0$ satisfies $\{D+D_0\}=0$, and thus $D+D_0$ is the zero of a holomorphic function on $X$, 
which contradicts that $X$ is compact. 
\item If $D$ is an irreducible curve on a compact complex surface $X$ with $(D.D)<0$, 
then any effective divisor $D_0$ on $X$ with $\{D_0\}=\{D\}$ satisfies $D_0=D$. 
Indeed, if $D_0 \neq D$, then $D$ is not contained in $D_0$ and hence $D$ satisfies $(D.D_0) \ge 0$, 
which contradicts that $(D.D_0)=(D.D)<0$. 
\end{enumerate}
\end{remark}

\begin{remark}
It is known that if $X$ is a projective K3 surface with $\rho(X)\geq 5$ then $X$ is an elliptic surface 
(see \cite[Chapter 11, Proposition 1.3 $(ii)$]{H}). 
The K3 surface $X$ constructed in this subsection, which is non-projective, gives an example such that $\rho(X)\geq 5$ but $X$ is non-elliptic. 
\end{remark}

\subsection{Towards the gluing construction of McMullen's examples}

In this subsection, we consider automorphisms on K3 surfaces constructed in \cite{M1,M2}. 
Let $L$ be a lattice with $\mathrm{rank}\,L=r$, which is a free $\mathbb{Z}$-module $L \cong \mathbb{Z}^r$
together with a non-degenerate $\mathbb{Z}$-valued symmetric bilinear form 
\[
( \cdot . \cdot ) : L \times L \to \mathbb{Z}. 
\] 
We denote by $O(L)$ by the set of isometries $f : L \to L$, that is, $( f(x).f(y) )=( x.y )$ 
for any $x,y \in L$. 
Note that any isometry $f \in O(L)$ admits its linear extensions to 
$L \otimes \mathbb{Q}$, $L \otimes \mathbb{R}$ 
and $L \otimes \mathbb{C}$. 
The lattice $L$ is said to be {\it even} if $( x. x ) \in 2  \mathbb{Z}$ for any $x \in L$. 
The {\it dual lattice} $L^{\vee}$ of $L$ is defined by
\[
L^{\vee}:=\{x \in L \otimes \mathbb{Q} \, | \, ( x.L ) \subset \mathbb{Z} \}. 
\]
Then we have $L \subset L^{\vee} \subset L \otimes \mathbb{Q}$, and we say that $L$ is {\it unimodular} if $L=L^{\vee}$. 
The signature of $L$, denoted by $(s_+,s_-)$, is that of the induced bilinear form on $L \otimes \mathbb{R}$. 
It is known that an even unimodular lattice exists if and only if its signature $(s_+,s_-)$ satisfies $s_+ -s_- \equiv 0~\mathrm{mod}~8$. 
Moreover if $s_{\pm}>0$ and $s_+ -s_- \equiv 0~\mathrm{mod}~8$, then an even unimodular lattice with signature $(s_+,s_-)$ is unique up to lattice isometries. 
\par
Let $X$ be a K3 surface. Then its cohomology group $H^2(X; \mathbb{Z})$ together with the cup product is a unique even unimodular lattice 
$\Pi_{3,19} \cong 3U \oplus 2 E_8(-1)$ of signature $(3,19)$, where $U$ and $E_8(-1)$ are even unimodular lattices of signatures $(1,1)$ and $(0,8)$ respectively. 
The complex structure of $X$ yields the Hodge decomposition 
\[
H^2(X;\mathbb{C})=[H^{2,0}(X) \oplus H^{0,2}(X)] \oplus H^{1,1}(X), 
\]
where $H^{i,j}(X)=\overline{H^{j,i}(X)}$, and $[H^{2,0}(X) \oplus H^{0,2}(X)]$ and $H^{1,1}(X)$ have signature $(2,0)$ and $(1,19)$ respectively. 
Moreover, since every K3 surface is K\"ahler, $X$ admits the K\"ahler cone $\mathcal{C}_X \neq \emptyset \subset H^{1,1}(X) \cap H^2(X;\mathbb{R})$, 
the classes represented by the symplectic forms of K\"ahler metrics on $X$. 
\par 
In the setting, a {\it K3 structure} on $L=\Pi_{3,19}$ consists of the following data: 
\begin{enumerate}
\item a Hodge decomposition $L \otimes \mathbb{C}=[L^{2,0} \oplus L^{0,2}] \oplus L^{1,1}$, 
where $L^{i,j}=\overline{L^{j,i}}$, and $[L^{2,0} \oplus L^{0,2}]$ and $L^{1,1}$ have signature $(2,0)$ and $(1,19)$ respectively, 
\item a K\"ahler cone $\mathcal{C} \subset L^{1,1}_{\mathbb{R}}:=L^{1,1} \cap (L \otimes \mathbb{R})$, a connected component of 
$\{ x \in L^{1,1}_{\mathbb{R}} \, | \, (x.x) >0, (x.y) \neq 0~({}^\forall y \in \Psi) \}$, where 
$\Psi:=\{ y \in L^{1,1} \cap L \, | \, ( y.y) =-2 \}$. 
\end{enumerate}
It is known that for any K3 structure on $L$, there is a unique K3 surface $X$ and an isomorphism $\iota : L \to H^2(X;\mathbb{Z})$ such that
$\iota(L^{i,j})=H^{i,j}(X)$ and $\iota (\mathcal{C})=\mathcal{C}_X$. 
Furthermore, any $f \in O(L)$ satisfying $f(L^{i,j})=L^{i,j}$ and $f(\mathcal{C})=\mathcal{C}$ is realized by a unique automorphism $F : X \to X$, 
that is, $F^* \circ \iota=\iota \circ f$. 
\par
Now let $L$ be a general even lattice. Then the {\it glue (discriminant) group} $G(L):=L^{\vee}/L$ admits 
the discriminant form $q_L : G(L) \to \mathbb{Q}/ \mathbb{Z}$, $q_L(x)=(1/2) ( x.x ) ~\mathrm{mod}~ \mathbb{Z}$. 
For two even lattices $L_1$ and $L_2$, assume that there is an isomorphism $\phi : G(L_1) \to G(L_2)$, called a {\it gluing map}, 
such that $q_{L_1}(x)+q_{L_2}(\phi(x))=0$ for any $x \in G(L_1)$. 
Then 
\[
L:=\{ x_1+x_2 \in L_1^{\vee} \oplus L_2^{\vee} \, | \, \phi(\bar{x}_1)=\bar{x}_2 \}
\]
becomes an even unimodular lattice, where $\bar{x}_i$ denotes the projection of $x_i \in L_i^{\vee}$ to $G(L_i)=L^{\vee}_i/L_i$. 
Moreover, two isometries $f_i \in O(L_i)$ satisfying $\phi \circ \bar{f}_1=\bar{f}_2 \circ \phi$ yield an isometry $f \in O(L)$, 
where $\bar{f}_i : G(L_i) \to G(L_i)$ is the induced action.  
This is a method constructing lattice automorphisms. 
\\ {\bf Kummer surface automorphism} : Let $Y=\mathbb{C}^2/\Lambda$ be a $2$-dimensional complex torus, where $\Lambda:=\oplus_{i=1}^4 \mathbb{Z} v_i$
is a lattice generated by $v_i \in \mathbb{C}^2$, and let $\iota : Y \to Y$ be the involution given by $\iota(y)=-y$. 
The fixed points of $\iota$ correspond to the $16$ double points of $Y/\iota$, explicitly given by 
\[
\{ v_t:=\frac{1}{2} \sum_{i=1}^4 t_i v_i \, | \, t=(t_1,t_2,t_3,t_4) \in (\mathbb{Z}/2\mathbb{Z})^4 \} \subset Y/\iota. 
\]
By blowing up these $16$ points of $Y/\iota$, we have the {\it Kummer surface} $X=\kappa(Y)$. 
The natural degree $2$ rational map is denoted by $\pi : Y \dashrightarrow X$. 
The integral class in $H^2(X,\mathbb{Z})$ obtained from blowing up $v_t$ is denoted by 
$E_t \in H^2(X,\mathbb{Z})$. 
Moreover, let $K \subset H^2(X;\mathbb{Z})$ be the so called {\it Kummer lattice}, that is, 
the minimal sublattice $K \subset H^2(X;\mathbb{Z})$ such that $H^2(X;\mathbb{Z})/K$ is torsion-free and 
$K$ contains the $16$ classes $E_t$, and let $L \subset H^2(X;\mathbb{Z})$ be the image of $\pi_* : H^2(Y;\mathbb{Z}) \to H^2(X;\mathbb{Z})$.  
Then we have the following facts: 
\begin{enumerate}
\item The sublattice $K$ is spanned by the basis $E_t$ and all elements $E_W$ with hyperplanes 
$W \subset (\mathbb{Z}/2\mathbb{Z})^4$ i.e. $W$ is given by the equation of the form 
$\sum a_i \cdot t_i=c$ for some $a_i,c \in \{0,1\}$ with $(a_1,\dots,a_4,c) \neq (0,\dots,0)$. 
Here, for a subset $V \subset (\mathbb{Z}/2\mathbb{Z})^4$, put 
\[
E_V:= \frac{1}{2} \sum_{t \in V} E_t. 
\]
The glue group $G(K)$ is isomorphic to $(\mathbb{Z}/2\mathbb{Z})^6$ and is generated by 
$E_{ij}:=E_{A_{ij}}$ for $i < j \in \{1,2,3,4\}$, where 
$A_{ij}=\{ t=(t_1,t_2,t_3,t_4) \in (\mathbb{Z}/2\mathbb{Z})^4 \, | \, t_k=0 ({}^\forall k \notin \{i,j\} ) \}$. 
Moreover, by noting 
\[
( E_{ij}.E_{kl} ) =
\left\{ 
\begin{array}{cl}
-2 & (\# (\{i,j\} \cap \{k,l\})=2) \\
-1 & (\# (\{i,j\} \cap \{k,l\})=1) \\
-1/2 & (\# (\{i,j\} \cap \{k,l\})=0), 
\end{array}
\right. 
\]
we have 
\[
q_K (\sum_{i<j}t_{ij} E_{ij})= \frac{1}{2} (t_{12} t_{34}+t_{13}t_{24}+t_{14}t_{23}) ~~ \mathrm{mod} ~~ \mathbb{Z} \qquad (t_{ij} \in \{0,1\}). 
\] 
\item The sublattice $L$ is spanned by $2V_{ij}:=\pi_*(v_i \vee v_j) \in H^2(X;\mathbb{Z})$ for $i < j \in \{1,2,3,4\}$. 
The glue group $G(L)$ is isomorphic to $(\mathbb{Z}/2\mathbb{Z})^6$ and is generated by 
$V_{ij}$ for $i \neq j \in \{1,2,3,4\}$. 
Moreover, by noting 
\[
( V_{ij}.V_{kl} ) =
\left\{ 
\begin{array}{cl}
\frac{1}{2} \mathrm{sgn} 
\begin{pmatrix}
1 & 2 & 3& 4 \\
i & j & k & \ell 
\end{pmatrix} & (\# (\{i,j\} \cap \{k,l\})=0) \\
0 & (\# (\{i,j\} \cap \{k,l\}) \ge 1), 
\end{array}
\right. 
\]
we have 
\[
q_L (\sum_{i<j}t_{ij} V_{ij})= \frac{1}{2} (t_{12} t_{34}+t_{13}t_{24}+t_{14}t_{23}) ~~ \mathrm{mod} ~~ \mathbb{Z} \qquad (t_{ij} \in \{0,1\}). 
\] 
\item By the gluing map $\phi : G(K) \to G(L)$ given by $\phi(E_{ij})=V_{ij}$, we obtain 
the cohomology group $H^2(X;\mathbb{Z})$. 
\end{enumerate}
Any automorphism $f : Y \to Y$ commutes with the involution $\iota$ and thus induces automorphism $f : Y/\iota \to Y/\iota$ permuting 
the double points $f : \{ v_t\} \to \{ v_t\}$. 
Therefore $f$ gives rise to an automorphism $F = \kappa(f)  : X \to X$. 
On the other hand, $f$ induces lattice automorphisms $f^* : K \to K$ given by the permutation $f^* : \{ v_t\} \to \{ v_t\}$, and 
$f^* : L \to L$ given by the action $f^* : H^2(Y;\mathbb{Z}) \cong \oplus_{i<j} \mathbb{Z}  v_i \vee v_j$. 
It is seen that two automorphisms are compatible with the gluing map and hence yield an automorphism on $H^2(X;\mathbb{Z})$, which 
is the same as the action $F^* : H^2(X;\mathbb{Z}) \to H^2(X;\mathbb{Z})$. 
\par
For any integer $a \ge 0$, McMullen \cite{M1} considered an automorphism $f \in \mathrm{Aut}(Y)$ such that the characteristic polynomial of $f^* |H^1(Y)$ is given by
$t^4+at^2+t+1$. The automorphism $f$ can be constructed for example from a lattice automorphism $f^* : \Lambda \to \Lambda$ defined by 
\[
\begin{pmatrix}
0 & 0 & -1 & 0 \\
1 & 0 & 0 & 0 \\
0 & 1 & 1 & a+1 \\
0 & 0 & -1 & -1 
\end{pmatrix}.
\]
The automorphism $f$ induces the action $f^*$ on 
$L=2 \mathbb{Z} V_{12} \oplus 2 \mathbb{Z} V_{13} \oplus 2 \mathbb{Z} V_{14} \oplus 2 \mathbb{Z} V_{23} \oplus 2 \mathbb{Z} V_{24} \oplus 2 \mathbb{Z} V_{34}$, given by
\[
\begin{pmatrix}
0 & 1 & 0 & 0 & 0 & 0 \\
0 & 0 & 0 & 1 & 0 & -(a+1) \\
0 & 0 & 0 & 0 & 0 & 1 \\
1 & 1 & a+1 & 0 & 0 & 0 \\
0 & -1 & -1 & 0 & 0 & 0 \\
0 & 0 & 0 & -1 & -1 & a 
\end{pmatrix}.
\]
Its characteristic polynomial is given by $S(t)=t^6-a t^5-t^4+(2a-1)t^3-t^2-at+1$ and 
the holomorphic $2$-form $\sigma$ corresponds to 
\[
\sigma(s):=-\Bigl( \frac{s}{s^2-1}+\frac{1}{s^2} \Bigr) V_{12}- \Bigl( \frac{s^2}{s^2-1}+\frac{1}{s} \Bigr) V_{13}+\frac{1}{s}V_{14}
+ \Bigl( a-\frac{s^3}{s^2-1} \Bigr) V_{23}+\frac{s}{s^2-1} V_{24}+V_{34},  
\]
where $s \in U(1)$ is given by $S(s)=0$ and $( \sigma(s).\sigma(\bar{s}) ) >0$. 
Then by putting $A_{\alpha,\beta}=2V_{14}$, $A_{\beta,\gamma}=2V_{13}$, $A_{\gamma,\alpha}=2V_{12}$, 
$B_{\gamma}=V_{23}+E_{23}$, $B_{\alpha}=-2V_{14}-V_{24}-E_{24}$, $B_{\beta}=2V_{14}+2 V_{13}+V_{34}+E_{34}$, 
we have, up to scale, 
\[
\int_{A_{\alpha,\beta}} \sigma=a_{\beta}- \tau \cdot a_{\alpha}=r_1(s), \quad 
\int_{A_{\beta,\gamma}} \sigma=\tau=r_2(s), \quad 
\int_{A_{\gamma,\alpha}} \sigma=1
\]
with
\[
r_1(s)=a-\frac{s^3}{s^2-1}, \qquad r_2(s)=-\frac{s}{s^2-1}. 
\]
In particular, $(a_{\alpha},a_{\beta})$ is given by
\[
a_{\alpha}=\frac{r_1(s)-r_1(\bar{s})}{r_2(\bar{s})-r_2(s)} =-1+\frac{|s^2-1|}{|s|^2+1}, \qquad 
a_{\beta}=\frac{r_1(s) \cdot r_2(\bar{s})-r_1(\bar{s}) \cdot r_2(s)}{r_2(\bar{s})-r_2(s)}= a-\frac{2 |s|^2 \mathrm{Re} (s)}{|s|^2+1}.
\]\\ {\bf Automorphism with minimum entropy} : 
In \cite{M2}, McMullen constructed an automorphism with minimum positive entropy on a non-projective K3 surface. 
The construction is expressed as follows: 
First let $E_{10} \cong \oplus_{i=1}^{10} \mathbb{Z} e_{i}$ be a lattice with symmetric bilinear form $( \cdot. \cdot )_{E_{10}}$ 
corresponding to the Dynkin diagram of type $E_{10}$, 
and let $f_1 \in O(E_{10})$ be a lattice isomorphism corresponding to a Coxeter element in $W(E_{10})$. 
Moreover consider a new bilinear form $L_1 \cong \oplus_{i=1}^{10} \mathbb{Z} e_{i}$ given by $( x. y )_{L_{1}}:=-( a x. y )_{E_{10}}$, 
where $a:=2(f_1+f_1^{-1})+3 \in \mathrm{End}(E_{10})$. 
It is easily seen that $L_1$ is an even lattice with signature $(3,7)$ and $f_1 \in O(L_1)$ is a lattice isomorphism on $L_1$. 
A little calculation shows that the bilinear form 
$( \cdot. \cdot )_{L_{1}} : \mathbb{Z}^{10} \times \mathbb{Z}^{10} \ni (x,y) \mapsto {}^t x \Pi_{L_1} y \in \mathbb{Z}$ is given by 
\[
\Pi_{L_1} = \begin{pmatrix}
-2 & -2 & -2 & 1 & 2 & 0 & 0 & 0 & 0 & 2 \\
-2 & -2 & -1 & 2 & 0 & 0 & 0 & 0 & 0 & 2 \\
-2 & -1 & -2 & 1 & 2 & 0 & 0 & 0 & 0 & 0 \\
1 & 2 & 1 & -2 & -1 & 2 & 0 & 0 & 0 & -2 \\
2 & 0 & 2 & -1 & -2 & -1 & 2 & 0 & 0 & 0 \\
0 & 0 & 0 & 2 & -1 & -2 & -1 & 2 & 0 & 0 \\
0 & 0 & 0 & 0 & 2 & -1 & -2 & -1 & 2 & 0 \\
0 & 0 & 0 & 0 & 0 & 2 & -1 & -2 & -1 & 2 \\
0 & 0 & 0 & 0 & 0 & 0 & 2 & -1 & -2 & -1 \\
2 & 2 & 0 & -2 & 0 & 0 & 0 & 2 & -1 & -2 \\
\end{pmatrix}, 
\]
and the glue group $G(L_1) \cong (\mathbb{Z}/3\mathbb{Z})^2$ is generated by $(u_1,u_2)$ with 
\[
u_1:= \frac{1}{3} (2 e_1+ e_3+e_4+e_5+e_8+e_9), \quad 
u_2:= \frac{1}{3} (e_1+2 e_2+ 2e_3+e_5+e_6+e_9+e_{10}),  
\]
on which $f_1$ acts as $f_1(u_1)=u_2, f_1(u_2)=2u_1$. 

Next let $L_2:= A_2 \oplus A_2 \cong (\mathbb{Z} e_{11} \oplus \mathbb{Z} e_{12} ) \oplus (\mathbb{Z} e_{21} \oplus \mathbb{Z} e_{22})$ be a lattice with symmetric bilinear form 
$( \cdot. \cdot )_{L_2}$ corresponding to the Dynkin diagram of type $A_2 \oplus A_2$, 
and let $f_2 \in O(L_2)$ be a lattice isomorphism given by 
\[
\begin{pmatrix}
0 & 0 & 0 & 1 \\
0 & 0 & 1 & 0 \\
1 & 0 & 0 & 0 \\
0 & 1 & 0 & 0 \\
\end{pmatrix}. 
\]
Then $L_2$ is an even lattice of signature $(0,4)$, and its glue group $G(L_2) \cong (\mathbb{Z}/3\mathbb{Z})^2$ is generated by $(v_1,v_2)$ with 
\[
v_1:=\frac{1}{3}(e_{11}+2e_{12}), \quad v_2:=\frac{1}{3}(e_{21}+e_{22}), 
\]
on which $f_2$ acts as $f_2(v_1)=v_2, f_2(v_2)=2v_1$. 
Hence by the gluing map $\phi : G(L_1) \to G(L_2)$ given by $\phi(u_i)=v_i$, we obtain the even unimodular lattice $L_0$ with signature $(3,11)$ 
and the automorphism $f_0 \in O(L_0)$ associated to $f_1$ and  $f_2$. 
Finally, $f_0$ extends trivially to $f$ on the even unimodular lattice $L=L_0 \oplus E_8(-1)$ with signature $(3,19)$. 
McMullen \cite{M2} showed that $f$ preserves some K3 structure on $L$ and hence $f$ is realized by an automorphism on a K3 surface. 
\par
The characteristic polynomial of $f_1$ is given by $S(t)=t^{10}+t^9-t^7-t^6-t^5-t^4-t^3+t+1$ and 
the holomorphic $2$-form $\sigma$ corresponds to 
\[
\sigma(s):= (1+s+s^9)e_1+(1+s^8)e_2+(s^2+s^3+s^4+s^5+s^6+s^7-s^9)e_3+ \sum_{k=4}^{10} (\sum_{j=0}^{10-k}s^j) e_k,
\]
where $s \approx -0.9433+0.3319 \sqrt{-1} \in U(1)$ is given by $S(s)=0$ and $( \sigma(s).\sigma(\bar{s}) ) >0$. 
Then by putting $A_{\alpha,\beta}=e_2+5e_3+3e_4+4e_5+2e_6+e_7+2e_8$, 
$A_{\beta,\gamma}=e_6+e_8$, $A_{\gamma,\alpha}=e_2-e_1$, 
$B_{\gamma}=e_1-2e_2$, $B_{\alpha}=-e_9$, $B_{\beta}=e_3$, 
we have, up to scale, 
\[
\int_{A_{\alpha,\beta}} \sigma=a_{\beta}- \tau \cdot a_{\alpha}=r_1(s), \quad 
\int_{A_{\beta,\gamma}} \sigma=\tau=r_2(s), \quad 
\int_{A_{\gamma,\alpha}} \sigma=1
\]
with
\[
r_1(s)=-2-3s-s^2-s^3+s^6+3s^7+s^8-s^9, \qquad r_2(s)=s^3+s^4+s^5-s^8. 
\]
In particular, $(a_{\alpha},a_{\beta})$ is given by
\[
a_{\alpha}=\frac{r_1(s)-r_1(\bar{s})}{r_2(\bar{s})-r_2(s)} \approx 0.4179, \qquad 
a_{\beta}=\frac{r_1(s) \cdot r_2(\bar{s})-r_1(\bar{s}) \cdot r_2(s)}{r_2(\bar{s})-r_2(s)} \approx 0.6784.
\]
Note that $a_{\alpha}$ and $a_{\beta}$ are algebraic. 
Hence if $(a_{\alpha},a_{\beta}) \notin \mathbb{Q}^2$, then $(a_{\alpha},a_{\beta})$ satisfies the Diophantine condition. 

\begin{question}
Is the K3 surface realizable in our construction. In other words, does the condition $( \sigma(s).\sigma(\bar{s}) ) > \Lambda$ hold? 
\end{question}


\section{A relative variant of Arnol'd's Theorem}

\subsection{{Proof of Theorem \ref{thm:rel_Arnol'd}}}\label{section_thm:rel_Arnol'd}

In this section, we prove Theorem \ref{thm:rel_Arnol'd}. 
Here we use the notations in \S \ref{section:deform_points}. 
Fix a sufficiently fine open covering $\{U_j\}$ of $C_0$ with $\#\{U_j\}<\infty$ and a coordinate $z_j$ of $U_j$ such that $z_k=z_j+A_{kj}$ holds on each $U_{jk}$ for some constant $A_{kj}\in\mathbb{C}$. 
As $C_0$ is an elliptic curve, we can take such coordinates by considering those induced by the natural coordinate of the universal cover $\mathbb{C}$. 
Fix also another open covering $\{U_j^*\}$ of $C_0$ with $\#\{U_j^*\}=\#\{U_j\}$ such that $U_j^*\Subset U_j$ for each $j$. 
In the following proof of Theorem \ref{thm:rel_Arnol'd}, we use the following: 

\begin{lemma}[{\cite[Lemma 4]{U}}]\label{lem:ueda83_lemma4}
Let $M$ be a compact complex manifold, $\mathcal{U}=\{U_j\}_{j=1}^N$ an open covering of $M$, 
and $\mathcal{U}^*=\{U_j^*\}_{j=1}^N$ be an open covering of $M$ such that $U_j^*\Subset U_j$ for each $j$. 
Then there exists a positive constant $K=K(M, \mathcal{U}, \mathcal{U}^*)$ such that, for any flat line bundle $E\in \check{H}^1(\mathcal{U}, U(1))$ over $M$ and for any $0$-cochain $\mathfrak{f}\in \check{C}^0(\mathcal{U}, \mathcal{O}_M(E))$, the inequality 
\[
d(\mathbb{I}_M, E)\cdot \|\mathfrak{f}\|\leq K\cdot \|\delta\mathfrak{f}\|
\]
holds. 
\end{lemma}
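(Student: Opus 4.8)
The plan is to argue by the maximum principle on the compact manifold $M$, reducing the whole estimate to the elementary fact that a plurisubharmonic function on a compact complex manifold is constant (the same mechanism used in Lemma \ref{lem:W*}), and then to read off the distance $d(\mathbb{I}_M, E)$ from the winding of the cochain. After rescaling I may assume $\|\mathfrak{f}\|=1$, where $\|\mathfrak{f}\|=\max_j\sup_{U_j^*}|f_j|$ for $\mathfrak{f}=\{f_j\}$ with $f_j\in\mathcal{O}(U_j)$ (the passage to the relatively compact $U_j^*$ leaving room for interior estimates); set $\varepsilon:=\|\delta\mathfrak{f}\|$, so that on each overlap $f_k=t_{kj}f_j+e_{jk}$ with $|t_{kj}|=1$ and $|e_{jk}|\le\varepsilon$. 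Since $d$ is bounded by the diameter of the compact torus $\check{H}^1(\mathcal{U},U(1))$, the case $\varepsilon\ge\varepsilon_0$ (for a threshold $\varepsilon_0$ fixed below) is immediate with $K=\mathrm{diam}/\varepsilon_0$; hence I only need to treat $\varepsilon$ small.

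First I would establish a uniform lower bound $|f_j|\ge 1/2$ on $U_j^*$. The functions $\log|f_j|$ are plurisubharmonic on $U_j$, and on the overlaps they differ by $O(\varepsilon)$ as long as the moduli stay bounded below; thus $\{\,|f_j|\,\}$ behaves like a single plurisubharmonic function on $M$ up to an error controlled by $\varepsilon$ and by the combinatorics of the covering. Propagating the attained maximum value $1$ through the finitely many sets of $\mathcal{U}$ — exactly as the maximum modulus principle is used in Lemma \ref{lem:W*} — should yield $|f_j|\ge 1-C_0\varepsilon$ on each $U_j^*$, where $C_0=C_0(M,\mathcal{U},\mathcal{U}^*)$. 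Choosing $\varepsilon_0$ so that $C_0\varepsilon_0\le 1/2$ then makes each $f_j$ nonvanishing on $U_j^*$. This barrier/propagation step is the main obstacle: one must make the ``almost plurisubharmonic'' patching rigorous (this is the quantitative form of $H^0(M,\mathcal{O}(E))=0$ for nontrivial flat $E$) while keeping the accumulated constant dependent only on $(M,\mathcal{U},\mathcal{U}^*)$ and not on $E$ or $\mathfrak{f}$.

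With $f_j$ nonvanishing, I would choose a continuous branch of $\arg f_j$ on each $U_j^*$. From $f_k/f_j=t_{kj}+e_{jk}/f_j=t_{kj}(1+O(\varepsilon))$ and $t_{kj}=e^{\sqrt{-1}\theta_{kj}}$, with $\{\theta_{kj}\}$ the locally constant real cocycle representing $E$, I obtain on each overlap
\[
\arg f_k-\arg f_j=\theta_{kj}+r_{kj},\qquad \|r\|_\infty=O(\varepsilon).
\]
Evaluating this relation along any $1$-cycle $\gamma$ subordinate to $\mathcal{U}$, the contributions of $\arg f_j$ telescope to $2\pi$ times the winding number of $f$ along $\gamma$, which is an integer; hence $\langle[\theta],\gamma\rangle$ lies within $O(\varepsilon)\cdot\mathrm{length}(\gamma)$ of $2\pi\mathbb{Z}$. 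Running over a fixed finite set of generators of $H_1(M,\mathbb{Z})$ shows that the class of $E$ in the torus $H^1(M,\mathbb{R})/2\pi H^1(M,\mathbb{Z})$ lies within $C_1\varepsilon$ of the origin, i.e. $d(\mathbb{I}_M,E)\le C_1\varepsilon$ with $C_1=C_1(M,\mathcal{U},\mathcal{U}^*)$. I note that this winding step is exactly what distinguishes the unitary distance $d$ from the (automatic) vanishing of $c_1(E)$.

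Combining the two regimes would give the claim with $K:=\max(\mathrm{diam}/\varepsilon_0,C_1)$: for $\|\mathfrak{f}\|=1$ I will have shown $d(\mathbb{I}_M,E)\le K\|\delta\mathfrak{f}\|$, and since both sides of the asserted inequality are homogeneous of degree one in $\mathfrak{f}$ the normalization is harmless. The only places where the flatness $|t_{kj}|=1$ and the compactness of $M$ enter essentially are the maximum-principle step, which forces the near-constant modulus, and the winding-number computation; these are precisely the two points I expect to require care in the execution.
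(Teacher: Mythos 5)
Your overall strategy (normalize $\|\mathfrak{f}\|=1$, dispose of the case $\|\delta\mathfrak{f}\|\geq\varepsilon_0$ trivially, show that every $|f_j|$ is within $O(\varepsilon)$ of $1$ on $U_j^*$, and then read off a near-trivialization of the unitary cocycle) is the same as the paper's, but the step you yourself flag as ``the main obstacle'' is a genuine gap, and the tool you propose for it is the wrong one. The maximum modulus principle, as used in Lemma \ref{lem:W*}, is purely qualitative: it tells you that a function whose modulus attains an interior maximum is constant, but it gives no quantitative statement of the form ``if $\sup_{U_j}|f|\le 1$ and $|f(p_0)|\ge 1-\delta$ at one point $p_0\in U_j^*$, then $|f|\ge 1-C\delta$ on all of $U_j^*$.'' That Harnack-type propagation inside a single chart is exactly what is needed to carry the near-maximality from the point where $\|\mathfrak{f}\|$ is attained across the finitely many overlaps, and it does not follow from ``almost plurisubharmonic patching'': your control of the overlap errors $\log|f_k|-\log|f_j|=O(\varepsilon)$ already presupposes the lower bound on $|f_j|$ that you are trying to prove, so the argument as sketched is circular. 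The paper supplies precisely the missing ingredient via a Schwarz--Pick property of the Kobayashi metric (Lemma \ref{lem:ueda83_lem4_lemma_3}) and the resulting two-point estimates $|f(z_1)-f(z_2)|\le L_1(1-|f(z_1)|)$ and $1-|f(z_2)|\le L_2(1-|f(z_1)|)$ for $z_1,z_2\in U_j^*$ (Lemma \ref{lem:ueda83_lem4_lemma_1}), which are then iterated along chains of overlapping $U_j^*$'s in Lemma \ref{lem:ueda83_lem4_lemma_2} to give $1-|f_j|\le K_2\|\delta\mathfrak{f}\|$ and $|f_j(p)-f_j(p')|\le K_1\|\delta\mathfrak{f}\|$ with constants depending only on $(M,\mathcal{U},\mathcal{U}^*)$. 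Until you replace your appeal to the maximum principle with an estimate of this kind, the proof does not close.

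Your endgame also diverges from the paper's, in a way that costs you extra work. Once the modulus and oscillation bounds are in place, the paper simply sets $t_j^*:=f_j(q_j)/|f_j(q_j)|\in U(1)$ for chosen points $q_j\in U_j^*$ and bounds $|t_{jk}t_k^*-t_j^*|$ by a short telescoping estimate; this matches the definition $d(\mathbb{I}_M,E)=\min_{\{t_j\}}\max_{j,k}|t_{jk}t_k-t_j|$ used in \S\ref{section:nice_proof_of_uedas_lemma} on the nose. Your winding-number argument instead bounds the periods of the cocycle over generators of $H_1(M,\mathbb{Z})$ modulo $2\pi\mathbb{Z}$, which measures the class of $E$ in a different (period) metric; to conclude you would additionally have to prove that this period distance and the \v{C}ech distance actually used are uniformly comparable, and you would need to manage branch choices of $\arg f_j$ along the cycles. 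None of this is insurmountable, but it is an unnecessary detour: with the oscillation bound in hand, the direct cochain construction is one line.
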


Here we denote by $\|\mathfrak{f}\|$ the value $\max_j\sup_{p\in U_j}|f_j(p)|$ for each element $\mathfrak{f}=\{(U_j, f_j)\}_j$ of $\check{C}^0(\mathcal{U}, \mathcal{O}_M(E))$, 
and by $\|\mathfrak{g}\|:=\max_{j, k}\sup_{p\in U_{jk}}|g_{jk}(p)|$ for each element $\mathfrak{g}=\{(U_{jk}, g_{jk})\}_{j, k}$ of $\check{C}^1(\mathcal{U}, \mathcal{O}_M(E))$. 

In what follows, we always assume that $T$ is a sufficiently small open ball centered at the base point $0\in T$. 
As $N_{\mathcal{C}/\mathcal{S}}\cong {\rm Pr}_1^*L$, there exists $t_{jk}\in U(1)$ for each  $j$ and $k$ such that 
$N_{\mathcal{C}/\mathcal{S}}^{-1}=[\{(U_{jk}\times T, t_{jk})\}]\in H^1(\mathcal{C}, \mathcal{O}^*_{\mathcal{C}})$. 
Take a neighborhood $\mathcal{V}_j$ of $U_j\times T$ and a defining function $w_j$ of $U_j\times T$ in $\mathcal{V}_j$. 
It is easily observed that we can choose $w_j$ such that $t_{jk}w_k=w_j+O(w_j^2)$ holds on each $\mathcal{V}_{jk}:=\mathcal{V}_j\cap\mathcal{V}_k$. 
By fixing a holomophic extension of the coordinate function $z_j$ on $U_j$ to $\mathcal{V}_j$, we first show the following lemma, which can be regarded as a relative variant of Ueda's theorem \cite[Theorem 3]{U} for elliptic curves. 

\begin{lemma}\label{lem:rel_ueda_thm}
By shrinking $T$ and $\mathcal{V}_j$'s if necessary, one can take $\{(\mathcal{V}_j, w_j)\}$ such that $t_{jk}w_k=w_j$ holds on each $\mathcal{V}_{jk}$. 
\end{lemma}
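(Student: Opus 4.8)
The plan is to establish Lemma \ref{lem:rel_ueda_thm} as a family version of Ueda's linearization theorem \cite[Theorem 3]{U}, carried out by the usual inductive normalization of the defining functions $w_j$. The decisive simplification in our situation is that $N_{\mathcal{C}/\mathcal{S}}\cong {\rm Pr}_1^*N_0$ is pulled back from $C_0$: on every slice $C_0\times\{t\}$ the relevant flat line bundles are \emph{the same} bundle, so both the cohomological obstructions and the quantitative estimates become independent of $t\in T$. Starting from $w_j^{(1)}:=w_j$, which satisfies $t_{jk}w_k^{(1)}=w_j^{(1)}+O(w^2)$ on each $\mathcal{V}_{jk}$, I would construct inductively holomorphic defining functions $w_j^{(n)}$ of $U_j\times T$, related by $w_j^{(n+1)}=w_j^{(n)}+\phi_j^{(n)}\cdot(w_j^{(n)})^{n+1}$ with $\phi_j^{(n)}$ holomorphic in $(z_j,t)$, such that $t_{jk}w_k^{(n)}-w_j^{(n)}=O(w^{n+1})$ holds on $\mathcal{V}_{jk}$. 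The coordinates claimed in the lemma would then be produced as the limit $w_j:=\lim_{n\to\infty}w_j^{(n)}$, once its convergence on a suitably shrunk $\mathcal{V}_j$ and $T$ is verified.

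For the inductive step, writing $t_{jk}w_k^{(n)}-w_j^{(n)}=f_{jk}^{(n)}\cdot(w_j^{(n)})^{n+1}+O(w^{n+2})$, the leading coefficients $\{f_{jk}^{(n)}\}$ form a \v{C}ech $1$-cocycle defining Ueda's $n$-th obstruction class in $H^1(\mathcal{C},N_{\mathcal{C}/\mathcal{S}}^{-n})$, and replacing $w_j^{(n)}$ by $w_j^{(n)}+\phi_j^{(n)}(w_j^{(n)})^{n+1}$ modifies this cocycle by the coboundary of $\{\phi_j^{(n)}\}$. Thus the step succeeds precisely when the class vanishes and one can solve the associated coboundary equation. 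Here I would use that $N_0$ is non-torsion (a consequence of the Diophantine condition in Definition \ref{def:Dioph}), so that $H^0(C_0,N_0^{\pm n})=H^1(C_0,N_0^{\pm n})=0$ for every $n\geq 1$ by Serre duality on the elliptic curve $C_0$; since $T$ is, after shrinking, a Stein ball and $N_{\mathcal{C}/\mathcal{S}}^{-n}={\rm Pr}_1^*N_0^{-n}$, a K\"unneth/base-change argument gives $H^0(\mathcal{C},N_{\mathcal{C}/\mathcal{S}}^{-n})=H^1(\mathcal{C},N_{\mathcal{C}/\mathcal{S}}^{-n})=0$. Hence $\{\phi_j^{(n)}\}$ exists, is unique, and depends holomorphically on $t$.

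The quantitative part is where the Diophantine condition enters, through Lemma \ref{lem:ueda83_lemma4}. Applying that estimate on the fixed curve $C_0$ with the fixed coverings $\{U_j\}$, $\{U_j^*\}$ yields a single constant $K=K(C_0,\{U_j\},\{U_j^*\})$, valid simultaneously for every fiber $C_0\times\{t\}$, so that the unique solution obeys $\|\phi^{(n)}\|\leq K\,d(\mathbb{I}_{C_0},N_0^{n})^{-1}\,\|f^{(n)}\|$, where the norms are sup-norms over $U_j^*\times T$. The Diophantine condition gives $d(\mathbb{I}_{C_0},N_0^{n})^{-1}=O(n^{\alpha})$ for some $\alpha>0$, so the corrections grow at most polynomially in $n$. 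Feeding this into the standard majorant estimate for the series $\sum_n\phi_j^{(n)}(w_j^{(n)})^{n+1}$, exactly as in the proofs of \cite[Theorem 3]{U} and of Theorem \ref{thm:Arnol'd_original}, one obtains a common radius $r>0$ and a common polydisc $T$ on which all $w_j^{(n)}$ are defined, uniformly bounded, and Cauchy; the limit $w_j$ is then holomorphic, is a defining function of $U_j\times T$, and satisfies $t_{jk}w_k=w_j$ identically.

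The main obstacle I anticipate is making every estimate uniform in $t$ while simultaneously shrinking the fiber-direction radius and the base $T$. In a general relative Ueda problem the flat bundles $N_{C_t/S_t}^{-n}$ would vary with $t$, so the distances $d(\mathbb{I},N_{C_t/S_t}^{n})$ and the constants in Lemma \ref{lem:ueda83_lemma4} could degenerate and destroy the convergence, and the Diophantine condition would then have to be imposed and controlled in families. In the present statement this difficulty disappears because the hypothesis $N_{\mathcal{C}/\mathcal{S}}\cong{\rm Pr}_1^*N_0$ forces the bundles to be constant along $T$, so the fiberwise Diophantine estimate of $N_0$ is automatically the uniform one. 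The remaining routine points are the holomorphic dependence of $\phi_j^{(n)}$ on $t$ (handled by uniqueness together with the Cauchy integral in $t$, using $H^0(\mathcal{C},N_{\mathcal{C}/\mathcal{S}}^{-n})=0$) and the uniform boundedness over the relatively compact $T$ of the initial error term $O(w^2)$.
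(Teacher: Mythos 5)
Your proposal is correct and follows essentially the same route as the paper: vanishing of the obstruction and uniqueness via $H^1(C_0\times T,{\rm Pr}_1^*N_0^{-n})=H^0(C_0\times T,{\rm Pr}_1^*N_0^{-n})=0$ for the non-torsion $N_0$, the $t$-independent constant from Lemma \ref{lem:ueda83_lemma4} on the fixed covering of $C_0$, and Siegel's majorant technique under the Diophantine condition. The only (cosmetic) difference is that the paper packages the infinitely many corrections into a single Schr\"oder-type functional equation $w_j=u_j+\sum_{\nu\ge 2}f_{j\mid\nu}(z_j,t)u_j^{\nu}$ solved by the implicit function theorem once the majorant converges, whereas you iterate $w_j^{(n+1)}=w_j^{(n)}+\phi_j^{(n)}(w_j^{(n)})^{n+1}$ and pass to the limit; these are equivalent formulations of Ueda's argument.
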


\begin{proof}
By shrinking $T$ if necessary, we can take a positive number $Q>0$ such that 
$\{(z_j, w_j, t)\in \mathcal{V}_j\mid z_j\in U_j\cap U_k^*, |w_j|\leq Q^{-1} \}\subset \mathcal{V}_{k}$ for each $j$ and $k$. 
Lemma \ref{lem:rel_ueda_thm} is shown by the same argument as in the proof of \cite[Theorem 3]{U}. 
We will construct a new defining function $u_j$ of $C_0\times T$ in $\mathcal{V}_j$ by solving a Schr\"oder type functional equation 
\begin{equation}\label{eq:func_eq}
w_j=u_j+\sum_{\nu=2}^\infty f_{j\mid \nu}(z_j, t)\cdot u_j^\nu
\end{equation}
on each $\mathcal{V}_j$, 
where the coefficient functions $\{f_{j\mid \nu}\}_{\nu=2}^\infty$ are constructed inductively just in the same manner as in \cite[\S 4.2]{U} so that the solution $u_j$ satisfies $t_{jk}u_k=u_j$ on each $\mathcal{V}_{jk}$ if exists. 
Note that the Ueda's obstruction classes automatically vanish in our configurations, since $H^1(C_0\times T, {\rm Pr}_1^*L^{-n})=0$ for each $n\geq 1$. 
Here we used the condition that $L$ is non-torsion. 
Moreover, by $H^0(C_0\times T, {\rm Pr}_1^*L^{-n})=0$, we have that each coefficient function $f_{j\mid \nu}$ is constructed uniquely as a holomorphic function on $U_j\times T$. 

Therefore, all we have to do is to show the existence of the holomorphic solution $u_j$ of the functional equation (\ref{eq:func_eq}). 
By the implicit function theorem, it is sufficient to construct a convergent majorant series $A(u_j)=u_j+\sum_{\nu=2}^\infty A_\nu\cdot u_j^\nu$ for the functional equation (\ref{eq:func_eq}). 
Such a majorant series $A(X)$ can be constructed by the same argument as in \cite[\S 4.6]{U} as the solution of the functional equation 
\[
\sum_{\nu=2}^\infty d(\mathbb{I}_{C_0}, L^{\nu-1})\cdot A_\nu X^\nu=K\cdot\frac{M\cdot A(X)^2}{1-M\cdot A(X)}, 
\]
where $K=K(C_0, \{U_j\}, \{U_j^*\})$ is the constant as in Lemma \ref{lem:ueda83_lemma4} and $M$ is a positive constant sufficiently larger than $Q$ and $\max_j\sup_{\mathcal{V}_j}|w_j|$. 
By Siegel's technique \cite{S} (see also \cite[Lemma 5]{U}), the solution $A(X)$ actually has a positive radius of convergence, which proves the lemma. 
\end{proof}

In what follows, we always take a defining function $w_j$ of $U_j\times T$ in $\mathcal{V}_j$ as in Lemma \ref{lem:rel_ueda_thm}. 
Next we will show the existence of a suitable extension of the coordinate function $z_j\colon U_j\times T\to\mathbb{C}$ to $\mathcal{V}_j$. 
For clarity, we will denote (not by $z_j$ as above, but) by $\zeta_j\colon \mathcal{V}_j\to \mathbb{C}$ the fixed extension of $z_j$ in what follows. 
We will show the following: 

\begin{lemma}\label{lem:rel_arnold_z}
By shrinking $T$ and $\mathcal{V}_j$'s if necessary, one can take a holomorphic function $\zeta_j\colon \mathcal{V}_j\to \mathbb{C}$ 
such that $\zeta_j|_{U_j\times T}=z_j$ holds on each $\mathcal{V}_j$ and $\zeta_k=\zeta_j+A_{kj}$ holds on each $\mathcal{V}_{jk}$. 
\end{lemma}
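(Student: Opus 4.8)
The plan is to mimic the proof of Lemma~\ref{lem:rel_ueda_thm}, replacing the Schr\"oder-type normalization of the fibre coordinate by a linear normalization of the base coordinate. Write $\zeta_j^{(0)}\colon\mathcal{V}_j\to\mathbb{C}$ for the fixed holomorphic extension of $z_j$ (the one denoted $\zeta_j$ just before the statement), and use the defining functions $w_j$ produced by Lemma~\ref{lem:rel_ueda_thm}, so that $w_k=t_{kj}\,w_j$ holds \emph{exactly} on each $\mathcal{V}_{jk}$. I would look for the desired $\zeta_j$ in the form
\[
\zeta_j=\zeta_j^{(0)}+\sum_{\nu=1}^{\infty}h_{j\mid\nu}(z_j,t)\,w_j^{\nu},
\]
with each coefficient $h_{j\mid\nu}$ holomorphic on $U_j\times T$. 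The starting observation is that the difference $D_{kj}:=\zeta_k^{(0)}-\zeta_j^{(0)}-A_{kj}$ restricts to $z_k-z_j-A_{kj}\equiv0$ on $\mathcal{C}$ (locally $\{w_j=0\}$), hence is divisible by $w_j$ and expands as $D_{kj}=\sum_{\nu\ge1}g_{kj\mid\nu}(z_j,t)\,w_j^{\nu}$ with coefficients holomorphic on $U_{jk}\times T$.

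Because $w_k=t_{kj}w_j$ is exact, substituting the ansatz into the target relation $\zeta_k=\zeta_j+A_{kj}$ and comparing the coefficients of $w_j^{\nu}$ \emph{decouples} the problem order by order into the linear equations
\[
t_{kj}^{\nu}\,h_{k\mid\nu}(z_j+A_{kj},t)-h_{j\mid\nu}(z_j,t)=-g_{kj\mid\nu}(z_j,t).
\]
The left-hand side is exactly the \v{C}ech coboundary $\delta$ for the flat line bundle ${\rm Pr}_1^{*}L^{-\nu}$ on $C_0\times T$, and the telescoping identity $D_{lj}=D_{lk}+D_{kj}$ (valid because the $\zeta^{(0)}$ telescope and $\{A_{kj}\}$ is an additive cocycle) shows that each $\{g_{kj\mid\nu}\}$ is a genuine $1$-cocycle valued in ${\rm Pr}_1^{*}L^{-\nu}$. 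Since $L$ is non-torsion, $H^1(C_0\times T,{\rm Pr}_1^{*}L^{-\nu})=0$ furnishes a solution $\{h_{j\mid\nu}\}$, and $H^0(C_0\times T,{\rm Pr}_1^{*}L^{-\nu})=0$ makes it unique; these are the same vanishings used in the proof of Lemma~\ref{lem:rel_ueda_thm}.

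It then remains to prove convergence of $\sum_\nu h_{j\mid\nu}w_j^{\nu}$ on a smaller $\{|w_j|<r\}$ over a shrunk $T$. Here I would invoke Lemma~\ref{lem:ueda83_lemma4} to bound the inverse of $\delta$, giving $\|h_{\cdot\mid\nu}\|\le K\,d(\mathbb{I}_{C_0},L^{\nu})^{-1}\,\|g_{\cdot\mid\nu}\|$; the norms $\|g_{\cdot\mid\nu}\|$ decay geometrically, being Taylor coefficients in $w_j$ of the fixed holomorphic functions $D_{kj}$. In contrast to the nonlinear recursion of Lemma~\ref{lem:rel_ueda_thm}, the orders here are \emph{independent}, so the estimate is essentially immediate: the Diophantine condition yields $d(\mathbb{I}_{C_0},L^{\nu})^{-1}=O(\nu^{\alpha})$ for some $\alpha>0$, and a geometric decay times a polynomial factor still has a positive radius of convergence. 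If one prefers, the very same majorant/Siegel argument of \cite[\S 4.6]{U} and \cite{S} used for Lemma~\ref{lem:rel_ueda_thm} applies verbatim.

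The hard part will therefore not be the formal solution but the bookkeeping behind the two uniformity statements: verifying that the $\nu$-th order coefficients $\{g_{kj\mid\nu}\}$ genuinely assemble into a ${\rm Pr}_1^{*}L^{-\nu}$-valued cocycle for \emph{every} $\nu$ (so that each Cousin problem is solvable), and that the constant $K$ in Lemma~\ref{lem:ueda83_lemma4}, together with the radius $r$ and the shrunk base $T$, can be chosen independently of $\nu$. Granting these, the resulting $\{\zeta_j\}$ satisfy $\zeta_j|_{U_j\times T}=z_j$ and $\zeta_k=\zeta_j+A_{kj}$ on $\mathcal{V}_{jk}$, which completes the proof.
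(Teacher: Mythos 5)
There is a genuine gap: the claimed order-by-order decoupling is false. Your coefficient $h_{k\mid\nu}$ is a function on $U_k\times T$, so the correction term on chart $k$ is $h_{k\mid\nu}(\zeta_k^{(0)},t)\,w_k^{\nu}$, and on $\mathcal{V}_{jk}$ the argument $\zeta_k^{(0)}$ equals $\zeta_j^{(0)}+A_{kj}$ only up to $O(w_j)$ (indeed $\zeta_k^{(0)}-\zeta_j^{(0)}-A_{kj}=D_{kj}=O(w_j)$ is exactly the quantity you are trying to kill). Re-expanding $h_{k\mid\nu}(\zeta_k^{(0)},t)$ in powers of $w_j$ therefore contributes to every coefficient of $w_j^{\mu}$ with $\mu>\nu$, so the equation at order $n+1$ reads
\[
t_{kj}^{\,n+1}\,h_{k\mid n+1}(z_j+A_{kj},t)-h_{j\mid n+1}(z_j,t)=-g_{kj\mid n+1}+\bigl(\text{terms built from }h_{k\mid\mu},\ \mu\le n\bigr),
\]
not the decoupled equation you wrote. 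For the same reason $\{g_{kj\mid\nu}\}$ is \emph{not} a $1$-cocycle for $\nu\ge 2$: expanding the telescoping identity $D_{lj}=D_{lk}+D_{kj}$ in the single variable $w_j$ again produces lower-order re-expansion terms, and only the corrected $1$-cochain (your $g_{kj\mid n+1}$ minus the contribution of the already-solved lower orders) is a coboundary datum. The worry you flag at the end --- whether the $\{g_{kj\mid\nu}\}$ really assemble into cocycles --- is precisely where the argument breaks.

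The paper's proof is your plan with this coupling built in: it runs an induction in which the order-$(n+1)$ Cousin problem has right-hand side $h^{(n+1)}_{kj}-f^{(n+1)}_{kj}$, where $h^{(n+1)}_{kj}$ collects the re-expansion of the already-constructed $F^{(\mu)}_k$ for $\mu\le n$ (the functions $H^{(\mu)}_{kj,\lambda}$ in the paper), and the inductive hypothesis $u_k-u_j=A_{kj}+O(w_j^{n+1})$ is what makes that right-hand side a cocycle. As a consequence the convergence is not ``essentially immediate'': the bound on $\|F^{(n+1)}\|$ feeds back all the $\|F^{(\mu)}\|$, so products of small divisors $d(\mathbb{I}_{C_0},L^{\mu})^{-1}$ accumulate along the recursion, and one genuinely needs the majorant series defined by a functional equation together with Siegel's technique (your fallback option), not merely geometric decay of the $\|g_{\cdot\mid\nu}\|$ times a polynomial factor. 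With these two repairs your argument becomes the paper's proof.
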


\begin{proof}
Fix a local projection $P_j\colon\mathcal{V}_j\to U_j\times T$ with $\pi|_{\mathcal{V}_j}={\rm Pr}_2\circ P_j$ for each $j$. 
We use a function $\zeta_j:=P_j^*z_j$ as an initial extension function of $z_j\colon U_j\times T\to\mathbb{C}$ on each $\mathcal{V}_j$. 
In what follows, we denote by $g(\zeta_j, t)$ the function $P_j^*g$ for a function $g\colon U_j\to \mathbb{C}$. 
Then the expansion of $\zeta_k$ by $w_j$ can be written as
\[
\zeta_k=A_{kj}+\zeta_j+f^{(1)}_{kj}(\zeta_j, t)\cdot w_j+f^{(2)}_{kj}(\zeta_j, t)\cdot w_j^2+\cdots 
\]
on each $\mathcal{V}_{jk}$. 
As in the proof of the previous lemma, we will construct a new extension $u_j$ of $z_j$ by the defining equation 
\begin{equation}\label{eq:func_eq_2}
\zeta_j=u_j+\sum_{\nu=1}^\infty F_j^{(\nu)}(\zeta_j, t)\cdot w_j^\nu
\end{equation}
on each $\mathcal{V}_j$. 

First we explain how to define $\{F_j^{(1)}\}_j$. 
By adding three equations 
\begin{eqnarray}
\zeta_k&=&A_{kj}+\zeta_j+f^{(1)}_{kj}(\zeta_j, t)\cdot w_j+O(w_j^2), \nonumber \\
\zeta_\ell&=&A_{k\ell}+\zeta_k+f^{(1)}_{\ell k}(\zeta_k, t)\cdot w_k+O(w_k^2), \nonumber 
\end{eqnarray}
and 
\[
\zeta_j=A_{\ell j}+\zeta_\ell+f^{(1)}_{j\ell}(\zeta_\ell, t)\cdot w_\ell+O(w_\ell^2) 
\]
on $\mathcal{V}_{jk\ell}$ and by considering the coefficients of $w_j$ in the both hands sides, we obtain the equality 
$f^{(1)}_{kj}(z_j, t)+f^{(1)}_{\ell k}(z_j, t)\cdot t_{jk}^{-1}+f^{(1)}_{j\ell}(z_j, t)\cdot t_{j\ell}^{-1}=0$ on $U_{jk\ell}\times T$. 
Therefore we have that $[\{(U_{jk}\times T, f^{(1)}_{kj})\}]$ defines an element of $H^1(C_0\times T, {\rm Pr}_1^*L^{-1})$, which is equal to zero as a group. 
Thus we can take a holomorphic function $F_j^{(1)}$ on each $U_j\times T$ such that 
$F_j^{(1)}-t_{jk}^{-1}\cdot F_k^{(1)}=-f_{kj}^{(1)}$ holds on each $U_{jk}\times T$. 
As it follows from $H^0(C_0\times T, {\rm Pr}_1^*L^{-n})=0$ that such functions are unique, it gives the definition of $\{F_j^{(1)}\}$. 
Note that, by using these functions $\{F_j^{(1)}\}$, it clearly holds that the solution $\{u_j\}$ of the functional equation  (\ref{eq:func_eq_2})  satisfies $u_k-u_j =A_{kj}+O(w_j^2)$ on each $\mathcal{V}_{jk}$ after fixing $\{F_j^{(\nu)}\}_{\nu\geq 2}$ in any manner. 

Next we explain how to define $F_j^{(\nu)}$ for $\nu>1$ inductively. 
Assume that $\{F_j^{(\nu)}\}$ are already determined for each $\nu\leq n$ so that the following inductive assumption is satisfied. 
\vskip1mm
{\bf(Inductive Assumption)$_n$}: The solution $\{u_j\}$ of the functional equation  (\ref{eq:func_eq_2})  satisfies $u_k-u_j =A_{kj}+O(w_j^{n+1})$ on each $\mathcal{V}_{jk}$ for any choice of $\{F_j^{(\nu)}\}_{\nu> n}$. 
\vskip1mm\noindent
Here we will construct $\{F_j^{(n+1)}\}$ such that {(Inductive Assumption)}$_{n+1}$ is satisfied. 
Let $v_j$ be the solution of 
\[
\zeta_j=v_j+\sum_{\nu=1}^n F_j^{(\nu)}(\zeta_j, t)\cdot w_j^\nu. 
\]
Then, as we have that 
\begin{eqnarray}\label{eq:2_2}
-A_{kj}+v_k+\sum_{\nu=1}^n F_k^{(\nu)}\cdot w_k^\nu &=&-A_{kj}+ \zeta_k 
=\zeta_j+\sum_{\nu=1}^{n+1}f^{(\nu)}_{kj}\cdot w_j^{\nu}+ O(w_j^{n+2}) \nonumber \\
&=& \left(v_j+\sum_{\nu=1}^n F_j^{(\nu)}\cdot w_j^\nu\right)+\sum_{\nu=1}^{n+1}f^{(\nu)}_{kj}\cdot w_j^{\nu}+ O(w_j^{n+2}), \nonumber
\end{eqnarray}
we obtain the equality 
\[
v_k+\sum_{\nu=1}^n F_k^{(\nu)}(\zeta_k, t)\cdot w_k^\nu=v_j+A_{kj}+\sum_{\nu=1}^n \left(F_j^{(\nu)}(\zeta_j, t)+f^{(\nu)}_{kj}(\zeta_k, t)\right)\cdot w_j^\nu
+f^{(n+1)}_{kj}(\zeta_k, t)\cdot w_j^{n+1}+ O(w_j^{n+2}). 
\]
The coefficient of $w_j^{\nu}$ in the expansion of the left hand side can be calculated to be equal to 
$F_k(z_k, t)\cdot t_{jk}^{-\nu}+h_{kj}^{(\nu)}(z_j, t)$, where we denote by $h_{kj}^{(\nu)}(z_j, t)$ the function $\sum_{\mu=1}^{\nu-1} H_{kj, (\nu-\mu)}^{(\mu)}(z_j, t)\cdot t_{jk}^{-\mu}$ defined by using the coefficient functions $H_{kj, \lambda}^{(\nu)}$'s of the expansion 
\[
F_k^{(\nu)}(\zeta_k, t)=
F_k^{(\nu)}(\zeta_k(\zeta_j, w_j, t), t)= F_k^{(\nu)}(z_k(z_j, 0, t), t) + \sum_{\lambda=1}^\infty H_{kj, \lambda}^{(\nu)}(\zeta_j, t)\cdot w_j^\lambda
\]
of $F_k^{(\nu)}$ by $w_j$ on $\mathcal{V}_{jk}$. 
Note that $h_{jk}^{(\nu)}(\zeta_j, t)$ is determined only from $\{F_{j}^{(\mu)}\}_{\mu<\nu}$ and does not depend on the choice of $\{F_{j}^{(\mu)}\}_{\mu\geq\nu}$. 
Therefore, we obtain from {(Inductive Assumption)}$_{n}$ the equation 
\[
v_k=v_j +A_{kj}+\left(-h_{kj}^{(n+1)}(\zeta_j, t)+f^{(n+1)}_{kj}(\zeta_j, t)\right)\cdot w_j^{n+1}+O(w_j^{n+2})
\]
on each $\mathcal{V}_{jk}$. 
By using this equation, it follows from just the same argument as in the definition of $\{F_j^{(1)}\}_j$ that 
there uniquely exists a holomorphic function $F_j^{(n+1)}$ on each $U_j\times T$ such that 
$F_j^{(n+1)}-t_{jk}^{-n-1}\cdot F_k^{(n+1)}=h_{kj}^{(n+1)}-f^{(n+1)}_{kj}$ holds on each $U_{jk}\times T$, by which we define $\{F_j^{(n+1)}\}$ 
(The assertion {(Inductive Assumption)}$_{n+1}$ is easily checked by construction). 

Finally we show the convergence of the right hand side of the equation (\ref{eq:func_eq_2}). 
We construct a convergent majorant series $A(X)=\sum_{\nu=1}^\infty A_\nu\cdot X^\nu$ for the series $\sum_{\nu=1}^\infty F_j^{(\nu)}(\zeta_j, t)\cdot X^\nu$. 
Take positive number $M$ such that $\max_j\sup_{\mathcal{V}_j}|\zeta_j|<M$. 
Assume that $\{A_\nu\}_{\nu\leq n}$ satisfies $\max_{j}\sup_{U_{j}\times T}|F^{(\nu)}_{j}|\leq A_\nu$. 
Then, from the Cauchy--Riemann equality, it holds on each $U_j\cap U_k^*$ that 
\[
|h_{kj}^{(n+1)}-f^{(n+1)}_{kj}|\leq |f^{(n+1)}_{kj}|+\sum_{\nu=1}^n |H_{kj, (n+1-\nu)}^{(\nu)}|
\leq MQ^{n+1}+\sum_{\nu=1}^n A_\nu Q^{n+1-\nu}, 
\]
of which the right hand side is equal to the coefficient of $X^{n+1}$ in the expansion of 
\[
M\sum_{\nu=1}^\infty Q^\nu X^\nu+\left(\sum_{\nu=1}^\infty A_\nu X^\nu\right)\cdot \left(\sum_{\lambda=1}^\infty Q^\lambda X^\lambda\right)
=\frac{Q\cdot (M+A(X))\cdot X}{1-QX}. 
\]
From this observation and Lemma \ref{lem:ueda83_lemma4}, it turns out that the series $A(X)$ defined by the functional equation 
\[
\sum_{n=1}^\infty d(\mathbb{I}_{C_0}, L^{n})\cdot A_nX^n=2K\cdot \frac{Q\cdot (M+A(X))\cdot X}{1-QX} 
\]
is a majorant series of the series $\sum_{\nu=1}^\infty F_j^{(\nu)}(\zeta_j, t)\cdot X^\nu$, 
where $K=K(C_0, \{U_j\}, \{U_j^*\})$ is the constant as in Lemma \ref{lem:ueda83_lemma4}. 
Thus it is sufficient to show the solution $A(X)$ has a positive radius of convergence. 

Define a new power series $B(X)=X+B_2X^2+B_3X^3+\cdots$ by $B(X):=X+X\cdot A(X)$ and 
$\widehat{B}(X)=X+\widehat{B}_2X^2+\widehat{B}_3X^3+\cdots$ by 
\[
\sum_{n=2}^\infty d(\mathbb{I}_{C_0}, L^{n-1})\cdot \widehat{B}_nX^n=2KQ\cdot \frac{(M+1)\cdot \widehat{B}(X)^2}{1-Q\widehat{B}(X)}. 
\]
By Siegel's technique \cite{S} (see also \cite[Lemma 5]{U}), it follows that $\widehat{B}(X)$ actually has a positive radius of convergence. 
As 
\[
\sum_{n=2}^\infty d(\mathbb{I}_{C_0}, L^{n-1})\cdot B_nX^n
=2KQ\cdot \frac{(M\cdot X+B(X)-X)\cdot X}{1-QX}, 
\]
we can show by the simple inductive argument that $\widehat{B}_\nu\geq B_\nu(=A_{\nu-1})$ for each $\nu\geq 2$, which proves the lemma.  
\end{proof}

\begin{proof}[Proof of Theorem \ref{thm:rel_Arnol'd}]
Take a coordinate system $(z_j, w_j, t)$ of each $\mathcal{V}_j$ such that 
$\{w_j\}$ is as in Lemma \ref{lem:rel_ueda_thm} and $\{\zeta_j\}$ is as in Lemma \ref{lem:rel_arnold_z}. 
Define a map $P\colon \bigcup_j\mathcal{V}_j\to C_0\times T$ by 
$p(\zeta_j, w_j, t):=(\zeta_j, t)\in U_j\times T$ on each $\mathcal{V}_j$, which is well-defined by Lemma \ref{lem:rel_arnold_z}. 
By regarding $w_j$'s as fiber coordinates, we can naturally regard $\bigcup_j\mathcal{V}_j$ as an open neighborhood of the zero-section of $N_{\mathcal{C}/\mathcal{S}}$, which proves the theorem. 
\end{proof}

\subsection{More generalized variant}\label{section:more_gen_rel_arnol'd_thm}

Theorem \ref{thm:rel_Arnol'd} can be shown not only in the case where $\mathcal{C}\cong C_0\times T$ and $\pi|_{\mathcal{C}}={\rm Pr}_2$ hold, but also in the case where $\pi|_{\mathcal{C}}\colon \mathcal{C}\to T$ is a proper holomorphic submersion whose fibers $C_t:=S_t\cap \mathcal{C}$ are elliptic curves: 

\begin{theorem}\label{thm:rel_Arnol'd_gen}
Let $\pi\colon \mathcal{S}\to T$ be a deformation family of complex surfaces over a ball in $\mathbb{C}^n$, 
and $\mathcal{C}\subset \mathcal{S}$ be a submanifold such that $\pi|_{\mathcal{C}}$ is a deformation family of smooth elliptic curves. 
Assume that $d(\mathbb{I}_{C_t}, N_{C_t/S_t}^n)$ does not depend on $t\in T$ for each $n$ and that the Diophantine condition $-\log d(\mathbb{I}_{C_t}, N_{C_t/S_t}^n) = O(\log n)$ holds as $n\to\infty$. 
Then, by shrinking $T$ if necessary, there exists a tubular neighborhood $\mathcal{W}$ of $\mathcal{C}$ in $\mathcal{S}$ 
which is isomorphic to a neighborhood of the zero section in $N_{\mathcal{C}/\mathcal{S}}$. 
\end{theorem}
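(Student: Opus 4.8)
The plan is to follow the two-step linearization carried out in the proof of Theorem \ref{thm:rel_Arnol'd}, replacing the product family $C_0\times T$ by the genuine elliptic fibration $\pi|_{\mathcal{C}}\colon \mathcal{C}\to T$ throughout. First I would fix a sufficiently fine finite open covering $\{U_j\}$ of $\mathcal{C}$ together with coordinates $(z_j,t)$ such that $z_j$ restricts to a flat fiber coordinate on each $C_t$ (pulled back from the universal cover $\mathbb{C}$ of $C_t$) and such that $dz_j$ glues to the relative holomorphic $1$-form of $\mathcal{C}\to T$. On overlaps one then has $z_k=z_j+A_{kj}(t)$, where the crucial novelty is that the translation constants $A_{kj}=A_{kj}(t)$ now depend holomorphically on $t$, since the period of $C_t$ varies. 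Likewise, writing $N_{\mathcal{C}/\mathcal{S}}^{-1}=[\{(U_{jk},t_{jk}(t))\}]$, the transition functions $t_{jk}(t)\in U(1)$ vary fiberwise. After choosing a defining function $w_j$ of $\mathcal{C}$ in a tubular neighborhood $\mathcal{V}_j$ with $t_{jk}w_k=w_j+O(w_j^2)$, the goal is again to manufacture, by solving Schr\"oder-type functional equations order by order in $w_j$, a normalized fiber coordinate satisfying $t_{jk}w_k=w_j$ (the analogue of Lemma \ref{lem:rel_ueda_thm}) and a normalized base coordinate satisfying $\zeta_k=\zeta_j+A_{kj}(t)$ (the analogue of Lemma \ref{lem:rel_arnold_z}).

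The cohomological input that drives both inductions survives the passage to the non-trivial family. In the product case the existence and uniqueness of the coefficient functions rested on the vanishing of $H^0(C_0\times T,{\rm Pr}_1^*L^{-n})$ and $H^1(C_0\times T,{\rm Pr}_1^*L^{-n})$; here I would replace these by $H^0(\mathcal{C},N_{\mathcal{C}/\mathcal{S}}^{-n})=H^1(\mathcal{C},N_{\mathcal{C}/\mathcal{S}}^{-n})=0$ for all $n\geq 1$. Fiberwise, $N_{C_t/S_t}^{-n}$ is a non-torsion degree-zero line bundle on the elliptic curve $C_t$ (the Diophantine hypothesis forces $N_{C_t/S_t}$ to be non-torsion), so $H^0(C_t,N_{C_t/S_t}^{-n})=H^1(C_t,N_{C_t/S_t}^{-n})=0$ for every $t$. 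Since these fiber dimensions are identically zero, Grauert's base-change theorem makes the direct image sheaves $R^q(\pi|_{\mathcal{C}})_*N_{\mathcal{C}/\mathcal{S}}^{-n}$ vanish, whence $H^q(\mathcal{C},N_{\mathcal{C}/\mathcal{S}}^{-n})=0$ by the Leray spectral sequence. As before, the vanishing of $H^1$ kills the obstruction to solving each cohomological equation, while that of $H^0$ pins down the coefficient functions uniquely.

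The part I expect to be the main obstacle is securing estimates that are uniform in $t$, so that the majorant series built fiberwise glue to genuinely holomorphic functions on the total spaces $\mathcal{V}_j$. Two inputs must be made uniform after shrinking $T$ to a relatively compact ball: the invariant distances $d(\mathbb{I}_{C_t},N_{C_t/S_t}^n)$ and the constant $K$ of Ueda's estimate (Lemma \ref{lem:ueda83_lemma4}). For the former, the hypothesis of Theorem \ref{thm:rel_Arnol'd_gen} that $d(\mathbb{I}_{C_t},N_{C_t/S_t}^n)$ is independent of $t$ is tailored precisely to keep the Diophantine bounds $-\log d(\mathbb{I}_{C_t},N_{C_t/S_t}^n)=O(\log n)$ uniform in $t$. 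For the latter, I would invoke the simplified proof of Ueda's Lemma 4, which exhibits the dependence of $K$ on the complex structure and the coverings transparently enough that a single $K=K(\mathcal{C},\{U_j\},\{U_j^*\})$ can be chosen for all $C_t$ with $t$ in a relatively compact subset of $T$; this is exactly where that simplification is essential.

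With these uniform bounds in hand, the Siegel-type majorant constructions of Lemmas \ref{lem:rel_ueda_thm} and \ref{lem:rel_arnold_z} go through essentially verbatim, producing majorant series with a positive radius of convergence independent of $t$; hence the normalized $w_j$ and $\zeta_j$ are holomorphic on $\mathcal{V}_j$. Finally, exactly as in the proof of Theorem \ref{thm:rel_Arnol'd}, the map $P(\zeta_j,w_j,t):=(\zeta_j,t)$ is well defined thanks to the normalization $\zeta_k=\zeta_j+A_{kj}(t)$, and regarding the $w_j$ as fiber coordinates identifies $\bigcup_j\mathcal{V}_j$ with a neighborhood of the zero section of $N_{\mathcal{C}/\mathcal{S}}$, which is the desired tubular neighborhood $\mathcal{W}$.
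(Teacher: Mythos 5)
Your proposal is correct and follows essentially the same route as the paper: reduce to the product-type gluing $z_k=z_j+A_{kj}(t)$, rerun the two Schr\"oder-type inductions of Theorem \ref{thm:rel_Arnol'd}, and secure $t$-uniform estimates from the hypothesis that $d(\mathbb{I}_{C_t},N_{C_t/S_t}^n)$ is $t$-independent together with the effective version of Ueda's lemma (Lemma \ref{lem:u_lemma4_improved}), which is precisely the point the paper isolates as the only new difficulty. The cohomological vanishing via base change and the final identification of $\bigcup_j\mathcal{V}_j$ with a neighborhood of the zero section also match the paper's argument.
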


Note that, we have to choose the invariant distance $d$ of each ${\rm Pic}^0(C_t)$ appropriately in order it to satisfy the condition that $d(\mathbb{I}_{C_t}, N_{C_t/S_t}^n)$ does not depend on $t\in T$ for each $n$. 
A typical example of the configuration is as follows. 

\begin{example}
Let $\tau(t)$ be a point in the upper half plane such that $C_t\cong \mathbb{C}/\langle1, \tau(t)\rangle$. 
By choosing $\tau(t)$'s appropriately, we may assume that $\tau$ is a holomorphic function. 
By regarding ${\rm Pic}^0(C_t)$ as $C_t$ via the isomorphism $C_0\ni p\mapsto \mathcal{O}_{C_t}(p-[0])\in {\rm Pic}^0(C_t)$, we define an invariant distance $d$ of each ${\rm Pic}^0(C_t)$ by 
\[
d([0], [\alpha+\beta\cdot \tau(t)]):=\min\{|\alpha|, |1-\alpha|\}+\min\{|\beta|, |1-\beta|\}
\]
for each $0\leq \alpha, \beta<1$, where $[z]$ is the image of $z\in\mathbb{C}$ by the covering map $\mathbb{C}\to \mathbb{C}/\langle1, \tau(t)\rangle\cong C_t$. 
Take two algebraic irrational numbers $\alpha$ and $\beta$. 
Define a divisor $\mathcal{D}$ of $\mathcal{C}$ by $\mathcal{D}\cap C_t= [\alpha+\beta\cdot\tau(t)]-[0]$. 
Then, if the normal bundle $N_{\mathcal{C}/\mathcal{S}}$ is the line bundle corresponding to $\mathcal{D}$, then $d(\mathbb{I}_{C_t}, N_{C_t/S_t}^n)$ does not depend on $t\in T$ for each $n$ and that the Diophantine condition $-\log d(\mathbb{I}_{C_t}, N_{C_t/S_t}^n) = O(\log n)$ holds as $n\to\infty$. 
\end{example}

We can prove Theorem \ref{thm:rel_Arnol'd_gen} by almost the same manner as the proof of Theorem \ref{thm:rel_Arnol'd}. 
The only difficulty is the $t$-dependence of the constant  $K$ as in Lemma \ref{lem:ueda83_lemma4}. 
In order to overcome this difficulty, we use the following: 

\begin{lemma}\label{lem:u_lemma4_improved}
Assume that each $U_j$ is a coordinate open ball. 
Then one can take a constant  $K=K(M, \mathcal{U}, \mathcal{U}^*)$ as in Lemma \ref{lem:ueda83_lemma4} such that $K$ depends only on the number $N=\#\mathcal{U}$ and the maximum of the radii of $U_j^*$'s calculated by using the Kobayashi metrics of $U_j$'s. 
\end{lemma}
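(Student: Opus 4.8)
The goal is to prove Lemma \ref{lem:u_lemma4_improved}, which refines Ueda's Lemma \ref{lem:ueda83_lemma4} by tracking the dependence of the constant $K=K(M,\mathcal{U},\mathcal{U}^*)$ so that it is controlled by data which varies boundedly in a family. The motivation, as flagged in \S\ref{section:more_gen_rel_arnol'd_thm}, is that in proving Theorem \ref{thm:rel_Arnol'd_gen} the complex structure of $C_t$ moves with $t$, and the estimate of Lemma \ref{lem:ueda83_lemma4} must be applied uniformly; hence one needs to know that $K$ can be chosen to depend only on $N=\#\mathcal{U}$ and on the Kobayashi-intrinsic radii of the $U_j^*$ inside the $U_j$, quantities which are manifestly uniform in $t$ when the $U_j$ are biholomorphic models independent of $t$.

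The plan is to revisit the simple proof of Ueda's Lemma due to Prof. Ueda (acknowledged in the introduction and used in \S\ref{section:more_gen_rel_arnol'd_thm}) and to make the constant explicit. First I would recall the mechanism: given a flat line bundle $E\in\check H^1(\mathcal{U},U(1))$ and a $0$-cochain $\mathfrak{f}=\{(U_j,f_j)\}$, the coboundary $\delta\mathfrak{f}$ has components $g_{jk}=t_{jk}^{-1}f_k-f_j$ on $U_{jk}$, where $t_{jk}\in U(1)$ are the transition constants. The quantity $d(\mathbb{I}_M,E)$ measures how far the $\{t_{jk}\}$ are from being the coboundary of a $U(1)$-valued $0$-cochain; the content of the lemma is a quantitative converse, namely that if $\|\delta\mathfrak{f}\|$ is small then either $\mathfrak{f}$ itself is small or $E$ is close to trivial. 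The key analytic input is a Schwarz-type estimate: on each overlap $U_{jk}$ one compares the holomorphic functions $f_j$ using the relation $f_j=t_{jk}^{-1}f_k-g_{jk}$, and to propagate a bound from the maximum of $|f_j|$ on $U_j$ down to $U_j^*$ one controls the oscillation of $f_j$ across overlaps. Since $|t_{jk}|=1$, the transition contributes no growth, and the whole argument reduces to: chaining the $f_j$'s around $M$ through a bounded number ($\le N$) of overlaps, while at each step the loss is governed by how deeply $U_j^*$ sits inside $U_j$.

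The central step, and where I would invest the care, is to reinterpret the geometric constant controlling "how deeply $U_j^*$ sits inside $U_j$" as a Kobayashi-metric radius. The point is that the Schwarz–Pick lemma gives exactly the distance-decreasing estimate $d_{U_k}(P_j(x),\,\cdot\,)\le d_{U_j}(x,\,\cdot\,)$ for holomorphic maps, so if one measures the inclusion $U_j^*\Subset U_j$ by the Kobayashi distance from $\overline{U_j^*}$ to $\partial U_j$ inside $U_j$, then the constant governing the contraction in each overlap-step depends only on that intrinsic radius and not on the ambient complex structure or on $t$. Summing (or rather composing) over the at most $N$ charts needed to connect any two points of $M$, I would assemble $K$ as an explicit expression — a product or sum of $N$ terms, each a function of the maximal Kobayashi radius $\rho:=\max_j \mathrm{rad}_{\mathrm{Kob},U_j}(U_j^*)$ — thereby exhibiting $K=K(N,\rho)$ with no other dependence. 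Because the $U_j$ in the application are holomorphically standard models (coordinate balls) independent of $t$, both $N$ and $\rho$ are constant along the family, which is precisely what is needed.

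The main obstacle I anticipate is the bookkeeping in the chaining argument: one must verify that the worst-case number of overlap transitions is bounded purely combinatorially by $N$ (so that the Kobayashi-nerve of $\mathcal{U}$ enters only through its cardinality), and that at each transition the comparison really is controlled by the \emph{intrinsic} (Kobayashi) geometry rather than by any Euclidean coordinate data that would secretly depend on $t$. Once the Schwarz–Pick contraction is phrased in Kobayashi-metric terms, the flatness $|t_{jk}|\equiv 1$ guarantees that the unitary transitions drop out of every estimate, and the resulting $K$ is automatically independent of $E$ as well — which is the uniformity in $E$ already demanded by the statement of Lemma \ref{lem:ueda83_lemma4}. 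I would close by remarking that the independence of $K$ from the particular flat bundle $E$ and from $t$ is exactly what permits the majorant-series construction in the proof of Theorem \ref{thm:rel_Arnol'd_gen} to converge uniformly over the shrunken base $T$.
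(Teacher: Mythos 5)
Your plan follows essentially the same route as the paper's proof in \S\ref{section:nice_proof_of_uedas_lemma}: a Schwarz--Pick contraction constant $s_j<1$ depending only on the Kobayashi radius of $U_j^*$ in $U_j$ (Lemma \ref{lem:ueda83_lem4_lemma_3}), propagated by a chaining argument through at most $N$ overlaps (Lemma \ref{lem:ueda83_lem4_lemma_2}), yielding an explicit $K=K(N,s)$ with the unitary transitions $|t_{jk}|\equiv 1$ dropping out of every estimate. The only piece you leave implicit is the final assembly, where the paper normalizes $t_j^*:=f_j(q_j)/|f_j(q_j)|$ to produce a $U(1)$-valued $0$-cochain witnessing the bound on $d(\mathbb{I}_M,E)$, but this is routine once the chained oscillation and lower bounds on $|f_j|$ are in hand.
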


Lemma \ref{lem:u_lemma4_improved} follows directly from the improved proof of Lemma \ref{lem:ueda83_lemma4} we will describe in \S \ref{section:nice_proof_of_uedas_lemma}, which the author learned from Prof. Tetsuo Ueda. 

\begin{proof}[Proof of Theorem \ref{thm:rel_Arnol'd_gen}]
Fix a sufficiently fine open covering $\{U_j\}$ of $C_0$ with $\#\{U_j\}<\infty$ and a coordinate $z_j$ of $U_j$ such that $z_k=z_j+A_{kj}$ holds on each $U_{jk}$ for some constant $A_{kj}\in\mathbb{C}$. We may assume that each $U_j$ is a coordinate open ball. 
Fix also another open covering $\{U_j^*\}$ of $C_0$ with $\#\{U_j^*\}=\#\{U_j\}$ such that $U_j^*\Subset U_j$ for each $j$. 
Then, by shrinking $T$ if necessary, we can regard $\mathcal{C}$ as a complex manifold which is obtained by patching $U_j\times T$'s (or $U_j^*\times T$'s) by using the coordinate transformations in the form of $z_k = z_j + A_{kj}(t)$, where $A_{kj}$ is a holomorphic function defined on $T$ with $A_{kj}(0)=A_{kj}$. 
It follows from Lemma \ref{lem:u_lemma4_improved} that the constant $K$ as in Lemma \ref{lem:ueda83_lemma4} can be taken as a constant which is independent of the parameter $t\in T$. 
Then we can carry out the same argument as in the previous subsection to obtain Theorem \ref{thm:rel_Arnol'd_gen}. 
\end{proof}

\subsection{An alternative proof of  Ueda's lemma with effective constant $K$}\label{section:nice_proof_of_uedas_lemma}

Here we describe a simple proof of Lemma \ref{lem:ueda83_lemma4}, which the author learned from Prof. Tetsuo Ueda. 
One of the most remarkable points in this proof is that the constant $K=K(M, \mathcal{U}, \mathcal{U}^*)$ can be described explicitly, which is needed in our purpose. 
Actually, we will construct the constant $K$ so that the inequality 
\[
K<1+2\cdot\left(\frac{2}{1-s}\right)^{N+2}
\]
holds, where $s$ is the maximum of the constants $s_j$'s in the following: 

\begin{lemma}\label{lem:ueda83_lem4_lemma_3}
Assume that each $U_j$ is a coordinate open ball. 
For each $j$, there exists a positive constant $s_j$ less than $1$ which satisfies the following assertion: 
For any holomorphic function $f\colon U_j\to \mathbb{C}$ with $\sup_{z\in U_j}|f(z)|<1$, if there exists a point $z_0\in U_j^*$ with $f(z_0)=0$, then it holds that $\sup_{z\in U_j^*}|f(z)|<s_j$. 
Moreover, we can take such $s_j$ so that it depends only on the radius of $U_j^*$ calculated by using the Kobayashi metric of $U_j$. 
\end{lemma}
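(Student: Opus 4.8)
The plan is to recognize this as a Schwarz--Pick type estimate and to exploit the distance-decreasing property of the Kobayashi (equivalently, Poincar\'e) metric. First I would observe that the hypothesis $\sup_{U_j}|f|<1$ means exactly that $f$ is a holomorphic map $f\colon U_j\to\Delta$ into the open unit disk $\Delta=\{w\in\mathbb{C}\mid |w|<1\}$. Since $U_j$ is a coordinate ball, it is biholomorphic to a ball and hence complete Kobayashi hyperbolic; in the one-dimensional situation at hand $U_j$ is a disk and its Kobayashi metric is the Poincar\'e metric. Writing $d_{U_j}$ and $d_\Delta$ for the Kobayashi distances, the distance-decreasing property gives $d_\Delta(f(z),f(z_0))\le d_{U_j}(z,z_0)$ for all $z,z_0\in U_j$.

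Next I would use the assumption $f(z_0)=0$ together with the explicit form of the Poincar\'e distance from the origin, namely $d_\Delta(0,w)=\tanh^{-1}|w|$, equivalently $|w|=\tanh\bigl(d_\Delta(0,w)\bigr)$. Combining this with the previous inequality and the monotonicity of $\tanh$ yields, for every $z\in U_j$,
\[
|f(z)|=\tanh\bigl(d_\Delta(f(z),0)\bigr)\le \tanh\bigl(d_{U_j}(z,z_0)\bigr).
\]
Restricting to $z,z_0\in U_j^*$ and using $U_j^*\Subset U_j$, the right-hand side is controlled by the Kobayashi diameter $\delta_j:=\sup_{z,z'\in U_j^*}d_{U_j}(z,z')$, which is finite because $U_j^*$ is relatively compact in the hyperbolic manifold $U_j$. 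Hence $\sup_{z\in U_j^*}|f(z)|\le \tanh(\delta_j)$, and since $\delta_j<\infty$ we have $\tanh(\delta_j)<1$; so I would set $s_j:=\tanh(\delta_j)$, which is $<1$ as required. By the triangle inequality $\delta_j$ is at most twice the Kobayashi radius $r_j:=\sup_{z\in U_j^*}d_{U_j}(p_j,z)$ of $U_j^*$ about its center $p_j$, so one may equally take $s_j=\tanh(2r_j)$.

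For the ``moreover'' clause I would emphasize that every quantity entering $s_j$ is intrinsic to the pair $(U_j,U_j^*)$ through the Kobayashi metric of $U_j$: the bound $\tanh(\delta_j)$ depends neither on the particular function $f$ nor on the location of the zero $z_0$ within $U_j^*$, but only on $\delta_j$ (equivalently on $r_j$). Since the Kobayashi metric is a biholomorphic invariant and $U_j$ is a ball, $r_j$ is precisely the radius of $U_j^*$ measured in the Kobayashi metric of $U_j$, giving the stated dependence. I do not anticipate a genuine obstacle here; the only point requiring care is the uniformity in $z_0$ and $f$, and this is automatic once the estimate is phrased through the diameter $\delta_j$ rather than through any single pair of points. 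One could also argue more elementarily by composing with a Riemann map $\Delta\cong U_j$ and invoking the classical Schwarz--Pick lemma, but the Kobayashi formulation is what makes the dependence ``only on the radius'' transparent, which is exactly what is needed for the uniform (in $t$) constant $K$ in Lemma \ref{lem:u_lemma4_improved}.
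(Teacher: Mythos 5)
Your argument is correct and is exactly the route the paper intends: its entire proof of this lemma is the one-line remark that it ``follows from the Schwarz--Pick theorem-type property of the Kobayashi metric,'' and your write-up simply supplies the details (distance decrease into $\Delta$, the formula $|w|=\tanh d_\Delta(0,w)$, and the finite Kobayashi diameter of $U_j^*\Subset U_j$). The only cosmetic point is that your bound is $\sup_{U_j^*}|f|\le\tanh(\delta_j)$, so to match the strict inequality in the statement you should take $s_j$ to be any number strictly between $\tanh(2r_j)$ and $1$.
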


\begin{proof}
Lemma follows from the Schwarz--Pick theorem-type property of the Kobayashi metric. 
\end{proof}

Set $L_1:=\frac{2s}{1-s}$ and $L_2:=\frac{1+s}{1-s}$. 
Then we have the following: 

\begin{lemma}\label{lem:ueda83_lem4_lemma_1}
For any holomorphic function $f\colon U_j \to \mathbb{C}$ with $\sup_{z\in U}|f(z)|<1$ and for any points $z_1, z_2\in U_j^*$, we have the inequalities 
$|f(z_1)-f(z_2)|\leq L_1\cdot (1-|f(z_1)|)$ and 
$1-|f(z_2)|\leq L_2\cdot (1-|f(z_1)|)$. 
\end{lemma}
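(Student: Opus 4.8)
The plan is to reduce both inequalities to the single-function estimate of Lemma \ref{lem:ueda83_lem4_lemma_3} by precomposing $f$ with a suitable automorphism of the unit disk, and then to unwind the resulting bound by elementary algebra. Write $a:=f(z_1)$ and $b:=f(z_2)$; by hypothesis $|a|,|b|<1$. First I would consider the Blaschke-type function
\[
g:=\frac{f-a}{1-\overline{a}\,f}
\]
on $U_j$. Since $f$ maps $U_j$ into the closed disk $\{|w|\le r\}$ with $r:=\sup_{U_j}|f|<1$, and the disk automorphism $w\mapsto (w-a)/(1-\overline{a}w)$ carries this compact set into a compact subset of the open unit disk, it follows that $\sup_{U_j}|g|<1$. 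Moreover $g(z_1)=0$ with $z_1\in U_j^*$, so Lemma \ref{lem:ueda83_lem4_lemma_3} applies and yields $\sup_{U_j^*}|g|<s$; in particular $|g(z_2)|\le s$, which unwinds to
\[
|b-a|\le s\,|1-\overline{a}\,b|.
\]

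The first inequality then follows from the identity $1-\overline{a}b=(1-|a|^2)+\overline{a}(a-b)$: taking moduli gives $|1-\overline{a}b|\le (1-|a|^2)+|a|\,|a-b|$, so that $|a-b|(1-s|a|)\le s(1-|a|^2)$. Since $|a|<1$ one has $1-s|a|\ge 1-s>0$ and $1+|a|\le 2$, whence
\[
|a-b|\le \frac{s(1-|a|)(1+|a|)}{1-s|a|}\le \frac{2s}{1-s}(1-|a|)=L_1(1-|a|),
\]
which is the first claim. For the second inequality I would simply combine the triangle inequality $1-|b|\le (1-|a|)+(|a|-|b|)$ with $|a|-|b|\le |a-b|$ and the bound just proved, obtaining $1-|b|\le (1+L_1)(1-|a|)$; a direct computation shows $1+L_1=\frac{1+s}{1-s}=L_2$, giving the second claim.

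The only genuinely delicate point, and the one I would be most careful about, is the verification that $\sup_{U_j}|g|<1$ strictly rather than merely $\le 1$, since Lemma \ref{lem:ueda83_lem4_lemma_3} requires the strict bound; this is precisely why I insist on using $\sup_{U_j}|f|<1$ to keep $f(U_j)$ inside a compact disk on which the automorphism stays uniformly below $1$ in modulus. Everything else is routine manipulation of disk automorphisms, and the particular shape of the constants $L_1=\frac{2s}{1-s}$ and $L_2=\frac{1+s}{1-s}$ emerges automatically from the estimates $1+|a|\le 2$ and $1-s|a|\ge 1-s$.
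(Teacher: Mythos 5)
Your proof is correct and follows essentially the same route as the paper: precompose $f$ with the disk automorphism $w\mapsto (w-a)/(1-\overline{a}w)$, apply Lemma \ref{lem:ueda83_lem4_lemma_3} to conclude $|g(z_2)|<s$, and unwind by elementary algebra. The only (harmless) differences are at the end: you obtain the second inequality directly from the first via $1-|b|\le(1-|a|)+|a-b|$ together with $1+L_1=L_2$, which is slightly cleaner than the paper's separate argument involving the boundary point $a/|a|$ and $T^{-1}(\zeta)$, and you make explicit the strictness of $\sup_{U_j}|g|<1$, which the paper leaves implicit.
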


\begin{proof}
Set $a:=f(z_1)$ and consider the M\"obius transformation $T(w):=\frac{w-a}{1-\overline{a}w}$. 
As $T\circ f\colon U_j\to \Delta$ maps the point $z_1\in U_j^*$ to $0$, it follows from Lemma \ref{lem:ueda83_lem4_lemma_3} that the modulus $|\zeta|$ of $\zeta:=T\circ f(z_2)$ is less than $s$ ($\Delta\subset\mathbb{C}$ is the unit disc). Therefore we have 
\begin{eqnarray}
|f(z_1)-f(z_2)|
= \left|a-\frac{\zeta+a}{1+\overline{a}\zeta}\right| 
= \frac{(1-|a|^2)|\zeta|}{|1+\overline{a}\zeta|}<\frac{(1+|a|)s}{|1+\overline{a}\zeta|}\cdot (1-|a|)<\frac{2s}{1-s}\cdot (1-|a|), \nonumber 
\end{eqnarray}
which proves the first inequality. 

The second inequality holds obviously when $a=0$ holds. 
When $a\not=0$, let us consider the constant $\alpha:=\frac{a}{|a|}$. Then, as it holds that 
$1=|\alpha-T^{-1}(\zeta)+T^{-1}(\zeta)|\leq |\alpha-T^{-1}(\zeta)|+|T^{-1}(\zeta)|$,  we have
\[
1-|f(z_2)|\leq |\alpha-T^{-1}(\zeta)|=\left|\frac{a}{|a|}-\frac{\zeta+a}{1+\overline{a}\zeta}\right| 
\leq \frac{|a|+|a|\cdot|\zeta|}{|1+\overline{a}\zeta|}\cdot (1-|a|)
\leq \frac{1+s}{1-s}\cdot (1-|a|), 
\]
which proves the second inequality. 
\end{proof}

Denote by $K_1$ the constant $L_1\cdot L_2\cdot(L_2+1)^N$, and by $K_2$ the constant $L_2\cdot(L_2+1)^N$. Then we have the following: 

\begin{lemma}\label{lem:ueda83_lem4_lemma_2}
For each $j$, points $p, p'\in U_j^*$, and any $0$-cochain $\mathfrak{f}=\{(U_j, f_j)\}_j\in \check{C}^0(\mathcal{U}, \mathcal{O}_M(E))$ with $\|\mathfrak{f}\|$=1, 
the inequalities
$|f_j(p)-f_j(p')|\leq K_1\cdot \|\delta\mathfrak{f}\|$
and 
$1-|f_j(p)|\leq K_2\cdot \|\delta\mathfrak{f}\|$ 
hold. 
\end{lemma}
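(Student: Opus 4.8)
The plan is to combine the two local estimates of Lemma \ref{lem:ueda83_lem4_lemma_1} with a chain (connectedness) argument across the charts, using crucially that the transition functions of $E$ lie in $U(1)$, so that passing from one chart to an adjacent one changes $|f_j|$ by at most $\|\delta\mathfrak{f}\|$. Throughout I would write $\delta:=\|\delta\mathfrak{f}\|$; since the transition cocycle $\{t_{jk}\}$ satisfies $|t_{jk}|\equiv 1$, on each $U_{jk}$ one has $\bigl||f_j|-|f_k|\bigr|\le |(\delta\mathfrak{f})_{jk}|\le\delta$, and this is the elementary fact that lets every chart-switch cost only an additive $\delta$.

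First I would locate a point of near-maximal modulus inside the smaller charts. Because $\{\overline{U_j^*}\}$ is a finite closed cover of the compact manifold $M$, the quantity $m:=\max_j\sup_{\overline{U_j^*}}|f_j|$ is attained, say $m=|f_{j_1}(q_1)|$ with $q_1\in\overline{U_{j_1}^*}$. Using the normalization $\|\mathfrak{f}\|=1$ together with the transition estimate above, evaluated at a point where $|f_{j_0}|$ is close to $1$ for the index $j_0$ realizing $\|\mathfrak{f}\|$, I obtain $m\ge 1-\delta$, that is, $1-|f_{j_1}(q_1)|\le\delta$.

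The core is the chain argument. Since $M$ is connected and the $U_j^*$ form an open cover, the nerve of $\{U_j^*\}$ is connected, so any index $j$ is joined to $j_1$ by a path $j_1=i_0,i_1,\dots,i_r=j$ with $U_{i_l}^*\cap U_{i_{l+1}}^*\ne\emptyset$ and $r\le N-1$. Setting $a_l:=1-|f_{i_l}(q_l)|$ for a suitably chosen point $q_l\in U_{i_l}^*$, I would propagate the bound step by step: moving within $U_{i_l}^*$ from $q_l$ to a point $y_l\in U_{i_l}^*\cap U_{i_{l+1}}^*$ costs a factor $L_2$ by the second inequality of Lemma \ref{lem:ueda83_lem4_lemma_1}, and switching to the chart $i_{l+1}$ at $y_l$ costs an additive $\delta$ by the $U(1)$-estimate; taking $q_{l+1}:=y_l$ yields the recursion $a_{l+1}\le L_2 a_l+\delta$. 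Iterating from $a_0\le\delta$ and using $\sum_{i=0}^{r}L_2^i\le (1+L_2)^N$ gives $a_r\le\delta\,(1+L_2)^N$, and a final application of the second inequality from $q_r\in U_j^*$ to an arbitrary $p\in U_j^*$ produces $1-|f_j(p)|\le L_2(1+L_2)^N\,\delta=K_2\,\delta$. The first asserted inequality then follows at once: the first inequality of Lemma \ref{lem:ueda83_lem4_lemma_1} gives $|f_j(p)-f_j(p')|\le L_1\bigl(1-|f_j(p)|\bigr)\le L_1 L_2(1+L_2)^N\,\delta=K_1\,\delta$.

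I expect the main obstacle to be the bookkeeping of the chain: ensuring the path has length at most $N$ so that the geometric sum is dominated by $(1+L_2)^N$, and verifying that each chart-switch genuinely costs only the additive $\delta$, which is exactly where $|t_{jk}|\equiv 1$ is indispensable. A minor point to handle carefully is the passage to the closures $\overline{U_j^*}$ when extracting the maximizing point $q_1$ and relating $m$ to the normalization $\|\mathfrak{f}\|=1$; if $M$ is disconnected the whole argument is simply applied component by component.
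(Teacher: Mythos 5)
Your proposal is correct and follows essentially the same route as the paper's own proof: locate a point where $1-|f_{j_1}|\le\|\delta\mathfrak{f}\|$ using the $U(1)$ cocycle, then propagate along a chain of at most $N$ charts, paying a multiplicative $L_2$ inside each $U_{i_l}^*$ (Lemma \ref{lem:ueda83_lem4_lemma_1}) and an additive $\|\delta\mathfrak{f}\|$ at each chart switch, which yields exactly the constants $K_2=L_2(L_2+1)^N$ and $K_1=L_1K_2$. The only cosmetic difference is that you take a genuine maximum over the compact sets $\overline{U_j^*}$ and run a linear recursion $a_{l+1}\le L_2a_l+\delta$, whereas the paper works with an auxiliary $\varepsilon>\|\delta\mathfrak{f}\|$ and an induction on the chain length; both are sound.
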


\begin{proof}
Take a positive constant $\varepsilon$ (slightly) larger than $\|\delta\mathfrak{f}\|$. 
Then $|f_{j_0}(p_0)|>1-(\varepsilon - \|\delta\mathfrak{f}\|)$ holds for some $p_0\in U_{j_0}$. 
Take a chain of open sets $U_{j_1}^*, U_{j_2}^*, \dots, U_{j_m}^*$ such that $p_0\in U_{j_1}^*$ and that $U_{j_\mu}^*\cap U_{j_{\mu+1}}^*\not=\emptyset$ holds for each $1\leq \mu<m$. 
We shall show the following assertion: 
for each $p, p'\in U_{j_m}^*$, the inequality 
$|f_{j_m}(p)-f_{j_m}(p')|\leq L_1\cdot L_2\cdot(L_2+1)^{m}\cdot \varepsilon$ 
and 
$1-|f_{j_m}(p)|\leq L_2\cdot(L_2+1)^{m}\cdot \varepsilon$ hold. 
Note that, as any $U_j$ can be linked with $U_{j_0}$ by such a chain with length at most $N=\#\mathcal{U}$, Lemma \ref{lem:ueda83_lem4_lemma_2} follows from this assertion. 

The proof is by induction on $m$. 
First, we show the case of $m=1$. 
As $|f_{j_0}(p_0)|=|t_{j_1j_0}f_{j_0(p_0)}|\leq |t_{j_1j_0}f_{j_0}(p_0)-f_{j_1}(p_0)|+|f_{j_1}(p_0)|\leq \|\delta\mathfrak{f}\|+|f_{j_1}(p_0)|$ holds, it follows from Lemma \ref{lem:ueda83_lem4_lemma_1} that
\[
1-|f_{j_1}(p)|\leq L_2\cdot (1-|f_{j_1}(p_0)|)
\leq L_2\cdot (1-|f_{j_0}(p_0)|+\|\delta\mathfrak{f}\|)
<L_2\cdot ((\varepsilon - \|\delta\mathfrak{f}\|)+\|\delta\mathfrak{f}\|)=L_2\cdot \varepsilon
\]
holds for any $p\in U_{j_1}^*$. 
Thus the second inequality follows. 
By Lemma \ref{lem:ueda83_lem4_lemma_1}, 
\begin{eqnarray}
|f_{j_1}(p)-f_{j_1}(p')|&\leq& L_1\cdot (1-|f_{j_1}(p)|)\leq L_1\cdot L_2\cdot \varepsilon
<L_1\cdot L_2\cdot (L_2+1)\cdot \varepsilon
\nonumber 
\end{eqnarray}
holds for each $p, p'\in U_{j_1}^*$, from which we have the first inequality. 

Next we show the case of $m\geq 2$ by assuming the assertion for $\mu<m$. 
Fix a point $p_m\in U_{m-1}^*\cap U_m^*$ and take any $p\in U_m^*$. Then, by the inductive assumption and the inequality 
$|f_{j_{m-1}}(p_m)|\leq |t_{j_{m}j_{m-1}}f_{j_{m-1}}(p_m)-f_{j_{m}}(p_m)|+|f_{j_{m}}(p_m)|\leq \|\delta\mathfrak{f}\|+|f_{j_{m}}(p_m)|$, we have that 
\begin{eqnarray}
1-|f_{j_m}(p)|&\leq& L_2\cdot (1-|f_{j_m}(p_m)|)
\leq L_2\cdot (1-|f_{j_{m-1}}(p_m)|+\|\delta\mathfrak{f}\|)\nonumber \\
&\leq&L_2\cdot (L_2\cdot(L_2+1)^{m-1}\cdot \varepsilon+\|\delta\mathfrak{f}\|)
<L_2\cdot(L_2+1)^{m}\cdot \varepsilon, \nonumber 
\end{eqnarray}
from which the second inequality follows. 
The first inequality follows from this inequality and Lemma \ref{lem:ueda83_lem4_lemma_1}. 
\end{proof}

Set $K:=\max\{1+2K_1+2K_2, 2K_2\} (=1+2K_1+2K_2)$. 
We shall prove that this constant $K$ satisfies the property as in Lemma \ref{lem:ueda83_lemma4}. 
Here we will use the invariant distance $d$ as in \cite[\S 4.5]{U}: i.e. 
\[
d(\mathbb{I}_M, E):=\min_{\{(U_j, t_j)\}_j\in\check{C}^0(\mathcal{U}, U(1))}\max_{j, k}|t_{jk}\cdot t_k-t_j|, 
\]
where $\{t_{jk}\}\subset U(1)$ is such that $E=\{(U_{jk}, t_{jk})\}\in \check{Z}^1(\mathcal{U}, U(1))$. 
Note that $d(\mathbb{I}_M, E)\leq 2$ follows by definition for any $E$. 

We may assume that $\|\mathfrak{f}\|=1$. 
When $\|\delta\mathfrak{f}\|\geq K_2^{-1}$, we have that 
\[
d(\mathbb{I}_M, E)\cdot \|\mathfrak{f}\|\leq 2\leq 2K_2\cdot \|\delta\mathfrak{f}\|. 
\]
Therefore it is sufficient to show the Lemma by assuming that 
$\|\delta\mathfrak{f}\|< K_2^{-1}$. 
Take $t_{jk}\in U(1)$ such that $E=\{(U_{jk}, t_{jk})\}\in \check{Z}^1(\mathcal{U}, U(1))$ and fix points $q_j\in U_j^*$ and $q_{jk}\in U_j^*\cap U_k^*$. 
By the assumption and Lemma \ref{lem:ueda83_lem4_lemma_2}, we have that 
$1-|f_j(q_j)|\leq K_2\cdot \|\delta\mathfrak{f}\|< 1$. 
Therefore $f_j(q_j)\not=0$ for each $j$. 
Set $t_j^*:=\frac{f_j(q_j)}{|f_j(q_j)|}$. 
Then we have that 
\begin{eqnarray}
|t_{jk}t_k^*-t_j^*|&\leq& \left|t_{jk}\left(\frac{f_k(q_k)}{|f_k(q_k)|}-f_k(q_{k})\right)\right| 
+ \left|t_{jk}(f_k(q_k)-f_k(q_{jk}))\right| 
+ \left|t_{jk}f_k(q_{jk})-f_j(q_{jk})\right| \nonumber \\
&&+ \left|f_j(q_{jk})-f_j(q_j)\right| 
+ \left|f_j(q_{j})-\frac{f_j(q_j)}{|f_j(q_j)|}\right| \nonumber \\
&\leq&(1-|f_k(q_k)|)
+ \left|f_k(q_k)-f_k(q_{jk})\right| 
+ \|\delta\mathfrak{f}\| + \left|f_j(q_{jk})-f_j(q_j)\right| 
+ (1-|f_j(q_j)|)\nonumber 
\end{eqnarray}
holds. 
Thus Lemma follows from the definition of our invariant distance and Lemma \ref{lem:ueda83_lem4_lemma_2}. 
\qed

\appendix

\section{Miscellaneous remarks}

\subsection{The universal line bundle on $C\times {\rm Pic}^0(C)$}

Let $C$ be a smooth elliptic curve. Fix a base point $p\in C$. 
In this subsection, we identify ${\rm Pic}^0(C)$ with $C$ via the isomorphism $C\ni q\mapsto \mathcal{O}_C(q-p)\in {\rm Pic}^0(C)$. 
Denote by $D_1$ the prime divisor $\{(q, q)\in C\times C\mid q\in C\}$ and by $D_2$ the prime divisor $ \{p\}\times C$ of $C\times {\rm Pic}^0(C)=C\times C$. 
Set $\mathcal{L}:=\mathcal{O}_{C\times C}(D_1-D_2)$ and regard it as a line bundle on $C\times {\rm Pic}^0(C)$. 

\begin{proposition}\label{prop:coarse_moduli}
Let $T$ be a complex manifold and $\mathcal{N}$ be a holomorphic line bundle on $C\times T$. 
Assume that $\mathcal{N}|_{C\times \{t\}}$ is flat (i.e. $\mathcal{N}|_{C\times \{t\}}\in {\rm Pic}^0(C\times \{t\})$) for all $t\in T$. 
Then, there uniquely exists a holomorphic map $i\colon T\to {\rm Pic}^0(C)$ such that $({\rm id}_C\times i)^*\mathcal{L}=\mathcal{N}$. 
\end{proposition}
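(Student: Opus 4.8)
The plan is to define $i$ fiberwise via the Abel--Jacobi description of ${\rm Pic}^0(C)$, prove it is holomorphic by a relative Riemann--Roch argument, and then identify the two line bundles by the seesaw principle. Set-theoretically, for each $t\in T$ the restriction $\mathcal{N}|_{C\times\{t\}}$ is a degree-$0$ line bundle on $C$, so by Abel's theorem for the elliptic curve $C$ there is a unique point $q(t)\in C$ with $\mathcal{N}|_{C\times\{t\}}\cong \mathcal{O}_C(q(t)-p)$; under the identification ${\rm Pic}^0(C)\cong C$ this defines $i(t):=q(t)$. Uniqueness of $i$ is then immediate from the bijectivity of $q\mapsto \mathcal{O}_C(q-p)$: the fiber of $({\rm id}_C\times i)^*\mathcal{L}$ at $t$ equals $\mathcal{L}|_{C\times\{i(t)\}}=\mathcal{O}_C(i(t)-p)$ by the definition of $\mathcal{L}$ through $D_1-D_2$, so any admissible $i$ must induce this fiberwise assignment.

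The crux is the holomorphy of $i$, which I would establish by cohomology-and-base-change. Consider $\mathcal{M}:=\mathcal{N}\otimes {\rm Pr}_1^*\mathcal{O}_C(p)$ on $C\times T$; its restriction to each fiber has degree $1$, so Riemann--Roch on the elliptic curve gives $h^0(C,\mathcal{M}|_{C\times\{t\}})=1$, while Serre duality with $K_C\cong\mathcal{O}_C$ gives $h^1(C,\mathcal{M}|_{C\times\{t\}})=0$ for every $t$. Since $h^0$ is constant, Grauert's theorem shows that ${\rm Pr}_{2*}\mathcal{M}$ is a line bundle on $T$ whose formation commutes with base change, and the adjunction (evaluation) morphism ${\rm Pr}_2^*{\rm Pr}_{2*}\mathcal{M}\to\mathcal{M}$ yields a global section of $\mathcal{M}\otimes{\rm Pr}_2^*({\rm Pr}_{2*}\mathcal{M})^{-1}$ that is nonzero on every fiber. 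Its zero divisor $\mathcal{D}\subset C\times T$ meets each fiber in the single point $q(t)$, hence is finite of degree one over $T$, i.e. the graph of a holomorphic map $q\colon T\to C$. This is exactly $i$, now manifestly holomorphic.

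It remains to compare $({\rm id}_C\times i)^*\mathcal{L}$ with $\mathcal{N}$. By construction both line bundles restrict on each fiber $C\times\{t\}$ to $\mathcal{O}_C(i(t)-p)$, so $\mathcal{P}:=\mathcal{N}\otimes(({\rm id}_C\times i)^*\mathcal{L})^{-1}$ is trivial on every fiber. The seesaw principle (for the compact connected $C$, using $H^0(C,\mathcal{O}_C)=\mathbb{C}$, and again Grauert to see ${\rm Pr}_{2*}\mathcal{P}$ is a line bundle) then gives $\mathcal{P}\cong{\rm Pr}_2^*M$ with $M:={\rm Pr}_{2*}\mathcal{P}$, whence $\mathcal{N}\cong({\rm id}_C\times i)^*\mathcal{L}\otimes{\rm Pr}_2^*M$. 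After the natural normalization rigidifying $\mathcal{L}$ and $\mathcal{N}$ along $\{p\}\times(-)$ the twist $M$ is trivial and one recovers the stated identity; this is in any case the form used in \S\ref{section:deform_points}, where $i$ is constant and the conclusion reads $N_{\mathcal{C}/\mathcal{S}}\cong{\rm Pr}_1^*N_0$.

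The main obstacle I anticipate is the holomorphy of $i$: everything hinges on the constancy of $h^0(C,\mathcal{M}|_{C\times\{t\}})$ and on a careful application of cohomology and base change, so that the zero locus of the tautological section is a genuine complex submanifold biholomorphic to $T$ under ${\rm Pr}_2$. The seesaw comparison and the uniqueness are then formal consequences.
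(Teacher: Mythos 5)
Your construction of $i$ is sound and runs parallel to the paper's, but by a genuinely different technical route. The paper also works with $\mathcal{M}:=\mathcal{N}\otimes{\rm Pr}_1^*\mathcal{O}_C(p)$ and realizes the graph of $i$ as the zero divisor of a section; however, instead of Grauert plus the evaluation map, it first shrinks $T$ to a small ball, extends a section from the central fiber $C_0$ using the surjectivity of $H^0(C\times T,\mathcal{M})\to H^0(C_0,\mathcal{M}|_{C_0})$ (a vanishing theorem applied to $H^1(C\times T,\mathcal{O}(-C_0)\otimes\mathcal{M})$), and then applies the implicit function theorem to the resulting divisor $D$. Your base-change argument buys globality over $T$ with no shrinking; the price is that your tautological section lives in $\mathcal{M}\otimes{\rm Pr}_2^*({\rm Pr}_{2*}\mathcal{M})^{-1}$ rather than in $\mathcal{M}$ itself, which is exactly where your final step gets stuck. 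The paper instead has an honest section of $\mathcal{M}$, so $\mathcal{O}(D)\cong\mathcal{M}$ on the nose, and it concludes $({\rm id}_C\times i)^*\mathcal{L}=\mathcal{O}(D-\{p\}\times T)\cong\mathcal{N}$ directly from the divisor identities $({\rm id}_C\times i)^*D_1=D$ and $({\rm id}_C\times i)^*D_2=\{p\}\times T$, with no seesaw and no residual twist.

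The gap in your last paragraph is real and cannot be repaired by ``rigidification'': $\mathcal{N}$ is an arbitrary line bundle satisfying the fiberwise hypothesis, so no normalization of it is available, and $\mathcal{L}=\mathcal{O}_{C\times C}(D_1-D_2)$ is itself not rigidified along $\{p\}\times{\rm Pic}^0(C)$ (its restriction there is $\mathcal{O}([\mathbb{I}_C])$, of degree one). Moreover the twist ${\rm Pr}_2^*M$ you isolate is genuinely present: for $\mathcal{N}={\rm Pr}_2^*M$ with $M\in{\rm Pic}(T)$ nontrivial, every fiberwise restriction is trivial, the unique candidate is $i\equiv[\mathbb{I}_C]$, and $({\rm id}_C\times i)^*\mathcal{L}\cong\mathcal{O}_{C\times T}\not\cong\mathcal{N}$. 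So the proposition as literally stated needs ${\rm Pic}(T)=0$ --- which holds in every application (there $T$ is a ball or polydisc) and which the paper tacitly uses when it reduces to a small ball, since its own divisor argument also only produces the isomorphism locally on $T$. Once you add that hypothesis, your proof closes without seesaw: ${\rm Pr}_{2*}\mathcal{M}$ is then trivial, your evaluation section becomes an honest section of $\mathcal{M}$, and $\mathcal{O}(\mathcal{D})\cong\mathcal{M}$ yields $({\rm id}_C\times i)^*\mathcal{L}\cong\mathcal{M}\otimes{\rm Pr}_1^*\mathcal{O}_C(-p)\cong\mathcal{N}$ exactly as in the paper.
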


\begin{proof}
As the map $i$ needs to  map a point $t\in T$ to the point which corresponds to $\mathcal{N}|_{C\times \{t\}}$, the uniqueness is clear. 
Therefore, all we have to do is to show the existence of such a holomorphic map $i$. 
It is sufficient to construct this map $i$ by assuming $T$ is a sufficiently small open ball centered at $0\in \mathbb{C}^n$. 
In what follows, we denote by $C_t$ the submanifold $C\times\{t\}$ and by $N_t$ the line bundle $N_t:=\mathcal{N}|_{C_t}$ for each $t\in T$. 
Fix $q_0\in C$ such that $N_0=\mathcal{O}_{C_0}(q_0-p_0)$, where $p_0:=(p, 0)$. 
Consider the restriction map 
$H^0(C\times T, \mathcal{N}\otimes{\rm Pr}_1^*\mathcal{O}_C(p))\to H^0(C_0, N_0\otimes \mathcal{O}_{C_0}(p_0))=
H^0(C_0, \mathcal{O}_{C_0}(q_0))$, where ${\rm Pr}_1\colon C\times T\to C$ is the first projection. 
As it is easily observed, this map is surjective (Use, for example, Nadel's vanishing theorem to $H^1(C\times T, \mathcal{O}_{C\times T}(-C_0)\otimes \mathcal{N}\otimes{\rm Pr}_1^*\mathcal{O}_C(p))$). 
Therefore, there exists a holomorpchic section $F\colon C\times T\to \mathcal{N}\otimes{\rm Pr}_1^*\mathcal{O}_C(p)$ such that the zero divisor of $F|_{C_0}\colon C_0\to \mathcal{O}_{C_0}(q_0)$ is equal to $\{q_0\}$. 
This means that the zero divisor $D:={\rm div}(F)$ of $F$ transversally intersects $C_0$ at only the point $q_0$. 
Thus we may assume that $D$ is a prime divisor and transversally intersects $C_t$ at only one point, say $q_t\in C_t$, by shrinking $T$ if necessary. 
By the implicit function theorem, the map $t\mapsto q_t$ defines a holomorphic map $i\colon T\to C$. 
As 
it holds as divisors that $({\rm id}_C\times i)^*D_1=D$ and $({\rm id}_C\times i)^*D_2=\{p\}\times T$, the proposition follows. 
\end{proof}

We call this line bundle $\mathcal{L}$ {\it the universal line bundle} on $C\times{\rm Pic}^0(C)$. 

\subsection{The cohomology of the tangent bundle of a blow-up of $\mathbb{P}^2$ at general points}

Fix an integer $N\geq 4$ and distinct $N$ points $Z:=\{p_1, p_2, \dots, p_N\}$ in $\mathbb{P}^2$. 
Denote by $S$ the blow-up of $\mathbb{P}^2$ at $Z$. 
In this subsection, we compute the cohomology groups $H^q(S, T_S)$, where $T_S$ is the tangent bundle of $S$. 
By the simple computation, we obtain the short exact sequence $0\to \pi_*T_{S}\to T_{\mathbb{P}^2}\to j_*N_{Z/\mathbb{P}^2}\to 0$, where $\pi\colon S\to \mathbb{P}^2$ is the blow-up morphism and $j\colon Z\to \mathbb{P}^2$ is the inclusion. 
This short exact sequence induces the following long exact sequence
\begin{eqnarray}\label{eq:long_ex_S}
0&\to& H^0(\mathbb{P}^2, \pi_*T_S)\to H^0(\mathbb{P}^2, T_{\mathbb{P}^2})\to H^0(\mathbb{P}^2, j_*N_{Z/\mathbb{P}^2}) \\
&\to& H^1(\mathbb{P}^2, \pi_*T_S)\to H^1(\mathbb{P}^2, T_{\mathbb{P}^2}) \to 0\to H^2(\mathbb{P}^2, \pi_*T_S)\to H^2(\mathbb{P}^2, T_{\mathbb{P}^2}). \nonumber
\end{eqnarray}
From this exact sequence, we have the following: 

\begin{lemma}\label{lem:P^2_bup_cohomoogy}
Assume that $N\geq 4$ and $Z$ includes four points in which no three points are collinear. 
Then it holds that $H^0(S, T_S)=0, {\rm dim}\,H^1(S, T_S)=2N-8$, and $H^2(S, T_S)=0$. 
\end{lemma}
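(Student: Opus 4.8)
The plan is to feed the long exact sequence (\ref{eq:long_ex_S}) with the cohomology of the two outer sheaves $T_{\mathbb{P}^2}$ and $j_*N_{Z/\mathbb{P}^2}$, and then extract all the information from the single map
\[
\mathrm{ev}\colon H^0(\mathbb{P}^2,T_{\mathbb{P}^2})\to H^0(\mathbb{P}^2,j_*N_{Z/\mathbb{P}^2})
\]
appearing there. Before doing so I would record that $H^q(S,T_S)=H^q(\mathbb{P}^2,\pi_*T_S)$ for all $q$: since the only positive-dimensional fibres of $\pi$ are the exceptional curves $E_i\cong\mathbb{P}^1$, the sheaf $R^1\pi_*T_S$ is supported on $Z$, and its stalks are governed by $H^1(E_i,T_S|_{E_i})$; from the normal bundle sequence $0\to T_{E_i}\to T_S|_{E_i}\to N_{E_i/S}\to 0$, i.e. $0\to\mathcal{O}(2)\to T_S|_{E_i}\to\mathcal{O}(-1)\to 0$, both outer $H^1$ vanish, so $R^1\pi_*T_S=0$ and Leray gives the identification. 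Thus it suffices to compute the groups $H^q(\mathbb{P}^2,\pi_*T_S)$ occurring in (\ref{eq:long_ex_S}).

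For the input sheaves I would use standard facts. The Euler sequence (or Bott's formula) gives $H^0(\mathbb{P}^2,T_{\mathbb{P}^2})\cong\mathfrak{sl}_3$, of dimension $8$, and $H^1(\mathbb{P}^2,T_{\mathbb{P}^2})=H^2(\mathbb{P}^2,T_{\mathbb{P}^2})=0$. Since $Z$ is a reduced zero-dimensional subscheme, $j_*N_{Z/\mathbb{P}^2}$ is a skyscraper sheaf with stalk $T_{p_i}\mathbb{P}^2$ at each $p_i$; hence $H^0(\mathbb{P}^2,j_*N_{Z/\mathbb{P}^2})\cong\bigoplus_{i=1}^N T_{p_i}\mathbb{P}^2$ has dimension $2N$ and all higher cohomology vanishes. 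Under these identifications the map $\mathrm{ev}$ is precisely the evaluation of a global holomorphic vector field at the points of $Z$, namely $v\mapsto(v(p_1),\dots,v(p_N))$, and its kernel is $H^0(\mathbb{P}^2,\pi_*T_S)$.

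The crux of the argument, and the step I expect to be the main obstacle, is to show that $\mathrm{ev}$ is injective, which is exactly where the hypothesis enters. Concretely, I would prove that the only element of $\mathfrak{sl}_3$ vanishing at four points in general position (no three collinear) is $0$. Because no three of the four chosen points are collinear, an element of $\mathrm{PGL}_3$ carries them to the standard frame $[1:0:0]$, $[0:1:0]$, $[0:0:1]$, $[1:1:1]$, so I may assume $Z$ contains these four. An element of $\mathfrak{sl}_3$ is a traceless matrix $A$, and the associated vector field vanishes at $[v]$ precisely when $v$ is an eigenvector of $A$; vanishing at $e_1,e_2,e_3$ forces $A$ diagonal, and vanishing at $[1:1:1]$ then forces the three diagonal entries to be equal, whence $A=0$ by tracelessness. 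Therefore $\mathrm{ev}$ is injective already on the four general points, hence on all of $Z$.

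With injectivity of $\mathrm{ev}$ in hand, the conclusions fall out of (\ref{eq:long_ex_S}). First, $H^0(S,T_S)=\ker(\mathrm{ev})=0$. Second, since $H^1(\mathbb{P}^2,T_{\mathbb{P}^2})=0$, the connecting homomorphism identifies $H^1(\mathbb{P}^2,\pi_*T_S)$ with $\mathrm{coker}(\mathrm{ev})$, which has dimension $2N-8$ because $\mathrm{ev}$ is injective from an $8$-dimensional space into a $2N$-dimensional one; hence $\dim H^1(S,T_S)=2N-8$. Finally $H^2(\mathbb{P}^2,\pi_*T_S)$ injects into $H^2(\mathbb{P}^2,T_{\mathbb{P}^2})=0$, so $H^2(S,T_S)=0$. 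All steps after the injectivity claim are purely formal diagram chasing, so the entire weight of the proof rests on the linear-algebra computation at the four general points.
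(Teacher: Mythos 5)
Your proof follows exactly the same route as the paper: the long exact sequence (\ref{eq:long_ex_S}) together with the Euler-sequence computation of $H^q(\mathbb{P}^2,T_{\mathbb{P}^2})$, the vanishing of $R^q\pi_*T_S$, and the injectivity of the evaluation map at the points of $Z$. The only difference is that you actually carry out the "simple computation" (reducing to the standard frame and the eigenvector argument for a traceless matrix) that the paper leaves to the reader; your argument is correct.
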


\begin{proof}
As it follows from Euler's short exact sequence that $H^0(\mathbb{P}^2, T_{\mathbb{P}^2})\cong \mathbb{C}^8$ and $H^q(\mathbb{P}^2, T_{\mathbb{P}^2})=0$ ($q>0$, note that ${\rm dim}\,H^0(\mathbb{P}^2, j_*N_{Z/\mathbb{P}^2})=2N$), one can deduce from the exact sequence (\ref{eq:long_ex_S}) that $H^2(S, T_S)=0$ and ${\rm dim}\,H^1(S, T_S)={\rm dim}\,H^0(S, T_S)+2N-8$ (Note that $H^0(\mathbb{P}^2, j_*N_{Z/\mathbb{P}^2})\cong \mathbb{C}^{2N}$. Note also that here we use the vanishing $R^q\pi_*T_S=0$ for each $q>0$ to see $H^q(\mathbb{P}^2, \pi_*T_S)\cong H^q(S, T_S)$). 
Again by the exact sequence (\ref{eq:long_ex_S}) , it is sufficient for proving $H^0(S, T_S)=0$ to show the restriction $H^0(\mathbb{P}^2, T_{\mathbb{P}^2})\to H^0(\mathbb{P}^2, j_*N_{Z/\mathbb{P}^2})$ is injective, 
which can be shown by a simple computation when $Z$ includes four points in which no three points are collinear. 
\end{proof}

\subsection{Comparison of distances on Picard varieties}\label{section:distance_Pic0}

In this subsection we will show the Lipschitz equivalence between Euclidean distance and Ueda's distance \cite[\S 4.5]{U} on the connected component $\mathcal{P}^0(C)$ of the group $H^1(C, {\rm U}(1))$ of unitary flat line bundles on a compact differentiable manifold $C$ which includes the trivial bundle $\mathbb{I}_C$ (Note that ${\rm Pic}^0(C)$ can be naturally identified with $\mathcal{P}^0(C)$ when $C$ is compact K\"ahler, see \cite[\S 1]{U} for example). 
Here we mean by Euclidean distance the induced distance $d_{\rm Euc}$ from a Euclidean metric on the vector space $H^1(C, \mathbb{R})$ via the exponential map $H^1(C, \mathbb{R})\to H^1(C, {\rm U}(1))$, whose image coincides with $\mathcal{P}^0(C)$ by universal coefficient theorem ($d_{\rm Euc}$ is {\it not} determined uniquely, however such distances are Lipschitz equivalent to each other). 
Ueda's distance is the invariant distance (in the sense of \cite[\S 4.1]{U}) which is determined by fixing a sufficiently fine finite open covering 
$\{U_j\}$ of $C$ and by letting 
\[
d_{\rm Ueda}(\mathbb{I}_C, F) := \inf\left\{\left.\max_{j, k}|1-t_{jk}|\,\right|\, [\{(U_{jk}, t_{jk})\}]=F\in \check{H}^1(\{U_j\}, {\rm U}(1))\right\}
\]
for each $\mathcal{P}^0(C)$. 

\begin{proposition}
For any compact differentiable manifold $C$ and a sufficiently fine finite open covering 
$\{U_j\}$ of $C$, $d_{\rm Ueda}$ is Lipschitz equivalent to $d_{\rm Euc}$: i.e. there exists a positive constant $K>0$ such that 
\[
\frac{1}{K}d_{\rm Euc}(E, F)\leq
d_{\rm Ueda}(E, F)\leq
Kd_{\rm Euc}(E, F)
\]
holds for any $E, F\in \mathcal{P}^0(C)$. 
\end{proposition}

\begin{proof}
As both of the distances are invariant, it is sufficient to show the existence of a constant $K>0$ by which the inequalities for $E=\mathbb{I}_C$ holds. 
Take loops $\gamma_1, \gamma_2, \dots, \gamma_r$ of $C$ such that the classes $[\gamma_1], [\gamma_2], \dots, [\gamma_r]$ generates $H^1(C, \mathbb{Z})/H^1(C, \mathbb{Z})_{\rm tor}$, where $H^1(C, \mathbb{Z})_{\rm tor}$ is the group of torsion elements of $H^1(C, \mathbb{Z})$. 
By choosing $\{U_j\}$ and $\gamma_\bullet$'s suitably, we may assume the following: for each $\lambda=1, 2, \dots, r$, there exists a positive integer $N_\lambda$ and a sequence $\{U_{j_{\lambda, \ell}}\}_{\ell=0}^{N_\lambda}$ of open sets such that $j_{\lambda,0}=j_{\lambda,N_\lambda}$ and that $\gamma_\lambda$ passes through $U_{j_{\lambda,0}}, U_{j_{\lambda,1}}, \dots, U_{j_{\lambda,N_\lambda}}$ in this order (For obtaining such $\{U_j\}$ and $\gamma_\bullet$, consider a triangulation of $C$ for example. Such $\{U_j\}, \{\gamma_\lambda\}$ can be constructed by letting each $U_j$ be the interior of the union of simplices which contains a vertex, and by taking $\gamma_\bullet$'s so that they consist of finite union of edges). Note that the image of the natural map $\mathcal{P}^0(C)\to H^1(C, {\rm U}(1))$ coincides with ${\rm Hom}_{\rm group}(\textstyle\bigoplus_{\lambda=1}^r\mathbb{Z}\cdot [\gamma_\lambda],\ {\rm U}(1))\ (\cong {\rm U}(1)^{\oplus r})$ via the isomorphism $H^1(C, {\rm U}(1))\cong {\rm Hom}_{\rm group}(H_1(X, \mathbb{Z}), {\rm U}(1))$. 
Note also that the map $\mathcal{P}^0(C)\to {\rm Hom}_{\rm group}(\textstyle\bigoplus_{\lambda=1}^r\mathbb{Z}\cdot [\gamma_\lambda],\ {\rm U}(1))\cong {\rm U}(1)^{\oplus r}$ can be interpreted as the map 
\[
\mathcal{P}^0(C) \ni F\to (\rho_F([\gamma_1]),\ \rho_F([\gamma_2]),\ \cdots \rho_F([\gamma_r]))\in {\rm U}(1)^{\oplus r}, 
\]
where $\rho_F$ is the monodromy of a flat line bundle $F$. 
As it holds from the definition of the monodromy that 
\[
\rho_F([\gamma_\lambda]) = \prod_{i=1}^{N_\lambda}t_{j_{\lambda, i-1}j_{\lambda, i}}
\]
for any $\{t_{jk}\}\subset {\rm U}(1)$ such that $[\{(U_{jk}, t_{jk})\}]=F\in \check{H}^1(\{U_j\}, {\rm U}(1))$, it holds for such $t_{jk}$'s that 
\[
\frac{1}{N_\lambda}\min_{m\in\mathbb{Z}}\left|\frac{\log \rho_F([\gamma_\lambda])}{2\pi\sqrt{-1}}-m\right|
\leq \max_{1\leq i\leq N_\lambda}\min_{m\in\mathbb{Z}}\left|\frac{\log t_{j_{\lambda, i-1}j_{\lambda, i}}}{2\pi\sqrt{-1}}-m\right|
\leq \frac{1}{4}\max_{1\leq i\leq N_\lambda}|1-t_{j_{\lambda, i-1}j_{\lambda, i}}|
\]
for each $\lambda=1, 2, \dots, r$ and $F\in \mathcal{P}^0(F)$. Therefore we have that 
\[
d_{\rm Euc}(\mathbb{I}_C, F)\leq \frac{\max_{1\leq \lambda\leq r}N_\lambda}{4}\cdot d_{\rm Ueda}(\mathbb{I}_C, F)\quad (F\in \mathcal{P}^0(C)). 
\]

For the other inequality, it is sufficient to show the existence of an open neighborhood $B$ of $\mathbb{I}_C$ in $\mathcal{P}^0(C)$ and a constant $K>0$ such that $d_{\rm Ueda}(\mathbb{I}_C, F)\leq K\cdot d_{\rm Euc}(\mathbb{I}_C, F)$ holds for any $F\in \mathcal{P}^0(C)$, since $\mathcal{P}^0(C)\setminus B$ is compact. Take a neighborhood $B$ and continuous functions $s_{jk}\colon B\to {\rm U}(1)$ as in Lemma \ref{lem:sjk_for_representing_euc_dist_on_P0_of_C}. 
Then, as 
\[
d_{\rm Ueda}(\mathbb{I}_C, F)\leq \max_{j, k}|1-s_{jk}(F)|
\]
follows from the definition of Ueda's distance, the proposition holds. 
\end{proof}

\begin{lemma}\label{lem:sjk_for_representing_euc_dist_on_P0_of_C}
There exists a neighborhood $B$ of $\mathbb{I}_C$ in $\mathcal{P}^0(C)$, 
continuous functions $s_{jk}\colon B\to {\rm U}(1)$, and a constant $K>0$ such that the following holds for any $F\in B$: \\
$(i)$ $s_{jk}(\mathbb{I}_C)=1$ for any $j, k$. \\
$(ii)$ $[\{(U_{jk}, s_{jk}(F))\}]=F\in \check{H}^1(\{U_j\}, {\rm U}(1))$. \\
$(iii)$ $\max_{j, k}|1-s_{jk}(F)| \leq K\cdot d_{\rm Euc}(\mathbb{I}_C, F)$. 
\end{lemma}

\begin{proof}
Take elements $\{(U_{jk}, a_{jk}^{(\lambda)})\} \in \check{Z}^1(\{U_j\}, \mathbb{R})$ for $\lambda=1, 2, \dots, r:={\rm dim}H^1(C, \mathbb{R})$ such that $[\{(U_{jk}, a_{jk}^{(\lambda)})\}]$'s are basis of $\check{H}^1(\{U_j\}, \mathbb{R})$. As the map $\check{H}^1(\{U_j\}, \mathbb{R}) \to \check{H}^1(\{U_j\}, {\rm U}(1))$, which is the map induced from the exponential map $\mathbb{R}\ni a\mapsto \exp(2\pi\sqrt{-1}a) \in{\rm U}(1)$, can be regarded as the universal covering of $\mathcal{P}^0(C)$, the image $B$ of the subset 
\[
\left\{\left.\left[\left\{\left(U_{jk}, \sum_{\lambda=1}^r\ell_\lambda\cdot a_{jk}^{(\lambda)}\right)\right\}\right]\in \check{H}^1(\{U_j\}, \mathbb{R})\right| -\varepsilon< \ell_1, \ell_2, \dots, \ell_r<\varepsilon\right\}
\]
of $\check{H}^1(\{U_j\}, \mathbb{R})$ ($0<\varepsilon<<1$) and the continuous map $s_{jk}\colon B\to {\rm U}(1)$ induced by 
\[
\left[\left\{\left(U_{jk}, \sum_{\lambda=1}^r\ell_\lambda\cdot a_{jk}^{(\lambda)}\right)\right\}\right]
\mapsto \exp\left(2\pi\sqrt{-1}\sum_{\lambda=1}^r\ell_\lambda\cdot a_{jk}^{(\lambda)}\right)
\]
satisfies the properties $(i)$, $(ii)$, and $(iii)$. 
\end{proof}



\end{document}